\setlist[enumerate,1]{label=(\roman*), font = \normalfont} 
\let\originalleft\left
\let\originalright\right
\renewcommand{\left}{\mathopen{}\mathclose\bgroup\originalleft}
\renewcommand{\right}{\aftergroup\egroup\originalright}
\newlength{\bibitemsep}
\newlength{\bibparskip}\setlength{\bibparskip}{0pt}
\let\oldthebibliography\thebibliography
\renewcommand\thebibliography[1]{\oldthebibliography{#1}
	\setlength{\parskip}{\bibitemsep}
	\setlength{\itemsep}{\bibparskip}}
\newcommand{\N}{\mathbb{N}}
\newcommand{\R}{\mathbb{R}}
\renewcommand{\P}{\mathbb{P}}
\newcommand{\E}{\mathbb{E}}
\newcommand{\1}{\mathbbm{1}}
\newcommand{\cA}{\mathcal{A}}
\newcommand{\cB}{\mathcal{B}}
\newcommand{\cN}{\mathcal{N}}
\newcommand{\cD}{\mathcal{D}}
\newcommand{\cC}{\mathcal{C}}
\newcommand{\cE}{\mathcal{E}}
\newcommand{\cM}{\mathcal{M}}
\newcommand{\cG}{\mathcal{G}}
\newcommand{\Ec}[1]{\mathbb{E} \left[#1\right]}
\newcommand{\Pp}[1]{\mathbb{P} \left(#1\right)}
\newcommand{\Ecsq}[2]{\mathbb{E} \left[#1\middle|#2\right]}
\newcommand{\hEc}[1]{\widetilde{\mathbb{E}}_t \left[#1\right]}
\newcommand{\hPp}[1]{\widetilde{\mathbb{P}}_t \left(#1\right)}
\newcommand{\hEcsq}[2]{\widetilde{\mathbb{E}}_t 
	\left[#1\mathrel{}\middle|\mathrel{}#2\right]}
\newcommand{\e}{\mathrm{e}}
\newcommand{\ep}{\varepsilon}
\newcommand{\es}{\mathbb{E}}
\newcommand{\de}{\coloneqq}
\newcommand{\dt}{\mathrm{d}t}
\newcommand{\du}{\mathrm{d}u}
\newcommand{\dx}{\mathrm{d}x}
\newcommand{\gam}{\Gamma}
\newcommand{\q}{Q(\beta,\beta')}
\newcommand{\diff}{\mathop{}\mathopen{}\mathrm{d}}
\DeclareMathOperator{\Var}{Var}
\DeclareMathOperator{\Cov}{Cov}
\newcommand{\abs}[1]{\left\lvert#1\right\rvert}
\newcommand{\norme}[1]{\left\lVert#1\right\rVert}
\theoremstyle{plain}
\newtheorem{thm}{Theorem}[section]
\newtheorem{prop}[thm]{Proposition}
\newtheorem{lem}[thm]{Lemma}
\newtheorem{cor}[thm]{Corollary}
\theoremstyle{definition}
\theoremstyle{remark}
\newtheorem{rem}[thm]{Remark}
\newtheorem{ex}[thm]{Example}
\newtheorem{oquest}[thm]{Open question}
\title{Branching Brownian motion versus Random Energy Model in the supercritical phase: overlap distribution and temperature susceptibility}
\author{
	Benjamin \textsc{Bonnefont}\thanks{Université de Genève, Section de Mathématiques, Rue du Conseil-Général 7-9,
		1205 Genève, Suisse. Email: \texttt{benjamin.bonnefont@unige.ch}. Partially supported by GdR Branchement.},	
	~Michel \textsc{Pain}\thanks{Institut Math\'ematiques de Toulouse (UMR 5219), Universit\'e de Toulouse, France, CNRS. Email: \texttt{michel.pain@math.univ-toulouse.fr}. Partially supported by GdR Branchement and by PEPS JCJC grant n°246644.}
	~and Olivier \textsc{Zindy}\thanks{Sorbonne Universit\'e, Sorbonne Paris Cit\'e, CNRS, Laboratoire de Probabilit\'es Statistique et Mod\'elisation, LPSM, F-75005 Paris, France. Email: \texttt{olivier.zindy@sorbonne-universite.fr}. Partially supported by GdR Branchement.}
}
\begin{document}

\maketitle

\begin{abstract}
	In comparison with Derrida's REM, we investigate the influence of the so-called decoration processes arising in the limiting extremal processes of numerous log-correlated Gaussian fields. In particular, we focus on the branching Brownian motion and two specific quantities from statistical physics in the vicinity of the critical temperature. The first one is the two-temperature overlap, whose behavior at criticality is smoothened by the decoration process---unlike the one-temperature overlap which is identical---and the second one is the temperature susceptibility, as introduced by Sales and Bouchaud, which is strictly larger in the presence of decorations and diverges, close to the critical temperature, at the same speed as for the REM but with a different multiplicative constant. We also study some general decorated cases in order to highlight the fact that the BBM has a critical behavior in some sense to be made precise.
\end{abstract}


{
	\hypersetup{linkcolor=black}
	\tableofcontents
}

\section{Introduction and results}

\medskip


\subsection{Motivations}

In order to shed some light on the mysteries of the Parisi theory for mean field spin glasses, Derrida introduced in the
80's the {\it random energy models} (REMs) \cite{Derrida1981}, where the Gaussian energy levels are assumed to be independent, and its generalizations, the {\it generalized random energy models} (GREMs) \cite{Derrida1985}, whose correlations are given by a tree structure of finite depth. One question of central interest in spin glass theory is to understand the structure of pure states, which translates into the analysis of the extremal process in the language of extreme value theory of stochastic processes. 

\medskip

These two tractable models have been extensively studied and allowed, in particular, to investigate the phenomenon of {\it replica symmetry breaking}. Indeed  the REM seems to be the foremost representative of a universality class: in spin glass terminology one may call this the {\it 1-step replica symmetry breaking} (1-RSB) class, or REM-class. More precisely, a spin glass model displays a 1-RSB if there exists some critical $\beta_c>0$ such that, asymptotically, the overlap between two points chosen independently according to the Gibbs measure at inverse temperature $\beta>0$ is concentrated at 0 for $\beta \leq \beta_c$, but is supported by 0 and 1, when $\beta > \beta_c$. This phenomenon is a consequence of the fact that the REM-class undergo what physicists refer to as the {\it REM-freezing transition}: there is a phase transition for the free energy at some $\beta_c>0$ meaning that, for $\beta \le \beta_c$, there is an exponentially large number of configurations, with energy level strictly less than the extremes, contributing to free energy and the Gibbs measure is roughly uniformly distributed among such configurations while, for $\beta > \beta_c$, the relevant configurations are the ones with the largest energies and the free energy becomes dominated by a relatively small set of configurations.
Another striking fact characterizing the REM-class is that, for $\beta>\beta_c$, the ordered weights of the pure states under the Gibbs measure at inverse temperature $\beta>0$ follow asymptotically a \textit{Poisson--Dirichlet distribution} of parameter $\beta_c/\beta$.
We refer to Bolthausen \cite{bolthausensznitman2002,Bolthausen2007} and Kistler \cite{Kistler2015} for surveys on the REMs, GREMs and connections to spin glass theory. Finally, let us mention that, despite the simplicity of the freezing transition, rather sophisticated models are known, or conjectured, to belong to the REM-class, such is the case for the extremes of the Riemann zeta-function along the critical line, see Arguin  \cite{arguin_2016} for a survey.

\medskip

Natural hierarchical models with an infinite number of levels are the branching Brownian motion (BBM) and the
branching random walk (BRW), see e.g.\@ the seminal paper by Derrida and Spohn \cite{derridaspohn88}, who introduced directed polymers
on trees (a BRW with i.i.d.\@ displacements) as an infinite hierarchical extension of the GREMs for spin glasses. Physicists
suggested that Gaussian BRW and BBM should belong to a universality class called {\it log-correlated Gaussian fields}, or {\it $\log$-REMs}, which is in some sense a ``subclass'' of the REM-class. These models are not necessarily hierarchical but admit correlations that decay approximately like the logarithm of the inverse of the distance between index points. We refer to the works by Carpentier and Le Doussal \cite{carpentier-ledoussal2001}, Fyodorov and Bouchaud \cite{FyodorovBouchaud2008a,FyodorovBouchaud2008b} and Fyodorov \& al \cite{FyodorovLeDoussalRosso2009} for connection between $\log$-REMs and spin glass theory. Furthermore, these processes play an essential role in Liouville quantum gravity as well as models of three-dimensional turbulence or finance, see the review on Gaussian Multiplicative Chaos by Rhodes and Vargas \cite{rhodesvargas14} for discussions.

\medskip

Another line of research heavily relies on relations between $\log$-REMs and
traveling wave equations of Fisher-Kolmogorov-Petrovsky-Piscounov (FKPP) type.
Such a relation is exact for a particular instance of $\log$-REM: BBM mentioned above. Therefore BBM was studied over the last 50 years as a subject of interest in its own right, with contributions by McKean \cite{mckean75}, Bramson \cite{bramson78,bramson83}, Lalley and Sellke \cite{lalleysellke87}, Chauvin and Rouault \cite{chauvinrouault88,chauvinrouault90}. 
From the probabilistic point of view, the full picture was recently obtained by  A\"id\'ekon \& al \cite{abbs2013} and Arguin \& al \cite{abk2013} separately, while Cortines \& al  \cite{cortineshartunglouidor2019,cortineshartunglouidor2021} obtained a third description of the decorations' distribution.  Indeed it is now known that, in the thermodynamic limit, the extremal process tends to a randomly shifted decorated Poisson point process (SDPPP), see \cite{SubagZeitouni2015} for a precise definition. 
Compared with the Poisson point process which describes the extremes of the (uncorrelated) REM, the decorations appearing here describe the internal structure of blocks of extreme values which share a near {\it ancestor}, and thus are highly correlated.
For instance, this confirms the observation made by Bovier and Kurkova \cite{bovierkurkova2004-2} that BBM is a particularly
interesting example, lying right at the borderline where correlations begin to influence the behavior of the extremes of the process. Finally one can say that BBM is the prototype of hierarchical $\log$-REMs and therefore will naturally be the model of interest in this paper.

\medskip

Let us also mention that another important and famous example of log-correlated Gaussian fields is the two-dimensional discrete Gaussian Free Field (GFF),which possesses a complicated (non-hierachical) structure of extreme values, but it turns out to be possible to compare it with that of the branching random walk. By comparison to analoguous results for branching
random walks, many deep results have been recently established by Bolthausen, Deuschel and Giacomin \cite{bdg2001}, Daviaud \cite{Daviaud2006}, Arguin and Zindy \cite{arguinzindy2014,arguinzindy2015}, Bolthausen, Deuschel and Zeitouni \cite{bdz2011}, Bramson and Zeitouni \cite{bramsonzeitouni2012}, Bramson, Ding and Zeitouni \cite{bdz2016-2}, Biskup and Louidor \cite{biskuplouidor2016,biskuplouidor2018}. We refer to the excellent notes by Biskup \cite{biskup2020} for more details.

\medskip

While from a probabilistic point of view, the difference between BBM and REM is perfectly known, it is not the same when quantities from statistical physics are considered. Indeed it is not clear when the decorations of the log-correlated models are felt at the level of the Gibbs measure. For example, motivated by the question of temperature chaos for spin glasses (see the introduction of \cite{PainZindy21} for more comments on this), Bonnefont  \cite{bonnefont22} recently proved for the BBM that the distribution of the overlap between two points sampled independently according to Gibbs measures at different temperatures is different than
the REM's one, while it is the same for both models if the two points are sampled at the same temperature.
This raises questions about the influence of these decorations. Motivated by the recent work of Derrida and Mottishaw \cite{DerridaMottishaw_2021}, we seek to compare both models by studying carefuly the overlap distribution in the neighborhood of the critical temperature. Another quantity of interest is the notion of temperature susceptibility introduced and studied for the REM (with exponentially distributed energies) by Sales and Bouchaud \cite{salesbouchaud01}.

\subsection{Definitions and some results}

\paragraph{A probabilistic point of view.}

On one side, the (binary) branching Brownian motion (BBM) is a branching Markov process defined on some general probability space $(\Omega,\mathcal F, \P)$ as follows. Initially,
there is one single particle at the origin which moves according to a standard Brownian motion during an exponentially distributed time of parameter $1/2$%
\footnote{This choice is made such that the critical inverse temperature introduced later in this article and denoted by $\beta_c$ equals 1.} and
then splits into two new particles. These new particles start the same process from their place of birth, behaving independently of
the others and the system goes on indefinitely. Let $L_t$ denote the set of alive particles at time $t \ge 0$ and $h_t(x)$ the position of the particle $x$ at time $t$.
The position of the highest particle has been studied by Bramson \cite{bramson78,bramson83} and Lalley and Sellke \cite{lalleysellke87}. A new step has recently been taken with the proof of the convergence of the extremal process  in the space of Radon measures on $\R$ endowed with the vague topology, to a randomly
shifted decorated Poisson point process. More precisely, A\"id\'ekon et al.\@ \cite{abbs2013} and Arguin et al.\@ \cite{abk2013} proved simultaneously that
\begin{equation}
\label{eq:extremalprocessBBM}
\sum_{x \in L_t} \delta_{h_t(x)- t + \frac{3}{2} \log t -\log (c_{\mathrm{d}} D_{\infty})} \xrightarrow[t\to\infty]{{\rm (d)}} \sum_{i,j} \delta_{\xi_i + d_{ik}}, 
\end{equation}
 for some constant $c_{\mathrm{d}}>0$ and where $D_{\infty}\de\lim_{t \to +\infty} \sum_{x \in L_t} (t-h_t(x)) \, \e^{h_t(x)-t} >0$ (almost-surely) is the limiting {\it derivative martingale}, $\sum_{i\geq 1} \delta_{\xi_i}$ is a Poisson point process on $\R$ with intensity measure $\e^{-x} \diff x$ independent of $(\sum_{k\geq 0} \delta_{d_{ik}})_{i \ge 1}$, which are are i.i.d.\@ copies of a point process $\cD$ on $(-\infty,0]$ which has a.s.\@ an atom at~0. 
The point process $\cD$ is called \textit{decoration process}\footnote{We will also consider the case of {\it general decoration processes}, see Section \ref{sec:change_of_measure} and use the same notation $(\sum_{k\geq 0} \delta_{d_{ik}})_{i \ge 1}$ for general decoration processes. For clarity, one shall make precise (between brackets), in every statement/result, when one treats the BBM case or the general decorated case.}.

\medskip

On the other side, the REM of interest is defined as follows in order to be comparable to the BBM introduced above: it consists of $n_t:=\lfloor \e^{t/2} \rfloor$ i.i.d. centered Gaussian random variables of variance $t$, denoted by $(g_t(k) \, ; \, 1 \le k \le n_t)$. It is well known that the extremal process for the REM satisfies the following convergence in the space of Radon measures on $\R$ endowed with the vague topology:
\begin{equation}
\label{eq:extremalprocessREM}
\sum_{1 \le k \le n_t} \delta_{g_t(k)-t + \frac{1}{2} \log t - \log c_0}  \xrightarrow[t\to\infty]{{\rm (d)}} \sum_{i} \delta_{\xi_i}, 
\end{equation}
for some numerical positive constant $c_0>0$ and where again  the $(\xi_i)_{i\geq 1}$ are the atoms of a Poisson point process on $\R$ with intensity measure $\e^{- x} \diff x$. We refer to Kistler \cite{Kistler2015} for a recent survey on the REM. 

\medskip

Looking first at the convergences in Equations (\ref{eq:extremalprocessBBM}) and (\ref{eq:extremalprocessREM}) allows to compare the BBM and the REM from a probabilistic point of view: both model's maxima share the same first order $t$ but the correlations for the BBM start to affect the second order, namely $- \frac{3}{2} \log t$ for the BBM is smaller than $ - \frac{1}{2} \log t$ for the REM. And finally Equations (\ref{eq:extremalprocessBBM}) and (\ref{eq:extremalprocessREM}) also complete the picture by telling that the limiting extremal process for the REM is a standard Poisson point process while the BBM's one is a randomly shifted decorated Poisson point process.  The additional ingredient for the BBM is mainly the decoration process, which describes the internal structure of blocks of extremal particles sharing a near ancestor and thus highly correlated.

\medskip

\paragraph{A statistical physics approach.}

In statistical physics, it is common to consider first the {\it partition function} $Z_{t,{\mathrm{d}}}(\beta)$ of the model, here the BBM ($\beta$ stands for the inverse-temperature):
\begin{equation*}
Z_{t,{\mathrm{d}}}(\beta)\de\sum_{x \in L_t} \e^{\beta h_t(x)}, \qquad \forall  \, \beta >0,
\end{equation*}
and the {\it free energy}
\begin{equation*}
f_{t,{\mathrm{d}}}(\beta)\de \frac{1}{t}  \, \log Z_{t,{\mathrm{d}}}(\beta),  \qquad \forall  \, \beta >0. 
\end{equation*}
The subscript ``d'' stands for ``decorated'' and is here to highlight the difference with the REM (for which similar quantities are written without the subscript).
It is well known, see \cite{derridaspohn88}, that the BBM exhibits a phase transition at the level of the free energy and that this latter is the same as for the REM, namely
\begin{equation*}
f_{\mathrm{d}}(\beta)\de \lim_{t \to \infty} f_{t,{\mathrm{d}}}(\beta)=
 \begin{cases}
  \frac{1 + \beta^2}{2}, & {\rm if}
\ \beta < \beta_c\de1, \\
  \beta, &  {\rm if} \ \beta \ge \beta_c,
 \end{cases}
 \qquad 
 \text{ a.s. and in $L^1$.}
\end{equation*}
In particular, the model undergoes {\it freezing} above $\beta_c$ in the sense that the quantity $f_{\mathrm{d}}(\beta)/\beta$ is constant. 
More importantly, one considers the {\it Gibbs measure}; this is the random probability measure ${\cal G}_{\beta,t,{\mathrm{d}}}$ on $L_t$ defined, at inverse temperature $\beta>0$, by
\[
{\cal G}_{\beta,t,{\mathrm{d}}}(x) \de \frac{\e^{\beta h_t(x)}}{Z_{t,{\mathrm{d}}}(\beta)}, \qquad \forall \, x \in L_t.
\]
By design, the Gibbs measure concentrates at low temperature, i.e. when $\beta>\beta_c$,  on the high points of the Gaussian field. With spin glasses in mind, one also considers the normalized covariance or {\it overlap} between two particles $x,y \in L_t$ by 
\begin{equation*}
\label{eqn:overlapBBM}
q_{t,{\mathrm{d}}}(x,y) \de \frac{1}{t} \, \E\left[h_t(x) h_t(y)\right] = \frac{1}{t} \sup\left\{s \ge 0 \, : \, x,y \text{ share a common ancestor in } L_s \right\} \, .
\end{equation*}
A fundamental object, that records the correlations of high points, is the {\it distribution function of the overlap} sampled from the Gibbs measure, i.e. $\cG_{\beta,t,{\mathrm{d}}} \otimes \cG_{\beta',t,{\mathrm{d}}} (q_{t,{\mathrm{d}}}(u,v) \geq a)$ for any $a \in (0,1)$, where $u$ (respectively $v$) is sampled according to $ \cG_{\beta,t,{\mathrm{d}}}$ (respectively $ \cG_{\beta',t,{\mathrm{d}}}$). Bonnefont \cite{bonnefont22} recently proved that, if $\beta \leq \beta_c$ or $\beta' \leq \beta_c$, then $ \cG_{\beta,t,{\mathrm{d}}} \otimes \cG_{\beta',t,{\mathrm{d}}} (q_{t,{\mathrm{d}}}(u,v) \geq a)$ tends to $0$ in $L^1$ for all $a \in (0,1)$, while if $\beta > \beta_c$ and $\beta'  >\beta_c$, then, for all $a \in (0,1)$, 
\begin{equation} \label{def:Q_d}
 \cG_{\beta,t,{\mathrm{d}}} \otimes \cG_{\beta',t,{\mathrm{d}}} (q_{t,{\mathrm{d}}}(u,v) \geq a)
\xrightarrow[t\to\infty]{{\rm (d)}} \frac{\sum_{i} \left( \e^{\beta \xi_i} \sum_{k} \e^{\beta d_{ik}} \right)
\left( \e^{\beta' \xi_i} {\sum_{k} \e^{\beta' d_{ik}}}\right)}{\left(\sum_{i,k} \e^{\beta \xi_i} \e^{\beta d_{ik}}
\right)\left(\sum_{i,k} \e^{\beta' \xi_i} \e^{\beta' d_{ik}} \right)} 
\eqqcolon Q_{\mathrm{d}}(\beta,\beta'),
\end{equation}
where the $(\xi_i)_i$ and the $(d_{ik})_{i,k}$ were introduced for the description of the limiting extremal process of the BBM, see Equation (\ref{eq:extremalprocessBBM}). In other words, this result proves the convergence of the pushforward of the measure $ \cG_{\beta,t,{\mathrm{d}}} \otimes \cG_{\beta',t,{\mathrm{d}}}$ on $L_t^2$ by the function $q_{t,{\mathrm{d}}}$, which is a random measure on $[0,1]$. The limit is either $\delta_0$ if $\beta \wedge \beta' \leq \beta_c$, or $(1-Q_{\mathrm{d}}(\beta,\beta')) \delta_0 + Q_{\mathrm{d}}(\beta,\beta') \delta_1$ otherwise.
Note that, when $\beta > \beta_c$ and $\beta'  >\beta_c$,  the random variables $(\e^{\beta \xi_i} \sum_{k} \e^{\beta d_{ik}} / \sum_j \e^{\beta \xi_j} \sum_{k} \e^{\beta d_{jk}})_{i \geq 1}$ are the asymptotic weights of the clusters under $\cG_{\beta,t,{\mathrm{d}}},$ such that $Q_{\mathrm{d}}(\beta,\beta')$ is simply the probability of choosing two points in the same cluster (when they are chosen proportionally to their Gibbs weights with inverse temperature $\beta$ and $\beta'$ respectively). Observe that, in particular, this result implies absence of temperature chaos for the BBM.

For the REM, the overlap is simply given by
\begin{equation*}
\label{eqn:overlapREM}
q_{t}(k,\ell)\de  \frac{1}{t} \, \E\left[g_t(k) g_t(\ell)\right] = \1_{\{k=\ell\}}, \qquad \forall \, 1 \le k,\ell \le n_t,
\end{equation*}
and the Gibbs measure at inverse temperature $\beta>0$ is defined by 
\[
{\cal G}_{\beta,t}(k) \de \frac{\e^{\beta g_t(k)}}{Z_{t}(\beta)}, \qquad \forall \, 1 \le k \le n_t,
\]
where $Z_{t}(\beta)\de\sum_{ 1 \le k \le n_t} \e^{\beta g_t(k)}$. Kurkova \cite{kurkova2003} proved that, if $\beta \leq \beta_c$ or $\beta' \leq \beta_c$, then $ \cG_{\beta,t} \otimes \cG_{\beta',t} (q_{t}(u,v) \geq a)$ tends to $0$ in $L^1$ for all $a \in (0,1)$, while if $\beta > \beta_c$ and $\beta'  >\beta_c$, then, for all $a \in (0,1)$, 
\[
 \cG_{\beta,t} \otimes \cG_{\beta',t} (q_{t}(u,v) \geq a)
\xrightarrow[t\to\infty]{{\rm (d)}} \frac{\sum_{i} \e^{\beta \xi_i}  \e^{\beta' \xi_i}}{\left(\sum_{i} \e^{\beta \xi_i}
\right)\left(\sum_{i} \e^{\beta' \xi_i}  \right)} 
\eqqcolon Q(\beta,\beta').
\]
It is well known that in the case $\beta'=\beta$, the random variables $Q_{\mathrm{d}}(\beta,\beta)$ and $Q(\beta,\beta)$ have the same distribution (see \cite[Equation (1.2)]{PainZindy21} for more details). Therefore a natural question one may ask is whether $Q_{\mathrm{d}}(\beta,\beta')$ and $Q(\beta,\beta')$ still have the same distribution when $\beta \neq \beta' > \beta_c$. Following the work by Pain and Zindy \cite{PainZindy21} for the two-dimensional discrete Gaussian free field, Bonnefont \cite{bonnefont22} proved, for the BBM, that
\begin{equation} \label{eq:two-temp-overlap-inequality}
 \E \left[Q_{\mathrm{d}}(\beta,\beta')\right]
<\E \left[Q(\beta,\beta')\right],
\end{equation}
meaning that the answer is negative. In this paper, our aim is to study and compare the functions $\beta \mapsto \E \left[Q_{\mathrm{d}}(\beta,\beta')\right]$ and $\beta \mapsto \E \left[Q(\beta,\beta')\right]$, when $\beta'> \beta_c$ is fixed. We will specially focus on their behavior when $\beta$ tends to $\beta_c^+$.

Before introducing the second quantity of interest let us define, for any $\beta >\beta_c, $ the partition functions associated with both limiting extremal processes  
\begin{equation} \label{def:Z_d}
Z(\beta) \de \sum_{i\geq1} \e^{\beta \xi_i}, \qquad 
Z_{\mathrm{d}}(\beta) \de \sum_{i\geq 1} \sum_{k\geq 0} \e^{\beta (\xi_i+ d_{ik})}.
\end{equation}
Then, let us consider the correlation coefficient between the free energies at two temperatures, introduced by Fisher and Huse \cite{FisherHuse1991} and given for both models by
\[
\mathbf{C}(\beta,\beta')\de \frac{\Cov(\log Z(\beta),\log Z(\beta'))}{\sigma(\log Z(\beta))\, \sigma(\log Z(\beta'))}, \qquad 
\mathbf{C}_{\mathrm{d}}(\beta,\beta')\de \frac{\Cov(\log Z_{\mathrm{d}}(\beta),\log Z_{\mathrm{d}}(\beta'))}{\sigma(\log Z_{\mathrm{d}}(\beta))\, \sigma(\log Z_{\mathrm{d}}(\beta'))},
\]
where $\sigma(X) \coloneqq \sqrt{\Var(X)}$.
The {\it susceptibility to temperature} we are interested in is defined by Sales and Bouchaud \cite{salesbouchaud01} as the coefficient $\kappa_{\mathrm{d}}(\beta)$ for the BBM and $\kappa(\beta)$ for the REM such that
\begin{align*}
\mathbf{C}(\beta,\beta+h) & = 1-\kappa(\beta)\, h^2+ o(h^2), \qquad  h\rightarrow 0,  \\
\mathbf{C}_{\mathrm{d}}(\beta,\beta+h) & = 1-\kappa_{\mathrm{d}}(\beta)\, h^2 + o(h^2), \qquad  h\rightarrow 0.
\end{align*}
See Corollary \ref{cor:formula_susc} for a justification that this coefficient exists.
Let us emphasize that Sales and Bouchaud \cite{salesbouchaud01} work with a slightly different convention, which results in their temperature susceptibility to equal $\kappa(\beta)\beta^2$ with our notation.

\medskip

\paragraph{Notation.} 
Throughout the paper, $C$ and $c$ denote a positive constant that does not depend on the parameters and can change from line to line. 
For $f \colon (1,\infty) \to \R$ and $g \colon (1,\infty) \to \R_+^*$, we say, as $\beta \downarrow 1$, 
that $f(\beta) = o(g(\beta))$ 
if $\lim_{\beta \downarrow 1} f(\beta)/g(\beta) = 0$, 
that $f(\beta) = O(g(\beta))$ 
if $\limsup_{\beta \downarrow 1} \abs{f(\beta)}/g(\beta) < \infty$, 
that $f(\beta) \sim g(\beta)$ 
if $\lim_{\beta \downarrow 1} f(\beta)/g(\beta) =1$
and that $f(\beta) \asymp g(\beta)$ 
if $0 < \liminf_{\beta \downarrow 1} f(\beta)/g(\beta) \leq \limsup_{\beta \downarrow 1} f(\beta)/g(\beta) < \infty$.

Let us also recall that, for readability, we use the same notation $(\sum_{k\geq 0} \delta_{d_{ik}})_{i \ge 1}$ for the decoration processes of the BBM and of the general decorated case; but, for clarity, it will be precised (between brackets), for every statement/result, when one treats the BBM case or the general decorated case.

\bigskip

\subsection{Main results}
\label{subsection:result_overlap}

\medskip

\paragraph{Near-critical two-temperature overlap.}
Our first results concern the behavior of the mean overlap $\E \left[Q(\beta,\beta')\right]$ or $\E \left[Q_{\mathrm{d}}(\beta,\beta')\right]$ at two supercritical inverse temperatures $\beta$ and $\beta'$.
Recall from Equation \eqref{eq:two-temp-overlap-inequality} that it has already been proved that $\E \left[Q_{\mathrm{d}}(\beta,\beta')\right] < \E \left[Q(\beta,\beta')\right]$.
Our goal here is to quantify this difference in the regime where $\beta'$ is fixed but $\beta$ approaches the critical inverse temperature $\beta_c=1$.

We start with the REM case.
It is not hard to see that the function $\beta \mapsto \E \left[Q(\beta,\beta')\right]$ has an infinite right-derivative at 1, see Remark \ref{rem:Q_right_diff}. We establish a sharper estimate in the following result.

\medskip

\begin{thm}[REM case]
	\label{thm:THM1_REM}
	For any $\beta' > \beta_c=1$, as $\beta \downarrow 1$,
	\[	
	\E \left[Q(\beta,\beta')\right]= (\beta- 1)\log\frac{1}{\beta- 1} + O(\beta-1).
	\]	
\end{thm}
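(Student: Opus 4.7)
The plan is to derive an integral representation for $\E[Q(\beta,\beta')]$ through a Slivnyak--Mecke computation and then analyze its asymptotics as $\beta\downarrow1$. First, write $Q(\beta,\beta')=N/(Z(\beta)Z(\beta'))$ with $N=\sum_i e^{(\beta+\beta')\xi_i}$, and combine the identity $\tfrac{1}{xy}=\int_0^\infty\!\int_0^\infty e^{-sx-uy}\,ds\,du$ with the Slivnyak--Mecke formula for the PPP $(\xi_i)$ of intensity $e^{-x}\,dx$, which yields
$$\E\bigl[N e^{-sZ(\beta)-uZ(\beta')}\bigr]=e^{-\Psi(s,u)}\int_{-\infty}^\infty e^{(\beta+\beta'-1)x}e^{-se^{\beta x}-ue^{\beta'x}}\,dx,$$
where $\Psi(s,u)\coloneqq-\log\E[e^{-sZ(\beta)-uZ(\beta')}]=\int_{\R}(1-e^{-se^{\beta y}-ue^{\beta'y}})e^{-y}\,dy$. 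The substitution $\sigma=se^{\beta x}$, $\tau=ue^{\beta'x}$, combined with the scaling identity $\Psi(\sigma e^{-\beta x},\tau e^{-\beta'x})=e^{-x}\Psi(\sigma,\tau)$ (obtained by shifting $y\to y+x$ inside $\Psi$) and then $v=e^{-x}$ in the $x$-integral, collapses everything to the clean formula
$$\E[Q(\beta,\beta')]=\int_0^\infty\!\!\int_0^\infty\frac{e^{-\sigma-\tau}}{\Psi(\sigma,\tau)}\,d\sigma\,d\tau.$$

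Next I would exploit the sandwich
$$\tfrac12\bigl(A\sigma^{1/\beta}+B\tau^{1/\beta'}\bigr)\le \Psi(\sigma,\tau)\le A\sigma^{1/\beta}+B\tau^{1/\beta'},$$
where $A\coloneqq\Gamma(1-1/\beta)$ and $B\coloneqq\Gamma(1-1/\beta')$: the lower bound comes from $\Psi(\sigma,\tau)\ge\max(\Psi(\sigma,0),\Psi(0,\tau))$ and the upper bound from $1-e^{-a-b}\le(1-e^{-a})+(1-e^{-b})$. The crucial feature is that, as $\beta\downarrow 1$, one has $A\sim(\beta-1)^{-1}$ while $B$ stays a fixed positive constant. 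Setting $\epsilon\coloneqq\beta-1$ and fixing $\tau$ of order one, I would split the $\sigma$-integral at the balance point $\sigma_\star(\tau)\coloneqq(B\tau^{1/\beta'}/A)^\beta$ (of order $\epsilon^\beta$): below $\sigma_\star$ the denominator is of order $B\tau^{1/\beta'}$ and yields an $O(\epsilon)$ contribution, while above $\sigma_\star$ the denominator behaves like $A\sigma^{1/\beta}$. Substituting $t=A\sigma^{1/\beta}$ on that range and using the classical expansion $\int_x^\infty t^{-1}e^{-t}\,dt=-\log x-\gamma+O(x)$ as $x\to 0^+$ produces the logarithmic main term:
$$\int_0^\infty\frac{e^{-\sigma}\,d\sigma}{\Psi(\sigma,\tau)}=\epsilon\log\tfrac{1}{\epsilon}-\tfrac{\epsilon}{\beta'}\log\tau+O(\epsilon),$$
uniformly for $\tau$ on compact subsets of $(0,\infty)$. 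Integrating against $e^{-\tau}\,d\tau$---and using $\int_0^\infty e^{-\tau}\log\tau\,d\tau=-\gamma$ to absorb the logarithmic term in $\tau$ into the $O(\epsilon)$---then yields $\E[Q(\beta,\beta')]=\epsilon\log(1/\epsilon)+O(\epsilon)$, as claimed.

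The main obstacle will be ensuring uniformity in $\tau$ at both endpoints. On the exponentially small set $\{\tau\le e^{-1/(2\epsilon)}\}$ the incomplete-gamma expansion above breaks down, but the trivial bound $\int_0^\infty e^{-\sigma}/\Psi(\sigma,\tau)\,d\sigma\le \int_0^\infty e^{-\sigma}/\Psi(\sigma,0)\,d\sigma=1$ suffices there, since its integral against $e^{-\tau}\,d\tau$ over that set is exponentially small and negligible. For $\tau\to\infty$, the bound $\int_0^\infty e^{-\sigma}/\Psi(\sigma,\tau)\,d\sigma\le 1/\Psi(0,\tau)=O(\tau^{-1/\beta'})$ combined with $e^{-\tau}$ keeps that contribution integrable. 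Matching both endpoint regimes with the bulk estimate then produces matching upper and lower bounds of the form $\epsilon\log(1/\epsilon)+O(\epsilon)$, completing the proof.
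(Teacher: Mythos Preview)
Your approach is correct and genuinely different from the paper's route. The derivation of the closed formula
\[
\E[Q(\beta,\beta')]=\int_0^\infty\!\!\int_0^\infty\frac{e^{-\sigma-\tau}}{\Psi(\sigma,\tau)}\,d\sigma\,d\tau
\]
via the Laplace representation of $1/(Z(\beta)Z(\beta'))$, Slivnyak--Mecke, and the scaling identity $\Psi(\sigma e^{-\beta x},\tau e^{-\beta'x})=e^{-x}\Psi(\sigma,\tau)$ is clean and checks out. The paper takes a different path: it also starts from Palm's formula but keeps the result as an expectation, writing (in the REM case)
\[
\E[Q(\beta,\beta')]=\E\Bigl[Z(\beta')^{-1/\beta'}\,I(R)\Bigr],\qquad R=Z(\beta)^{1/\beta}Z(\beta')^{-1/\beta'},
\]
where $I(r)=\int_0^\infty \frac{dx}{(1+(rx)^\beta)(1+x^{\beta'})}$. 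It then proves the deterministic estimate $I(r)=\tfrac{\log r}{r}+O(\tfrac1r)$ and plugs in the moment expansions $\E[Z(\beta)^{-1/\beta}]\sim\beta-1$ and $\E[Z(\beta)^{-1/\beta}\log Z(\beta)^{1/\beta}]=(\beta-1)\log\tfrac{1}{\beta-1}+O(\beta-1)$. The paper's machinery is built for the general decorated case (where no closed deterministic formula exists), so the REM statement falls out as the trivial instance $S_\beta\equiv1$. Your explicit double integral is REM-specific---the scaling identity for $\Psi$ fails once decorations are present---but for the REM itself it is arguably more direct, since it bypasses all moment computations on $Z(\beta)$.

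Two places in your write-up are slightly loose, though both are easily repaired. First, after the substitution $t=A\sigma^{1/\beta}$ the integrand becomes $\tfrac{\beta}{A^\beta}\,t^{\beta-2}e^{-(t/A)^\beta}$, not $t^{-1}e^{-t}$, so the exponential-integral expansion you quote does not literally apply; the correct observation is that $\tfrac{\beta}{A^\beta}\int_0^c t^{\beta-2}e^{-(t/A)^\beta}\,dt = (c/A)^{\beta-1}(1+O(\epsilon))$ via the exact antiderivative plus $e^{-(t/A)^\beta}=1+O((t/A)^\beta)$, and then $1-(c/A)^{\beta-1}$ Taylor-expands to give the $\epsilon\log(1/\epsilon)$ term. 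Second, ``uniformly on compact subsets'' followed by integrating over $(0,\infty)$ leaves the ranges $\tau\in(e^{-1/(2\epsilon)},\delta)$ and $\tau\in(M,\infty)$ implicitly unhandled; a cleaner fix than the endpoint patching you sketch is to show directly that $\int_0^\infty e^{-\tau}\,|F(\tau)-G(\tau)|\,d\tau=O(\epsilon)$ where $G(\tau)=\int_{\sigma_\star(\tau)}^\infty e^{-\sigma}/(A\sigma^{1/\beta})\,d\sigma$, and then compute $\int_0^\infty e^{-\tau}G(\tau)\,d\tau$ exactly by Fubini, obtaining $1-(B/A)^{\beta-1}\,\tfrac{\beta}{\beta'}\,\Gamma((\beta-1)/\beta')+O(\epsilon^2)=\epsilon\log\tfrac{1}{\epsilon}+O(\epsilon)$.
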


\medskip

We compare this behavior to the one arising for the BBM. 
Surprisingly, the decorations have the effect of smoothing the function $\beta \mapsto \E \left[Q_{\mathrm{d}}(\beta,\beta')\right]$ at criticality: more precisely, $\E \left[Q_{\mathrm{d}}(\beta,\beta')\right]$ is of order $(\beta-1)$ as $\beta \downarrow 1$, as shown in the following result. 
It would be interesting to obtain an asymptotic equivalent instead, which would prove that $\beta \mapsto \E \left[Q_{\mathrm{d}}(\beta,\beta')\right]$ is right-differentiable at $\beta=1$, but this seems out of reach with the techniques used here.

\medskip

\begin{thm}[BBM case]
\label{thm:THM1_BBM}
	For any $\beta' > \beta_c= 1$, 
	\[
	0< \liminf_{\beta \downarrow 1} \, \frac{\E \left[Q_{\mathrm{d}}(\beta,\beta')\right]}{\beta-1}  \leq  \limsup_{\beta \downarrow 1} \, \frac{\E \left[Q_{\mathrm{d}}(\beta,\beta')\right]}{\beta-1} < +\infty.
	\]
\end{thm}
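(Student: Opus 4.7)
My plan is to apply Mecke's formula to the limiting marked Poisson point process $\sum_i \delta_{(\xi_i,d_{i\cdot})}$ and establish matching upper and lower bounds of order $\beta-1$. Letting $\cD(\beta):=\sum_k \e^{\beta d_k}$ denote one independent copy of the decoration-based Laplace exponent, Mecke's formula yields
\[
\E[Q_d(\beta,\beta')] = \int_\R \e^{-\xi}\,\mathrm{d}\xi\; \E\!\left[\frac{\e^{\beta\xi}\cD(\beta)\cdot\e^{\beta'\xi}\cD(\beta')}{\bigl(\e^{\beta\xi}\cD(\beta)+Z_d(\beta)\bigr)\bigl(\e^{\beta'\xi}\cD(\beta')+Z_d(\beta')\bigr)}\right],
\]
where on the right $(Z_d(\beta),Z_d(\beta'))$ is the pair of partition functions of the remaining PPP, independent of $(\cD(\beta),\cD(\beta'))$.

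For the lower bound, I would restrict the $\xi$-integration to a unit interval around the random threshold $\xi_\ast:=\beta^{-1}\log(Z_d(\beta)/\cD(\beta))$, at which the factor $\e^{\beta\xi}\cD(\beta)$ matches $Z_d(\beta)$. Because $Z_d(\beta)\to\infty$ in probability as $\beta\downarrow 1$ while $Z_d(\beta')$ stays tight, this threshold $\xi_\ast$ is typically so large that $\e^{\beta'\xi}\cD(\beta')\gg Z_d(\beta')$ throughout the interval, pinning the integrand uniformly away from $0$. The interval contributes $\asymp\e^{-\xi_\ast}=(\cD(\beta)/Z_d(\beta))^{1/\beta}$; taking expectations and using independence gives $\E[Q_d(\beta,\beta')]\gtrsim C_\beta\,\E[Z_d(\beta)^{-1/\beta}]$ with $C_\beta:=\E[\cD(\beta)^{1/\beta}]<\infty$. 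From the $(1/\beta)$-stable Laplace transform $\exp(-C_\beta\Gamma(1-1/\beta)\,t^{1/\beta})$, a direct computation gives $\E[Z_d(\beta)^{-1/\beta}]=\beta\sin(\pi/\beta)/(\pi C_\beta)\sim (\beta-1)/C_\beta$, so $\E[Q_d(\beta,\beta')]\gtrsim \beta-1$.

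The upper bound is the harder direction. I would split the $\xi$-integration into three regions according to whether $\e^{\beta\xi}\cD(\beta)$ is below, comparable to, or above $Z_d(\beta)$ (with an analogous threshold for the $\beta'$-factor), and in each region bound the integrand by the product of the relevant dominant ratios. The difficulty, mirrored in the REM case of Theorem~\ref{thm:THM1_REM}, is that a naive implementation of this scheme produces an upper bound of order $(\beta-1)\log(1/(\beta-1))$ via the negative moment $\E[\log Z_d(\beta)/Z_d(\beta)^{1/\beta}]\sim (\beta-1)\log(1/(\beta-1))$ coming from the stable Laplace transform, rather than the desired $\beta-1$.

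The whole point of the theorem is that the decoration removes this logarithmic factor: in contrast to the REM, the bivariate process of weights $(\e^{\beta\xi_i}\cD_i(\beta),\e^{\beta'\xi_i}\cD_i(\beta'))_i$ is genuinely two-dimensional (rather than living on the curve $v=u^{\beta'/\beta}$), and the transverse randomness carried by a shape-type variable such as $T:=\cD(\beta')/\cD(\beta)^{\beta'/\beta}$ (under its size-biased law) provides the cancellation required to reduce the naive bound to $O(\beta-1)$. The main technical obstacle is therefore to make this cancellation quantitative: after the Mecke decomposition, one must control the joint law of $(Z_d(\beta),Z_d(\beta'))$ — which inside the limiting SDPPP are strongly correlated through the same underlying PPP — against the size-biased law of $T$, and show that the would-be logarithmic singularity is matched by a vanishing factor coming from the decoration's joint moment structure. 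Once this cancellation is extracted, the remaining computation reduces, as in the lower bound, to the standard negative-moment estimates for the one-sided stable law.
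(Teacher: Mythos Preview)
Your lower bound is essentially correct and matches the paper's argument. After the Palm/Mecke representation you use Lemma~\ref{lem:integral}.\ref{it:lower_bound} (your ``restrict to a unit interval around $\xi_\ast$'' is precisely $I(r)\geq \tfrac14(1\wedge \tfrac1r)$), and the outcome $\E[\cD(\beta)^{1/\beta}]\cdot\E[Z_{\mathrm d}(\beta)^{-1/\beta}]=\E[Z(\beta)^{-1/\beta}]\sim\beta-1$ is exactly the content of Equation~\eqref{eq:E_1}. The paper pushes this one step further (the inequality $a\wedge b\geq \tfrac1M(b-b^{1+\delta}/(Ma)^\delta)$) to avoid the soft ``typically $\xi_\ast$ is large'' argument, but your idea is the right one.

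The upper bound, however, has a genuine gap: you have misdiagnosed the cancellation. You attribute the removal of the $\log\frac{1}{\beta-1}$ to the \emph{two-temperature} shape variable $T=\cD(\beta')/\cD(\beta)^{\beta'/\beta}$ and to the joint law of $(Z_{\mathrm d}(\beta),Z_{\mathrm d}(\beta'))$. In fact the cancellation is a \emph{one-temperature} phenomenon driven entirely by the heavy-tail structure of $S_\beta=\cD(\beta)$. After the change of measure in Lemma~\ref{lem:change_of_measure} (which you do not use), one has $Z_{\mathrm d}(\beta)\overset{\mathrm d}{=}\E[S_\beta^{1/\beta}]^\beta Z(\beta)$, and the leading term of the Palm integral becomes
\[
E_{00}=\Ec{\frac{\log Z(\beta)^{1/\beta}}{Z(\beta)^{1/\beta}}}-\Ec{Z(\beta)^{-1/\beta}}\cdot\Ec{T_\beta\log T_\beta},\qquad T_\beta:=\frac{S_\beta^{1/\beta}}{\E[S_\beta^{1/\beta}]}.
\]
The first summand is $(\beta-1)\log\frac{1}{\beta-1}+O(\beta-1)$ (this is the REM contribution you correctly identify). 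What kills the logarithm is that for the BBM decoration one has $\E[T_\beta\log T_\beta]=\log\frac{1}{\beta-1}+O(1)$, which is a consequence of the moment scaling $\E[S_\beta^\gamma]\asymp(\beta-1)^{1-2\gamma}$ established in Corollary~\ref{cor:encadrement_moment_S_beta} from the Cortines--Hartung--Louidor level-set estimates. This is encoded abstractly as the critical case $\psi'(1)=\psi(1)+1$ of Corollary~\ref{cor:overlap}.\ref{it:critical}; note that this criterion involves only $S_\beta$, not $S_{\beta'}$. The $\beta'$-dependent piece of $\log R$ contributes only a harmless $O(\beta-1)$ (term $E_{01}$), controlled by the cross-moment bound $\E[S_\beta\log S_{\beta'}]=O(\E[S_\beta])$ (Proposition~\ref{prop:crossed}); it is a side condition, not the engine of the cancellation.

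A quick sanity check against your intuition: Corollary~\ref{cor:overlap_small_deco} shows that any decoration with uniformly bounded $(1+\varepsilon)$-moment still gives the REM rate $(\beta-1)\log\frac1{\beta-1}$, even though such a decoration makes the weight process ``genuinely two-dimensional'' in your sense. So two-dimensionality alone cannot be the mechanism; it is specifically the rare-event-dominated moments of $S_\beta$ (Remark~\ref{rem:heuristic_picture_BBM}) that do the work.
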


\medskip

In order to shed some light on these different behaviors, we study in Section \ref{sec:general_decorated_case} the case of general decoration processes $\cD = \sum_{k\geq0} \delta_{d_k}$.
We do not state these results precisely here, some of them having very technical assumptions, but rather give an informal description of some of the conclusions:
\begin{itemize}
	\item If the sum $\sum_{k\geq0} \e^{\beta d_k}$ has bounded $(1+\varepsilon)$-moment as $\beta \downarrow 1$, then the behavior of $\E \left[Q_{\mathrm{d}}(\beta,\beta')\right]$ is the same as for the REM in Theorem \ref{thm:THM1_REM}, see Corollary \ref{cor:overlap_small_deco}.
	\item If the $\gamma$-th moment of the sum $\sum_{k\geq0} \e^{\beta d_k}$ explodes like $(\beta-1)^{-\psi(\gamma)}$ as $\beta \downarrow 1$, then different behaviors appear depending on the value of $\psi'(1)-\psi(1)$: the larger this value---in other words, the more moments of $\sum_{k\geq0} \e^{\beta d_k}$ are governed by rare events---the faster $\E \left[Q_{\mathrm{d}}(\beta,\beta')\right]$ decreases as $\beta \downarrow 1$.
	More precisely, when $\psi'(1)-\psi(1)<1$, $\E \left[Q_{\mathrm{d}}(\beta,\beta')\right]$ is still of order $(\beta-1) \log \frac{1}{\beta-1}$ like in the REM case, but possibly with another multiplicative constant.
	At the critical value $\psi'(1)-\psi(1)=1$, the first order constant vanishes and $\E \left[Q_{\mathrm{d}}(\beta,\beta')\right]$ becomes of order $(\beta-1)$.
	Above the critical value, $\E \left[Q_{\mathrm{d}}(\beta,\beta')\right]$ decreases faster than $(\beta-1)^{1+\eta}$ for $\eta > 0$ in some explicit interval,
	see Corollary \ref{cor:overlap} for a precise statement.
\end{itemize}
It turns out that the BBM corresponds exactly to the critical case described in the second point. This fact is a consequence of some fine properties of the decoration of the BBM, proved by building upon results by Cortines, Hartung and Louidor \cite{cortineshartunglouidor2019,cortineshartunglouidor2021}, who showed that the mean of level sets of the decoration process is driven by rare events in which the decoration is much bigger than its typical behavior.
We believe that this should be the case of other models falling in the log-correlated fields universality class, such as general branching random walks or the 2D discrete Gaussian free field.

\medskip

We conclude this discussion by two open questions. 
Firstly, we wonder if the limit in Theorem~\ref{thm:THM1_BBM} actually exists, or in other words:

\medskip

\begin{oquest}[BBM case]
	For any $\beta'>1$, is the function $\beta \in (1,\infty) \mapsto \E[Q_{\mathrm{d}}(\beta,\beta')]$ right-differentiable at $1$?
\end{oquest}

\medskip

Our results compare two 1-RSB systems, but, as suggested by an anonymous referee, it would be interesting to compare systems with more levels of replica symmetry breaking.
More precisely, a $k$-GREM (that is with a tree structure of height $k+1$), where each vertex has $\e^{t/(2k)}$ children and with energy increments $\cN(0,\sigma_\ell^2 t)$ at level $\ell$, is a $k$-RSB system if $\sigma_1 > \dots > \sigma_k$, see \cite{bovierkurkova2004-1} for details.
It can be compared to a $k$-speed BBM, where particles move with variance $\sigma_\ell^2$ on $[(\ell-1)t,\ell t]$ for a given time horizon $t$, see \cite{bovierhartung2014} for a description of its extremal process when $k=2$.
With these definitions, the one-temperature overlap distribution is the same for both models in the limit $t\to\infty$, but the two-temperature overlap distribution should be different. 
It could be compared in an abstract manner in the spirit of \cite{PainZindy21,bonnefont22}, or more quantitatively in the near-critical regime as in this paper.

\medskip

\begin{oquest}
	Compare the two-temperature overlap distributions of the $k$-GREM and of the $k$-speed BBM.
\end{oquest}

\medskip

\paragraph{Temperature susceptibility.}
We now turn to results concerning the temperature susceptibility. 
We first compute it explicitly for the REM in the supercritical phase and study its asymptotic behavior close to the critical temperature, as well as in the low temperature regime.

\medskip

\begin{thm}[REM case]
	\label{thm:THM2_REM}
	For any $\beta > \beta_c = 1$, 
	\begin{align*}
		\kappa(\beta)&=\frac{1}{2}\left(\frac{1}{\beta^2-1}+\frac{6}{\pi^2\beta^3(\beta+1)}\left(\frac{\Gamma''\left(\frac{\beta-1}{\beta}\right)}{\Gamma\left(\frac{\beta-1}{\beta}\right)}-\left(\frac{\Gamma'\left(\frac{\beta-1}{\beta}\right)}{\Gamma\left(\frac{\beta-1}{\beta}\right)}\right)^2\right)-\frac{\beta^2}{(\beta^2-1)^2}\right) \, ,
	\end{align*}
	and
	\[
	\kappa(\beta)\, \underset{\beta \downarrow 1}{\sim}\, \left(\frac{3}{2\pi^2}-\frac{1}{8}\right)\frac{1}{(\beta-1)^2}\, ,\qquad\qquad
	\kappa(\beta)\underset{\beta\rightarrow+\infty}{\sim}\left(\frac{6}{\pi^2}\zeta(3)-\frac{1}{2}\right)\frac{1}{\beta^5}\, .
	\]
\end{thm}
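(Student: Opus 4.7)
The plan is to express $\kappa(\beta)$ in closed form via the two quantities $f(\beta) \coloneqq \Var(\log Z(\beta))$ and $g_3(\beta) \coloneqq \partial_{\beta'}^2 \Cov(\log Z(\beta), \log Z(\beta'))|_{\beta'=\beta}$. Write $g(\beta,\beta') \coloneqq \Cov(\log Z(\beta),\log Z(\beta'))$; symmetry of $g$ together with $g(\beta,\beta) = f(\beta)$ forces $\partial_{\beta'} g|_{\beta'=\beta} = f'(\beta)/2$, so that Taylor-expanding $\log\mathbf{C}(\beta,\beta+h) = \log g(\beta,\beta+h) - \tfrac12\log f(\beta) - \tfrac12\log f(\beta+h)$ to order $h^2$ yields
\[
\kappa(\beta) = \frac{f''(\beta)}{4 f(\beta)} - \frac{g_3(\beta)}{2 f(\beta)} - \frac{f'(\beta)^2}{8 f(\beta)^2}.
\]
Since $Z(\beta) \sd \Gamma(1-1/\beta)^\beta S_{1/\beta}$ with $S_\alpha$ the standard positive $\alpha$-stable variable of Laplace transform $e^{-t^\alpha}$, and $\E[S_\alpha^u] = \Gamma(1-u/\alpha)/\Gamma(1-u)$ for $u<\alpha$, twice differentiating the cumulant generating function of $\log S_{1/\beta}$ at $0$ gives $f(\beta) = (\beta^2-1)\pi^2/6$, hence $f'(\beta) = \beta\pi^2/3$ and $f''(\beta) = \pi^2/3$.

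The core step is the computation of $g_3(\beta)$. Set $y_i \coloneqq e^{\beta\xi_i}$, $\alpha = 1/\beta$ and $p_i = y_i/Z(\beta)$, so that $\{y_i\}$ is a PPP on $(0,\infty)$ of intensity $\alpha y^{-\alpha-1}dy$; for $\eta = \beta'/\beta$ one has $Z(\beta') = Z(\beta)^\eta W(\eta)$ with $W(\eta) = \sum_i p_i^\eta$. Expanding $\log W$ around $\eta=1$ (where $W(1)=1$, $\partial_\eta\log W|_{\eta=1} = -S[p]$ with $S[p] \coloneqq -\sum p_i\log p_i$, and $\partial_\eta^2\log W|_{\eta=1} = \Var_p(\log p) \coloneqq \sum p_i(\log p_i)^2 - S[p]^2$) gives $\beta^2 g_3(\beta) = \Cov(\log Z(\beta), \Var_p(\log p))$. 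To evaluate this, the single- and double-Palm--Slivnyak identities applied to $\{y_i\}$ reduce to a Dirichlet integral and produce a closed Gamma-rational expression for $\E[(\sum p_i^{u_1})(\sum p_i^{u_2}) Z(\beta)^v]$ valid for $u_1, u_2 > \alpha$ and $v < \alpha$. Taking $\partial_{u_2}^2$ at $u_1=u_2=1$ yields a formula for $\E[\sum p_i(\log p_i)^2 Z^v]$, and $\partial_{u_1}\partial_{u_2}$ at the same point yields one for $\E[S[p]^2 Z^v]$; subsequent differentiation in $v$ at $0$ produces $\Cov(\log Z, \sum p_i(\log p_i)^2)$ and $\Cov(\log Z, S[p]^2)$. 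After cancellations based on $\psi(2-\alpha) = \psi(1-\alpha) + 1/(1-\alpha)$ and $\psi'(2-\alpha) = \psi'(1-\alpha) - 1/(1-\alpha)^2$, all terms involving $\zeta(3)$ and $(\psi(1-\alpha)+\gamma_E)^2$ drop out of the difference, leaving
\[
\Cov\bigl(\log Z(\beta), \Var_p(\log p)\bigr) = -\tfrac{\beta-1}{\beta}\,\psi'\!\bigl((\beta-1)/\beta\bigr),
\]
and hence $g_3(\beta) = -(\beta-1)\psi'((\beta-1)/\beta)/\beta^3$. Inserting this and $f(\beta), f'(\beta), f''(\beta)$ into the formula for $\kappa(\beta)$ and rewriting $\psi' = \Gamma''/\Gamma - (\Gamma'/\Gamma)^2$ reproduces the announced closed form.

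The two asymptotics follow by standard expansions of $\psi'$. As $\beta\downarrow 1$, $(\beta-1)/\beta \to 0^+$ and $\psi'(x) \sim 1/x^2$, so in $2\kappa(\beta)$ the $\psi'$-term contributes $3/(\pi^2(\beta-1)^2)$, $-\beta^2/(\beta^2-1)^2$ contributes $-1/(4(\beta-1)^2)$, and $1/(\beta^2-1) = O((\beta-1)^{-1})$ is subleading, yielding the $(3/(2\pi^2) - 1/8)(\beta-1)^{-2}$ coefficient. As $\beta\to\infty$, Taylor-expand $\psi'(1-1/\beta) = \pi^2/6 + 2\zeta(3)/\beta + O(\beta^{-2})$ (using $\psi''(1) = -2\zeta(3)$) and each of the three terms in powers of $1/\beta$: the $1/\beta^2$ and $1/\beta^4$ orders cancel between the three, so only the $2\zeta(3)/\beta$ correction to $\psi'$ survives at order $1/\beta^5$, giving $\kappa(\beta) \sim (6\zeta(3)/\pi^2 - 1/2)/\beta^5$. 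The main obstacle is the third step: the Gibbs weights $(p_i)$ are \emph{not} independent of $Z(\beta)$ in the limit PPP (a naive independence assumption leads to inconsistent variance identities), so one cannot separate factors and the argument hinges on the explicit Palm identity above together with the delicate Gamma-function cancellation that isolates $-(1-1/\beta)\psi'((\beta-1)/\beta)$.
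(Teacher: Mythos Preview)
Your argument is correct and reaches the announced formula, but the route differs from the paper's. Both approaches start from the same Taylor identity (the paper's Corollary~\ref{cor:formula_susc} is exactly your display $\kappa=f''/(4f)-g_3/(2f)-(f')^2/(8f^2)$ after using $\Var(Z'/Z)=f''/2-g_3$ and $\Cov(\log Z,Z'/Z)=f'/2$), and both compute $f(\beta)=\tfrac{\pi^2}{6}(\beta^2-1)$ from the explicit Laplace transform. The divergence is in the ``hard'' ingredient: you go after $g_3=\Cov(\log Z,(\log Z)'')$ by rewriting it as $\beta^{-2}\Cov(\log Z,\Var_p(\log p))$ and then invoking two-point Palm identities to get a Gamma-rational expression for $\E[(\sum p_i^{u_1})(\sum p_i^{u_2})Z^v]$, differentiating in $u_1,u_2,v$ and tracking a cancellation of $\zeta(3)$- and $(\psi(1-\alpha)+\gamma_E)^2$-terms. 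The paper instead computes $\Var(Z'/Z)$ directly: it differentiates $\E[\log Z]$ twice to get $\E[Z''/Z-(Z'/Z)^2]$, and evaluates $\E[Z''/Z]$ by a \emph{single} Palm identity, $\E[Z''/Z]=\int_0^\infty\log^2(x)\,\E[(1+x^\beta Z)^{-1}]\,dx$, which after inserting the stable Laplace transform becomes an explicit double integral in elementary functions. What the paper's approach buys is economy: no double-Palm reduction, no Dirichlet integral, and no delicate cancellation---the $\psi'$ term drops out of a short Gamma computation. What your approach buys is a structural interpretation of $g_3$ through the Gibbs entropy fluctuations $\Var_p(\log p)$, which is conceptually appealing and would generalize more readily to decorated settings, at the cost of a heavier bookkeeping that you only sketch.
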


\medskip

\begin{rem}
	This question has already been studied in the physics literature by Sales and Bouchaud \cite{salesbouchaud01} in the case of the REM with exponentially distributed energies. 
	The formula they obtain for $\kappa(\beta)$ (see their appendix) does not seem to match the one above, but we are not able to find the source of this discrepancy. In particular, the exponents appearing as $\beta \downarrow 1$ and $\beta \to \infty$ are different.
\end{rem}

\medskip

Then, we study the temperature susceptibility of the BBM. 
The following results shows that it is strictly larger than the one of the REM and gives the behavior of the temperature susceptibility close to the critical temperature, showing that it diverges at the same speed as for the REM but with a different multiplicative constant. We do not investigate the behavior at low temperature.

\medskip

\begin{thm}[BBM case]
	\label{thm:THM2_DECORATED}
	For any $\beta > \beta_c =1$, one has $\kappa_{\mathrm{d}}(\beta) > \kappa(\beta)$. Moreover, we have
	\[
	\frac{3}{2\pi^2}-\frac{1}{8}
	< \liminf_{\beta \downarrow 1} \, (\beta-1)^2 \, \kappa_{\mathrm{d}}(\beta) 
	\le \limsup_{\beta \downarrow 1} \, (\beta-1)^2 \,\kappa_{\mathrm{d}}(\beta) 
	\le \frac{3}{\pi^2}-\frac{1}{8}.
	\]
\end{thm}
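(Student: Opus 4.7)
Set $L(\beta) := \log Z(\beta)$ and $L_{\mathrm{d}}(\beta) := \log Z_{\mathrm{d}}(\beta)$. By Corollary~\ref{cor:formula_susc},
\[
\kappa(\beta) \;=\; \frac{\Var(L(\beta))\,\Var(L'(\beta)) - \Cov(L(\beta), L'(\beta))^2}{2\,\Var(L(\beta))^2},
\]
and the analogous formula holds for $\kappa_{\mathrm{d}}(\beta)$ with $L_{\mathrm{d}}$ in place of $L$. Writing $Z_{\mathrm{d}}(\beta) = \sum_{i} D_{i}(\beta)\,\e^{\beta \xi_i}$ with iid marks $D_{i}(\beta) := \sum_{k \geq 0}\e^{\beta d_{ik}}$ independent of the PPP $\sum_i \delta_{\xi_i}$, the Poisson displacement theorem applied with shifts $\tfrac{1}{\beta}\log D_{i}(\beta)$ gives, for every $\beta > 1$,
\[
Z_{\mathrm{d}}(\beta) \sd \E\bigl[D(\beta)^{1/\beta}\bigr]^\beta\, Z(\beta).
\]
In particular $\Var(L_{\mathrm{d}}(\beta)) = \Var(L(\beta))$ pointwise in $\beta$, and differentiating this identity in $\beta$ yields $\Cov(L_{\mathrm{d}}(\beta), L_{\mathrm{d}}'(\beta)) = \Cov(L(\beta), L'(\beta))$. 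Substituting both equalities into the susceptibility formula collapses everything to the clean identity
\[
\kappa_{\mathrm{d}}(\beta) - \kappa(\beta) \;=\; \frac{\Var(L_{\mathrm{d}}'(\beta)) - \Var(L'(\beta))}{2\,\Var(L(\beta))}.
\]

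\textbf{Strict inequality.}
By the identity above, proving $\kappa_{\mathrm{d}}(\beta) > \kappa(\beta)$ amounts to showing $\Var(L_{\mathrm{d}}'(\beta)) > \Var(L'(\beta))$. Differentiating $\Var(L_{\mathrm{d}}(\beta)) = \Var(L(\beta))$ twice in $\beta$ produces the further identity
\[
\Var(L_{\mathrm{d}}'(\beta)) - \Var(L'(\beta)) \;=\; \Cov(L(\beta), L''(\beta)) - \Cov(L_{\mathrm{d}}(\beta), L_{\mathrm{d}}''(\beta)).
\]
Using the Gibbs representations $L''(\beta) = \Var_{\cG_\beta}(\xi)$ and $L_{\mathrm{d}}''(\beta) = \Var_{\cG_{\beta,\mathrm{d}}}(\xi + d)$, together with a conditional-variance decomposition of $L_{\mathrm{d}}''$ along the cluster $\sigma$-field of the BBM Gibbs measure, I would rearrange the right-hand side to expose a strictly positive decoration contribution, valid as soon as the decoration process $\cD$ is non-deterministic---which is indeed the case for BBM.

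\textbf{Asymptotic bounds and main obstacle.}
The $1/\beta$-stability of $Z(\beta)$ yields the exact formula $\Var(L(\beta)) = \tfrac{\pi^2}{6}(\beta^2 - 1)$ for $\beta > 1$. Combined with Theorem~\ref{thm:THM2_REM} and the clean identity of step 1, the claimed bounds are equivalent to
\[
0 \;<\; \liminf_{\beta \downarrow 1}\,(\beta-1)\bigl[\Var(L_{\mathrm{d}}'(\beta)) - \Var(L'(\beta))\bigr] \;\le\; \limsup_{\beta \downarrow 1}\,(\beta-1)\bigl[\Var(L_{\mathrm{d}}'(\beta)) - \Var(L'(\beta))\bigr] \;\le\; 1.
\]
The lower bound follows from the pointwise strict inequality of step 2 combined with a quantitative positive lower estimate on the decoration-induced extra variance. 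The upper bound is the most delicate step: it requires sharp asymptotics on the moments $\E[D(\beta)^\gamma (\log D(\beta))^j]$ as $\beta \downarrow 1$, which for BBM lie precisely at the critical regime $\psi'(1) - \psi(1) = 1$ coming from the fine description of the decoration process in~\cite{cortineshartunglouidor2019,cortineshartunglouidor2021}. The principal difficulty is to disentangle the stable PPP fluctuations of $L'(\beta)$ from the decoration-driven fluctuations of $L_{\mathrm{d}}'(\beta) - L'(\beta)$, both of the same critical order $(\beta-1)^{-1}$, and pin down the multiplicative constant~$1$ on the right.
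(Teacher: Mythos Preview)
Your first reduction step is correct and coincides with the paper's approach: the identity
\[
\kappa_{\mathrm{d}}(\beta)-\kappa(\beta)=\frac{\Var(L_{\mathrm{d}}'(\beta))-\Var(L'(\beta))}{2\Var(L(\beta))}
\]
is exactly the paper's intermediate equation in the proof of Proposition~\ref{prop:susceptibility_decorated}. However, the paper does not stop there. Using the change of measure of Lemma~\ref{lem:change_of_measure} together with the law of total variance, it further rewrites the variance difference as
\[
\Var(L_{\mathrm{d}}'(\beta))-\Var(L'(\beta))=\Bigl(1-\tfrac{1}{\beta}\Bigr)\,\Var_\beta\!\left(\tfrac{1}{\beta}\log S_\beta-\tfrac{S'_\beta}{S_\beta}\right),
\]
a quantity depending on a \emph{single} decoration under the tilted law $\P_\beta$. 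This reformulation is what makes both the strict inequality and the asymptotic bounds tractable; your proposal stays at the level of the full partition functions and never reaches it.

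There are two genuine gaps. First, your strict-inequality argument is incomplete: the double-differentiation identity $\Var(L_{\mathrm{d}}')-\Var(L')=\Cov(L,L'')-\Cov(L_{\mathrm{d}},L_{\mathrm{d}}'')$ is correct, but ``I would rearrange the right-hand side to expose a strictly positive decoration contribution'' is not a proof. The paper instead argues by contradiction on $\Var_\beta(\tfrac{1}{\beta}\log S_\beta-S'_\beta/S_\beta)=0$, which would force $\tfrac{1}{\beta}\log S_\beta=S'_\beta/S_\beta+c$ almost surely, and then uses a specific property of the BBM decoration (the event $\{\sum_{k\geq1}\e^{\beta'd_k}\leq\varepsilon\}$ has positive probability) to reach a contradiction.

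Second, and more seriously, your claim that ``the lower bound follows from the pointwise strict inequality of step~2 combined with a quantitative positive lower estimate'' is circular: pointwise strict positivity gives \emph{no} information about $\liminf_{\beta\downarrow1}$, and the ``quantitative positive lower estimate'' is precisely the thing to be proved. In the paper this is the hardest part of the argument (Lemma~\ref{lem:lower_bound_susceptibility}): one must show $\liminf_{\beta\downarrow1}(\beta-1)^3\E[(S'_\beta)^2/S_\beta]>C_\star$, with strict inequality. A direct Cauchy--Schwarz gives only $\geq C_\star$, which is not enough; the paper works at finite time with $\cC^*_{t,r_t}$, introduces a spine event $B_{r,t}$ chosen so that the conditional first moments of $S_\beta$ and $S'_\beta$ on and off $B_{r,t}$ have \emph{different} ratios, and applies Cauchy--Schwarz conditionally on each piece (Lemmas~\ref{lem:level_sets_on_B} and~\ref{lem:kappa}). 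Your sketch contains none of this. The upper bound is also only gestured at; the paper obtains it from $\E_\beta[(S'_\beta/S_\beta)^2]\leq \E[S''_\beta]/\E[S_\beta^{1/\beta}]\sim 2(\beta-1)^{-2}$ together with $\E_\beta[S'_\beta/S_\beta]\sim -(\beta-1)^{-1}$, both resting on the moment asymptotics of Proposition~\ref{prop:moments_points_extremaux} and Corollary~\ref{cor:moment_1/beta}.
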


\medskip

As for the two-temperature overlap, we also investigate the general decorated case. Proposition \ref{prop:susceptibility_decorated} shows that $\kappa_{\mathrm{d}}(\beta)$ can be written as the sum of $\kappa(\beta)$ and a nonnegative term, which is positive for most non-deterministic decoration processes. 
Showing that this additional term is growing like $(\beta-1)^{-2}$ as $\beta \downarrow 1$ for the BBM, and in particular the lower bound, is the main task in the proof of Theorem \ref{thm:THM2_DECORATED}.
In general, this term can be negligible w.r.t.\@ $(\beta-1)^{-2}$ for ``small enough'' decorations, and it can be much larger than $(\beta-1)^{-2}$ for ``big enough'' decorations, see Example \ref{ex:deco2}. So again, the BBM seems to belong to a critical window, even if here this critical window seems to include more models than for the two-temperature overlap, see e.g.\@ the first family of decorations considered in Example \ref{ex:deco2}.

\medskip

Finally we state two open questions similar to the ones for the two-temperature overlap. 

\medskip

\begin{oquest}[BBM case]
	Does the limit of $(\beta-1)^2 \, \kappa_{\mathrm{d}}(\beta)$ as $\beta \downarrow 1$ exists?
\end{oquest}

\medskip
	
In the context of susceptibility, the number of levels of replica symmetry breaking does not really matter. However, one can expect that the susceptibility is also explicit for the GREM and it would be interesting to compare it with the one for the REM.
	
\medskip

\begin{oquest}
	Find the susceptibility for the GREM.
\end{oquest}

\bigskip

\subsection{Organization of the paper}

\medskip

Section \ref{section:prelimREM} includes preliminary results on the partition function of the REM, on a change of measures used to study the decorated case and on the decoration process of the BBM. 
Section~\ref{ov2t} is dedicated to the two-temperature overlap, including proofs of Theorems \ref{thm:THM1_REM} and \ref{thm:THM1_BBM}, as well as results in more general decorated cases in Section \ref{sec:general_decorated_case}.
In Section \ref{susc}, we investigate the temperature susceptibility, proving Theorems \ref{thm:THM2_REM} and  \ref{thm:THM2_DECORATED}. 
Appendix \ref{sec:app_BBM} contains the proof of some results needed about the decoration process of the BBM throughout the paper.

\section{Preliminaries}
\label{section:prelimREM}

\medskip

\subsection{Properties of \texorpdfstring{$Z(\beta)$}{Z}}

In this section, we collect some basic facts on the partition function of the REM in the supercritical phase $\beta>1$ that will be useful in the following sections. We work directly with the limiting extremal process in the spirit of Ruelle's reformulation of the REM \cite{ruelle87} (see also \cite{DerridaMottishaw_2021} by Derrida and Mottishaw where this model is called the Poisson REM) : the partition function is given by $Z(\beta)=\sum \e^{\beta \xi_k}$ where $(\xi_k)_{k\geq1}$ are the atoms ranked in non-increasing order of a PPP($\e^{-x}\dx$). The fact that this is the limit of the properly renormalized partition function of the finite size REM is proved by Bovier, Kurkova and Löwe \cite[Theorem~1.5]{BovierKurkovaLowe02}.

It will be convenient to use the notation $\eta_k \de \e^{-\xi_k}$ because these are the atoms of a homogeneous Poisson point process on $\R_+$  and allows us to use the law of large numbers. 
Hence $Z(\beta)$, can be rewritten
\begin{equation} \label{eq:Z_in_terms_of_eta}
Z(\beta) = \sum_{k\geq 1} \eta_k^{-\beta}, \qquad \forall \, \beta >1.
\end{equation}
We first show that $Z(\beta)$ has a stable distribution. We use the following convention: the stable distribution ${\rm Stable}_\alpha\left(c,s,\mu \right)$\footnote{Here we do not use $\beta$ for the skewness parameter because the letter is used for the inverse temperature.} has a characteristic function given by
\[
t\mapsto\exp\left\{it\mu-|ct|^\alpha\left(1-is\,\mathrm{sgn}(t)\Phi\right)\right\},
\quad \text{where} \quad 
\Phi = \left\{
\begin{array}{ll}
\tan\left(\frac{\pi\alpha}{2}\right), & \mbox{if } \alpha \neq 1, \\
-\frac{2}{\pi}\log |t|, & \mbox{if } \alpha=1.
\end{array}
\right.
\]

\medskip

\begin{lem}[REM case]	\label{lem:Z_stable} 
	The Laplace transform of $Z(\beta)$ is given, for any $t \geq 0$, by
	\begin{align*}
		\Ec{\e^{-tZ(\beta)}}
		= \exp\left\{-\Gamma\left(\frac{\beta-1}{\beta}\right) t^{\frac{1}{\beta}}\right\}.
	\end{align*}
	In other words, the distribution of $Z(\beta)$ is ${\rm Stable}_{1/\beta} \left( \big(\Gamma(1-\frac{1}{\beta})\cos\frac{\pi}{2\beta}\big)^\beta,1,0 \right)$.
\end{lem}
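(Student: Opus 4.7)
The plan is to use the exponential formula for Poisson point processes on the right-hand side of \eqref{eq:Z_in_terms_of_eta}, which turns the Laplace transform of the sum into a single one-dimensional integral. Since $(\eta_k)_{k\geq 1}$ are the atoms of a PPP on $\R_+$ with Lebesgue intensity, for every $t \geq 0$,
\[
\E\left[\e^{-tZ(\beta)}\right] = \E\left[\exp\left(-\sum_{k\geq 1} t \eta_k^{-\beta}\right)\right] = \exp\left(-\int_0^\infty \left(1-\e^{-tx^{-\beta}}\right)\dx\right).
\]
Here one needs to observe that $x\mapsto 1-\e^{-tx^{-\beta}}$ is indeed integrable on $(0,\infty)$: near $0$ it is bounded by $1$ (but in fact decays super-polynomially), and near $\infty$ it is equivalent to $tx^{-\beta}$, integrable since $\beta>1$.

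Next, I would evaluate the integral by the change of variables $u=tx^{-\beta}$, which yields $x = (t/u)^{1/\beta}$ and $\dx = -\frac{t^{1/\beta}}{\beta} u^{-1/\beta-1}\du$, so that
\[
\int_0^\infty \left(1-\e^{-tx^{-\beta}}\right)\dx = \frac{t^{1/\beta}}{\beta}\int_0^\infty (1-\e^{-u})\,u^{-1/\beta-1}\du.
\]
The remaining integral is a standard representation of the Gamma function: an integration by parts (with $v=1-\e^{-u}$ and $\diff w = u^{-1/\beta-1}\du$, the boundary terms vanishing since $0<1/\beta<1$) gives
\[
\int_0^\infty (1-\e^{-u})\,u^{-1/\beta-1}\du = \beta \int_0^\infty \e^{-u} u^{-1/\beta}\du = \beta\, \Gamma\!\left(\tfrac{\beta-1}{\beta}\right).
\]
Combining these computations yields the claimed Laplace transform $\exp\bigl(-\Gamma(\tfrac{\beta-1}{\beta})\,t^{1/\beta}\bigr)$.

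Finally, I would identify this Laplace transform with the stated stable distribution. Since $1/\beta \in (0,1)$, the support is contained in $\R_+$ and I can pass from the Laplace transform to the characteristic function by analytic continuation: writing $(-it)^{1/\beta} = |t|^{1/\beta}\bigl(\cos(\tfrac{\pi}{2\beta}) - i\sin(\tfrac{\pi}{2\beta})\mathrm{sgn}(t)\bigr)$ one obtains
\[
\E\!\left[\e^{itZ(\beta)}\right] = \exp\!\left(-\Gamma\!\left(\tfrac{\beta-1}{\beta}\right)\cos\!\left(\tfrac{\pi}{2\beta}\right)|t|^{1/\beta}\left(1 - i\tan\!\left(\tfrac{\pi}{2\beta}\right)\mathrm{sgn}(t)\right)\right),
\]
which matches the parameterization ${\rm Stable}_{1/\beta}(c,1,0)$ with $c^{1/\beta} = \Gamma(\tfrac{\beta-1}{\beta})\cos(\tfrac{\pi}{2\beta})$, i.e.\@ $c = \bigl(\Gamma(1-\tfrac{1}{\beta})\cos\tfrac{\pi}{2\beta}\bigr)^\beta$. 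The main thing to be careful about is this last identification, where the $\cos(\pi/(2\beta))$ factor appears only in the characteristic function parameterization and is absent from the Laplace transform; the actual computation is a clean application of Campbell's formula and a Gamma integral.
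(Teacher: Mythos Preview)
Your proof is correct and follows essentially the same approach as the paper: both apply the exponential (Campbell) formula for Poisson point processes to \eqref{eq:Z_in_terms_of_eta} and then evaluate the resulting one-dimensional integral. You simply fill in the details---the change of variables, the integration by parts, and the identification of the stable parameters via the characteristic function---that the paper leaves implicit.
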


\medskip

\begin{proof}
	The formula for Laplace transforms of Poisson point processes applied to Equation \eqref{eq:Z_in_terms_of_eta} gives
	\begin{align*}
	\Ec{\e^{-tZ(\beta)}} 
	&= \exp\left\{-\int_{0}^{\infty} (1-\e^{-tx^{-\beta}}) \diff x\right\}
	= \exp\left\{-\Gamma\left(\frac{\beta-1}{\beta}\right) t^{\frac{1}{\beta}}\right\}\,,
	\end{align*}
	which is the Laplace transform of the desired stable law.
\end{proof}

\medskip

\begin{rem}
	The fact that the partition function has a stable distribution appears as a consequence of \cite[Theorem 2.3]{BenarousBogachevMolchanov05} by Ben Arous, Bogachev and Molchanov, where the authors study the limiting distributions of exponential sums of i.i.d. random variables.
\end{rem}

\medskip

\begin{lem}[REM case]
\label{esz}
	For any $\beta >1$ and any $\alpha>-\frac{1}{\beta}$, 
	\[
	\Ec{ Z(\beta)^{-\alpha} }
	=\frac{\Gamma(\alpha\beta+1)}{\Gamma(\alpha+1)\Gamma(\frac{\beta-1}{\beta})^{\alpha\beta}} \, .
	\]
\end{lem}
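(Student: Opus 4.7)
The plan is to combine the Laplace transform computed in Lemma \ref{esz} (just above, Lemma on $Z(\beta)$ stable distribution) with a suitable integral representation of the power function, then reduce to a gamma integral via an explicit change of variables. I will treat separately the cases $\alpha > 0$ and $-1/\beta < \alpha \leq 0$, since the integral representation is different in the two regimes.

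\textbf{Step 1 (case $\alpha > 0$).} I start from the classical identity
\[
x^{-\alpha} = \frac{1}{\Gamma(\alpha)} \int_0^\infty t^{\alpha-1} \e^{-tx} \diff t, \qquad x>0.
\]
Since $Z(\beta) > 0$ a.s., Fubini (all integrands are nonnegative) together with the Laplace transform formula from Lemma \ref{lem:Z_stable} gives
\[
\E\!\left[Z(\beta)^{-\alpha}\right] = \frac{1}{\Gamma(\alpha)} \int_0^\infty t^{\alpha-1} \exp\!\left\{-\Gamma\!\left(\tfrac{\beta-1}{\beta}\right) t^{1/\beta}\right\} \diff t.
\]

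\textbf{Step 2 (change of variables).} I perform the substitution $u = \Gamma(\frac{\beta-1}{\beta})\, t^{1/\beta}$, i.e.\ $t = (u/\Gamma(\frac{\beta-1}{\beta}))^\beta$ and $\diff t = \beta u^{\beta-1}/\Gamma(\frac{\beta-1}{\beta})^\beta \diff u$. Collecting powers of $u$ yields
\[
\E\!\left[Z(\beta)^{-\alpha}\right] = \frac{\beta}{\Gamma(\alpha)\,\Gamma(\frac{\beta-1}{\beta})^{\alpha\beta}} \int_0^\infty u^{\alpha\beta-1} \e^{-u} \diff u = \frac{\beta\,\Gamma(\alpha\beta)}{\Gamma(\alpha)\,\Gamma(\frac{\beta-1}{\beta})^{\alpha\beta}}.
\]
A final application of $x\Gamma(x) = \Gamma(x+1)$ gives $\beta\,\Gamma(\alpha\beta)/\Gamma(\alpha) = \Gamma(\alpha\beta+1)/\Gamma(\alpha+1)$, which is the claimed identity.

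\textbf{Step 3 (case $-1/\beta < \alpha \leq 0$).} Here $\alpha = 0$ is trivial, and for $\gamma \coloneqq -\alpha \in (0, 1/\beta) \subset (0,1)$ I use the complementary representation
\[
x^{\gamma} = \frac{\gamma}{\Gamma(1-\gamma)} \int_0^\infty \frac{1 - \e^{-tx}}{t^{1+\gamma}} \diff t, \qquad x>0,
\]
which follows from $\int_0^\infty (1-\e^{-s}) s^{-1-\gamma}\diff s = \Gamma(1-\gamma)/\gamma$. Applying Fubini (justified since $\gamma < 1/\beta$ so $1 - \exp\{-\Gamma(\frac{\beta-1}{\beta})t^{1/\beta}\}$ is integrable against $t^{-1-\gamma}\diff t$ at both endpoints) and the same substitution $u = \Gamma(\frac{\beta-1}{\beta})t^{1/\beta}$, the computation reduces to the gamma integral $\int_0^\infty (1-\e^{-u}) u^{-\gamma\beta-1}\diff u = \Gamma(-\gamma\beta)/(-1)$ (or equivalently $\Gamma(1-\gamma\beta)/(\gamma\beta) \cdot (\text{sign})$, which I would work out carefully) and a bit of $\Gamma$-function algebra produces the same formula, now read for $\alpha < 0$.

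\textbf{Obstacles.} There is essentially no conceptual obstacle: everything is bookkeeping. The only points requiring care are the justification of Fubini in each case and the correct handling of the integrability at $0$, which is exactly what forces the hypothesis $\alpha > -1/\beta$. An equally valid alternative would be to observe that both sides of the identity extend analytically in $\alpha$ on the strip $\operatorname{Re}\alpha > -1/\beta$, so that the result for $\alpha > 0$ proved in Steps 1--2 automatically yields the full range by analytic continuation, sparing the second integral representation entirely.
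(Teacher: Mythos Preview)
Your proof is correct and, for $\alpha>0$, identical to the paper's argument. For $-1/\beta<\alpha<0$ you use the representation $x^{\gamma}=\frac{\gamma}{\Gamma(1-\gamma)}\int_0^\infty(1-\e^{-tx})\,t^{-1-\gamma}\diff t$, whereas the paper instead writes
\[
Z(\beta)^{-\alpha}=\frac{1}{\Gamma(1+\alpha)}\int_0^\infty t^{\alpha}\,Z(\beta)\,\e^{-tZ(\beta)}\diff t
\]
and obtains $\E[Z(\beta)\e^{-tZ(\beta)}]$ by differentiating the Laplace transform. Both routes are short and equivalent in difficulty: yours needs only the Laplace transform itself, the paper's avoids the auxiliary integral $\int_0^\infty(1-\e^{-u})u^{-1-\gamma\beta}\diff u$. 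Your closing remark on analytic continuation in $\alpha$ is a perfectly valid (and arguably cleanest) third option.
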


\medskip

\begin{proof}
	For $\alpha>0$, this is proved by taking the expectation of 
	\[
	Z(\beta)^{-\alpha} = \frac{1}{\Gamma(\alpha)}\int_{0}^{\infty}t^{\alpha-1}\e^{-tZ(\beta)}\diff t,
	\]
	and then applying Lemma \ref{lem:Z_stable}.
	For $-1/\beta<\alpha<0$, we use instead the following representation
	 \[
	 Z(\beta)^{-\alpha} = \frac{1}{\Gamma(1+\alpha)}\int_{0}^{\infty}t^{\alpha}Z(\beta) \, \e^{-tZ(\beta)}\diff t,
	 \]
	 and note that $\E[Z(\beta) \e^{-tZ(\beta)}]$ is obtained by differentiating $\E[\e^{-tZ(\beta)}]$ with respect to $t$.
\end{proof}

\medskip

The formula established in the previous lemma yields to the following moment estimates in the regime $\beta\downarrow1$, which are used throughout the paper.

\medskip

\begin{cor}[REM case]
 Let $\alpha > -1$. Then, as $\beta\downarrow1$,
	\begin{align}
	\label{a1}
	\Ec{ Z(\beta)^{-\alpha} } & = (\beta-1)^\alpha \left(1+O\left((\beta-1) \log \frac{1}{\beta-1} \right) \right) \, ,\\
	\label{a2} 
	\Ec{ Z(\beta)^{-\alpha/\beta} } & = (\beta-1)^\alpha \left(1+O(\beta-1) \right) \, ,\\
	\label{a5}
	\Ec{ Z(\beta)^{-1/\beta} \log \left(Z(\beta)^{1/\beta}\right)} & = (\beta-1) \log \frac{1}{\beta-1} +O(\beta-1).
	\end{align}
\end{cor}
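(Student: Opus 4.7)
The plan is to deduce all three estimates from the closed-form expression established in Lemma \ref{esz}, using standard asymptotics of the Gamma and digamma functions near their singularity at $0$. The starting point is the identity
\[
\Ec{Z(\beta)^{-\alpha}} = \frac{\Gamma(\alpha\beta+1)}{\Gamma(\alpha+1)\,\Gamma\!\left(\tfrac{\beta-1}{\beta}\right)^{\alpha\beta}},
\]
valid for $\alpha > -1/\beta$, together with the expansion $\Gamma(x) = 1/x - \gamma + O(x)$ as $x \downarrow 0$, which yields
\[
\Gamma\!\left(\tfrac{\beta-1}{\beta}\right) = \tfrac{\beta}{\beta-1}\bigl(1 + O(\beta-1)\bigr) \qquad \text{as } \beta \downarrow 1.
\]

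For \eqref{a1}, I would write $\Gamma(\frac{\beta-1}{\beta})^{\alpha\beta} = (\beta-1)^{-\alpha\beta}\beta^{\alpha\beta}(1+O(\beta-1))^{\alpha\beta}$, then split $(\beta-1)^{-\alpha\beta} = (\beta-1)^{-\alpha} \cdot (\beta-1)^{-\alpha(\beta-1)}$ and expand the second factor as $\exp(-\alpha(\beta-1)\log(\beta-1)) = 1 + O((\beta-1)\log\frac{1}{\beta-1})$. Since the ratio $\Gamma(\alpha\beta+1)/\Gamma(\alpha+1) = 1 + O(\beta-1)$ by differentiability of $\Gamma$ at $\alpha+1$, multiplying everything gives \eqref{a1}. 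For \eqref{a2}, substitute $\alpha/\beta$ in place of $\alpha$ in Lemma \ref{esz}; the exponent on the singular Gamma factor becomes exactly $\alpha$ (no $\beta$ attached), so the troublesome logarithmic factor $(\beta-1)^{-\alpha(\beta-1)}$ disappears entirely, and the remaining error is $O(\beta-1)$ coming only from $\beta^\alpha = 1 + O(\beta-1)$, the expansion of $\Gamma(\frac{\beta-1}{\beta})$, and the ratio $\Gamma(\alpha+1)/\Gamma(\alpha/\beta+1)$.

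For \eqref{a5}, the natural strategy is to differentiate the identity $\Ec{Z(\beta)^{-\alpha/\beta}} = \Gamma(\alpha+1)/(\Gamma(\alpha/\beta+1)\Gamma(\frac{\beta-1}{\beta})^\alpha)$ with respect to $\alpha$ and evaluate at $\alpha=1$. Since the family is dominated (by a routine argument near $\alpha=1$), this yields
\[
-\frac{1}{\beta}\Ec{Z(\beta)^{-1/\beta}\log Z(\beta)} = \frac{\Gamma(2)}{\Gamma(\tfrac{1}{\beta}+1)\Gamma(\tfrac{\beta-1}{\beta})}\left[\psi(2) - \tfrac{1}{\beta}\psi\!\left(\tfrac{1}{\beta}+1\right) - \log\Gamma\!\left(\tfrac{\beta-1}{\beta}\right)\right],
\]
where $\psi = \Gamma'/\Gamma$. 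The prefactor equals $(\beta-1)(1+O(\beta-1))$ by the expansion of $\Gamma(\frac{\beta-1}{\beta})$ used above. In the bracket, the first two terms combine to $O(\beta-1)$ by differentiability of $\psi$ at $2$, while $\log\Gamma(\frac{\beta-1}{\beta}) = \log\frac{1}{\beta-1} + O(1)$, so the bracket equals $-\log\frac{1}{\beta-1} + O(1)$. Multiplying by the prefactor and by $-\beta$, and writing $\log(Z(\beta)^{1/\beta}) = \frac{1}{\beta}\log Z(\beta)$, produces \eqref{a5}.

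The only subtlety in this plan is justifying the differentiation under the expectation at $\alpha=1$ for \eqref{a5}; this is routine from the explicit Gamma formula on the right-hand side of Lemma \ref{esz} (which is smooth in $\alpha$ near $1$ uniformly for $\beta$ in a right-neighborhood of $1$) and a standard domination argument on the left, using that $|\log Z(\beta)| Z(\beta)^{-\alpha/\beta}$ is integrable with locally uniform bounds in $\alpha$. All other steps are elementary Taylor expansions of $\Gamma$ and $\psi$.
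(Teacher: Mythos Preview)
Your proposal is correct and follows essentially the same approach as the paper: expand the closed-form Gamma expression from Lemma~\ref{esz} for \eqref{a1}--\eqref{a2}, and differentiate it in $\alpha$ for \eqref{a5}. The paper's proof is terser and differentiates $\Ec{Z(\beta)^{-\alpha}}$ rather than $\Ec{Z(\beta)^{-\alpha/\beta}}$, but this is a cosmetic reparametrization; your choice is arguably cleaner since it lands directly on the exponent $1/\beta$ appearing in \eqref{a5}.
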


\medskip

\begin{proof}
	The first two expansions are obtained by expanding the Gamma function in the formula given by Lemma \ref{esz}.
	For the third one, note that, for any $\beta > 1$ and $\alpha >-1/\beta$,  $\frac{\diff}{\diff \alpha} \Ec{ Z(\beta)^{-\alpha} } = - \alpha \Ec{ Z(\beta)^{-\alpha} \log Z(\beta)}$.
\end{proof}

\medskip

The expansions in the previous corollary are governed by the fact that $Z(\beta)$ concentrates around $1/(\beta-1)$ as $\beta \downarrow 1$. More precisely, we have surprisingly the following almost sure expansion.

\medskip

\begin{lem}[REM case]	\label{lem:Z_as}
	When $\beta \downarrow 1$, we have
	\[
	Z(\beta) = \frac{1}{\beta -1} + W + o(1),\quad \text{almost surely,}
	\]
	where $W \sim {\rm Stable}_1\left(\frac{\pi}{2},1,1-\gamma\right)$
	and $\gamma$ is the Euler constant.
\end{lem}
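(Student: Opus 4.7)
The plan is two-step: prove almost-sure convergence by a sieve-and-compensation argument, and then identify the limit via its Laplace transform, matched against the L\'evy--Khintchine representation of the target stable law. Let $N \de \sum_k \delta_{\eta_k}$ denote the unit-intensity Poisson point process on $\R_+$ and let $m$ be Lebesgue measure on $(0,\infty)$. Since $(\beta-1)^{-1} = \int_1^\infty x^{-\beta} \dx$, I would split
\[
Z(\beta) - \frac{1}{\beta-1} = \sum_{\eta_k \leq 1} \eta_k^{-\beta} + \int_1^\infty x^{-\beta}(N - m)(\dx).
\]
The first term is a.s.\@ a finite sum depending continuously on $\beta$, hence converges trivially to $\sum_{\eta_k \leq 1} \eta_k^{-1}$ as $\beta \downarrow 1$. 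For the second term, set $M(x) \de N([1,x]) - (x - 1)$; the LIL for the Poisson process gives $|M(x)| = O(\sqrt{x \log\log x})$ a.s. Stieltjes integration by parts yields
\[
\int_1^\infty x^{-\beta}(N-m)(\dx) = \beta \int_1^\infty M(x)\, x^{-\beta-1}\, \dx,
\]
and the integrand is a.s.\@ dominated, uniformly for $\beta \in [1, 2]$, by the integrable function $2|M(x)|x^{-2}$. Dominated convergence then gives a.s.\@ convergence as $\beta \downarrow 1$ to $\int_1^\infty M(x) x^{-2}\dx = \int_1^\infty x^{-1}(N - m)(\dx)$. Combining,
\[
Z(\beta) - \frac{1}{\beta-1} \xrightarrow[\beta \downarrow 1]{\text{a.s.}} W, \qquad W \de \sum_{\eta_k \leq 1}\eta_k^{-1} + \int_1^\infty x^{-1}(N-m)(\dx).
\]

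To identify the law of $W$, the two summands above are independent since they depend on the restrictions of $N$ to the disjoint sets $(0,1]$ and $(1,\infty)$. Campbell's formula applied to the raw Poisson integral on $(0,1]$ and to the compensated Poisson integral on $(1,\infty)$ gives, for $t > 0$,
\[
\log \Ec{\e^{-tW}} = -\int_0^1 \left(1 - \e^{-t/x}\right) \dx + \int_1^\infty \left(\e^{-t/x} - 1 + t/x\right)\dx,
\]
which, via the substitution $u = 1/x$, turns into the L\'evy--Khintchine exponent
\[
\int_0^1 \frac{\e^{-tu} - 1 + tu}{u^2}\du + \int_1^\infty \frac{\e^{-tu} - 1}{u^2}\du
\]
associated with the L\'evy measure $\du/u^2$ on $(0,\infty)$. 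A careful integration by parts and the change of variable $v = tu$, together with the classical identity $\int_0^t (1-\e^{-v})/v\, \mathrm{d}v - \int_t^\infty \e^{-v}/v\, \mathrm{d}v = \log t + \gamma$, should reduce this expression to $\log \Ec{\e^{-tW}} = t \log t + (\gamma - 1)t$. The analogous computation for the characteristic function, using $\mathrm{Cin}(t) + \mathrm{Ci}(t) = \log t + \gamma$, yields $\log \Ec{\e^{itW}} = it(1-\gamma) - it \log t - \pi t/2$ for $t > 0$, which agrees with the characteristic function of $\mathrm{Stable}_1(\pi/2, 1, 1 - \gamma)$ in the paper's convention.

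As a cross-check, and an alternative route to convergence in distribution, Lemma \ref{lem:Z_stable} can be applied directly: the expansions $\Gamma((\beta-1)/\beta) = (\beta-1)^{-1} + 1 - \gamma + O(\beta-1)$ and $t^{1/\beta} = t - (\beta-1)t \log t/\beta + O((\beta-1)^2 (\log t)^2)$ yield $\e^{t/(\beta-1)}\Ec{\e^{-tZ(\beta)}} \to \exp\{t\log t + (\gamma-1)t\}$ as $\beta \downarrow 1$, matching the Laplace transform of $W$ computed above. The main technical point I expect is the integration-by-parts step in the evaluation of $\log \Ec{\e^{-tW}}$: the two halves of the L\'evy--Khintchine integral cannot be split further without producing divergent $\int \du/u$ pieces, so the manipulation must be organized carefully around the compensator $tu \1_{\{u \leq 1\}}$. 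Once that is handled, both the almost-sure convergence (dominated convergence) and the law identification (Campbell's formula) are routine.
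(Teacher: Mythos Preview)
Your proof is correct and takes a genuinely different route from the paper's. For the almost-sure convergence, the paper compares $Z(\beta)$ to the Riemann zeta function $\zeta(\beta)$: writing $Z(\beta)-\zeta(\beta)=\sum_k (k^\beta-\eta_k^\beta)/(k^\beta\eta_k^\beta)$, it bounds each summand by $C|\eta_k-k|/k^2$ (using $\eta_k=k+O(k^{3/4})$ a.s.) and applies dominated convergence for series, then invokes the classical expansion $\zeta(\beta)=(\beta-1)^{-1}+\gamma+o(1)$. Your decomposition instead splits at the threshold $\eta_k=1$ and treats the tail as a compensated Poisson integral, with the LIL for $N([1,x])-(x-1)$ providing the dominating function. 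Both arguments are elementary; yours is perhaps more intrinsically ``Poissonian'' and avoids importing the zeta expansion, while the paper's comparison to $\zeta$ is slightly shorter and makes the appearance of the Euler constant transparent.

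For the law identification, the paper works entirely through the explicit stable Laplace transform of Lemma~\ref{lem:Z_stable}: it expands $\Ec{\e^{itZ(\beta)}}$ as $\beta\downarrow1$ and reads off the $\mathrm{Stable}_1$ parameters directly. Your primary route---Campbell's formula applied to the limiting representation $W=\sum_{\eta_k\le1}\eta_k^{-1}+\int_1^\infty x^{-1}(N-m)(\dx)$---is more work (the $\mathrm{Ein}/\mathrm{Ei}$ manipulation you flag is indeed the delicate step) but has the virtue of computing the law of $W$ from its explicit Poisson description rather than as a limit. Your cross-check via Lemma~\ref{lem:Z_stable} is essentially the paper's argument transposed to the Laplace side, so you have in fact recovered their identification as well.
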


\medskip

\begin{proof}
	With the Riemann $\zeta$ function defined by $\zeta(\beta) = \sum_{k\geq1} \frac{1}{k ^\beta}$, one has
	\[
	Z(\beta)-\zeta(\beta) = \sum_{k\geq1} \frac{k^\beta-\eta_k^\beta}{k^\beta\eta_k^\beta}.
	\]
	If $\beta\leq2$, one has
	\[
	 \frac{\left|k^\beta-\eta_k^\beta\right|}{k^\beta\eta_k^\beta}\leq \frac{\beta\left(\eta_k^{\beta-1}+k ^{\beta-1}\right)\left|\eta_k-k\right|}{k^\beta\eta_k^\beta} \leq 5 \frac{\left|\eta_k-k\right|}{k^2},
	\]
	for $k$ large enough. This last term is summable since $\eta_k= k+O(k^{\frac{3}{4}})$ a.s. 
	Hence, the dominated convergence theorem (for series) implies that $Z(\beta)-\zeta(\beta)$ has an a.s. limit when $\beta$ tends to $1$. 
	The almost sure expansion follows from the fact that $\zeta(\beta) = \frac{1}{\beta-1}+\gamma+o(1)$ when $\beta\downarrow1$. Then, computing the characteristic function enables to identify the limiting law: if $t>0$, one has
	\begin{align*}
		\Ec{\e^{itZ(\beta)}}&=\exp\left\{-\Gamma\left(\frac{\beta-1}{\beta}\right)\cos\frac{\pi}{2\beta}\,t^{\frac{1}{\beta}}\left(1-i\tan\frac{\pi}{2\beta}\right)\right\}\\
		&=\exp\left\{-\Gamma\left(\frac{\beta-1}{\beta}\right)t^{\frac{1}{\beta}}\e^{-i\frac{\pi}{2\beta}}\right\}\\
		&=\exp\left\{it\left(\frac{1}{\beta-1}-\log(t)+1-\gamma+i\frac{\pi}{2}\right)+O(\beta-1)\right\},
	\end{align*}
	as $\beta\downarrow1$, therefore
		\begin{align*}
		\Ec{\e^{it\left(Z(\beta)-\frac{1}{\beta-1}\right)}}&\underset{\beta\downarrow1}{\longrightarrow}
		\exp\left\{it(1-\gamma)-\frac{\pi}{2}t\left(1+i\frac{2}{\pi}\log t\right)\right\},
	\end{align*}
	which is the expected characteristic function. The case $t<0$ is similar.
\end{proof}

\medskip

\begin{rem} \label{rem:1-stable}
	These 1-stable fluctuations are reminiscent of some recent results on the BBM: such fluctuations appear for the critical Gibbs measure \cite{MaiPai2019,MaiPai2021}, as well as for the limiting extremal process, when re-centered around a low position \cite{mytnik2021fisherkpp}. The result above shows that 1-stable fluctuations already appear in the much simpler context of the extremal process of the REM.
	Moreover, it has a consequence for the BBM stated in Remark \ref{rem:1-stable_2}.
\end{rem}

\bigskip

\subsection{Change of measure in the general decorated case}
\label{sec:change_of_measure}

\medskip

In the paper, we sometimes consider the \textit{general decorated case}.
This means that we work with a limiting extremal process which is of the form 
\[
	\sum_{i \geq 1}\sum_{k \geq 0} \delta_{\xi_i+d_{ik}},
\]
where the $(\xi_i)_{i\geq 1}$ are the atoms of a PPP($\e^{-x} \diff x$) independent of $(\sum_{k\geq 0} \delta_{d_{ik}})_{i \ge 1}$, which are i.i.d.\@ copies of a point process $\cD = \sum_{k\geq 0} \delta_{d_k}$ on $(-\infty,0]$ which has a.s.\@ an atom at~0,
but we do not assume that $\cD$ is the decoration process of the BBM.
In particular, for $\beta, \beta' > \beta_c=1$ the limiting partition function $Z_{\mathrm{d}}(\beta)$ is defined as in Equation \eqref{def:Z_d} and the limiting mass at 1 of the two-temperature overlap $Q_{\mathrm{d}}(\beta,\beta')$ is defined as in Equation \eqref{def:Q_d}.

Moreover, let us introduce the partition functions associated to the decoration processes\footnote{Note that we do not add a subscript ``d'' in this notation for the sake of readability in the proofs where the quantity $S_\beta$ is ubiquitous. There is no confusion possible with the REM for which the quantity $S_\beta$ does not appear (it is equal to 1 in that case).}:
\[
	S_\beta \coloneqq \sum_{k\geq 0} \e^{\beta d_k} 
	\qquad \text{and} \qquad 
	S_{\beta,i} \coloneqq \sum_{k\geq 0} \e^{\beta d_{ik}}, \qquad \forall \, i \geq 1.
\]
For $\beta > 1$, assuming $\Ec{S_\beta^{1/\beta}} < \infty$, we introduce a new probability measure $\P_\beta$ such that
\begin{itemize}
	\item the distribution of $\cD$ under $\P_\beta$ is characterized by 
	\begin{equation} \label{eq:def_P_beta}
	\E_\beta\left[F(\cD)\right]
	= \frac{\E\left[S_\beta^{1/\beta} F(\cD)\right]}{\E\left[S_\beta^{1/\beta}\right]},
	\end{equation}
	for any measurable bounded function $F$ from the space of Radon measures on $\R$ to $\R$;
	\item under $\P_\beta$, 
	the $(\xi_i)_{i\geq 1}$ are still the atoms of a Poisson point process on $\R$ with intensity measure $\e^{-x} \diff x$ independent of $(\sum_{k\geq 0} \delta_{d_{ik}})_{i \ge 1}$, which are i.i.d.\@ copies of $\cD$ under $\P_\beta$.
\end{itemize}
With this definition in hand, the following fact holds. 

\medskip

\begin{lem}[General decorated case]
 \label{lem:change_of_measure}
	Let $\beta > 1$. Assume $\Ec{S_\beta^{1/\beta}} < \infty$. 
	Then, with $c_\beta \coloneqq \log \Ec{S_\beta^{1/\beta}}$, 
	\[
	\left(\, \sum_{i\geq 0} \delta_{\xi_i + \frac{1}{\beta} \log S_{\beta,i}}, \bigg( \sum_{k\geq 0} \delta_{d_{ik}} \bigg)_{i \ge 1} \,\right)
	\;\mathrm{ under }\; \P 
	\;\; \overset{\mathrm{(d)}}{=}\;\; 
	\left(\, \sum_{i\geq 0} \delta_{\xi_i + c_\beta}, \bigg( \sum_{k\geq 0} \delta_{d_{ik}} \bigg)_{i \ge 1} \,\right)
	\;\mathrm{ under }\; \P_\beta\,.
	\]
\end{lem}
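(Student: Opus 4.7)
The plan is to apply the mapping theorem for Poisson point processes on the enlarged space $\R \times \cM$, where $\cM$ denotes the space of locally finite measures on $(-\infty,0]$. By the marking theorem, under $\P$ the pairs $\sum_{i \geq 1} \delta_{(\xi_i, \cD_i)}$, with $\cD_i \coloneqq \sum_k \delta_{d_{ik}}$, form a PPP on $\R \times \cM$ with intensity $\e^{-x} \dx \otimes \mu_\cD(\mathrm{d}\cD)$, where $\mu_\cD$ is the law of $\cD$ under $\P$. Both sides of the stated identity are measurable functionals of such a joint point process (read off the first coordinates, possibly shifted, together with the marks), so it suffices to exhibit equality in law between two PPPs on $\R \times \cM$.

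I would then apply the mapping theorem to $\Phi \colon (x, \cD) \mapsto \bigl(x + \tfrac{1}{\beta}\log S_\beta(\cD), \cD\bigr)$, where $S_\beta(\cD) \coloneqq \sum_k \e^{\beta d_k}$; this map is well defined almost everywhere since $\E[S_\beta^{1/\beta}] < \infty$ forces $S_\beta(\cD) < \infty$ for $\mu_\cD$-a.e.\@ $\cD$. A direct change of variables $y = x + \tfrac{1}{\beta}\log S_\beta(\cD)$ at fixed $\cD$ transforms the intensity into
$$S_\beta(\cD)^{1/\beta} \, \e^{-y} \, \dy \otimes \mu_\cD(\mathrm{d}\cD) \;=\; \e^{c_\beta - y} \, \dy \otimes \mu_\cD^\beta(\mathrm{d}\cD),$$
where $\mu_\cD^\beta(\mathrm{d}\cD) \coloneqq S_\beta(\cD)^{1/\beta} \mu_\cD(\mathrm{d}\cD) / \E[S_\beta^{1/\beta}]$ is, by Equation~(\ref{eq:def_P_beta}), precisely the law of $\cD$ under $\P_\beta$. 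On the right-hand side, the construction of $\P_\beta$ makes $\sum_{i \geq 1} \delta_{(\xi_i + c_\beta, \cD_i)}$ a PPP with intensity $\e^{-(y - c_\beta)} \dy \otimes \mu_\cD^\beta(\mathrm{d}\cD)$, which matches the expression above. Equality of the two PPPs on $\R \times \cM$ then yields the claim.

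The only step requiring care is the local finiteness of the image intensity after $\Phi$, which is needed for the mapping theorem to produce a genuine PPP rather than a more singular random measure. This holds because the expected number of image points with first coordinate greater than $a \in \R$ equals $\int_a^\infty \e^{c_\beta - y} \dy = \e^{c_\beta - a} < \infty$ under the hypothesis $\E[S_\beta^{1/\beta}] < \infty$. No substantial obstacle is then present: the argument is a direct instance of the exponential tilting/mapping correspondence for marked Poisson processes, where the cost $\e^{c_\beta}$ of biasing each mark by $S_\beta^{1/\beta}$ is exactly absorbed by translating the underlying PPP by $c_\beta$.
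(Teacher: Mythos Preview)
Your argument is correct and essentially coincides with what the paper calls the ``direct Laplace transform calculation'': you have simply packaged that computation via the marking and mapping theorems for Poisson point processes rather than writing out the Laplace functional explicitly, while the paper alternatively invokes \cite[Lemma~2.1]{panchenkotalagrand2007-1} as a black box. The one point worth making slightly more explicit is how the pair $(\text{point process}, (\cD_i)_{i\geq 1})$ in the statement is recovered from the marked PPP on $\R\times\cM$ (e.g.\ by ordering on the first coordinate, which is a.s.\ well defined since the intensity has no atoms and is finite on half-lines $[a,\infty)$), but this is a cosmetic issue and your remark about both sides being measurable functionals of the joint PPP covers it.
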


\medskip

\begin{proof}
	This can be obtained via a direct Laplace transform calculation, or as a consequence of \cite[Lemma 2.1]{panchenkotalagrand2007-1} applied with 
	$u_i = \e^{\beta \xi_i}$, 
	$m = 1/\beta$, 
	$X_i = S_{\beta,i}$
	and $Y_i = \sum_{k\geq 0} \delta_{d_{ik}}$.
\end{proof}

\medskip

As a consequence, we get the following simple distribution for the decorated partition function
\[
	Z_{\mathrm{d}}(\beta) 
	\de \sum_{i \geq 1} \sum_{k \geq 0} \e^{\beta (\xi_i+ d_{ik})} 
	= \sum_{i \geq 1} \e^{\beta \xi_i} S_{\beta,i}.
\]
However, note that one cannot relate the joint distribution of $(Z_{\mathrm{d}}(\beta),Z_{\mathrm{d}}(\beta'))$ with the one of $(Z(\beta),Z(\beta'))$ in such a way (otherwise the two-temperature overlap distribution would not depend on the decoration).

\medskip

\begin{cor}[General decorated case]
\label{cor:change_of_measure}
	Let $\beta > 1$. Assume $\Ec{S_\beta^{1/\beta}} < \infty$. Then, under $\P$,
	\[
	Z_{\mathrm{d}}(\beta)
	\overset{(\mathrm{d})}{=}
	\Ec{S_\beta^{1/\beta}}^{\beta} Z(\beta).
	\]
\end{cor}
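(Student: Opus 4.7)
The plan is to deduce Corollary \ref{cor:change_of_measure} directly from Lemma \ref{lem:change_of_measure} by expressing $Z_{\mathrm{d}}(\beta)$ as a continuous functional of only the left-hand point process appearing in that lemma. The key observation is the algebraic identity
\[
e^{\beta \xi_i} S_{\beta,i} = \exp\!\left(\beta\Bigl(\xi_i + \tfrac{1}{\beta}\log S_{\beta,i}\Bigr)\right),
\]
so that
\[
Z_{\mathrm{d}}(\beta) = \sum_{i\geq 1} \exp\!\left(\beta\Bigl(\xi_i + \tfrac{1}{\beta}\log S_{\beta,i}\Bigr)\right)
\]
is a measurable function of the point process $\sum_{i\geq 1} \delta_{\xi_i + \frac{1}{\beta}\log S_{\beta,i}}$ alone (the decorations $(\sum_k \delta_{d_{ik}})_i$ do not appear once their sum has been absorbed into the shift).

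Next I would apply Lemma \ref{lem:change_of_measure}: projecting the distributional identity onto the first coordinate, the process $\sum_{i} \delta_{\xi_i + \frac{1}{\beta}\log S_{\beta,i}}$ under $\P$ has the same law as $\sum_{i} \delta_{\xi_i + c_\beta}$ under $\P_\beta$, where $c_\beta = \log \E[S_\beta^{1/\beta}]$. Applying the functional $\mu \mapsto \sum e^{\beta x_i}$ to both sides gives
\[
Z_{\mathrm{d}}(\beta) \;\overset{(\mathrm{d})}{=}\; \sum_{i\geq 1} e^{\beta(\xi_i + c_\beta)} \;\text{under}\; \P_\beta
\;=\; e^{\beta c_\beta}\sum_{i\geq 1} e^{\beta \xi_i}\;\text{under}\; \P_\beta.
\]

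Finally, since the definition of $\P_\beta$ only tilts the law of the decoration processes $\cD_i$ and preserves the law of the PPP $(\xi_i)_{i\geq 1}$ (with the same independence structure), the distribution of $\sum_{i\geq 1} e^{\beta \xi_i} = Z(\beta)$ under $\P_\beta$ coincides with its distribution under $\P$. Substituting $e^{\beta c_\beta} = \E[S_\beta^{1/\beta}]^{\beta}$ yields the claimed equality in distribution. There is no real obstacle here; the only thing to be careful about is to invoke Lemma \ref{lem:change_of_measure} for a functional that depends solely on the shifted atom positions, which is precisely what the above rewriting achieves.
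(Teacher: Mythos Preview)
Your proof is correct and follows essentially the same approach as the paper: apply Lemma~\ref{lem:change_of_measure} to identify $Z_{\mathrm{d}}(\beta)$ under $\P$ with $\e^{\beta c_\beta} Z(\beta)$ under $\P_\beta$, then observe that $Z(\beta)$ has the same law under $\P$ and $\P_\beta$ since the Poisson point process $(\xi_i)_i$ is unaffected by the change of measure. The paper's proof is the same argument compressed into two sentences.
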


\medskip

\begin{proof}
	By Lemma \ref{lem:change_of_measure}, $Z_{\mathrm{d}}(\beta)$ under $\P$ has the same distribution as $\e^{\beta c_\beta} Z(\beta)$ under $\P_\beta$. But $Z(\beta)$ has the same law under $\P$ and under $\P_\beta$, so the result follows.
\end{proof}

\medskip

\begin{rem} \label{rem:deco_part_fc}
	When $\E[S_\beta^{1/\beta}] = \infty$, one can show that $Z_{\mathrm{d}}(\beta) = \infty$ a.s.
	This follows for example from the previous result applied to $S_\beta \wedge M$ and then letting $M \to \infty$. Therefore, the assumption that $\E[S_\beta^{1/\beta}] < \infty$ for any $\beta > 1$ is a very minimalist one when working with the decorated partition function.
\end{rem}

\medskip

\subsection{The decoration of the BBM}
\label{subsec:decoration_BBM}

\medskip

\paragraph{A description of the decoration.}

In this section, we recall some results on the law of the decoration point process
\[
	\cD = \sum_{k \geq 0} \delta_{d_k} \, ,
\]
appearing in the limit of the extremal process of the BBM.
As mentioned in the introduction (see Equation \eqref{eq:extremalprocessBBM}), convergence of this extremal process has been established in \cite{abbs2013} and \cite{abk2013}, which give two different descriptions of the law of $\cD$.
However, we use here a third description obtained in \cite[Lemma 5.1]{cortineshartunglouidor2019}, as well as several other results shown in this paper and its sequel \cite{cortineshartunglouidor2021}.
Note that the authors work with a BBM with branching rate 1 (instead of $1/2$ for us here), but that both processes have the same distribution up to a time-space scaling.
In particular, one can check that the decoration process $\cC$ appearing in this case (branching rate 1) can be related to $\cD$ as follows:
\[
	\cC = \sum_{k \geq 0} \delta_{d_k/\sqrt{2}} \, .
\]
For the sake of clarity, we give below the description of the law of $\cC$ (instead of $\cD$) so that we can work exactly in the same setting as \cite{cortineshartunglouidor2019} when we adapt some of their proofs.

We consider a BBM with a spine defined under a new probability measure $\widetilde{\P}$ as follows. It starts with one particle at 0 at time 0 which is part of the spine. Particles along the spine branch at rate 2 and move according to a standard Brownian motion. When they branch into two particles, one of them, chosen uniformly at random, is part of the spine and the other one starts a standard BBM with branching rate 1.
We denote by $L_t$ the set of particles alive at time~$t$, by $X_t$ the particle at time~$t$ which is part of the spine, and by $h_t(x)$ for $x \in L_t$ the position of particle~$x$ at time~$t$.

Moreover, we set
\[
m_t \coloneqq \sqrt{2} t - \frac{3}{2\sqrt{2}} \log t \, ,
\]
which is the position of the maximum of the BBM (with branching rate 1) at time $t$ up to $O(1)$ fluctuations,
and, for $0 < r \leq t$ and $x \in L_t$,
\[
\cC_{t,r}(x) \coloneqq \sum_{y \in L_t \, : \, d(x,y) < r} \delta_{h_t(y) - h_t(x)} \, ,
\]
where $d(x,y) := \inf \{s \geq 0 :x \text{ and } y \text{ share a common ancestor in } L_{t-s} \}$.
For brevity, we introduce the new probability measure
\begin{equation} \label{eq:def_widetilde_P_t}
\hPp{ \cdot }
\coloneqq \widetilde{\P} \left( \, \cdot \, \mathrel{}\middle|\mathrel{} 
h_t(X_t) = \max_{x\in L_t} h_t(x) = m_t \right),
\end{equation}
as well as 
\[
\cC_{t,r_t}^* \coloneqq \cC_{t,r_t}(X_t).
\]
Then, for any positive function $t \mapsto r_t$ such that both $r_t$ and $t-r_t$ tend to $\infty$ as $t \to \infty$,  \cite[Lemma 5.1]{cortineshartunglouidor2019} with $u=0$ establishes that, for the vague convergence on the set of Radon measures on $\R$, 
\begin{equation} \label{eq:description_deco}
\cC_{t,r_t}^* \text{ under } \widetilde{\P}_t
\quad \xrightarrow[t\to\infty]{\text{(d)}} \quad 
\cC \text{ under } \P.
\end{equation}
In their papers \cite{cortineshartunglouidor2019,cortineshartunglouidor2021}, Cortines, Hartung and Louidor develop tools to study $\cC_{t,r_t}^*$ under $\widetilde{\P}_t$ and therefore obtain results on the distribution of $\cC$.

\paragraph{Level sets of the decoration.}

Some of the main results in \cite{cortineshartunglouidor2019} concern the level sets of the decoration point process itself. Recall that $\cC$ is supported on $(-\infty,0]$ and has a.s.\@ an atom at~0.
In \cite[Proposition 1.5]{cortineshartunglouidor2019}, they prove the existence of constants $C_\star,C > 0$ such that
\begin{equation} \label{eq:equiv_C_x}
\Ec{\cD([-\sqrt{2}x,0])} = \Ec{\cC([-x,0])} \sim C_\star \,\e^{\sqrt{2} x}, \qquad \text{as } x \to \infty,
\end{equation}
and, for any $x \geq 0$,
\begin{equation} \label{eq:2nd_moment_C_x}
\Ec{\cD([-\sqrt{2}x,0])^2} = \Ec{\cC([-x,0])^2} \leq C(x+1) \,\e^{2\sqrt{2} x}.
\end{equation}
Note also that, as a consequence of Equation \eqref{eq:equiv_C_x}, there exists $C>0$ such that, for any $x \geq 0$,
\begin{equation} \label{eq:bound_C_x}
\Ec{\cD([-\sqrt{2}x,0])} = \Ec{\cC([-x,0])} \leq C\,\e^{\sqrt{2} x}.
\end{equation}
As a consequence, Cortines, Hartung and Louidor deduce a law of large numbers for large level sets of the whole limiting extremal process.
This law of large numbers, as well as the 1-stable fluctuations appearing at the next order, have been subsequently obtained via PDE techniques by Mytnik, Roquejoffre and Ryzhik \cite{mytnik2021fisherkpp}.

\medskip

\begin{rem} \label{rem:level_sets}
	The second moment bound, see Equation \eqref{eq:2nd_moment_C_x}, is actually of the right order, as proved by Cortines, Hartung and Louidor in a second paper \cite[Proposition 1.1]{cortineshartunglouidor2021}, which means that the first and second moments of $\cC([-x,0])$ are dominated by an unlikely event, as $x \to \infty$.
	This event already appeared in the proof of Equation \eqref{eq:equiv_C_x} in \cite{cortineshartunglouidor2019}, when working with $\cC_{t,r_t}^*([-x,0])$ under $\widetilde{\P}_t$, before taking the limit $t \to \infty$. It consists in the fact that the spine is going sufficiently high at a time $t-s$ with $s$ of order $x^2$, more precisely, it can be defined, for some small $\eta > 0$ and large $M>0$, as
	\begin{equation} \label{eq:event}
		\left\{ \max_{s \in [\eta x^2,\eta^{-1} x^2]} \left( h_{t-s}(X_{t-s}) -m_t + m_s \right) \in [-M,M] \right\},
	\end{equation}
	or in the same way but with $[-M,\infty)$ instead of $[-M,M]$%
	\footnote{
		To see that both choices are roughly equivalent, note that, if $h_{t-s}(X_{t-s}) \simeq m_t - m_s + y$, then the particles branching from the spine around time $s$ typically (under $\widetilde{\P}$) have descendants at time $t$ close to $m_t+y$.
		Hence, the conditioning in the definition of $\widetilde{\P}_t$ implies that $h_{t-s}(X_{t-s})$ cannot be much larger than $m_t - m_s$.
	}.
	This event has a probability of order $1/x$ and, on this event, $\cC_{t,r_t}^*([-x,0])$ is typically of order $x \e^{\sqrt{2} x}$.
	Moreover, it is the dominating event in the first and second moments of $\cC_{t,r_t}^*([-x,0])$ and this implies, up to multiplicative constants, that
	\[
		\Ec{\cC_{t,r_t}^*([-x,0])} \simeq \e^{\sqrt{2} x}
		\qquad \text{and} \qquad 
		\Ec{\cC_{t,r_t}^*([-x,0])^2} \simeq x \e^{2\sqrt{2} x},
	\]
	in accordance with Equations \eqref{eq:equiv_C_x} and \eqref{eq:2nd_moment_C_x}. See \cite[Section 1.3]{cortineshartunglouidor2021} for a more detailed version of this heuristic picture, which plays a crucial role in the arguments used in this paper for the proofs of Theorems \ref{thm:THM1_BBM} and \ref{thm:THM2_DECORATED}.
	We also mention here that the question of the typical size of $\cC([-x,0])$ has been investigated in the physics literature \cite{MuellerMunier2020,LeMuellerMunier2022}, where it is conjectured that it differs from $\e^{\sqrt{2} x}$ by a stretched-exponentially small factor $\e^{-\xi x^{2/3}}$ where $\xi$ is a positive random variable. 
	This result has been proved after the first version of this paper in \cite{HarLouWu2024}.
\end{rem}

\medskip

From these estimates on level sets of the decoration, we can deduce a first moment estimate and a second moment bound for $S_\beta = \sum_{k\geq 0} \e^{\beta d_k}$ stated in the following proposition.

\medskip

\begin{prop}[BBM case]
\label{prop:moments_points_extremaux}
	Let $C_\star > 0$ be the constant appearing in Equation \eqref{eq:equiv_C_x}.
	As $\beta \downarrow 1$, we have
	\begin{equation} \label{eq:moment1_Sbeta}
	\Ec{S_\beta} \sim \frac{C_\star}{\beta-1},
	\end{equation}
	and there exists a constant $C > 0$ such that, for any $\beta \in (1,2]$,
	\begin{equation} \label{eq:moment2_Sbeta}
	\Ec{S_\beta^2} \leq \frac{C}{(\beta-1)^3}.
	\end{equation}
\end{prop}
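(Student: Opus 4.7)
The plan is to reduce both estimates to integrals of the counting function $u \mapsto \cD([-u,0])$ and then invoke the controls \eqref{eq:equiv_C_x}--\eqref{eq:2nd_moment_C_x}. Writing $e^{\beta d_k} = \beta \int_{-d_k}^\infty e^{-\beta u} \, du$ for each $k$ (valid since $d_k \leq 0$) and applying Fubini yields the master identity
\[
S_\beta \;=\; \beta \int_0^\infty e^{-\beta u} \, \cD([-u,0]) \, du,
\]
after which substituting $u = \sqrt{2}\, x$ aligns everything with the scaling of the cited estimates.

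For \eqref{eq:moment1_Sbeta}, I would take expectations and use Fubini to obtain
\[
\E[S_\beta] \;=\; \beta \sqrt{2} \int_0^\infty e^{-\beta \sqrt{2}\, x} \, \E\bigl[\cD([-\sqrt{2}\, x, 0])\bigr] \, dx,
\]
and then run a split-the-integral argument with a cutoff at large but fixed $X$: on $[X, \infty)$ the asymptotic \eqref{eq:equiv_C_x} replaces the expectation by $C_\star e^{\sqrt{2} x}(1+o_X(1))$, producing the dominant term $\sim C_\star/(\beta - 1)$, while the uniform bound \eqref{eq:bound_C_x} shows the contribution from $[0, X]$ is $O(1)$ and controls the tail error uniformly. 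Letting $X \to \infty$ after $\beta \downarrow 1$ yields $\E[S_\beta] \sim C_\star/(\beta - 1)$.

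The main obstacle is \eqref{eq:moment2_Sbeta}: the naive bounds (plain Cauchy--Schwarz in the representation above, or the pointwise estimate $\cD([-u,0])\,\cD([-v,0]) \leq \cD([-(u \vee v), 0])^2$) combined with \eqref{eq:2nd_moment_C_x} produce an integrand of order $(x+1)\, e^{-\sqrt{2}(\beta - 2) x}$, which is not integrable for $\beta \in (1, 2]$; put differently, the quadratic second-moment growth $e^{2\sqrt{2} x}$ of $\cD([-\sqrt{2} x,0])$ cannot be absorbed by $e^{-\beta \sqrt{2} u}$ alone when $\beta \leq 2$. To remedy this I would use a weighted Cauchy--Schwarz: splitting $e^{-\beta u} = e^{-\alpha u} \cdot e^{-(\beta - \alpha) u}$ for a small parameter $\alpha \in (0, \beta - 1)$ to be optimized later,
\[
S_\beta^2 \;\leq\; \frac{\beta^2}{2\alpha} \int_0^\infty e^{-2(\beta - \alpha) u} \, \cD([-u,0])^2 \, du.
\]
Taking expectations, substituting $u = \sqrt{2}\, x$, and invoking \eqref{eq:2nd_moment_C_x} gives
\[
\E[S_\beta^2] \;\leq\; \frac{\beta^2 \sqrt{2}\, C}{2\alpha} \int_0^\infty (x+1) \, e^{-2\sqrt{2}(\beta - \alpha - 1)\, x} \, dx.
\]
The choice $\alpha = (\beta - 1)/2$ makes the exponential rate equal to $\sqrt{2}(\beta - 1)$, so the integral contributes $O((\beta-1)^{-2})$, the prefactor $1/\alpha$ contributes $O((\beta-1)^{-1})$, and $\beta^2$ is bounded for $\beta \in (1, 2]$; the product is $O((\beta-1)^{-3})$ uniformly in this range, which is exactly \eqref{eq:moment2_Sbeta}.
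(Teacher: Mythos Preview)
Your proof is correct. The first-moment argument is identical to the paper's; the difference lies in the second moment.

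For $\E[S_\beta^2]$, the paper does not use your weighted Cauchy--Schwarz with an auxiliary parameter~$\alpha$. Instead it expands $S_\beta^2$ as the double integral
\[
\E[S_\beta^2] = \beta^2 \int_0^\infty \int_0^\infty \E\bigl[\cD([-x,0])\,\cD([-y,0])\bigr]\, e^{-\beta(x+y)}\, \diff x\, \diff y,
\]
applies Cauchy--Schwarz \emph{inside the expectation} to get $\E[\cD([-x,0])\cD([-y,0])] \leq C\sqrt{(x+1)(y+1)}\, e^{x+y}$ via \eqref{eq:2nd_moment_C_x}, and observes that the resulting double integral factorizes as $\bigl(\int_0^\infty \sqrt{x+1}\, e^{-(\beta-1)x}\, \diff x\bigr)^2 \leq C(\beta-1)^{-3}$. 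The key point you missed is that Cauchy--Schwarz at the level of the cross-moment, rather than at the level of the integral defining $S_\beta$, yields $\sqrt{(x+1)(y+1)}$ instead of $(x\vee y)+1$, and this milder growth is integrable against $e^{-(\beta-1)(x+y)}$ without any weighting trick. Your route reaches the same bound but requires the extra optimization over $\alpha$; the paper's route is shorter and avoids introducing a free parameter.
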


\medskip

\begin{proof}
	We write $S_\beta = \int_0^{\infty} \cD([-x,0]) \beta\e^{-\beta x}\diff x$.
	Therefore, by Fubini's theorem and Equation \eqref{eq:equiv_C_x},
	\begin{align*}
	\Ec{S_\beta}
	= \int_{0}^{\infty} \left(C_\star \e^{x}+o(\e^x)\right) \beta\e^{-\beta x}\dx 
	\underset{\beta\downarrow 1}{\sim} \frac{C_\star}{\beta -1}.
	\end{align*}
	Similarly, applying Fubini's theorem and then Cauchy-Schwarz inequality together with Equation \eqref{eq:2nd_moment_C_x} yields
	\begin{align*}
	\Ec{S_\beta^2}
	& = \int_{0}^{\infty} \int_{0}^{\infty} \es\left[\cD([-x,0]) \cD([-y,0])\right]\, \beta^2\e^{-\beta (x+y)}\dx\mathrm dy\\
	&\leq C\int_{0}^{\infty} \int_{0}^{\infty} \sqrt{(x+1)(y+1)}\, \e^{x+y}\, \beta^2\e^{-\beta (x+y)}\dx\mathrm dy\\
	&= C \left(\int_{0}^{\infty} \sqrt{x+1}\, \e^{x}\, \beta\e^{-\beta x}\dx\right)^2
	\leq \frac{C}{(\beta-1)^3},
	\end{align*}
	which concludes the proof.
\end{proof}

\medskip

\begin{rem} \label{rem:heuristic_picture_BBM}
	As for the level sets of the decoration $\cC$ (see Remark \ref{rem:level_sets}), the moments of $S_\beta$ are governed by a rare event when $\beta \downarrow 1$.
	Heuristically, everything behaves as if $S_\beta$ was of order $1$ except on an event of small probability $(\beta-1)$ on which it is of order $1/(\beta-1)^2$.
	This event can be described when working on $\cC^*_{t,r_t}$ (before taking the limit $t \to \infty$ in the description of the decoration \eqref{eq:description_deco}) and is given by the event in Equation \eqref{eq:event} with $x = 1/(\beta-1)$.
	This heuristic picture is consistent with Proposition \ref{prop:moments_points_extremaux} and also with the further results in Lemma \ref{lem:small_moments} and
	Corollary \ref{cor:encadrement_moment_S_beta}.
\end{rem}

\medskip

\begin{cor}[BBM case]
\label{cor:moment_1/beta}
	Let $C_\star > 0$ be the constant appearing in Equation \eqref{eq:equiv_C_x}.
	As $\beta \downarrow 1$, we have
	\[
	\Ec{S_\beta^{1/\beta}}
	\sim \Ec{S_\beta^{2-1/\beta}}
	\sim \frac{C_\star}{\beta-1}.
	\]
\end{cor}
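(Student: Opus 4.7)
The plan is to unify both asymptotics by establishing the slightly more general statement: for any exponent $p = p(\beta)$ satisfying $|p-1| = O(\beta-1)$ as $\beta \downarrow 1$, one has $\Ec{S_\beta^{p}} \sim \Ec{S_\beta}$. Since $|p-1| = (\beta-1)/\beta$ in both cases $p = 1/\beta$ and $p = 2-1/\beta$, combining this with the asymptotic $\Ec{S_\beta} \sim C_\star/(\beta-1)$ from Proposition \ref{prop:moments_points_extremaux} yields the corollary.

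I would start from the pointwise inequality
\[
\left| S_\beta^p - S_\beta \right| \;=\; S_\beta\, \left|\e^{(p-1)\log S_\beta} - 1\right| \;\leq\; |p-1|\, S_\beta^{1+|p-1|} \log S_\beta,
\]
using the elementary bound $|\e^a-1| \leq |a|\e^{|a|}$ together with the fact that $S_\beta \geq 1$ (since $\cD$ has an atom at $0$, so $\log S_\beta \geq 0$). Replacing the logarithm via the inequality $\log x \leq x^\delta/\delta$, valid for $x \geq 1$ and any fixed $\delta > 0$, gives
\[
\Ec{\left|S_\beta^p - S_\beta\right|} \;\leq\; \frac{|p-1|}{\delta}\, \Ec{S_\beta^{1+|p-1|+\delta}}.
\]

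For the moment on the right, for $\beta$ close to $1$ and $\delta \in (0,1/2)$ fixed, the exponent $q \coloneqq 1+|p-1|+\delta$ lies in $(1,2)$, so log-convexity of moments (a one-line Hölder interpolation between $L^1$ and $L^2$) gives $\Ec{S_\beta^q} \leq \Ec{S_\beta}^{2-q}\Ec{S_\beta^2}^{q-1}$. Plugging in the two bounds from Proposition \ref{prop:moments_points_extremaux} yields $\Ec{S_\beta^q} = O\bigl((\beta-1)^{-1-2|p-1|-2\delta}\bigr)$, and the factor $(\beta-1)^{-2|p-1|}$ tends to~$1$ because $|p-1| = O(\beta-1)$ implies $|p-1|\log(\beta-1) \to 0$, so that $\Ec{S_\beta^q} = O\bigl((\beta-1)^{-1-2\delta}\bigr)$.

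Combining the two estimates gives $\Ec{|S_\beta^p - S_\beta|} = O\bigl((\beta-1)^{-2\delta}\bigr)$, which is $o\bigl((\beta-1)^{-1}\bigr) = o(\Ec{S_\beta})$ for any $\delta < 1/2$; hence $\Ec{S_\beta^p} \sim \Ec{S_\beta}$, and Proposition \ref{prop:moments_points_extremaux} finishes the argument. The only delicate point is the choice of $\delta$: it must be small enough so that $(\beta-1)^{-2\delta}$ is negligible compared to $(\beta-1)^{-1}$, while keeping $q$ strictly inside $(1,2)$ so that Hölder interpolation is available; any fixed $\delta \in (0, 1/2)$ works, and this is precisely what makes the rather crude $L^2$ bound in Proposition \ref{prop:moments_points_extremaux} sufficient.
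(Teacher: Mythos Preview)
Your proof is correct. Both your argument and the paper's rely on the same two ingredients---the first and second moment estimates from Proposition~\ref{prop:moments_points_extremaux} and H\"older interpolation---but the packaging differs. The paper sandwiches $\Ec{S_\beta^{1\pm\ep}}$ directly via the inequalities
\[
\Ec{Y}^{1+\ep}\Ec{XY}^{-\ep}\leq \Ec{X^{-\ep}Y}\leq \Ec{Y},
\qquad
\Ec{Y}\leq \Ec{X^{\ep}Y}\leq \Ec{Y}^{1-\ep}\Ec{XY}^{\ep},
\]
applied with $X=Y=S_\beta$ and $\ep=1-1/\beta$; since $(\beta-1)^{\pm\ep}\to 1$, both bounds collapse to $C_\star/(\beta-1)$. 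Your route instead bounds the difference $\Ec{|S_\beta^{p}-S_\beta|}$ pointwise, then controls the resulting $(1+|p-1|+\delta)$-moment by the same H\"older interpolation between $L^1$ and $L^2$. This is a couple of lines longer but yields the slightly more general statement (valid for any $p$ with $|p-1|=O(\beta-1)$), while the paper's sandwich is more compact. Neither approach offers a real advantage over the other here.
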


\medskip

\begin{proof}
	This is a consequence of the following inequalities: for any real random variables $X \geq 1$ and $Y \geq 0$ and any $\ep \in (0,1)$,
	\begin{equation} 
	\Ec{Y}^{1+\ep} \Ec{XY}^{-\ep}
	\leq \Ec{X^{-\ep} Y} 
	\leq \Ec{Y} 
	\qquad \text{and} \qquad 
	\Ec{Y}
	\leq \Ec{X^\ep Y} 
	\leq \Ec{Y}^{1-\ep} \Ec{XY}^\ep, \label{eq:ineq_X_Y}
	\end{equation}
	which follows on one side simply from the fact that $X \geq 1$ and on the other side from H\"older's inequality.
	These inequalities with $X = Y = S_\beta$ and $\ep = 1 - 1/\beta$ together with Proposition \ref{prop:moments_points_extremaux} yield the desired result.
\end{proof}

\medskip

\begin{rem} \label{rem:1-stable_2}
	Recall from Corollary \ref{cor:change_of_measure} that $Z_{\mathrm{d}}(\beta)$ has the same distribution as $\E[S_\beta^{1/\beta}]^{\beta} Z(\beta)$.
	On the other hand, we have seen in Lemma \ref{lem:Z_as} that $Z(\beta) = \frac{1}{\beta -1} + W + o(1)$ almost surely, as $\beta \downarrow 1$, where $W \sim {\rm Stable}_1 \left(\frac{\pi}{2},1,1-\gamma\right)$.
	Combining this with Corollary \ref{cor:moment_1/beta} above, we get the following fluctuation result for $Z_{\mathrm{d}}(\beta)$:
	\[
	(\beta-1) \left( Z_{\mathrm{d}}(\beta) - \frac{\Ec{S_\beta^{1/\beta}}^{\beta}}{\beta-1} \right)
	\xrightarrow[\beta \downarrow 1]{(\mathrm{d})} C_\star  \, W.
	\]
	Note that the deterministic centering term used here for $Z_{\mathrm{d}}(\beta)$ is not explicit: it is asymptotically equivalent to $C_\star/(\beta-1)^2$ but could involve other non-negligible terms.
	As mentioned in Remark~\ref{rem:1-stable}, these 1-stable fluctuations appear for other quantities describing the front of the BBM.
	The content of this remark has been suggested by Xinxin Chen.
\end{rem}

\medskip

To conclude this section we state two new results on level sets of the decoration. The first one is a bound on cross-moments of level sets, which can be of independent interest. Its proof is postponed to Section \ref{sec:cross_moment}.

\medskip

\begin{prop}[BBM case]
 \label{prop:crossed}
	There exists $C > 0$ such that, for any $v \geq v' \geq 0$,
	\[
	\Ec{\cC([-v,0])\cC([-v',0])}
	\leq C (v'+1) \e^{\sqrt{2}(v+v')}.
	\]
\end{prop}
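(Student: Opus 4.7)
The plan is to combine a simple decomposition of the level sets with techniques from \cite{cortineshartunglouidor2019,cortineshartunglouidor2021}. Since $v \geq v'$, writing $\cC([-v,0]) = \cC([-v',0]) + \cC([-v,-v'))$ yields
\[
\Ec{\cC([-v,0])\cC([-v',0])} = \Ec{\cC([-v',0])^2} + \Ec{\cC([-v,-v'))\cC([-v',0])}.
\]
The first term is bounded directly by \eqref{eq:2nd_moment_C_x}: $\Ec{\cC([-v',0])^2} \leq C(v'+1)\e^{2\sqrt{2}v'} \leq C(v'+1)\e^{\sqrt{2}(v+v')}$. The crux is to obtain the same bound for the cross term; note that a naive Cauchy--Schwarz gives only $C\sqrt{(v+1)(v'+1)}\e^{\sqrt{2}(v+v')}$, which is too weak by a factor $\sqrt{v+1}$.

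To get the sharp bound on $\Ec{\cC([-v,-v'))\cC([-v',0])}$, I would pass to the pre-limit via \eqref{eq:description_deco} and bound $\hEc{\cC_{t,r_t}^*([-v,-v'))\cC_{t,r_t}^*([-v',0])}$ uniformly in $t$, then invoke Fatou's lemma. Under $\hPp{\cdot}$, the standard spine decomposition represents the particles within distance $r_t$ of the spine as the union of independent BBMs rooted at positions $X_{t-s_i}$ at the spine's branching times $t-s_i$, which form a Poisson point process of rate $2$ on $[0,t]$. The cross moment then splits naturally into an \emph{off-diagonal} part (the two particles belong to distinct subtrees) and a \emph{diagonal} part (they belong to the same subtree). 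The off-diagonal part factorizes conditionally on the spine, producing a product of two first-moment contributions that are each bounded by \eqref{eq:bound_C_x}, for a total of order $\e^{\sqrt{2}(v+v')}$, well within the target.

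The main obstacle is the diagonal contribution. For a subtree of age $s$ rooted at $X_{t-s}$, the many-to-two formula inside this BBM counts the relevant pairs via
\[
 2\int_0^s \e^{s+u}\, \P\left(B_{s-u}+B^1_u \in A - X_{t-s},\ B_{s-u}+B^2_u \in B - X_{t-s}\right) \diff u,
\]
with $A = [-v,-v')+m_t$ and $B = [-v',0]+m_t$, where $B, B^1, B^2$ are independent standard Brownian motions. This integrand must then be integrated against the joint law of the spine trajectory and the branching times under $\hPp{\cdot}$. The heuristic of Remark \ref{rem:level_sets} indicates why the prefactor should be $(v'+1)$ rather than $(v+1)$: the enhancement factor $x+1$ in $\Ec{\cC([-x,0])^2}$ comes from a rare event of probability $\sim 1/x$ in which the spine rises to an atypical height at a time $s \sim x^2$; in the cross moment, the smaller scale $v'$ controls the relevant rare event, while $\cC([-v,-v'))$ contributes mostly at its unconditional first-moment level $\e^{\sqrt{2}v}$ outside this event. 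Executing the Gaussian computation sharply to extract this asymmetry---in particular, showing that the larger downward distance to reach $A$ enters only through the exponential factor while the polynomial prefactor is generated by the small-scale rare event---yields the desired bound and completes the proof.
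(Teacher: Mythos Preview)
Your outline has a real gap in the off-diagonal step, and the diagonal step is left entirely at the heuristic level.

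For the off-diagonal part you write that it ``factorizes conditionally on the spine, producing a product of two first-moment contributions that are each bounded by \eqref{eq:bound_C_x}''. The factorization conditionally on the spine is correct, but what you then need to bound is
\[
\hEc{\,\E\!\left[\cC_{t,r_t}^*(A)\mid \text{spine}\right]\cdot\E\!\left[\cC_{t,r_t}^*(B)\mid \text{spine}\right]\,},
\]
and the two conditional first moments are \emph{positively correlated} through the spine trajectory. The bound \eqref{eq:bound_C_x} is an unconditional first-moment bound and does not control these random conditional means; in particular, on the rare event of Remark~\ref{rem:level_sets} both conditional means blow up simultaneously. If you try Cauchy--Schwarz here you are back to $\sqrt{(v+1)(v'+1)}\,\e^{\sqrt{2}(v+v')}$, which you already noted is too weak. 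So the off-diagonal is not ``well within the target'' for free; it requires exactly the same asymmetric Gaussian analysis as the diagonal part.

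The paper does not use your preliminary split $\cC([-v,0])=\cC([-v',0])+\cC([-v,-v'))$ (which does not simplify matters, since the cross term is as hard as the original statement). Instead it re-runs the two-spine machinery of \cite{cortineshartunglouidor2019} with two different levels: it proves asymmetric versions of their Lemmas~4.3, 4.4, 5.4, 5.5 and 5.3, obtaining a bound on $j_{t,v,v'}(s,s')\coloneqq\widehat{\E}_t[J_{t,v}(s)J_{t,v'}(s')]$ that carries the Gaussian factor $\e^{-v^2/(16s)}+\e^{-v/4}$ associated with the \emph{larger} level $v$. Integrating this over $s,s'$ produces a factor $(v+1)(v'+1)\cdot\frac{1}{v+1}=(v'+1)$, which is precisely the asymmetry your heuristic predicts. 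The point is that this asymmetry has to be extracted at the level of the two-spine probability estimate (the analogue of \cite[Lemma~4.3]{cortineshartunglouidor2019}) and then propagated through the subsequent lemmas; it is not a consequence of a decomposition into diagonal and off-diagonal pieces. Your final paragraph names the right phenomenon but does not contain the computation.
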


\medskip

The last result of this section is a uniform bound on the first and second moment of the level sets at finite $t$, which follows from the proofs of Equations \eqref{eq:equiv_C_x} and \eqref{eq:2nd_moment_C_x} in \cite{cortineshartunglouidor2019}, as explained in Section~\ref{sec:unif_bound}.

\medskip

\begin{lem}[BBM case]
\label{lem:moment_C_t_r_t}
	There exists $C > 0$ such that for any $v \geq 0$ and $t \geq 1$,
	\begin{align*} 
	\hEc{\cC_{t,r_t}^*([-v,0])} & \leq C \,\e^{\sqrt{2}v}, \\
	\hEc{\cC_{t,r_t}^*([-v,0])^2} & \leq C(v+1) \,\e^{2\sqrt{2}v}.
	\end{align*}
\end{lem}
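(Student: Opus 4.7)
The plan is to follow the strategy used by Cortines, Hartung and Louidor for their Proposition 1.5 in \cite{cortineshartunglouidor2019}, which implicitly establishes these bounds at the pre-limit level, and to verify that the resulting constants are independent of $t \geq 1$. The starting point is the spine decomposition under $\widetilde{\P}$: along the spine, branchings occur at rate $2$; at each such branching time, a fair coin chooses the spine offspring and the other offspring initiates an independent standard BBM (with branching rate $1$). Denoting by $\mathcal{B}_t \subset (t-r_t, t)$ the set of spine branching times in the last $r_t$ units, and by $\cN^{(s)}$ the BBM issued at the branching happening at time $s \in \mathcal{B}_t$, one has the exact identity
\[
\cC_{t,r_t}^*([-v,0]) = \ind_{\{h_t(X_t) \in [m_t-v, m_t]\}} + \sum_{s \in \mathcal{B}_t} \#\bigl\{y \in \cN^{(s)}_{t} : h_t(y) \in [m_t - v, m_t]\bigr\}\,,
\]
where the first term equals~$1$ under $\widetilde{\P}_t$ for every $v \geq 0$.

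To control the first moment, I would condition on the spine trajectory and on the branching times (a Poisson process of rate $2$), so that each sub-BBM is independent with given starting time $s$ and starting position $h_{t-s}(X_{t-s})$. The inner expectations are then handled by the many-to-one lemma, reducing the count of descendants landing in $[m_t - v, m_t]$ to a Gaussian density integrated against a ballot-type probability that the ancestor Brownian path stays below the spine profile. Under $\widetilde{\P}_t$, the spine is a Brownian bridge from $0$ to $m_t$ conditioned to be maximal at time $t$, whose marginal densities can be bounded uniformly in $t$ by standard Brownian bridge/meander estimates, using the condition $t \geq 1$ to avoid any short-time singularity. Combined with the elementary sum $\sum_{u \in [0,v]} \e^{\sqrt{2}(v - u)} \asymp \e^{\sqrt{2} v}$ arising from the Gaussian weight, the integration over $s \in (0, r_t)$ produces the uniform bound $C \e^{\sqrt{2} v}$.

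For the second-moment bound, I would square the identity above, expand, and split the resulting double sum into the diagonal contribution (pairs of particles coming from the same sub-BBM) and the off-diagonal contribution (pairs coming from two distinct sub-BBMs). The off-diagonal contribution factorises, by independence of the sub-BBMs conditionally on the spine, into a sum of products of single sub-BBM first moments, which by the first-moment analysis above is bounded by a constant times $\e^{2\sqrt{2} v}$. The diagonal contribution requires the second moment of the count of particles of a single sub-BBM falling into $[m_t - v, m_t]$; this is handled by the classical BBM second-moment method, where the extra $(v+1)$ factor appears naturally from the integral, over the split time of two lineages, of the ballot-type probability $1/\sqrt{s}$ times the Gaussian tail. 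Summing the two contributions and integrating over the spine distribution produces the claimed uniform bound $C(v+1) \e^{2 \sqrt{2} v}$.

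The main obstacle, and indeed the only nontrivial point, is verifying that all constants in the ballot estimates, the many-to-one computations and the spine bridge density bounds can be taken independently of $t \geq 1$. This is already the case in the intermediate lemmas of \cite{cortineshartunglouidor2019} (Lemmas~3.3, 5.2 and~5.3 there), since the authors first prove uniform-in-$t$ estimates on $\widetilde{\E}_t[\cC_{t,r_t}^*([-v,0])]$ and $\widetilde{\E}_t[\cC_{t,r_t}^*([-v,0])^2]$ and only then pass to the limit $t \to \infty$ via the vague convergence \eqref{eq:description_deco} and a Fatou-type argument. Extracting these finite-$t$ bounds from their proofs, together with the restriction $t \geq 1$ which handles the small-time regime, gives exactly the two inequalities claimed.
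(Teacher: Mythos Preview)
Your approach is essentially the same as the paper's: both extract the uniform-in-$t$ bounds from the proofs (not the statements) of \cite[Lemmas 5.2 and 5.3]{cortineshartunglouidor2019}, via the spine decomposition and the $j_{t,v}(s)$ estimates of \cite[Lemma 5.5]{cortineshartunglouidor2019}. Your diagonal/off-diagonal split for the second moment is exactly what underlies the paper's Lemma~\ref{lem:new_5.3}.

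There is, however, one concrete point you skate over. The conditioning in $\widetilde{\P}_t$ is on $\{h_t(X_t) = \max_{x\in L_t} h_t(x) = m_t\}$, and the way this is handled (in \cite{cortineshartunglouidor2019} and here) is to write
\[
\hEc{\cC_{t,r_t}^*([-v,0])}
= \frac{\widetilde{\E}\bigl[\cC_{t,r_t}^*([-v,0])\,\1_{\{\max h_t \leq m_t\}} \mid h_t(X_t) = m_t\bigr]}
{\widetilde{\P}\bigl(\max h_t \leq m_t \mid h_t(X_t) = m_t\bigr)}.
\]
The denominator is asymptotically $C_1/t$, hence bounded below by $c/t$ only for $t \geq t_0$. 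The intermediate estimate \cite[Lemma 5.5]{cortineshartunglouidor2019} that you implicitly invoke also requires $s \in [0,t/2]$, i.e.\ $r_t \leq t/2$, which again needs $t \geq t_0$. Your sentence ``the restriction $t \geq 1$ handles the small-time regime'' does not explain what happens for $1 \leq t \leq t_0$: the bridge/meander densities and the ballot estimates do not automatically give a uniform constant there. The paper closes this gap by a separate crude argument for $t \leq t_0$, bounding $\cC_{t,r_t}^*([-v,0]) \leq \#L_{t_0}$ and using $\widetilde{\E}[\#L_{t_0}] < \infty$ and $\widetilde{\E}[(\#L_{t_0})^2] < \infty$. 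You should add this step explicitly.
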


\bigskip

\section{Overlap distribution at two temperatures}\label{ov2t}

\medskip
\subsection{Results in the general decorated case}
\label{sec:general_decorated_case}

\medskip

We study here the influence of the decorations on the behavior of $\E[Q_{\mathrm{d}}(\beta,\beta')]$, when $\beta'>\beta_c=1$ is fixed and $\beta \downarrow 1$ and show that they can change the leading order drastically. 
We work in the general decorated case (see Section \ref{sec:change_of_measure}) in order to highlight the fact that the BBM has a critical behavior in some sense to be made precise below.

We assume that 
\begin{equation} \label{eq:hyp_de_base}
	\Ec{S_\beta^{1/\beta}} < \infty, \qquad \forall \, \beta > 1.
\end{equation}
Recall from Corollary \ref{cor:change_of_measure} and Remark \ref{rem:deco_part_fc} that it is a necessary and sufficient condition to have $Z_{\mathrm{d}}(\beta) < \infty$ almost surely.
In particular, this ensures that $Q_{\mathrm{d}}(\beta,\beta')$, introduced in Equation \eqref{def:Q_d} and which can be written as
\begin{equation} \label{def:Q_d_2}
	Q_{\mathrm{d}}(\beta,\beta')
	= \frac{\sum_{i} \e^{(\beta+\beta') \xi_i} S_{\beta,i} S_{\beta',i}}
	{\left(\sum_{i} \e^{\beta \xi_i} S_{\beta,i} \right) 
		\left(\sum_{i} \e^{\beta' \xi_i} S_{\beta',i} \right)},
\end{equation}
is well-defined, and henceforth $\E \left[Q_{\mathrm{d}}(\beta,\beta')\right]$ as well.

In the following results, assumptions are stated in terms of the following random variables
\[
	T_\beta \coloneqq \frac{S_{\beta}^{1/\beta}}{\Ec{S_\beta^{1/\beta}}}, 
	\qquad \forall \, \beta > 1.
\]
Note that $\E[T_\beta] = 1$. 
The fact that the assumptions of the following results are stated in terms of $T_\beta$ shows that the behavior of $\E \left[Q_{\mathrm{d}}(\beta,\beta')\right]$ as $\beta \downarrow 1$ is not governed by the growth rate of $\E[S_\beta^{1/\beta}]$ as $\beta \downarrow 1$, but rather by how much $S_\beta^{1/\beta}$ is fluctuating around its mean.
In the forthcoming assumptions, these fluctuations are controlled in terms of how fast $\E[T_\beta^{1+\varepsilon}]$ explodes or $\E[T_\beta^{1-\varepsilon}]$ vanishes.

The first result concerns the case of weakly fluctuating decorations in which $\E[T_\beta^{1+\varepsilon}]$ does not explode too fast.

\medskip

\begin{thm}[General decorated case]
\label{thm:overlap_deco_1}
	Assume \eqref{eq:hyp_de_base} and that the decoration process~$\cD$ satisfies the following properties:
	\begin{enumerate}
		\item \label{it:ass_alpha} There exists $\alpha \in [0,1]$ such that
		$\Ec{T_\beta \log T_\beta } = (\alpha+o(1)) \log \frac{1}{\beta-1}$, as $\beta \downarrow 1$.
		\item \label{it:ass_1+eps} There exists $\varepsilon > 0$ such that 
		$\E[T_\beta^{1+\varepsilon}] = O((\beta-1)^{-\varepsilon})$, as $\beta \downarrow 1$.
		\item \label{it:ass_S_beta'} For any $\beta' > 1$, $\E[T_\beta \log T_{\beta'}] = O(1)$, as $\beta \downarrow 1$.
	\end{enumerate}
	Then, for any $\beta' > 1$, as $\beta \downarrow 1$,
	\[
	\E \left[Q_{\mathrm{d}}(\beta,\beta')\right] 
	= (1-\alpha+o(1)) (\beta-1) \log\frac{1}{\beta-1}.
	\]
	If moreover, we assume
	\begin{enumerate}[label=(\roman*')]
		\item \label{it:ass_alpha'} There exists $\alpha \in [0,1]$ such that
		$\Ec{T_\beta \log T_\beta } = \alpha \log \frac{1}{\beta-1} + O(1)$, as $\beta \downarrow 1$,
	\end{enumerate}
	then, for any $\beta' > 1$, as $\beta \downarrow 1$,
	\[
	\E \left[Q_{\mathrm{d}}(\beta,\beta')\right] 
	= (1-\alpha) (\beta-1) \log\frac{1}{\beta-1}+ O(\beta-1).
	\]
\end{thm}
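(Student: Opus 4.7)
The plan is to apply the change of measure in Lemma~\ref{lem:change_of_measure} and reduce to an integral representation that can be compared directly to the REM case. Writing $\eta_i := \xi_i + \frac{1}{\beta}\log S_{\beta,i}$, one checks the algebraic identities $e^{\beta\xi_i}S_{\beta,i} = e^{\beta\eta_i}$ and $e^{\beta'\xi_i}S_{\beta',i} = e^{\beta'\eta_i}R_i$ with $R_i := S_{\beta',i}/S_{\beta,i}^{\beta'/\beta} \in (0,1]$ (the upper bound following from the nesting of $\ell^p$-norms). In the ratio defining $Q_d$ the $c_\beta$ factors cancel between numerator and denominator, and Lemma~\ref{lem:change_of_measure} yields
\[
	\E[Q_d(\beta,\beta')] = \E_\beta\!\left[\frac{\sum_i e^{(\beta+\beta')\xi_i}R_i}{(\sum_i e^{\beta\xi_i})(\sum_i e^{\beta'\xi_i}R_i)}\right],
\]
with $R_i$ i.i.d.\ copies, independent of the PPP, of the $S_\beta^{1/\beta}$-tilted law of $R$.

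I would then apply the Campbell--Mecke formula to isolate one atom of the PPP together with an independent tilted copy $R$, and perform the substitutions $v = e^{\beta x}$ and $w = v/Z(\beta)$ to obtain the compact representation
\[
	\E[Q_d(\beta,\beta')] = \frac{1}{\beta}\,\E_\beta\!\left[\frac{I(A)}{Z(\beta)^{1/\beta}}\right],\quad I(A) := \int_0^\infty\!\frac{w^{(\beta'-1)/\beta}\,dw}{(1+w)(A+w^{\beta'/\beta})},
\]
where $Z(\beta) = \sum_i e^{\beta\xi_i}$, $\widetilde Z(\beta') := \sum_i e^{\beta'\xi_i}R_i$ and $A = \widetilde Z(\beta')/(Z(\beta)^{\beta'/\beta}R)$. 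The case $R \equiv 1$ recovers the analogous representation of $\E[Q(\beta,\beta')]$ which, combined with the estimate \eqref{a5}, yields Theorem~\ref{thm:THM1_REM}.

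Splitting the integral at $w_0 = A^{\beta/\beta'}$ yields the pointwise asymptotic $I(A) = \frac{1}{\beta'}\log(1/A) + O(1)$, valid in the regime where $(\beta-1)\log(1/A)$ stays bounded. Plugging in the decomposition $\log(1/A) = \frac{\beta'}{\beta}\log Z(\beta) - \log\widetilde Z(\beta') + \log R$ reduces the computation to three expectations. A further application of Lemma~\ref{lem:change_of_measure} (shifting by $\frac{1}{\beta'}\log R_i$ and using $\E_\beta[R^{1/\beta'}] = \E[S_{\beta'}^{1/\beta'}]/\E[S_\beta^{1/\beta}]$) gives the distributional identity $\log\widetilde Z(\beta') \overset{(d)}{=} \beta'(c_{\beta'}-c_\beta) + \log Z(\beta')$, so that the $c_\beta$-dependent parts of $\E_\beta[\log R] = -\beta'\alpha\log\frac{1}{\beta-1} - \beta'c_\beta + o(\log\frac{1}{\beta-1})$ (obtained from assumption (i) via $\E_\beta[\log S_\beta] = \beta(\E[T_\beta\log T_\beta] + c_\beta)$ and from assumption (iii) via $\E_\beta[\log S_{\beta'}] = O(1)$) and those of $\E_\beta[\log\widetilde Z(\beta')]$ exactly cancel. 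The $\log Z(\beta)$ contribution, controlled by \eqref{a5}, produces the REM leading term $(\beta-1)\log\frac{1}{\beta-1}$; what remains of the $\log R$ contribution subtracts $\alpha(\beta-1)\log\frac{1}{\beta-1}$; summing yields $(1-\alpha)(\beta-1)\log\frac{1}{\beta-1}$.

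The main obstacle is to upgrade the pointwise asymptotic $I(A) \sim \frac{1}{\beta'}\log(1/A)$ into an integrable one: on the atypical event where $R$ is very small (equivalently $A$ very large), $I(A)$ saturates at $O(\beta/(\beta-1))$ rather than growing logarithmically. Assumption (ii), $\E[T_\beta^{1+\varepsilon}] = O((\beta-1)^{-\varepsilon})$, is used precisely here via a H\"older-type split to bound the contribution of this rare event by $o((\beta-1)\log\frac{1}{\beta-1})$. The sharper error $O(\beta-1)$ under assumption (i') is then obtained by tracking $O(\beta-1)$ corrections in each step of the argument using the estimates \eqref{a1}--\eqref{a5}, which is routine once the leading-order argument has been set up.
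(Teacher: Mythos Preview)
Your approach is correct and closely parallels the paper's, though with a different ordering of the two main tools. The paper first applies the Palm formula (Lemma~\ref{lem:expression_F_deco}) to obtain the integral representation
\[
\E[Q_{\mathrm d}(\beta,\beta')] = \Ec{\bigl(S_{\beta'}/Z_{\mathrm d}(\beta')\bigr)^{1/\beta'} I(R)},\qquad R=(Z_{\mathrm d}(\beta)/S_\beta)^{1/\beta}(S_{\beta'}/Z_{\mathrm d}(\beta'))^{1/\beta'},
\]
and only then invokes the change of measure (Corollary~\ref{cor:change_of_measure}) factor by factor when estimating the resulting terms $E_0,\dots,E_3$. You instead front-load Lemma~\ref{lem:change_of_measure}, work entirely under $\P_\beta$, and then apply Palm; this produces a different but equivalent integral $I(A)$ and makes the role of $T_\beta$ somewhat more transparent. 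Both routes reduce to the same expansion $\text{(integral)} \approx \text{constant}\cdot\log(\text{ratio})$, and both use assumption~\ref{it:ass_1+eps} to control the tail where this expansion fails (the paper's term $E_3$).

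Two points to tighten. First, the distributional identity $\log\widetilde Z(\beta')\overset{(\mathrm d)}{=}\beta'(c_{\beta'}-c_\beta)+\log Z(\beta')$ is only \emph{marginal}; to handle $\E_\beta\bigl[\log\widetilde Z(\beta')/Z(\beta)^{1/\beta}\bigr]$ you must separate the deterministic shift and then control the remaining $\log Z(\beta')$-distributed part jointly with $Z(\beta)^{-1/\beta}$ via Cauchy--Schwarz (the paper does exactly this when bounding $E_{02}$). Second, your description of the bad regime is inverted: when $R$ is small one has $A$ large and $I(A)\to 0$, while the saturation at $O(\beta/(\beta-1))$ occurs as $A\to 0$. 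Both tails of $A$ require control; the $R$-small tail is the one governed by assumption~\ref{it:ass_1+eps}, precisely as in the paper's $E_3$ estimate, while the $A$-small tail needs an error bound of the type in Lemma~\ref{lem:integral}\ref{it:estimate_r_>_0} (giving a $(\beta-1)r^{-(1-\delta)}$ correction rather than a bare $O(1)$).
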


\medskip

\begin{cor}[General decorated case]
\label{cor:overlap_small_deco}
	Assume there exists $\varepsilon > 0$ such that $\E[S_\beta^{1+\varepsilon}] = O(1)$, as  $\beta \downarrow 1$. Then, for any $\beta' > 1$, as  $\beta \downarrow 1$,
	\[
	\E \left[Q_{\mathrm{d}}(\beta,\beta')\right] 
	= (\beta-1) \log\frac{1}{\beta-1}+ O(\beta-1).
	\]
\end{cor}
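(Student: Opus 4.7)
The plan is to deduce the corollary directly from Theorem \ref{thm:overlap_deco_1} by verifying its three hypotheses, specifically with $\alpha = 0$ and with the strengthened variant (i') in place of (i), which will then produce exactly the announced $O(\beta-1)$ remainder.

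First I would record two preliminary facts. Since the decoration $\cD$ has an atom at $0$, one has $S_\beta \geq 1$ almost surely, so Jensen gives $1 \leq \Ec{S_\beta^{1/\beta}} \leq \Ec{S_\beta}^{1/\beta} = O(1)$, keeping the normalizer $\Ec{S_\beta^{1/\beta}}$ bounded between two positive constants as $\beta \downarrow 1$. In addition, because $d_k \leq 0$ the map $\beta \mapsto S_\beta$ is nondecreasing as $\beta \downarrow 1$, so monotone convergence upgrades the hypothesis $\Ec{S_\beta^{1+\varepsilon}} = O(1)$ to $\Ec{S_1^{1+\varepsilon}} < \infty$, and by comparison $\Ec{S_{\beta'}^{1+\varepsilon}} < \infty$ for every fixed $\beta' \geq 1$.

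Next I would check hypothesis (ii) and the strengthened (i'), both with $\alpha = 0$. Pick $\varepsilon' \in (0, \varepsilon)$ so that $(1+\varepsilon')/\beta \leq 1+\varepsilon$ for $\beta$ close to $1$; then $\Ec{S_\beta^{(1+\varepsilon')/\beta}} \leq \Ec{S_\beta^{1+\varepsilon}} = O(1)$ (using $S_\beta \geq 1$), whence $\Ec{T_\beta^{1+\varepsilon'}} = O(1)$. This is stronger than (ii) and, combined with the convexity inequality $x^{1+\varepsilon'} \geq x + \varepsilon' x \log x$ (valid for $x>0$) and $\Ec{T_\beta} = 1$, it immediately yields
\[
\Ec{T_\beta \log T_\beta} \leq \frac{\Ec{T_\beta^{1+\varepsilon'}} - 1}{\varepsilon'} = O(1),
\]
proving (i') with $\alpha = 0$; the matching lower bound $\Ec{T_\beta \log T_\beta} \geq -1/\mathrm{e}$ is automatic.

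For (iii), I would decompose $\log T_{\beta'} = \beta'^{-1} \log S_{\beta'} - \log \Ec{S_{\beta'}^{1/\beta'}}$: the additive constant contributes $O(1)$ because $\Ec{T_\beta} = 1$, so it suffices to bound $\Ec{T_\beta \log S_{\beta'}}$. Hölder's inequality with conjugate exponents $1+\varepsilon'$ and $q = (1+\varepsilon')/\varepsilon'$ reduces this to controlling $\Ec{(\log S_{\beta'})^q}$, which follows from the elementary inequality $\log x \leq (q/\varepsilon) x^{\varepsilon/q}$ for $x \geq 1$ together with the finiteness of $\Ec{S_{\beta'}^{1+\varepsilon}}$ noted above. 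Theorem \ref{thm:overlap_deco_1} then delivers the expansion. There is no real obstacle in this argument: it is purely a matter of exploiting the polynomial moment bound through two convexity/Hölder manoeuvres, and the only point requiring mild care is choosing Hölder exponents compatible with the available $\varepsilon$.
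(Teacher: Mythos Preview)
Your proof is correct and follows essentially the same approach as the paper: both deduce the corollary from Theorem~\ref{thm:overlap_deco_1} by verifying hypotheses (i'), (ii) and (iii) with $\alpha=0$. The paper's argument is terser---it simply uses $T_\beta \leq S_\beta^{1/\beta} \leq S_\beta$, $\log T_\beta \leq C\,T_\beta^\varepsilon$ and $S_{\beta'} \leq S_\beta$ for $\beta \leq \beta'$---whereas you go through a convexity inequality for (i') and a H\"older step plus the auxiliary bound $\Ec{S_{\beta'}^{1+\varepsilon}}<\infty$ for (iii); these are minor technical variations on the same idea.
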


\begin{proof}
	Noting that $T_\beta \leq S_\beta^{1/\beta} \leq S_\beta$, $\log T_\beta \leq C \, T_\beta^\varepsilon$ and $S_{\beta'} \leq S_\beta$ as soon as $\beta \leq \beta'$,
	Assumptions \ref{it:ass_alpha'}-\ref{it:ass_1+eps}-\ref{it:ass_S_beta'} of Theorem \ref{thm:overlap_deco_1} are satisfied with $\alpha=0$ and the result follows.
\end{proof}

\medskip

In the case $\alpha=1$, which we call the critical case, the previous theorem yields 
an upper bound for $\E \left[Q_{\mathrm{d}}(\beta,\beta')\right]$, but does not identify the main order. The following result proves 
a lower bound.

\medskip

\begin{thm}[General decorated case]
\label{thm:overlap_deco_crit}
	Assume \eqref{eq:hyp_de_base} and that there exists $\varepsilon > 0$ such that $\E[T_\beta^{1+\varepsilon}] = O((\beta-1)^{-\varepsilon})$, as $\beta \downarrow 1$.
	Then, for any $\beta' > 1$,
	\[
	\liminf_{\beta \downarrow 1} \frac{\E \left[Q_{\mathrm{d}}(\beta,\beta')\right]}{\beta-1} > 0.
	\]
\end{thm}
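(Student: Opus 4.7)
The plan is to apply the Campbell--Mecke formula for the marked Poisson point process $(\xi_i,\cD_i)$ of intensity $e^{-x}dx\otimes\cQ$ underlying the decorated extremal process, yielding
\[
	\E[Q_{\mathrm{d}}(\beta,\beta')]=\int_\R \E\!\left[\frac{e^{(\beta+\beta')x}S_\beta S_{\beta'}}{(e^{\beta x}S_\beta+Z_{\mathrm{d}}(\beta))(e^{\beta'x}S_{\beta'}+Z_{\mathrm{d}}(\beta'))}\right]e^{-x}\,dx,
\]
where $(S_\beta,S_{\beta'})$ come from a fresh decoration independent of $(Z_{\mathrm{d}}(\beta),Z_{\mathrm{d}}(\beta'))$. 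Writing $y_\gamma:=\gamma^{-1}\log(Z_{\mathrm{d}}(\gamma)/S_\gamma)$ for $\gamma\in\{\beta,\beta'\}$, the integrand is at least $1/4$ on $\{x\ge\max(y_\beta,y_{\beta'})\}$, since each denominator is then at most twice its $x$-dependent term; integrating $e^{-x}$ on that region gives
\[
	\E[Q_{\mathrm{d}}(\beta,\beta')]\;\ge\;\tfrac14\,\E[\min(X_\beta,X_{\beta'})], \qquad X_\gamma:=\left(\frac{S_\gamma}{Z_{\mathrm{d}}(\gamma)}\right)^{1/\gamma}.
\]

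It then suffices to show $\E[\min(X_\beta,X_{\beta'})]\gtrsim (\beta-1)$. Using the independence of the fresh decoration from the PPP together with Corollary~\ref{cor:change_of_measure} and Lemma~\ref{esz}, one computes $\E[X_\beta]=\E[S_\beta^{1/\beta}]\,\E[Z_{\mathrm{d}}(\beta)^{-1/\beta}]=\E[Z(\beta)^{-1/\beta}]\sim (\beta-1)$, while $\E[X_{\beta'}]$ is a positive $\beta$-independent constant. The $(1+\varepsilon)$-moment assumption, combined with \eqref{a2} applied at $\alpha=1+\varepsilon$, yields
\[
	\E[X_\beta^{1+\varepsilon}]=\E[T_\beta^{1+\varepsilon}]\,\E[Z(\beta)^{-(1+\varepsilon)/\beta}]=O((\beta-1)^{-\varepsilon})\cdot O((\beta-1)^{1+\varepsilon})=O(\beta-1),
\]
so that $X_\beta$ is concentrated on its natural scale $\beta-1$; in particular $\E[X_\beta\mathbf 1_{X_\beta>c_0}]\le c_0^{-\varepsilon}\E[X_\beta^{1+\varepsilon}]=O(c_0^{-\varepsilon}(\beta-1))$ for any fixed $c_0>0$. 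Moreover $X_{\beta'}$ has a $\beta$-independent law with finite second moment (easily checked from Lemma~\ref{esz}), so by Paley--Zygmund $\P(X_{\beta'}\ge c_0)\ge c'>0$ uniformly in $\beta$ for a well-chosen threshold $c_0$.

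The main obstacle is the joint step: converting these marginal estimates into a lower bound for $\E[\min(X_\beta,X_{\beta'})]\ge \E[X_\beta\mathbf 1_{X_\beta\le c_0}\mathbf 1_{X_{\beta'}\ge c_0}]$. The random variables $X_\beta$ and $X_{\beta'}$ are jointly dependent through the common fresh decoration (giving $S_\beta,S_{\beta'}$) and the common underlying PPP (giving $Z_{\mathrm{d}}(\beta),Z_{\mathrm{d}}(\beta'))$, but both sources of dependence should be \emph{positive}, suggesting that conditioning on $\{X_{\beta'}\ge c_0\}$ only makes $X_\beta$ larger on average. What one needs to rigorously establish is therefore $\E[X_\beta\mathbf 1_{X_{\beta'}<c_0}]\le \tfrac14\E[X_\beta]$; I expect this to follow from a Hölder bound in combination with a decomposition separating the decoration and PPP contributions to the dependence, ultimately producing $\E[Q_{\mathrm{d}}(\beta,\beta')]\ge c(\beta-1)$.
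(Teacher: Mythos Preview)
Your initial reduction is exactly the paper's: after the Palm/Campbell--Mecke formula and the crude bound on the integrand, you arrive at
\[
\E[Q_{\mathrm{d}}(\beta,\beta')]\ \ge\ \tfrac14\,\E\bigl[\min(X_\beta,X_{\beta'})\bigr],
\qquad X_\gamma=\Bigl(\tfrac{S_\gamma}{Z_{\mathrm{d}}(\gamma)}\Bigr)^{1/\gamma},
\]
which is precisely Equation~\eqref{eq:F_d_5} in the paper. Your moment computations $\E[X_\beta]\sim\beta-1$ and $\E[X_\beta^{1+\varepsilon}]=O(\beta-1)$ are also correct and match the paper's terms $E_1$ and $E_3$.

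The gap is in the final joint step, and the two fixes you propose do not close it. A direct H\"older bound $\E[X_\beta\mathbf 1_{X_{\beta'}<c_0}]\le \E[X_\beta^{1+\varepsilon}]^{1/(1+\varepsilon)}\P(X_{\beta'}<c_0)^{\varepsilon/(1+\varepsilon)}$ only gives $O((\beta-1)^{1/(1+\varepsilon)})$, which is too weak. The positive-association route is not available in the general decorated case: nothing in the hypotheses forces $(S_\beta,S_{\beta'})$ to be positively associated (they are increasing functionals of $\cD$, but the law of $\cD$ is arbitrary). More concretely, your decomposition forces a single threshold $c_0$ to do two incompatible jobs: you need $c_0$ large so that $\E[X_\beta\mathbf 1_{X_\beta>c_0}]\le c_0^{-\varepsilon}O(\beta-1)$ is small, yet $c_0$ small so that $\{X_{\beta'}<c_0\}$ is rare.

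The paper resolves this with a purely algebraic trick that avoids indicators altogether: for any $M>1$ and $\delta>0$,
\[
a\wedge b\ \ge\ \frac{1}{M}\Bigl(b-\frac{b^{1+\delta}}{(Ma)^{\delta}}\Bigr).
\]
Applied with $a=X_{\beta'}$, $b=X_\beta$, the correction term becomes $\E[X_\beta^{1+\delta}X_{\beta'}^{-\delta}]$, which \emph{factors} as
\[
\E\!\Bigl[\tfrac{S_\beta^{(1+\delta)/\beta}}{S_{\beta'}^{\delta/\beta'}}\Bigr]\cdot
\E\!\Bigl[\tfrac{Z_{\mathrm{d}}(\beta')^{\delta/\beta'}}{Z_{\mathrm{d}}(\beta)^{(1+\delta)/\beta}}\Bigr]
\]
thanks to the independence of the fresh decoration from the process; each factor is then controlled by $S_{\beta'}\ge 1$, Cauchy--Schwarz, Corollary~\ref{cor:change_of_measure}, and the assumption on $T_\beta^{1+\varepsilon}$, giving $E_3=O(\beta-1)$. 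The free parameter $M$ then makes this error term negligible relative to $\E[X_\beta]\sim\beta-1$. Replacing your indicator $\mathbf 1_{X_{\beta'}<c_0}$ by the power $(c_0/X_{\beta'})^{\delta}$ would lead you to the same factorisation --- but then you still need the separate large parameter $M$ to absorb the constant, which is exactly what the paper's inequality supplies.
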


\medskip

Finally, we consider strongly fluctuating cases where $\E \left[Q_{\mathrm{d}}(\beta,\beta')\right]$ can vanish faster than $(\beta-1)$, as  $\beta \downarrow 1$. In that case, the function $\beta \mapsto \E \left[Q_{\mathrm{d}}(\beta,\beta')\right]$ has a zero derivative at $\beta =1$.

\medskip

\begin{thm}[General decorated case]
\label{thm:overlap_deco_2}
	Assume \eqref{eq:hyp_de_base} and that there exists $\theta \in (0,1)$ and $\eta \geq 0$ such that,
	\[
		\Ec{ T_{\beta}^{1-\theta} T_{\beta'}^{\theta}}
		= O \left( (\beta-1)^{\theta+\eta}\right), \qquad \beta \downarrow 1.
	\]
	Then, for any $\beta' > 1$, 
	\[
	\E \left[Q_{\mathrm{d}}(\beta,\beta')\right] 
	= O\left((\beta-1)^{1+\eta} \right), \qquad \beta \downarrow 1.
	\]
\end{thm}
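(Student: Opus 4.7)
The plan is to apply the Mecke formula to the Poisson point process $(\xi_i)_{i\geq 1}$ in order to extract a single atom, then dominate the resulting integrand by an expression of the form $V\,e^{\lambda x}\wedge 1$ whose Hölder exponents are tuned so that the normalizing constants $\Ec{S_\beta^{1/\beta}}$ arising from the hypothesis and from Corollary~\ref{cor:change_of_measure} cancel exactly. Writing $Q_{\mathrm{d}}(\beta,\beta')=\sum_i (e^{\beta\xi_i}S_{\beta,i}/Z_{\mathrm{d}}(\beta))(e^{\beta'\xi_i}S_{\beta',i}/Z_{\mathrm{d}}(\beta'))$, the Mecke formula yields
\[
\Ec{Q_{\mathrm{d}}(\beta,\beta')} = \int_\R e^{-x}\,\Ec{ \frac{e^{\beta x}\hat S_\beta}{e^{\beta x}\hat S_\beta + Z^*_{\mathrm{d}}(\beta)}\cdot \frac{e^{\beta' x}\hat S_{\beta'}}{e^{\beta' x}\hat S_{\beta'}+Z^*_{\mathrm{d}}(\beta')} }\,\diff x,
\]
where $(\hat S_\beta,\hat S_{\beta'})$ is an independent copy of $(S_\beta,S_{\beta'})$ (the decoration sums of the added atom) and $Z^*_{\mathrm{d}}(\beta),Z^*_{\mathrm{d}}(\beta')$ are the partition functions of the remaining atoms. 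Using the elementary inequality $a/(a+b)\leq (a/b)^c$ valid for $c\in[0,1]$ together with the trivial bound $\leq 1$, the integrand is dominated by $V\,e^{\lambda x}\wedge 1$, where
\[
V \coloneqq \hat S_\beta^{c_1}\hat S_{\beta'}^{c_2}\,Z^*_{\mathrm{d}}(\beta)^{-c_1}Z^*_{\mathrm{d}}(\beta')^{-c_2}, \qquad \lambda \coloneqq c_1\beta+c_2\beta'.
\]

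The decisive choice is $c_1=\lambda(1-\theta)/\beta$ and $c_2=\lambda\theta/\beta'$ for any fixed $\lambda>1$ small enough that $c_1,c_2\in[0,1]$ for $\beta$ close to $1$; this is possible since $1/(1-\theta)>1$ and $\beta'/\theta>1$. Splitting the $x$-integral at the deterministic threshold $x^* = -(\log V)/\lambda$ where the two bounds coincide gives $\int_\R e^{-x}(V\,e^{\lambda x}\wedge 1)\,\diff x \leq \frac{\lambda}{\lambda-1}\,V^{1/\lambda}$. Taking expectation via Fubini and using the independence of $(\hat S_\beta,\hat S_{\beta'})$ from the rest of the PPP, together with the distributional identities $\hat S_\beta \overset{(\mathrm{d})}{=} S_\beta$ and $Z^*_{\mathrm{d}}(\beta)\overset{(\mathrm{d})}{=} Z_{\mathrm{d}}(\beta)$, I obtain
\[
\Ec{Q_{\mathrm{d}}(\beta,\beta')} \leq \frac{\lambda}{\lambda-1}\,\Ec{S_\beta^{(1-\theta)/\beta}S_{\beta'}^{\theta/\beta'}}\cdot \Ec{Z_{\mathrm{d}}(\beta)^{-(1-\theta)/\beta}Z_{\mathrm{d}}(\beta')^{-\theta/\beta'}}.
\]
The first factor rewrites as $\Ec{T_\beta^{1-\theta}T_{\beta'}^\theta}\cdot\Ec{S_\beta^{1/\beta}}^{1-\theta}\Ec{S_{\beta'}^{1/\beta'}}^\theta$, which by the hypothesis is at most $O((\beta-1)^{\theta+\eta})\Ec{S_\beta^{1/\beta}}^{1-\theta}\Ec{S_{\beta'}^{1/\beta'}}^\theta$. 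For the second factor, I would apply Hölder's inequality to decouple the correlated pair $(Z_{\mathrm{d}}(\beta),Z_{\mathrm{d}}(\beta'))$, use Corollary~\ref{cor:change_of_measure} to rewrite each marginal in terms of $Z(\beta)$ and $Z(\beta')$, and invoke the asymptotic~\eqref{a2} to produce an upper bound of the form $C\,(\beta-1)^{1-\theta}\,\Ec{S_\beta^{1/\beta}}^{-(1-\theta)}\Ec{S_{\beta'}^{1/\beta'}}^{-\theta}$ with $C$ independent of $\beta$.

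Multiplying these two estimates, the powers of $\Ec{S_\beta^{1/\beta}}$ and $\Ec{S_{\beta'}^{1/\beta'}}$ cancel exactly and the announced bound $\Ec{Q_{\mathrm{d}}(\beta,\beta')}=O((\beta-1)^{1+\eta})$ follows. The main subtlety is this calibration of the Hölder exponents: one needs $c_j/\lambda$ to match the exponents $(1-\theta)/\beta$ and $\theta/\beta'$ from the hypothesis (so that the first factor can be read off directly from the assumption) \emph{and} $\lambda>1$ (so that the $x$-integral is controlled by a finite multiple of $V^{1/\lambda}$). Any other choice would leave residual powers of the potentially divergent mean $\Ec{S_\beta^{1/\beta}}$ uncancelled, which is why the structural identity $(c_1/\lambda)\beta + (c_2/\lambda)\beta' = 1$ forces the specific exponents above.
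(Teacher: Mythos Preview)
Your proof is correct and follows essentially the same route as the paper: both apply the Palm/Mecke formula to obtain the one-atom representation (the paper packages this as Lemma~\ref{lem:expression_F_deco}), bound the resulting integrand so as to produce the factor $S_\beta^{(1-\theta)/\beta}S_{\beta'}^{\theta/\beta'}\,Z_{\mathrm{d}}(\beta)^{-(1-\theta)/\beta}Z_{\mathrm{d}}(\beta')^{-\theta/\beta'}$ (the paper does this via the bound $I(r)\leq C(\beta',\theta)\,r^{\theta-1}$ of Lemma~\ref{lem:integral}.\ref{it:bound_strongly_fluctuating}, you via $a/(a+b)\leq (a/b)^c\wedge 1$ followed by explicit integration in $x$), and then finish identically with Cauchy--Schwarz, Corollary~\ref{cor:change_of_measure}, and~\eqref{a2}. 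The only cosmetic difference is that the paper's change of variable singles out the ratio $R$ before bounding, whereas you bound each Gibbs weight separately and integrate afterwards; the calibration of exponents you highlight is exactly what makes the paper's choice $\delta=\theta$ in Lemma~\ref{lem:integral}.\ref{it:bound_strongly_fluctuating} the right one.
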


\medskip

In the three theorems above, assumptions were stated exactly as we need them in the proof. 
In the following corollary, we work instead under a more readable set of assumptions without seeking any optimality.
Recall that, for $f \colon (1,\infty) \to \R$ and $g \colon (1,\infty) \to \R_+^*$, we write $f(\beta) \asymp g(\beta)$ 
if $0 < \liminf_{\beta \downarrow 1} f(\beta)/g(\beta) \leq \limsup_{\beta \downarrow 1} f(\beta)/g(\beta) < \infty$.

\medskip

\begin{cor}[General decorated case]
 \label{cor:overlap}
	Assume there exist $\gamma_- < 1 < \gamma_+$ and a function $\psi \colon (\gamma_-, \gamma_+) \to [0,\infty)$ such that, for any $\gamma \in (\gamma_-, \gamma_+)$,
	\begin{equation} \label{eq:ass_asymp}
	\Ec{S_\beta^\gamma} \asymp (\beta-1)^{-\psi(\gamma)}, \qquad \beta \downarrow 1.
	\end{equation}
	Assume moreover that $\psi$ is differentiable at 1.
	Then, the following holds, with all asymptotic notation meant to hold as $\beta \downarrow 1$.
	\begin{enumerate}
		\item\label{it:sub_critical} If $\psi'(1) < \psi(1) + 1$ and, for any $\beta' > 1$, $\E[S_\beta \log S_{\beta'}] = O(\E[S_\beta])$, then, setting $\alpha\coloneqq \psi'(1)-\psi(1) \in [0,1)$, we have, for any $\beta' > 1$,
		\[
		\E \left[Q_{\mathrm{d}}(\beta,\beta')\right]
		\sim (1-\alpha) (\beta-1) \log\frac{1}{\beta-1}, \qquad \beta \downarrow 1.
		\]
		\item\label{it:critical} If $\psi'(1) = \psi(1) + 1$, $\psi$ is linear on a neighborhood of 1 and, for any $\beta' > 1$, $\E[S_\beta \log S_{\beta'}] = O(\E[S_\beta])$, then, for any $\beta' > 1$, 
		\[
		\E \left[Q_{\mathrm{d}}(\beta,\beta')\right] \asymp (\beta-1), \qquad \beta \downarrow 1.
		\]
		\item\label{it:super_critical} If $\psi'(1) > \psi(1) + 1$ and, for any $\gamma \in (\gamma_-,1)$ and $\beta' > 1$, $\E[S_\beta^\gamma S_{\beta'}^{1-\gamma}] = O(\E[S_\beta^\gamma])$, then
		\[
		\eta_0 \coloneqq \sup_{\gamma \in (\gamma_-,1)} 
		\left[ \psi(1)\gamma-\psi(\gamma)-1+\gamma \right] > 0,
		\]
		and, for any $\beta' > 1$, 
		\[
		\E \left[Q_{\mathrm{d}}(\beta,\beta')\right] \leq (\beta-1)^{1+\eta_0+o(1)}.
		\]
	\end{enumerate}
\end{cor}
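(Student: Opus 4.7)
The three cases follow by applying Theorems~\ref{thm:overlap_deco_1}, \ref{thm:overlap_deco_crit}, and \ref{thm:overlap_deco_2}, respectively. The main work is to convert the $\asymp$-scaling hypothesis~\eqref{eq:ass_asymp} into the assumptions, stated in terms of $T_\beta = S_\beta^{1/\beta}/\E[S_\beta^{1/\beta}]$, required by those theorems. Two basic identities guide the whole argument:
\begin{equation*}
	\E[T_\beta \log T_\beta] = \frac{1}{\beta} \cdot \frac{\partial}{\partial \gamma} \log \E[S_\beta^\gamma]\bigg|_{\gamma=1/\beta} - \log \E[S_\beta^{1/\beta}],
	\qquad
	\E[T_\beta^{1+\varepsilon}] = \frac{\E[S_\beta^{(1+\varepsilon)/\beta}]}{\E[S_\beta^{1/\beta}]^{1+\varepsilon}}.
\end{equation*}
Since $\gamma \mapsto \log \E[S_\beta^\gamma]$ is convex by H\"older and $\log \E[S_\beta^\gamma]/\log(1/(\beta-1)) \to \psi(\gamma)$ pointwise by~\eqref{eq:ass_asymp}, standard convex-analysis results (locally uniform convergence on the interior, convergence of one-sided derivatives at points where the limit is differentiable) combined with the differentiability of $\psi$ at $1$ yield $\partial_\gamma \log \E[S_\beta^\gamma]|_{\gamma=1/\beta} = (\psi'(1)+o(1)) \log\frac{1}{\beta-1}$. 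Consequently $\E[T_\beta \log T_\beta] = (\alpha + o(1)) \log\frac{1}{\beta-1}$ with $\alpha = \psi'(1)-\psi(1) \geq 0$ (nonnegativity from convexity of $\psi$ and $\psi(0)=0$), and plugging the differentiability $\psi(1+\varepsilon) = \psi(1) + \varepsilon \psi'(1) + o(\varepsilon)$ into the exponent of the $\asymp$-bound gives $\E[T_\beta^{1+\varepsilon}] \leq C_\varepsilon (\beta-1)^{-\alpha \varepsilon + o(\varepsilon)}$ for each fixed small $\varepsilon$.

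\textbf{Case (i).} With $\alpha \in [0,1)$, assumption~\ref{it:ass_alpha} of Theorem~\ref{thm:overlap_deco_1} is immediate, and assumption~\ref{it:ass_1+eps} follows by choosing $\varepsilon$ small enough that $-\alpha \varepsilon + o(\varepsilon) > -\varepsilon$. Assumption~\ref{it:ass_S_beta'} follows from $S_\beta^{1/\beta} \leq S_\beta$, $\log S_{\beta'} \geq 0$, the hypothesis $\E[S_\beta \log S_{\beta'}] = O(\E[S_\beta])$, and $\E[S_\beta] \asymp \E[S_\beta^{1/\beta}] \asymp (\beta-1)^{-\psi(1)}$. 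Theorem~\ref{thm:overlap_deco_1} then yields the announced equivalent.

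\textbf{Case (ii).} Here $\alpha = 1$; linearity of $\psi$ on a neighborhood of $1$ forces $\psi(\gamma) = (\psi(1)+1)\gamma - 1$ locally. Applying~\eqref{eq:ass_asymp} at the points $\gamma = 1/\beta$ and $\gamma = 1/\beta \pm h$ for $h>0$ small and fixed (all in the linear range for $\beta$ close to $1$) and sandwiching the one-sided derivatives of the convex function $\gamma \mapsto \log \E[S_\beta^\gamma]$ by the corresponding difference quotients, one obtains $\partial_\gamma \log \E[S_\beta^\gamma]|_{\gamma=1/\beta} = (\psi(1)+1)\log\frac{1}{\beta-1} + O(1)$, whence $\E[T_\beta \log T_\beta] = \log\frac{1}{\beta-1} + O(1)$: this is hypothesis~\ref{it:ass_alpha'}. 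Hypotheses~\ref{it:ass_1+eps} and~\ref{it:ass_S_beta'} are verified as in case (i), with the refined bound $\E[T_\beta^{1+\varepsilon}] \asymp (\beta-1)^{-\varepsilon}$ (no $o(\varepsilon)$ error) now coming from the exact linearity of $\psi$. Theorem~\ref{thm:overlap_deco_1} then gives the upper bound $\E[Q_\mathrm{d}(\beta,\beta')] = O(\beta-1)$, and Theorem~\ref{thm:overlap_deco_crit}, whose single hypothesis is~\ref{it:ass_1+eps}, gives the matching lower bound.

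\textbf{Case (iii) and main obstacle.} The function $g(\gamma) := \psi(1)\gamma - \psi(\gamma) - 1 + \gamma$ is concave (as $\psi$ is convex), satisfies $g(1) = 0$ and $g'(1) = \psi(1) + 1 - \psi'(1) < 0$, hence $g>0$ just below $1$ and $\eta_0 > 0$; one further checks, using $\psi \geq 0$ and $\psi(0)=0$, that $g \leq -1$ on $(\gamma_-, 0]$, so the supremum is attained in $(0,1)$. Fix $\eta \in (0,\eta_0)$ and $\gamma^* \in (0,1)$ with $g(\gamma^*) > \eta$, and set $\theta = 1 - \gamma^*$. For $\beta \leq \beta'$ the inequalities $S_{\beta'} \leq S_\beta$, $S_\beta^{(1-\theta)/\beta} \leq S_\beta^{1-\theta}$, $S_{\beta'}^{\theta/\beta'} \leq S_{\beta'}^\theta$ (using $S_\beta, S_{\beta'} \geq 1$) give $\E[S_\beta^{(1-\theta)/\beta} S_{\beta'}^{\theta/\beta'}] \leq \E[S_\beta^{\gamma^*} S_{\beta'}^{1-\gamma^*}] = O((\beta-1)^{-\psi(\gamma^*)})$ by the cross-moment hypothesis. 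Dividing by $\E[S_\beta^{1/\beta}]^{1-\theta} \E[S_{\beta'}^{1/\beta'}]^\theta \asymp (\beta-1)^{-\gamma^* \psi(1)}$ yields $\E[T_\beta^{1-\theta} T_{\beta'}^\theta] = O((\beta-1)^{\theta + g(\gamma^*)}) = O((\beta-1)^{\theta+\eta})$, so Theorem~\ref{thm:overlap_deco_2} gives $\E[Q_\mathrm{d}(\beta,\beta')] = O((\beta-1)^{1+\eta})$, and letting $\eta \uparrow \eta_0$ concludes. The most delicate step in the whole argument is the $O(1)$-precise derivative estimate in case (ii): with only pointwise $\asymp$ information on $\E[S_\beta^\gamma]$, lifting a pointwise $O(1)$ control on $\log \E[S_\beta^\gamma] - \psi(\gamma) \log\frac{1}{\beta-1}$ to an $O(1)$ control on its $\gamma$-derivative at $1/\beta$ requires a careful combination of convexity and the exact local affineness of $\psi$.
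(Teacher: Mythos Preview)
Your strategy is the same as the paper's: verify the hypotheses of Theorems~\ref{thm:overlap_deco_1}, \ref{thm:overlap_deco_crit}, \ref{thm:overlap_deco_2} by translating the $\asymp$-hypothesis~\eqref{eq:ass_asymp} into the required controls on $T_\beta$. Your convex--analysis phrasing (pointwise convergence of convex functions implies convergence of derivatives) is exactly the abstract form of the paper's Jensen--under--$\P_\beta$ sandwich in its Equation~\eqref{eq:encadrement_log}; the two arguments are essentially identical.

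There is, however, one genuine gap you should fill. The hypothesis~\eqref{eq:ass_asymp} is stated for each \emph{fixed} $\gamma$, yet you repeatedly invoke it at the moving exponents $\gamma=1/\beta$, $(1+\varepsilon)/\beta$, $1/\beta\pm h$. Locally uniform convergence of $\gamma\mapsto \log\E[S_\beta^\gamma]/\log\frac{1}{\beta-1}$ only yields $\E[S_\beta^{\gamma/\beta}]=(\beta-1)^{-\psi(\gamma)+o(1)}$, which is too weak for the $O(1)$-precise assumption~\ref{it:ass_alpha'} in case~(ii) and for the clean exponent in~\ref{it:ass_1+eps}. The paper supplies the missing step as its Equation~\eqref{eq:ass_asymp_new}: using $S_\beta\ge 1$ and the H\"older-type bound $\E[X^{-\varepsilon}Y]\ge \E[Y]^{1+\varepsilon}\E[XY]^{-\varepsilon}$ with $X=S_\beta^{\gamma_0-\gamma}$, $Y=S_\beta^\gamma$ and $\varepsilon=\gamma(\beta-1)/(\beta(\gamma_0-\gamma))=O(\beta-1)$, one gets $\E[S_\beta^{\gamma/\beta}]\sim \E[S_\beta^\gamma]$, hence $\E[S_\beta^{\gamma/\beta}]\asymp(\beta-1)^{-\psi(\gamma)}$. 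Once this is inserted, your argument goes through unchanged.

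A minor point: you justify $\alpha\ge 0$ and the localization of the supremum in case~(iii) via ``$\psi(0)=0$'', but $0$ need not lie in $(\gamma_-,\gamma_+)$. The paper instead uses Jensen in the form $\E[S_\beta^\gamma]\ge \E[S_\beta]^\gamma$ for $\gamma>1$, which gives $\psi(\gamma)\ge \gamma\psi(1)$ and hence $\psi'(1)\ge\psi(1)$ without any reference to $\psi(0)$.
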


\medskip

\begin{proof}
	Before distinguishing cases, we first make some general remarks. 
	Firstly, for any $\gamma \in (\gamma_-, \gamma_+)$, we have
	\begin{equation} \label{eq:ass_asymp_new}
	\Ec{S_\beta^{\gamma/\beta}} \sim \Ec{S_\beta^{\gamma}} \asymp (\beta-1)^{-\psi(\gamma)}, \qquad \beta \downarrow 1,
	\end{equation}
	as a consequence of the first part of Equation \eqref{eq:ineq_X_Y} with $X=S_\beta^{\gamma_0-\gamma}$, $Y= S_\beta^{\gamma}$ and $\varepsilon = \gamma (\beta-1)/(\beta(\gamma_0-\gamma))$ for some $\gamma_0 \in (\gamma,\gamma_+)$.
	Secondly, recalling the definition of $\P_\beta$ in Equation \eqref{eq:def_P_beta}, we have, for any $h > 0$,
	\[
	\E[T_\beta \log S_\beta^{1/\beta}]
	= \frac{1}{h} \frac{\E[S_\beta^{1/\beta} \log S_\beta^{h/\beta}]}{\E[S_\beta^{1/\beta}]} 
	= \frac{1}{h} \E_\beta[\log S_\beta^{h/\beta}]
	\leq \frac{1}{h} \log\E_\beta[S_\beta^{h/\beta}]
	= \frac{1}{h} \log\frac{\E[S_\beta^{(1+h)/\beta}]}{\E[S_\beta^{1/\beta}]}.
	\]
	Similarly, we have
	\[
	\E[T_\beta \log S_\beta^{1/\beta}]
	= -\frac{1}{h} \E_\beta[\log S_\beta^{-h/\beta}]
	\geq - \frac{1}{h} \log\E_\beta[S_\beta^{-h/\beta}]
	= -\frac{1}{h} \log\frac{\E[S_\beta^{(1-h)/\beta}]}{\E[S_\beta^{1/\beta}]}.
	\]
	By Assumption \eqref{eq:ass_asymp}, we deduce the following inequalities, for any fixed $h > 0$ such that $\gamma_- < 1-h$ and $1+h < \gamma_+$, 
	\begin{equation} \label{eq:encadrement_log}
	\frac{\psi(1)-\psi(1-h)}{h} \log \frac{1}{\beta-1} + O(1)
	\leq \E[T_\beta \log S_\beta^{1/\beta}]
	\leq \frac{\psi(1+h)-\psi(1)}{h} \log \frac{1}{\beta-1} + O(1).
	\end{equation}
	
\medskip

\noindent We now treat the different cases separately.

\medskip
	
	\underline{\textit{Part} \ref{it:sub_critical}}. First note that $\psi'(1) \geq \psi(1)$, as a consequence of the fact that $\E[S_\beta^\gamma] \geq \E[S_\beta]^\gamma$ for any $\gamma>1$. This implies $\alpha \geq 0$.
	We now check that the assumptions of Theorem \ref{thm:overlap_deco_1} are satisfied.
	First note that it follows from Equation \eqref{eq:encadrement_log} by letting $h \to 0$ that
	$\E[T_\beta \log S_\beta^{1/\beta}] = (\psi'(1)+o(1)) \log \frac{1}{\beta-1}$. Together with Equation \eqref{eq:ass_asymp_new}, this shows Assumption \ref{it:ass_alpha}.
	Then, Equation \eqref{eq:ass_asymp_new} implies, for any $\varepsilon \in (0,\gamma_+-1)$,
	\begin{equation} \label{eq:check_T_1+eps}
	\E[T_\beta^{1+\varepsilon}] 
	\asymp (\beta-1)^{-\psi(1+\varepsilon)+(1+\varepsilon)\psi(1)}, \qquad \beta \downarrow 1,
	\end{equation}
	so Assumption \ref{it:ass_1+eps} follows from $\psi'(1) < \psi(1) + 1$.
	Finally, Assumption \ref{it:ass_S_beta'} follows from $\E[S_\beta \log S_{\beta'}] = O(\E[S_\beta])$, together with Equation \eqref{eq:ass_asymp_new} again.
	Therefore, the result follows from Theorem \ref{thm:overlap_deco_1}.

\medskip

	\underline{\textit{Part} \ref{it:critical}}. We check again that the assumptions of Theorem \ref{thm:overlap_deco_1} are satisfied.
	Since $\psi$ is linear in a neighborhood of 1, we have $\psi(1\pm h) = \psi(1) \pm h \psi'(1)$ for $h$ small enough and Equation \eqref{eq:encadrement_log} implies 
	$\E[T_\beta \log S_\beta^{1/\beta}] = \psi'(1) \log \frac{1}{\beta-1} + O(1)$.
	Combining this with Equation \eqref{eq:ass_asymp_new}, we get 
	Assumption~\ref{it:ass_alpha'} with $\alpha = \psi'(1)-\psi(1) = 0$.
	Assumption \ref{it:ass_1+eps} follows from Equation \eqref{eq:check_T_1+eps} and the fact that $\psi(1+\varepsilon)-(1+\varepsilon)\psi(1)= \varepsilon(\psi'(1)-\psi(1)) = \varepsilon$, and Assumption \ref{it:ass_S_beta'} is obtained as before. Hence, Theorem \ref{thm:overlap_deco_1} implies $\E \left[Q_{\mathrm{d}}(\beta,\beta')\right] = O(\beta-1)$. But under these assumptions Theorem \ref{thm:overlap_deco_crit} can also be applied and yield the desired lower bound.

\medskip

	\underline{\textit{Part} \ref{it:super_critical}}. The fact that $\eta_0 > 0$ is a consequence of $\psi'(1) > \psi(1)+1$. 
	Now, for some fixed $\eta \in (0,\eta_0)$, there exists $\gamma \in (\gamma_-,1)$ such that $\psi(1)\gamma-\psi(\gamma)-1+\gamma \geq \eta$.
	Then, with $\theta = 1-\gamma$, we have
	\[
	\Ec{ T_{\beta}^{1-\theta} T_{\beta'}^{\theta}}
	= \frac{\Ec{S_{\beta}^{\gamma/\beta} S_{\beta'}^{(1-\gamma)/\beta}}}
	{\Ec{S_{\beta}^{1/\beta}}^\gamma \Ec{S_{\beta'}^{1/\beta}}^{1-\gamma}}
	\leq \frac{\Ec{S_{\beta}^{\gamma} S_{\beta'}^{1-\gamma}}}
	{\Ec{S_{\beta}^{1/\beta}}^\gamma}
	= O \left( \frac{\Ec{S_{\beta}^{\gamma}}}{\Ec{S_{\beta}^{1/\beta}}^\gamma} \right), \qquad \beta \downarrow 1,
	\]
	using $S_{\beta'} \geq 1$ in the inequality and then the assumption of Part \ref{it:super_critical}. Applying Equations \eqref{eq:ass_asymp} and \eqref{eq:ass_asymp_new}, we get
	\[
	\Ec{ T_{\beta}^{1-\theta} T_{\beta'}^{\theta}}
	= O \left( (\beta-1)^{-\psi(\gamma)+\gamma\psi(1)}\right)
	= O \left( (\beta-1)^{\theta+\eta}\right), \qquad \beta \downarrow 1,
	\]
	by our choice of $\gamma$. Hence, we can apply Theorem \ref{thm:overlap_deco_2} and get $\E \left[Q_{\mathrm{d}}(\beta,\beta')\right] = O((\beta-1)^{1+\eta})$, which proves the result.
\end{proof}

\bigskip

\begin{ex}\label{ex:deco}
	We introduce a family of decoration processes $(\cD^{a,b})_{a,b>0}$ and study the behavior of $\E \left[Q_{\mathrm{d}}(\beta,\beta')\right]$ for these cases.
	For $a,b > 0$, let $X_a \geq 1$ be a random variable with law defined by
	\[
	\P(X_a \geq x) = x^{-a}, \quad \text{for } x \geq 1,
	\]
	and let $\cD^{a,b}$ be a point process such that, conditionally on $X_a$, $\cD^{a,b}$ is the sum of a Dirac mass at 0 and a Poisson point process with intensity $\abs{x}^{b-1} \e^{-x} \1_{x \in [-X_a,0]} \diff x$. This decoration is a toy model of the decoration of the BBM when $a=1$ and $b=2$, see Remarks \ref{rem:level_sets} and \ref{rem:heuristic_picture_BBM}. We are going to check that, for any $\beta' > 1$, as $\beta \downarrow 1$,
	\begin{equation} \label{eq:example}
	\E \left[Q_{\mathrm{d}}(\beta,\beta')\right] 
	\begin{cases}
	\sim (\beta-1) \log\frac{1}{\beta-1}, & \text{if } b<a, \smallskip \\
	\sim (1-a) (\beta-1) \log\frac{1}{\beta-1}, & \text{if } b>a \text{ and } a < 1, \smallskip \\
	\asymp (\beta-1), & \text{if } b>a = 1, \smallskip \\
	= O((\beta-1)^{1+(a-1)(1-\frac{a}{b}) + o(1)}), & \text{if } b>a>1.
	\end{cases}
	\end{equation}
	Note that one does not treat the case $a=b$, which do not fits in the theorems of this section. In the last case, by taking for example $b=2a$ large enough, one can have $\E \left[Q_{\mathrm{d}}(\beta,\beta')\right]$ vanishing faster than any power of $(\beta-1)$.
	
	Let $(d_k)_{k\geq0}$ be the atoms of $\cD^{a,b}$ ranked in decreasing order (with the convention $d_k = -\infty$ when there are no atoms left). One has
	\begin{align*}
		S_\beta 
		= \sum_{k\geq0} \e^{\beta d_k}
		& = 1 + \underbrace{\int_0^{X_a} x^{b-1} \e^{-(\beta-1)x} \dx}_{\eqqcolon R_\beta} + \underbrace{\sum_{k\geq1} \e^{\beta d_k} -\int_0^{X_a} x^{b-1} \e^{-(\beta-1)x} \dx}_{\eqqcolon V_\beta} .
	\end{align*}
	One can check that $\E[V_\beta^2]$ remains bounded when $\beta\downarrow1$, so we focus on the behavior of $R_\beta$.
	Let us write
	\begin{align*}
		R_\beta = \frac{1}{(\beta-1)^b} \int_{0}^{(\beta-1)X_a} u^{b-1}\e^{-u}\du = \frac{1}{h^b} G(hX_a),
	\end{align*}
	where $h \coloneqq \beta-1$ and $G(x)\de\int_{0}^{x} u^{b-1}\e^{-u}\du$. Then, if $\gamma>0$, one has
	\begin{align*}
		\es [R_\beta^\gamma] 
		= \frac{1}{h^{b\gamma}}\int_{1}^\infty G(hx)^\gamma \frac{a \dx}{x^{a+1}} 
		=\frac{a}{h^{b\gamma-a}}\int_{h}^{\infty} \frac{G(t)^\gamma}{t^{a+1}}\dt.
	\end{align*}
	Using the fact that $G$ is bounded and $G(t)\sim t^b/b$ when $t\rightarrow0$, one gets, up to constants $C=C(a,b,\gamma)$,
	\[
	\es [R_\beta^\gamma] \underset{\beta\downarrow1}{\sim}
	\begin{cases}
		C h^{a-b\gamma}, & \mbox{if } b\gamma>a, \\ 
		C\log\frac{1}{h}, & \mbox{if } b\gamma=a,\\
		C, & \mbox{if } b\gamma<a.
	\end{cases}
	\]
	Let us write $\norme{Y}_\gamma \coloneqq \Ec{\abs{Y}^\gamma}^{1/\gamma}$, for any $\gamma >0$. One has
	\begin{align*}
		\norme{R_\beta}_\gamma-\norme{1+V_\beta}_\gamma
		\leq \norme{S_\beta}_\gamma
		\leq \norme{R_\beta}_\gamma+ \norme{1+V_\beta}_\gamma,
	\end{align*}
	which extends the previous results to $S_\beta$ when $\gamma\leq2$ using that $\E[V_\beta^2] = O(1)$: 
	\[
	\es [S_\beta^\gamma] \underset{\beta\downarrow1}{\asymp} 
	\begin{cases}
		h^{a-b\gamma}, & \mbox{if } b\gamma>a, \\ 
		\log\frac{1}{h}, & \mbox{if } b\gamma=a,\\
		1, & \mbox{if } b\gamma<a.
	\end{cases}
	\]
	Note that, if $b<a$, then there exists $\gamma>1$ such that $\es[S_\beta^\gamma] = O(1)$, so the first part of \eqref{eq:example} follows from Corollary \ref{cor:overlap_small_deco}.
	
	So we now focus on the case $b>a$. Then, we have $\es [S_\beta^\gamma] \asymp (\beta-1)^{-\psi(\gamma)}$ with $\psi(\gamma) = b\gamma-a$ for $\gamma \in (a/b,2]$.
	We want to apply Corollary \ref{cor:overlap} noting that $\psi'(1) - \psi(1) = a$. 
	For this we first prove that, for any $\beta'>1$ and $\gamma \in (0,1]$, as $\beta \downarrow 1$,
	\begin{equation} \label{eq:ex_check}
		\Ec{S_\beta^\gamma S_{\beta'}} = O\left( \Ec{S_\beta^\gamma} \right).
	\end{equation}
	To see this, we first use subadditivity of $x \mapsto x^\gamma$ to get
	\begin{align*}
	\Ec{S_\beta^\gamma S_{\beta'}}
	\leq \Ec{\left(1+R_\beta^\gamma+\abs{V_\beta}^\gamma\right) \left(1+R_{\beta'}+\abs{V_{\beta'}} \right)}
	\leq C \left(
	1+\Ec{R_\beta^\gamma}+ \Ec{R_\beta^\gamma \cdot \abs{V_{\beta'}}}
	\right),
	\end{align*}
	where $C = C(a,b,\beta')$ and we used $\E[V_\beta^2] \leq C$ and $R_{\beta'} \leq C$.
	Then, we bound
	\[
	\Ec{R_\beta^\gamma \cdot \abs{V_{\beta'}}} 
	= \Ec{R_\beta^\gamma\,\Ecsq{|V_{\beta'}|}{X_a}} 
	\leq \Ec{R_\beta^\gamma\,\Ecsq{V_{\beta'}^2}{X_a}^{1/2}} 
	\leq C \Ec{R_\beta^\gamma}.
	\]
	Since $\Ec{R_\beta^\gamma} \asymp \Ec{S_\beta^\gamma}$, this implies Equation \eqref{eq:ex_check}.
	In particular, this proves $\E[S_\beta \log S_{\beta'}] \leq \E[S_\beta S_{\beta'}] = O(\E[S_\beta])$, so we can apply Corollary \ref{cor:overlap}.\ref{it:sub_critical} if $a<1$ and Corollary \ref{cor:overlap}.\ref{it:critical} if $a=1$ to get the second and third parts of Equation \eqref{eq:example}.
	On the other hand, Equation \eqref{eq:ex_check} implies $\E[S_\beta^\gamma S_{\beta'}^{1-\gamma}] = O(\E[S_\beta^\gamma])$ for $\gamma \in (0,1]$, so we can apply Corollary \ref{cor:overlap}.\ref{it:sub_critical} if $a>1$, with 
	\[
	\eta_0 = \sup_{\gamma \in (a/b,1)} 
	\left[ (b-a)\gamma-(b\gamma-a)-1+\gamma \right]
	= (a-1)\left(1-\frac{a}{b}\right),
	\]
	which yields the fourth part of Equation \eqref{eq:example}.
\end{ex}

\medskip

\subsection{Some tools for the proofs}
\label{sec:general_tools}

\medskip

The starting point for the proof of the results stated in the previous section is the following expression for $\E \left[Q_{\mathrm{d}}(\beta,\beta')\right]$.

\medskip

\begin{lem}[General decorated case]
 \label{lem:expression_F_deco}
	Let $\beta,\beta' > 1$. Define
	\[
	R = R(\beta,\beta') \coloneqq 
	\biggl( \frac{Z_{\mathrm{d}}(\beta)}{S_\beta} \biggr)^{1/\beta}
	\left( \frac{S_{\beta'}} {Z_{\mathrm{d}}(\beta')}\right)^{1/\beta'}.
	\]
	Then, we have 
	\[
	\E \left[Q_{\mathrm{d}}(\beta,\beta')\right]
	= \Ec{ \left( \frac{S_{\beta'}}{Z_{\mathrm{d}}(\beta')} \right)^{1/\beta'} \int_0^\infty \frac{\diff x}{(1+(Rx)^\beta)(1+ x^{\beta'})}}.
	\]
\end{lem}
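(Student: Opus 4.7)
The plan is to single out one atom of the underlying Poisson point process via the Mecke--Slivnyak formula, and then to perform two explicit changes of variables.

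Starting from
\[
Q_{\mathrm{d}}(\beta,\beta')
= \sum_i \frac{e^{\beta \xi_i} S_{\beta,i}}{Z_{\mathrm{d}}(\beta)}\cdot \frac{e^{\beta' \xi_i} S_{\beta',i}}{Z_{\mathrm{d}}(\beta')},
\]
I would apply Mecke's formula to the marked Poisson point process $\sum_i \delta_{(\xi_i,\cD_i)}$ on $\R$ (intensity $e^{-y}\,\diff y \otimes \P_{\cD}$). By Slivnyak's theorem, the Palm version of this process is obtained by adding to the configuration a point with an independent fresh mark, and this yields
\[
\E\bigl[Q_{\mathrm{d}}(\beta,\beta')\bigr]
= \int_\R e^{-y}\, \E\!\left[\frac{e^{(\beta+\beta')y}\, S_\beta S_{\beta'}}{\bigl(Z_{\mathrm{d}}(\beta) + e^{\beta y} S_\beta\bigr)\bigl(Z_{\mathrm{d}}(\beta') + e^{\beta' y} S_{\beta'}\bigr)}\right] \diff y,
\]
where on the right-hand side $(Z_{\mathrm{d}}(\beta),Z_{\mathrm{d}}(\beta'))$ and $(S_\beta,S_{\beta'})$ are independent, consistently with the conventions of Section~\ref{sec:change_of_measure}.

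The substitution $t = e^{-y}$ then turns $\int_\R \diff y$ into $\int_0^\infty \diff t$; after factoring $t^{-\beta}$ and $t^{-\beta'}$ out of the two denominators, the exponential prefactor $e^{(\beta+\beta'-1)y}$ together with the Jacobian $\diff y = -\diff t/t$ make all powers of $t$ cancel, so that by Fubini
\[
\E\bigl[Q_{\mathrm{d}}(\beta,\beta')\bigr]
= \E\!\left[\int_0^\infty \frac{\diff t}{\bigl(1 + t^\beta Z_{\mathrm{d}}(\beta)/S_\beta\bigr)\bigl(1 + t^{\beta'} Z_{\mathrm{d}}(\beta')/S_{\beta'}\bigr)}\right].
\]
A second substitution $s = t\,(Z_{\mathrm{d}}(\beta')/S_{\beta'})^{1/\beta'}$ normalises the $\beta'$-factor of the denominator to $1+s^{\beta'}$ and produces the Jacobian $(S_{\beta'}/Z_{\mathrm{d}}(\beta'))^{1/\beta'}$; the $\beta$-factor becomes
\[
1 + s^\beta\, (S_{\beta'}/Z_{\mathrm{d}}(\beta'))^{\beta/\beta'}\, Z_{\mathrm{d}}(\beta)/S_\beta
= 1 + (Rs)^\beta
\]
by the very definition of $R$, and the identity claimed in the lemma follows.

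The whole argument is essentially algebraic: the only conceptual input is the Mecke formula, and both substitutions are forced by the form of the target (one normalises the $\beta'$-factor, and $R$ is defined precisely so that the $\beta$-factor comes out right). The one point to check carefully is that the powers from the Jacobian and the prefactor collapse exactly to $(S_{\beta'}/Z_{\mathrm{d}}(\beta'))^{1/\beta'}$, with no leftover factors; this is the mild ``miracle'' that makes the final formula take exactly the claimed shape, and I do not anticipate any genuine analytical obstacle.
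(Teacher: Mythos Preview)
Your proposal is correct and follows essentially the same route as the paper. The only cosmetic difference is that the paper first passes to the homogeneous parametrization $\eta_k=\e^{-\xi_k}$ (so the Palm/Mecke formula is applied to a PPP with Lebesgue intensity on $\R_+$), whereas you stay with the $\xi_i$ and perform the substitution $t=\e^{-y}$ afterwards; the intermediate formula and the final change of variable are identical.
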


\medskip

\begin{proof}
	Starting from Equation \eqref{def:Q_d_2} and recalling that $\eta_k = \e^{-\xi_k}$, we get
	\begin{align*}
	\E \left[Q_{\mathrm{d}}(\beta,\beta')\right]
	& = \Ec{ \sum_{k\geq 1} \eta_k^{-(\beta+\beta')} S_{\beta,k} S_{\beta',k}
		\frac{1}{\Bigl( \eta_k^{-\beta}S_{\beta,k} + \sum_{j\neq k} \eta_j^{-\beta} S_{\beta,j} \Bigr) 
			\Bigl( \eta_k^{-\beta'} S_{\beta',k} + \sum_{j\neq k}\eta_j^{-\beta'} S_{\beta',j} \Bigr)} }\\
	& = \int_{(\R_+)^3} \Ec{x^{-(\beta+\beta')} ss' \frac{1}{(x^{-\beta}s+Z_{\mathrm{d}}(\beta))(x^{-\beta'}s'+Z_{\mathrm{d}}(\beta'))} }
	\diff x \diff \nu(s,s'),
	\end{align*}
	where $\nu$ denotes the law of $(S_\beta,S_{\beta'})$ and one applied Palm formula (see Proposition \ref{prop:palmformula} stated below) to the Poisson point process $\sum_{i} \delta_{(\eta_i,S_{\beta,i},S_{\beta',i})}$ on $(\R_+)^3$, which has intensity $\diff x \otimes \diff \nu(s,s')$.
	Using Fubini's theorem together with the fact that $(S_\beta,S_{\beta'})$ is independent of $(Z_{\mathrm{d}}(\beta),Z_{\mathrm{d}}(\beta'))$ yields 
	\[
	\E \left[Q_{\mathrm{d}}(\beta,\beta')\right]
	= \Ec{ \int_0^\infty \frac{\diff x}{(1+x^\beta Z_{\mathrm{d}}(\beta)/S_\beta)(1+ x^{\beta'} Z_{\mathrm{d}}(\beta')/S_{\beta'})}}.
	\]
	Then the result follows from an obvious change of variable.
\end{proof}

\medskip

\begin{prop}[Palm formula, see Theorem 4.1 in \cite{lastpenrose2017}]
\label{prop:palmformula}
	Let $\Pi$ be a PPP($\mu$) where $\mu$ is a non-zero $\sigma$-finite positive measure on $\R$. 
	Let $\cM$ denote the set of Radon measures on $\R$ and $f \colon \R \times \cM \to \R$ be a positive mesurable function. 
	Then, we have
	\[
	\Ec{\sum_{X \in \Pi} f(X,\Pi\setminus\{X\}) } 
	= \int_\R  \mathbb E [f(x,\Pi)] \mu(\diff x) \, .
	\]	
\end{prop}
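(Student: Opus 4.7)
The plan is to prove the Palm/Mecke identity by the standard strategy: first reduce to tensor-product integrands, then handle the finite-intensity case by explicit calculation using the Poisson-number representation, and finally extend to the $\sigma$-finite case by monotone convergence.

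\textbf{Reduction.} By a monotone class argument, it suffices to establish the identity for functions of the form $f(x,\nu) = g(x) h(\nu)$ with $g \colon \R \to \R_+$ and $h \colon \cM \to \R_+$ measurable. The general case then follows since such products generate the product $\sigma$-algebra on $\R \times \cM$ and both sides of the identity are linear in $f$ and respect monotone limits.

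\textbf{Finite intensity case.} Assume first that $\mu$ is a finite measure with $m \coloneqq \mu(\R) \in (0,\infty)$. Then $\Pi$ may be realized as $\{X_1,\dots,X_N\}$ where $N \sim \mathrm{Poisson}(m)$ and, conditionally on $N=n$, the $X_i$ are i.i.d.\ with law $\mu/m$, independent of $N$. Decomposing the sum over atoms and using exchangeability,
\begin{align*}
\Ec{\sum_{X \in \Pi} g(X) h(\Pi\setminus\{X\})}
&= \sum_{n \geq 1} \e^{-m} \frac{m^n}{n!} \cdot n \cdot \Ec{g(X_1) h(\{X_2,\dots,X_n\})} \\
&= \int_\R g\,\diff\mu \cdot \sum_{n \geq 1} \e^{-m} \frac{m^{n-1}}{(n-1)!} \Ec{h(\{X_2,\dots,X_n\})}.
\end{align*}
Setting $k=n-1$ and recognizing that $\{X_2,\dots,X_{k+1}\}$ with $k \sim \mathrm{Poisson}(m)$ is again a realization of a PPP($\mu$), the last sum equals $\E[h(\Pi)]$, which yields the claimed identity in this setting.

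\textbf{Extension to $\sigma$-finite $\mu$.} Write $\R = \bigsqcup_{n\geq1} A_n$ with $\mu(A_n) < \infty$ and let $\Pi_n \coloneqq \Pi \cap A_n$, so that the $\Pi_n$ are independent PPP($\mu|_{A_n}$) and $\Pi = \bigsqcup_n \Pi_n$. Applying the finite case to each $\Pi_n$ with integrand $(x,\nu) \mapsto g(x)\,h(\nu \cup (\bigsqcup_{m\neq n}\Pi_m))$ and taking expectation, then summing over $n$ and invoking Fubini–Tonelli and dominated/monotone convergence (first truncating $g$ and $h$ by constants and restricting to finitely many $A_n$, then passing to the limit), one obtains the identity for general $\sigma$-finite $\mu$.

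The main subtlety is the identification, in the finite case, of the law of $\Pi\setminus\{X\}$ under the size-biased/Palm sampling as being again PPP($\mu$); this is the content of Slivnyak's theorem and is precisely what the exchangeability computation above encodes. Everything else is bookkeeping: the monotone class reduction and the $\sigma$-finite approximation are routine, and no probabilistic input beyond the Poisson-number representation of a finite PPP is required.
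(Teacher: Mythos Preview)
Your proof is correct and follows the standard route to the Mecke--Slivnyak identity: reduce to product integrands by a monotone class argument, verify the finite-intensity case via the explicit Poisson-number representation and exchangeability, and extend to $\sigma$-finite intensity by partitioning and monotone convergence. The computation in the finite case is clean and the identification of the reduced Palm measure with the original law (Slivnyak's theorem) is exactly what your exchangeability step captures.

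The paper, however, does not supply a proof of this proposition at all: it is stated as a quotation of Theorem~4.1 in Last--Penrose and used as a black box. So there is nothing to compare against; you have simply filled in a proof the authors chose to cite. One minor remark: in your $\sigma$-finite extension the phrase ``dominated/monotone convergence (first truncating $g$ and $h$ by constants \dots)'' is slightly muddled --- since everything is nonnegative, Tonelli and monotone convergence alone suffice and no truncation is needed.
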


\medskip

We now study the integral appearing in Lemma \ref{lem:expression_F_deco} in a deterministic fashion.

\medskip

\begin{lem} \label{lem:integral}
	Let $\beta' \geq \beta > 1$. For $r \geq 0$, we set
	\[
	I(r) \coloneqq \int_0^\infty \frac{\diff x}{(1+(rx)^{\beta})(1+x^{\beta'})}. 
	\]
	\begin{enumerate}
		\item\label{it:bound_by_a_cst} For any $r \geq 0$, $0 \leq I(r) \leq C(\beta')$, where $C(\beta')$ denotes a constant depending only on $\beta'$.
		\item\label{it:estimate_r_>_1} For any $r \geq 1$,
		\[
		\abs{I(r) - \frac{\log r}{r}}
		\leq \frac{4}{r} + (\beta-1)\frac{\log^2 r}{2r}. 
		\]
		\item\label{it:estimate_r_>_0} For any $\delta \in (0,1)$ there exists $C(\beta',\delta)$, depending only on $\beta'$ and $\delta$ such that, for any $r > 0$,
		\[
		\abs{I(r) - \frac{\log r}{r}}
		\leq C(\beta',\delta) \left( \frac{1}{r} + \frac{(\beta-1)}{r^{1-\delta}} 
		+ \frac{1}{r^{1+\delta}}\right). 
		\]
		\item\label{it:lower_bound} For any $r \geq 0$, $I(r) \geq \frac{1}{4} (1 \wedge \frac{1}{r})$.
		\item\label{it:bound_strongly_fluctuating} For any $\delta \in (0,1)$, there exists $C(\beta',\delta)$ depending only on $\beta'$ and $\delta$ such that, for any $r > 0$, $I(r) \leq C(\beta',\delta) r^{\delta-1}$.
	\end{enumerate}
\end{lem}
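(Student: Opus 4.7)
The five assertions are of a purely analytic nature. Part 1 follows from the uniform bound $I(r) \leq \int_0^\infty (1+x^{\beta'})^{-1}\diff x < \infty$, finite since $\beta' > 1$. Part 4 follows from the observation that on the interval $(0, 1 \wedge 1/r)$ both factors in the denominator of the integrand are at most $2$, so the integrand is at least $1/4$. The real work lies in Part 2, from which Parts 3 and 5 will be deduced by elementary manipulations.

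For Part 2 (with $r \geq 1$), I would split $I(r) = \int_0^1 + \int_1^\infty$ at $x=1$. The tail is bounded by $\int_1^\infty (rx)^{-\beta} x^{-\beta'}\diff x = r^{-\beta}/(\beta+\beta'-1) \leq 1/r$, since $\beta+\beta' \geq 2$. On $(0,1)$, I would replace the factor $(1+x^{\beta'})^{-1}$ by $1$; the error $\int_0^1 \frac{x^{\beta'}\diff x}{1+(rx)^\beta}$ is $O(1/r)$ after splitting further at $x=1/r$ and using $1+(rx)^\beta \geq rx$ on $(1/r,1)$. The remaining integral becomes $\frac{1}{r}\int_0^r \frac{\diff y}{1+y^\beta}$ after substituting $y = rx$. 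The key identity
\[
\frac{1}{1+y^\beta} = y^{-\beta} - \frac{y^{-2\beta}}{1+y^{-\beta}}, \qquad y \geq 1,
\]
yields the explicit antiderivative $\int_1^r y^{-\beta}\diff y = \frac{1-r^{1-\beta}}{\beta-1}$, and the remaining correction is controlled uniformly by $\int_1^\infty y^{-2\beta}\diff y \leq 1$; likewise $\int_0^1 \frac{\diff y}{1+y^\beta} \leq 1$. Finally, the elementary inequality $|1 - e^{-u} - u| \leq u^2/2$ for $u \geq 0$, applied to $u = (\beta-1)\log r$, converts this antiderivative into $\log r$ up to an error at most $(\beta-1)\log^2 r/2$. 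This is the unique source of the $(\beta-1)\log^2 r/(2r)$ term; all other errors are $O(1/r)$ uniformly in $\beta$.

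For Part 3 I would treat the two regimes separately: for $r \geq 1$, Part 2 yields the bound after absorbing $\log^2 r$ into $C(\delta) r^\delta$ (since $r^{-\delta}\log^2 r$ is bounded on $[1,\infty)$); for $r \leq 1$, the triangle inequality together with Part 1 and the elementary bound $|\log r| \leq C(\delta) r^{-\delta}$ on $(0,1]$ (obtained from the maximum of $r^\delta|\log r|$) give the claim. For Part 5, I would use the pointwise inequality $\frac{1}{1+y^\beta} \leq \frac{2}{1+y^{1-\delta}}$ (valid for $\beta > 1 > 1-\delta$, checked on $\{y \leq 1\}$ and $\{y \geq 1\}$ separately), which removes the $\beta$-dependence from the first factor. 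The substitution $y = rx$ followed by splitting at $y = r$ then produces two explicit integrals bounded by constant multiples of $r^\delta$, yielding $I(r) \leq C(\beta',\delta) r^{\delta - 1}$ for $r \geq 1$, and the case $r \leq 1$ follows immediately from Part 1 since $r^{\delta-1} \geq 1$ there. The principal obstacle is the bookkeeping in Part 2: three independent sources of $O(1/r)$ error---the tail $\int_1^\infty$, the correction from dropping $(1+x^{\beta'})^{-1}$, and the $O(1)$ errors from $\int_0^r\frac{\diff y}{1+y^\beta}$---must be combined cleanly, and the judicious choice of splitting point $x=1$ is precisely what keeps the final constant independent of $\beta'$ and below the advertised value~$4$.
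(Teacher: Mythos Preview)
Your proof is correct and follows the paper's strategy closely: isolate the main term $\int_{1/r}^1 (rx)^{-\beta}\diff x$, evaluate it via $\abs{1-\e^{-u}-u}\leq u^2/2$, and show all other contributions are $O(1/r)$. Two minor organizational differences are worth noting. For Part~\ref{it:estimate_r_>_1}, rather than first dropping the factor $(1+x^{\beta'})^{-1}$ and then separately approximating $(1+y^\beta)^{-1}$ by $y^{-\beta}$, the paper uses the single algebraic inequality
\[
\abs{\frac{1}{(1+u)(1+u')} - \frac{1}{u}} \leq \frac{1}{u^2} + \frac{u'}{u}
\]
on the interval $(1/r,1)$ with $u=(rx)^\beta$, $u'=x^{\beta'}$, which handles both approximations in one stroke and makes the constant $4$ fall out without any sign-tracking. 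For Part~\ref{it:bound_strongly_fluctuating}, your auxiliary inequality $\frac{1}{1+y^\beta}\leq\frac{2}{1+y^{1-\delta}}$ is correct but unnecessary: since $\beta-1\leq\beta'-1$, Part~\ref{it:estimate_r_>_1} already gives $I(r)\leq \frac{\log r+4}{r}+\frac{(\beta'-1)\log^2 r}{2r}\leq C(\beta',\delta)\,r^{\delta-1}$ for $r\geq 1$, and Part~\ref{it:bound_by_a_cst} covers $r\leq 1$---which is exactly what the paper does in one line.
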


\medskip

\begin{proof} \textit{Part} \ref{it:bound_by_a_cst}. The fact that $I(r)$ is nonnegative is trivial and the upper bound follows from the inequality $I(r) \leq \int_0^\infty \frac{\diff x}{1+x^{\beta'}}$.

\medskip
	
	\textit{Part} \ref{it:estimate_r_>_1}.
	We first split $I(r)$ into three pieces, keeping the central part as the main one: 
	\[
	\abs{I(r)- \int_{1/r}^{1} \frac{\diff x}{(1+(rx)^{\beta})(1+x^{\beta'})} }
	\leq \int_{0}^{1/r} \diff x
	+ \int_{1}^\infty \frac{\diff x}{r^\beta x^{\beta+\beta'}} \leq \frac{2}{r},
	\]
	using $1/r^{\beta} \leq 1/r$ (recall that $r$ is assumed to be larger than $1$).
	We now focus on the integral from $1/r$ to $1$.
	Using that for any $u,u'>0$, 
	\[
	\abs{\frac{1}{(1+u)(1+u')} - \frac{1}{u}} 
	= \frac{1+u'+uu'}{u(1+u)(1+u')} 
	\leq \frac{1}{u^2} + \frac{u'}{u},
	\]
	we get
	\[
	\abs{\int_{1/r}^{1} \frac{\diff x}{(1+(rx)^{\beta})(1+x^{\beta'})}
		- \int_{1/r}^{1} \frac{\diff x}{(rx)^{\beta}}} 
	\leq \int_{1/r}^{\infty} \frac{\diff x}{(rx)^{2\beta}}
	+ \int_0^{1} \frac{x^{\beta'} \diff x}{(rx)^{\beta}} 
	\leq \frac{2}{r}.
	\]
	Finally, note that 
	\begin{align*}
	\int_{1/r}^{1} \frac{\diff x}{(rx)^{\beta}}
	= \frac{r^{\beta-1}-1}{r^\beta(\beta-1)} 
	= \frac{1-\e^{-(\beta-1) \log r}}{r(\beta-1)},
	\end{align*}
	which yields Part \ref{it:estimate_r_>_1} using $\abs{1-\e^{-t}-t} \leq t^2/2$.
	
\medskip
	
	\textit{Part} \ref{it:estimate_r_>_0}. For $r \geq 1$, we apply Part \ref{it:estimate_r_>_1} and use that $\log^2 r \leq C(\delta) r^{\delta}$. 
	For $r \in (0,1)$, we write
	\[
	\abs{I(r) - \frac{\log r}{r}}
	\leq I(r) + \frac{\abs{\log r}}{r}
	\leq C(\beta') + \frac{C(\delta)}{r^{1+\delta}},
	\]
	using Part \ref{it:bound_by_a_cst}, and then, using $1 \leq 1/r$ for the first term on the right-hand side proves Part \ref{it:estimate_r_>_0}.
	
\medskip
	
	\textit{Part} \ref{it:lower_bound}. 
	For $x \in [0,1 \wedge \frac{1}{r}]$, we have $(1+(rx)^{\beta})(1+x^{\beta'}) \leq 4$.
	So keeping only this part of the integral yields Part \ref{it:lower_bound}. 
	
\medskip
	
	\textit{Part} \ref{it:bound_strongly_fluctuating}.
	This follows from Part \ref{it:bound_by_a_cst} if $r \in (0,1]$, and from Part \ref{it:estimate_r_>_1} if $r \geq 1$.
\end{proof}

\subsection{Proof of the general theorems}

In this section, we prove Theorems \ref{thm:overlap_deco_1}, \ref{thm:overlap_deco_crit} and \ref{thm:overlap_deco_2}. 

\begin{proof}[Proof of Theorem \ref{thm:overlap_deco_1}]
	Applying Lemma \ref{lem:expression_F_deco} and Lemma \ref{lem:integral}.\ref{it:estimate_r_>_0} we get, for some fixed $\delta \in (0,1)$ to be chosen small enough later,
	\begin{align}
	\E \left[Q_{\mathrm{d}}(\beta,\beta')\right]
	& = \Ec{ \left( \frac{S_{\beta'}}{Z_{\mathrm{d}}(\beta')} \right)^{1/\beta'} \frac{\log R}{R}}
	+ O \left( \Ec{ \left( \frac{S_{\beta'}}{Z_{\mathrm{d}}(\beta')} \right)^{1/\beta'} \left( \frac{1}{R} + \frac{(\beta-1)}{R^{1-\delta}} 
		+ \frac{1}{R^{1+\delta}}\right)} \right) \nonumber \\
	& \eqqcolon E_0 + O(E_1 + E_2 + E_3). \label{eq:F_d_2}
	\end{align}
	We estimate these four terms successively, in increasing order of difficulty.
	
\medskip
	
	\underline{\textit{Term $E_1$}}. Using the definition of $R$ and then Corollary \ref{cor:change_of_measure} together with the independence of $S_{\beta}$ and $Z_{\mathrm{d}}(\beta)$, we get
	\begin{equation} \label{eq:E_1}
	E_1
	= \Ec{ \left( \frac{S_{\beta}}{Z_{\mathrm{d}}(\beta)} \right)^{1/\beta} }
	= \Ec{ Z(\beta)^{-1/\beta} }
	\sim \beta-1, \qquad \beta \downarrow 1,
	\end{equation}
	by Equation \eqref{a2}.

\medskip

	\underline{\textit{Term $E_3$}}. We use the definition of $R$ and the independence of $(S_{\beta},S_{\beta'})$ and $(Z_{\mathrm{d}}(\beta),Z_{\mathrm{d}}(\beta'))$, and then $S_{\beta'} \geq 1$ and Cauchy--Schwarz inequality to get
	\begin{align*}
	E_3
	& = \Ec{ \frac{S_{\beta}^{(1+\delta)/\beta} }{S_{\beta'}^{\delta/\beta'}} }
	\Ec{ \frac{Z_{\mathrm{d}}(\beta')^{\delta/\beta'}}{Z_{\mathrm{d}}(\beta)^{(1+\delta)/\beta}}	}
	\leq \Ec{S_{\beta}^{(1+\delta)/\beta}}
	\Ec{Z_{\mathrm{d}}(\beta')^{2\delta/\beta'}}^{1/2}
	\Ec{\frac{1}{Z_{\mathrm{d}}(\beta)^{2(1+\delta)/\beta}}	}^{1/2}.
	\end{align*}
	Using Corollary \ref{cor:change_of_measure}, Lemma \ref{esz} and $\E[S_{\beta'}^{1/\beta'}] < \infty$, we get $\E[Z_{\mathrm{d}}(\beta')^{2\delta/\beta'}] = O(1)$ as soon as $\delta <1/2$.
	Therefore, by Corollary \ref{cor:change_of_measure} and Equation \eqref{a2},
	\begin{equation} \label{eq:E_3}
	E_3
	= O\left( \Ec{S_{\beta}^{(1+\delta)/\beta}} \cdot 
	\frac{(\beta-1)^{1+\delta}}{\E[S_\beta^{1/\beta}]^{1+\delta}} \right)
	= O(\beta-1), \qquad \beta \downarrow 1,
	\end{equation}
	by Assumption \ref{it:ass_1+eps} as soon as $\delta \leq \varepsilon$. 
	Indeed, if Assumption \ref{it:ass_1+eps} holds for some $\varepsilon > 0$ then, for any $\delta \in (0,\varepsilon]$ it holds with $\delta$ replacing $\varepsilon$ by Hölder's inequality%
	\footnote{More precisely this follows from the inequality $\E[T_\beta^{1+\delta}] \leq \E[T_\beta^{1+\varepsilon}]^{\delta/\varepsilon} \E[T_\beta]^{(\varepsilon-\delta)/\varepsilon}
	=\E[T_\beta^{1+\varepsilon}]^{\delta/\varepsilon}$.}.

\medskip

	\underline{\textit{Term $E_2$}}.
	We proceed as for $E_3$, using Cauchy--Schwarz inequality to bound the second expectation below:
	\begin{align*}
	E_2
	& = (\beta-1) \Ec{ S_{\beta}^{(1-\delta)/\beta} S_{\beta'}^{\delta/\beta'}}
	\Ec{ \frac{Z_{\mathrm{d}}(\beta')^{-\delta/\beta'}}{Z_{\mathrm{d}}(\beta)^{(1-\delta)/\beta}}	}
	= O\left( \Ec{ S_{\beta}^{(1-\delta)/\beta} S_{\beta'}^{\delta/\beta'}} \cdot \frac{(\beta-1)^{2-\delta}}{\E[S_\beta^{1/\beta}]^{1-\delta}} \right).
	\end{align*}
	Using $\E[X^{1-\delta} Y^\delta] \leq \E[X]^{1-\delta} \E[Y]^\delta$ with $X =S_\beta^{1/\beta}$ and $Y = S_{\beta'}^{1/\beta'}$ yields $E_2 = O(\beta-1)$, as $\beta \downarrow 1$.

\medskip

	\underline{\textit{Term $E_0$}}. We split this term into three parts:
	\begin{align*}
	E_0 
	& = \Ec{ \left( \frac{S_{\beta}}{Z_{\mathrm{d}}(\beta)} \right)^{1/\beta}
		\left( \log \frac{Z_{\mathrm{d}}(\beta)^{1/\beta}}{S_{\beta}^{1/\beta}}
		+ \frac{1}{\beta'} \log S_{\beta'}
		- \frac{1}{\beta'} \log Z_{\mathrm{d}}(\beta') \right)}
	\eqqcolon E_{00} + E_{01} - E_{02},
	\end{align*}
	where $E_{00}$ is the main term.
	By Corollary~\ref{cor:change_of_measure}, we get
	\[
	E_{00} 
	= \Ec{\frac{\log Z(\beta)^{1/\beta}}{Z(\beta)^{1/\beta}} }
	- \Ec{ Z(\beta)^{-1/\beta} } \Ec{\frac{S_{\beta}^{1/\beta}}{\E[S_\beta^{1/\beta}]} 
		\log \frac{S_{\beta}^{1/\beta}}{\E[S_\beta^{1/\beta}]} }.
	\]
	Then, using Equations \eqref{a5} for the first term and Equation \eqref{a2} together with Assumption \ref{it:ass_alpha} or \ref{it:ass_alpha'} for the second one, we get
	\[
	E_{00} = \begin{cases}
	(1-\alpha+o(1)) (\beta-1) \log\frac{1}{\beta-1} & \text{under Assumption \ref{it:ass_alpha}}, \\
	(1-\alpha) (\beta-1) \log\frac{1}{\beta-1}+ O(\beta-1) & \text{under Assumption \ref{it:ass_alpha'}}.
	\end{cases}
	\]
	On the other hand, we have $E_{01} = O(\beta-1)$ by Corollary~\ref{cor:change_of_measure} together with Equation \eqref{a2} and Assumption \ref{it:ass_S_beta'} (note that it implies $\E[T_\beta \log S_{\beta'}] = O(1)$).
	Finally, we have
	\begin{align*}
	E_{02} 
	& = \frac{1}{\beta'} \Ec{S_{\beta}^{1/\beta}} 
	\Ec{\frac{\log Z_{\mathrm{d}}(\beta')}{Z_{\mathrm{d}}(\beta)^{1/\beta}}}
	= O \left( \Ec{\log^2 \left(Z(\beta') \Ec{S_{\beta'}^{1/\beta'}}\right)}^{1/2} \Ec{Z(\beta)^{-2/\beta}}^{1/2} \right),
	\end{align*}
	using Cauchy--Schwarz inequality and Corollary~\ref{cor:change_of_measure}.
	The first expectation on the right-hand side is a $O(1)$ so Equation \eqref{a2} implies $E_{02} = O(\beta-1)$.
	This concludes the proof.
\end{proof}

\medskip

\begin{proof}[Proof of Theorem \ref{thm:overlap_deco_crit}]
	Applying Lemma \ref{lem:expression_F_deco} and Lemma \ref{lem:integral}.\ref{it:lower_bound}, we get 
	\begin{equation} \label{eq:F_d_5}
	\E \left[Q_{\mathrm{d}}(\beta,\beta')\right]
	\geq \frac{1}{4} \Ec{ \left( \frac{S_{\beta'}}{Z_{\mathrm{d}}(\beta')} \right)^{1/\beta'} \left( 1 \wedge \frac{1}{R} \right)} 
	= \frac{1}{4} \Ec{ \left( \frac{S_{\beta'}}{Z_{\mathrm{d}}(\beta')} \right)^{1/\beta'} \wedge 
	\left( \frac{S_{\beta}}{Z_{\mathrm{d}}(\beta)} \right)^{1/\beta}}.
	\end{equation}
	Moreover, for any $a,b,\delta >0$ and $M > 1$, we have
	\[
	a \wedge b
	\geq \frac{(Ma)\wedge b}{M} 
	\geq \frac{b}{M} \1_{\{b \leq Ma\}}
	= \frac{1}{M} ( b - b \1_{\{b > Ma\}} )
	\geq \frac{1}{M} \left(  b - \frac{b^{1+\delta}}{(Ma)^\delta} \right).
	\]
	Applying this to \eqref{eq:F_d_5} yields, for any $\delta >0$ and $M > 1$,
	\begin{equation} \label{eq:F_d_6}
	\E \left[Q_{\mathrm{d}}(\beta,\beta')\right]
	\geq \frac{1}{4M} \left( 
	\Ec{\left( \frac{S_{\beta}}{Z_{\mathrm{d}}(\beta)} \right)^{1/\beta}}
	- \frac{1}{M^\delta} \Ec{ \left( \frac{Z_{\mathrm{d}}(\beta')}{S_{\beta'}} \right)^{\delta/\beta'}
		\left( \frac{S_{\beta}}{Z_{\mathrm{d}}(\beta)} \right)^{(1+\delta)/\beta}}
	\right).
	\end{equation}
	The first expectation in Equation \eqref{eq:F_d_6} equals the term $E_1$ appearing in the proof of Theorem \ref{thm:overlap_deco_1}, and hence is asymptotically equivalent to $\beta-1$, see Equation \eqref{eq:E_1}. The second expectation equals the term $E_3$ and so is a $O(\beta-1)$ if $\delta \leq \frac{1}{2} \wedge \varepsilon$ with $\varepsilon$ given by the assumption of the theorem. Choosing $M$ large enough, the first expectation dominates and the result follows.
\end{proof}

\medskip

\begin{proof}[Proof of Theorem \ref{thm:overlap_deco_2}]
	Let $\theta \in (0,1)$ be given by the assumption of the theorem. 
	Applying Lemma \ref{lem:expression_F_deco} and Lemma \ref{lem:integral}.\ref{it:estimate_r_>_0} with $\delta = \theta$, we get 
	\begin{equation} \label{eq:F_d_4}
	\E \left[Q_{\mathrm{d}}(\beta,\beta')\right]
	= O \left( \Ec{ \left( \frac{S_{\beta'}}{Z_{\mathrm{d}}(\beta')} \right)^{1/\beta'} \frac{1}{R^{1-\theta}}} \right)
	= O \left( \Ec{ S_{\beta}^{(1-\theta)/\beta} S_{\beta'}^{\theta/\beta'}}
	\Ec{ \frac{Z_{\mathrm{d}}(\beta')^{-\theta/\beta'}}{Z_{\mathrm{d}}(\beta)^{(1-\theta)/\beta}}	} \right),
	\end{equation}
	using the definition of $R$ and the independence of $(S_{\beta},S_{\beta'})$ and $(Z_{\mathrm{d}}(\beta),Z_{\mathrm{d}}(\beta'))$. By Cauchy--Schwarz inequality and Corollary \ref{cor:change_of_measure}, 
	\begin{align*}
	\Ec{ \frac{Z_{\mathrm{d}}(\beta')^{-\theta/\beta'}}{Z_{\mathrm{d}}(\beta)^{(1-\theta)/\beta}}	}
	\leq \frac{\Ec{Z(\beta')^{-2\theta/\beta'}}^{1/2}}{\Ec{S_{\beta'}^{1/\beta'}}^{\theta}}
	\frac{\Ec{Z(\beta)^{-2(1-\theta)/\beta}}^{1/2}}{\Ec{S_\beta^{1/\beta}}^{1-\theta}}
	= O \left( \frac{(\beta-1)^{1-\theta}}{\Ec{S_{\beta'}^{1/\beta'}}^{\theta} \Ec{S_\beta^{1/\beta}}^{1-\theta}} \right),
	\end{align*}
	by Equation \eqref{a2}. Coming back to Equation \eqref{eq:F_d_4} and applying the assumption of the theorem concludes the proof.
\end{proof}

\subsection{The REM case}

First note that Theorem \ref{thm:THM1_REM} is simply a particular case of Corollary \ref{cor:overlap_small_deco}.

\begin{proof}[Proof of Theorem \ref{thm:THM1_REM}]
	For the REM, $S_\beta = 1$ so the result follows from Corollary \ref{cor:overlap_small_deco}.
\end{proof}

\medskip

In this subsection, we add some comments and further results concerning the REM case.

\medskip

\begin{rem} \label{rem:Q_right_diff}
	It follows from Lemma \ref{lem:Z_as} that a.s., as $\beta \downarrow 1$,
	\[
	Q(\beta,\beta') = \frac{Z(\beta+\beta')}{Z(\beta)Z(\beta')} \sim \frac{Z(1+\beta')}{Z(\beta')} (\beta-1),
	\]
	proving that the function $Q(\,\cdot\,,\beta')$ is almost surely right differentiable at $1$. 
	On the other hand, Theorem \ref{thm:THM1_REM} implies that $\E[Q(\,\cdot\,,\beta')]$ has an infinite right-derivative at $1$. 
	There is however a simple way to show this, without aiming for the first order of $\E \left[Q(\beta,\beta')\right]$ as $\beta \downarrow 1$: indeed, using Fatou's lemma, 
	\[
	\liminf\limits_{\beta \downarrow 1}\frac{\E \left[Q(\beta,\beta')\right]}{\beta -1}
	\geq \Ec{\liminf\limits_{\beta \downarrow 1}\frac{\q}{\beta -1}}
	=\Ec{\frac{Z(1+\beta')}{Z(\beta')}},
	\]
	and it is not hard to see that this last expectation is infinite using Palm formula.
\end{rem}

\medskip

We have seen in the previous remark that, as $\beta \downarrow 1$, $Q(\beta,\beta')$ is of order $\beta-1$, but Theorem~\ref{thm:THM1_REM} shows that $\E[Q(\beta,\beta')]$ is much larger.
Hence, the expectation is dominated by an unlikely event, in which the highest point in the extremal process is exceptionally high, more precisely at a position of order $\log \frac{1}{\beta-1}$, but not higher than $\log \frac{1}{\beta-1}$, as proved in the following proposition.

\medskip

\begin{prop}[REM case]
	Let $\varepsilon > 0$ and
	\[
	E_\beta \coloneqq \left\{ \xi_1 \in 
	\left[\varepsilon \log \frac{1}{\beta-1}, \log \frac{1}{\beta-1} \right]
	\right\}.
	\]
	Then, as $\beta \downarrow 1$,
	\[
	\Ec{Q(\beta,\beta') \1_{E_\beta^c}}
	\leq \varepsilon (\beta-1) \log \frac{1}{\beta-1} + O(\beta-1).
	\]
\end{prop}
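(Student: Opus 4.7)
The plan is to split $E_\beta^c = \{\xi_1 < y\} \cup \{\xi_1 > Y\}$, where $y = \varepsilon \log \frac{1}{\beta-1}$ and $Y = \log \frac{1}{\beta-1}$, and handle each piece separately. The ``high-max'' case is immediate: since $Q \le 1$ and $\P(\xi_1 > Y) = 1 - e^{-(\beta-1)} \le \beta-1$, one has $\Ec{Q \1_{\{\xi_1 > Y\}}} \le \beta-1$.

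For the ``low-max'' case I would mimic the derivation of Lemma \ref{lem:expression_F_deco}, carrying along the indicator. Writing $\1_{\{\xi_1<y\}} = \prod_i \1_{\{\xi_i<y\}}$, applying the Palm formula to $Q = \sum_i e^{(\beta+\beta')\xi_i}/(Z(\beta)Z(\beta'))$, and using the independence of the PPP restrictions to $(-\infty,y)$ and $(y,\infty)$ (so that the probability that the independent PPP appearing in the Palm formula has all atoms below $y$ equals $e^{-\lambda}$ with $\lambda = e^{-y} = (\beta-1)^\varepsilon$), one obtains
\[
\Ec{Q \1_{\{\xi_1<y\}}} = e^{-\lambda} \int_{-\infty}^y \Ec{\frac{e^{(\beta+\beta')\xi}}{(e^{\beta\xi}+Z^y(\beta))(e^{\beta'\xi}+Z^y(\beta'))}} e^{-\xi} \diff \xi,
\]
where $Z^y(\cdot)$ is the partition function of an independent PPP restricted to $(-\infty,y)$. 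The change of variable $x = e^\xi/Z^y(\beta')^{1/\beta'}$ used in the proof of Lemma \ref{lem:expression_F_deco} yields exactly the same integrand there but now on $(0, M)$ with the crucial finite upper limit $M := e^y / Z^y(\beta')^{1/\beta'}$, and $R^y := Z^y(\beta)^{1/\beta} / Z^y(\beta')^{1/\beta'}$.

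The main point is that this finite upper limit cuts off the logarithmic divergence responsible for the $(\beta-1)\log\frac{1}{\beta-1}$ behaviour of $\Ec{Q}$, replacing it by a multiple of $y = \varepsilon \log \frac{1}{\beta-1}$. Concretely, bounding $x^\beta + (R^y)^\beta \ge (R^y)^\beta$ and estimating
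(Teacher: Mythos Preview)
Your decomposition of $E_\beta^c$ and your treatment of the high-max piece $\{\xi_1>Y\}$ match the paper exactly.

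For the low-max piece $\{\xi_1<y\}$, the paper takes a shorter route than you do. Rather than carrying the full indicator $\prod_i \1_{\{\xi_i<y\}}$ through Palm, it uses the crude bound $\1_{\{\xi_1<y\}}\le \1_{\{\eta_k>(\beta-1)^\varepsilon\}}$ on the $k$-th summand (since $\xi_1<y$ forces every $\eta_k>e^{-y}$). After Palm this gives
\[
\Ec{Q(\beta,\beta')\,\1_{\{\xi_1<y\}}}
\;\le\; \Ec{\int_{(\beta-1)^\varepsilon}^\infty \frac{\diff u}{(1+u^\beta Z(\beta))(1+u^{\beta'}Z(\beta'))}},
\]
with the \emph{unrestricted} partition functions. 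Splitting at $u=1$, bounding the integrand by $(u^\beta Z(\beta))^{-1}$ on $[(\beta-1)^\varepsilon,1]$ and by $(u^{\beta+\beta'}Z(\beta)Z(\beta'))^{-1}$ on $[1,\infty)$, and invoking $\E[Z(\beta)^{-1}]\sim\beta-1$ from \eqref{a1} finishes the argument immediately.

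Your route---keeping the exact identity and hence the restricted $Z^y$---can be completed but pays for the extra precision twice. First, the change of variable $x=e^\xi/Z^y(\beta')^{1/\beta'}$ does \emph{not} reproduce the integrand of Lemma~\ref{lem:expression_F_deco}: on $(0,M)$ it gives $Z^y(\beta')^{-1/\beta'}\,x^{\beta+\beta'-2}/\bigl((x^\beta+(R^y)^\beta)(1+x^{\beta'})\bigr)$, which is consistent with your last line but not with ``exactly the same integrand'' (the Lemma~\ref{lem:integral} form arises from the reciprocal variable $u=e^{-\xi}Z^y(\beta')^{1/\beta'}$, in which case the truncation is a \emph{lower} limit $1/M$). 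Second, and more substantively, you end up needing $\E[(Z^y(\beta))^{-1}]=O(\beta-1)$ rather than $\E[(Z(\beta))^{-1}]$; since $Z^y\le Z$ this does not follow from \eqref{a1}. It can be obtained by comparing Laplace transforms: $\E[e^{-tZ^y(\beta)}]=\exp\bigl(-\int_\lambda^\infty(1-e^{-tu^{-\beta}})\diff u\bigr)\le e^\lambda\,\E[e^{-tZ(\beta)}]$ with $\lambda=e^{-y}=(\beta-1)^\varepsilon$, whence $\E[(Z^y(\beta))^{-\alpha}]\le e^\lambda\,\E[(Z(\beta))^{-\alpha}]$ for $\alpha>0$. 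With these two fixes your argument goes through, but the paper's per-summand indicator trick sidesteps both.
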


\medskip

\begin{proof}
	On the one hand, noting that $Q(\beta,\beta') \leq 1$, we have
	\[
	\Ec{Q(\beta,\beta') \1_{\{\xi_1 > \log \frac{1}{\beta-1}\}}}
	\leq \Pp{\xi_1 > \log \frac{1}{\beta-1}}
	= 1 - \exp \left\{ - \int_{\log \frac{1}{\beta-1}}^\infty \e^{-x} \diff x \right\} = O(\beta-1).
	\]
	On the other hand, recalling that $\eta_k = \e^{-\xi_k}$ for all $k \geq 1$,
	\begin{align*}
	\Ec{Q(\beta,\beta') \1_{\{\xi_1 < \varepsilon \log \frac{1}{\beta-1}\}}}
	& \leq \Ec{ \sum_{k\geq 1} \eta_k^{-(\beta+\beta')} \1_{\{\eta_k > (\beta-1)^\varepsilon\}}
		\frac{1}{\Bigl( \eta_k^{-\beta}+\sum_{j\neq k}\eta_j^{-\beta} \Bigr) 
			\Bigl( \eta_k^{-\beta'}+\sum_{j\neq k}\eta_j^{-\beta'} \Bigr)} } \\
	& = \Ec{\int_{(\beta-1)^\varepsilon}^{\infty} 
		\frac{\diff x}{(1+x^{\beta}Z(\beta))(1+x^{\beta'}Z(\beta'))}},
	\end{align*}
	by Palm's formula as in the proof of Lemma \ref{lem:expression_F_deco}.
	This is at most
	\begin{align*}
	\Ec{\int_{(\beta-1)^\varepsilon}^{1} \frac{\diff x}{x^{\beta}Z(\beta)} 
		+ \int_1^\infty
		\frac{\diff x}{x^{\beta+\beta'}Z(\beta)Z(\beta')}}
	& \leq \Ec{Z(\beta)^{-1}} \varepsilon \log \frac{1}{\beta-1}
	+ \Ec{Z(\beta)^{-1}Z(\beta')^{-1}} \\
	& = \varepsilon (\beta-1) \log \frac{1}{\beta-1} + O(\beta-1),
	\end{align*}
	using Cauchy--Schwarz inequality for the second term and then 
	Lemma \ref{esz} and  Equation \eqref{a1} to estimate the two expectations.
\end{proof}

\subsection{The BBM case}
\label{sec:overlap_BBM}

In this section, we work in the BBM case and prove Theorem~\ref{thm:THM1_BBM} as an application of Theorems~\ref{thm:overlap_deco_1} and~\ref{thm:overlap_deco_crit}. In order to check the assumptions of these theorems, we first prove the following upper bounds for small moments of $S_\beta$. Only the case $\gamma \in (1/2,1)$ is useful for our purposes, but the other bounds follow from the same proof, so we include them as well. 
In particular, by the monotone convergence theorem, this shows that the random variable $S_1 = \sum_{k\geq 0} \e^{d_k}$ is in $L^\gamma$ for $\gamma \in (0,1/2)$ and therefore is finite a.s.\@ for the BBM.

\medskip

\begin{lem}[BBM case] \label{lem:small_moments}
	For any $\gamma \in (0,1)$, there exists $C = C(\gamma) > 0$ such that, for any $\beta \in (1,2]$,
	\[
	\Ec{S_\beta^\gamma} \leq
	\begin{cases}
	C(\beta-1)^{1-2\gamma}, & \text{if } \gamma \in (1/2,1), \\
	C \log \frac{1}{\beta-1}, & \text{if } \gamma = 1/2, \\
	C, & \text{if } \gamma \in (0,1/2).
	\end{cases} 
	\]
\end{lem}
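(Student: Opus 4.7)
The proof strategy is to use the layer-cake identity
\[
\E[S_\beta^\gamma] = \gamma \int_0^\infty s^{\gamma-1}\,\P(S_\beta > s)\,\mathrm ds
\]
and to estimate the tail $\P(S_\beta>s)$ in several regimes, using the moment bounds of Proposition~\ref{prop:moments_points_extremaux} together with the sharper cross-moment estimate of Proposition~\ref{prop:crossed}. The large-$s$ regime is straightforward: Chebyshev's inequality combined with $\E[S_\beta^2]\leq C/(\beta-1)^3$ gives $\P(S_\beta>s)\leq C/(s^2(\beta-1)^3)$, and integrating this against $s^{\gamma-1}$ over $s\geq c/(\beta-1)^2$ produces exactly a contribution of order $(\beta-1)^{1-2\gamma}$. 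For $\gamma\in(1/2,1)$ this is the desired order; for $\gamma<1/2$ it tends to zero and gives an $O(1)$ contribution; and at $\gamma=1/2$ it is bounded.

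The delicate step is the intermediate regime $s\in[c_0,c/(\beta-1)^2]$. The naive first-moment Markov bound $\P(S_\beta>s)\leq C/(s(\beta-1))$ yields, after integration, the weaker bound $(\beta-1)^{-\gamma}$---off by a factor $(\beta-1)^{-1}$ from the claimed order. To fix this, I would truncate the decoration at a level $L$ to be optimized, writing
\[
S_\beta = A_L + B_L, \qquad A_L\de\int_0^L \cD([-x,0])\,\beta\mathrm e^{-\beta x}\,\mathrm dx \leq \cD([-L,0]),\qquad B_L\de\int_L^\infty \cD([-x,0])\,\beta\mathrm e^{-\beta x}\,\mathrm dx,
\]
bounding $\P(A_L>s/2)$ by Markov's inequality with the first-moment estimate \eqref{eq:equiv_C_x}, and $\P(B_L>s/2)$ by Chebyshev's inequality with $\E[B_L^2]$ evaluated via the \emph{refined} cross-moment bound of Proposition~\ref{prop:crossed} (which replaces the crude $(v\vee v'+1)$ prefactor by the much sharper $(v\wedge v'+1)$). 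Optimizing $L$ as a function of $s$ and $\beta$ should yield an intermediate tail bound of order $\beta-1$, in agreement with the rare-event heuristic of Remark~\ref{rem:heuristic_picture_BBM}. Integrated against $s^{\gamma-1}$ over the intermediate window this then gives $(\beta-1)(s_0^\gamma-c_0^\gamma) \asymp (\beta-1)^{1-2\gamma}$ for $\gamma>1/2$, and $O(1)$ for $\gamma<1/2$.

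The logarithm at $\gamma=1/2$ is a borderline effect: there, the integrals on the intermediate and far regimes are both bounded in $\beta$, but the crossing of the three tail bounds happens precisely at a scale at which the integral of $s^{-1/2}$ against the best available tail picks up a $\log\frac{1}{\beta-1}$ factor from the logarithmic length of the intermediate window (either because the sharpened tail degrades by a factor $\log$ near the crossover, or because a slightly more careful interpolation between the first-moment and second-moment bounds on $B_L$ does). In any case this boundary computation is robust.

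The main obstacle is establishing the intermediate tail bound $\P(S_\beta>s)\lesssim(\beta-1)$ itself: as noted above, pure first/second moment methods miss it by a factor $(\beta-1)^{-1}$, and one has to genuinely exploit the structural property captured by Proposition~\ref{prop:crossed}, namely that the second moment of $\cC([-v,0])$ is driven by a rare event (Remark~\ref{rem:level_sets}) which translates into the concentration of $S_\beta$ around $O(1)$ outside of an event of probability $\asymp(\beta-1)$ on which $S_\beta$ is of order $(\beta-1)^{-2}$. Once the truncation is optimized correctly, the three cases of the lemma then drop out at once from the layer-cake integration.
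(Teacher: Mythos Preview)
There is a genuine gap at the heart of your argument: the intermediate tail bound $\P(S_\beta>s)\lesssim(\beta-1)$ is not obtainable from the level-set moment estimates you cite, and in fact those estimates alone do \emph{not} imply the lemma. For a concrete obstruction, take the deterministic decoration $\cD=\delta_0+\sum_{k\geq1}\delta_{-\log k}$. Then $\cD([-x,0])=1+\lfloor\e^x\rfloor$, so the bounds $\E[\cD([-x,0])]\leq C\e^x$ and $\E[\cD([-x,0])\cD([-y,0])]\leq C(x\wedge y+1)\e^{x+y}$ from Equations~\eqref{eq:bound_C_x}, \eqref{eq:2nd_moment_C_x} and Proposition~\ref{prop:crossed} all hold trivially, yet $S_\beta=1+\zeta(\beta)\sim(\beta-1)^{-1}$ is deterministic, so $\P(S_\beta>s)=1$ for all $s<c(\beta-1)^{-1}$ and $\E[S_\beta^\gamma]\asymp(\beta-1)^{-\gamma}$, which for every $\gamma\in(1/2,1)$ exceeds the claimed $(\beta-1)^{1-2\gamma}$ by the diverging factor $(\beta-1)^{\gamma-1}$. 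No optimization of your truncation parameter $L$ can circumvent this: Propositions~\ref{prop:moments_points_extremaux} and~\ref{prop:crossed} only encode first and second moments of level sets, and this example saturates them while violating both your tail bound and the conclusion of the lemma.

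The paper's proof is structurally different and genuinely uses the BBM genealogy. It works at finite $t$ via the description~\eqref{eq:description_deco} and decomposes $\cC^*_{t,r_t}(f_\beta)$ along the \emph{spine} as a Poisson sum $\int_0^{r_t}K_{t,\beta}(s)\,\cN(\diff s)$ over branching times (Lemma~\ref{lem:small_moments_at_t}). Subadditivity of $x\mapsto x^\gamma$ is applied to this fine decomposition, and then, conditioning on the spine position at time $t-s$ and applying Jensen to each piece, one obtains $k_{t,\beta,\gamma}(s)\leq C\,t^{-1}(s+1)^{-1}s^{-1/2}\bigl((s+1)\wedge(\beta-1)^{-2}\bigr)^\gamma$; integrating in $s$ produces the three regimes directly. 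The spine decomposition fragments $S_\beta$ into many pieces with individually small conditional first moments, so that Jensen is effective piece by piece --- something a two-term split $A_L+B_L$ cannot replicate.
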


\medskip

\begin{proof}
	We postpone most of the work to Lemma \ref{lem:small_moments_at_t}. With this result in hand, setting $f_\beta(x) = \e^{\beta \sqrt{2} x}$, we have $S_\beta = \cC(f_\beta)$ and therefore it is enough to show
	\begin{equation} \label{eq:goal_gamma}
	\Ec{\cC(f_\beta)^\gamma}
	\leq \limsup_{t \to \infty}	\hEc{\cC_{t,r_t}^*(f_\beta)^\gamma}.
	\end{equation}
	To see this, we consider, for $K > 0$, a continuous function $\chi_K$ such that $\1_{[-K,K]} \leq \chi_K \leq \1_{[-(K+1),K+1]}$.
	On the one hand, it follows from the vague convergence stated in Equation \eqref{eq:description_deco} that $\cC_{t,r_t}^*(\chi_K f_\beta)$ under $\widetilde{\P}_t$ converges in distribution to $\cC(\chi_K f_\beta)$ under $\P$.
	On the other hand, we have $\cC_{t,r_t}^*(\chi_K f_\beta)^\gamma
	\leq \e^{\gamma \beta \sqrt{2} (K+1)} \cC_{t,r_t}^*([-(K+1),0])^\gamma$, which is bounded in $L^2$ by Lemma \ref{lem:moment_C_t_r_t}. So $\cC_{t,r_t}^*(\chi_K f_\beta)^\gamma$ is uniformly integrable in $t$ and we get
	\[
	\Ec{\cC(\chi_K f_\beta)^\gamma}
	= \lim_{t \to \infty} 
	\hEc{\cC_{t,r_t}^*(\chi_K f_\beta)^\gamma}
	\leq \limsup_{t \to \infty}	\hEc{\cC_{t,r_t}^*(f_\beta)^\gamma}.
	\]
	Applying the monotone convergence theorem to let $K \to \infty$ on the left-hand side yields Equation \eqref{eq:goal_gamma} and concludes the proof.
\end{proof}

\medskip

We note in the following corollary that combining the estimates from Proposition \ref{prop:moments_points_extremaux} and Lemma \ref{lem:small_moments}, we get the order of $\E[S_\beta^\gamma]$ for any $\gamma \in (1/2,2]$.

\medskip
 
\begin{cor}[BBM case]
 \label{cor:encadrement_moment_S_beta}
	For any $\gamma \in (1/2,2]$, there exists $0 < c < C$ such that, for any $\beta \in (1,2]$,
	\[
	c (\beta-1)^{1-2\gamma}
	\leq \Ec{S_\beta^\gamma} 
	\leq C (\beta-1)^{1-2\gamma}.
	\]
\end{cor}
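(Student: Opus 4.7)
The plan is to combine the upper bounds from Lemma \ref{lem:small_moments} and Proposition \ref{prop:moments_points_extremaux} with the tight estimate $\E[S_\beta] \asymp (\beta-1)^{-1}$ (which already supplies the matching lower bound at the anchor point $\gamma=1$) and to propagate them to arbitrary $\gamma\in(1/2,2]$ via log-convexity of $s\mapsto\log\E[S_\beta^s]$, a standard consequence of H\"older's inequality. The arithmetic fact that makes the method frictionless is that the target exponent $s\mapsto 1-2s$ is affine, so any three-point convexity inequality anchored at values $s_0$ where the bound is of the form $(\beta-1)^{1-2s_0}$ automatically reproduces the exponent $1-2\gamma$.

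Granted this, the upper bound is immediate: for $\gamma\in(1/2,1)$ and $\gamma\in\{1,2\}$ the desired inequality is literally Lemma \ref{lem:small_moments} and Proposition \ref{prop:moments_points_extremaux}, while for $\gamma\in(1,2)$ I would interpolate log-convexly between $a=1$ and $c=2$ and substitute the two upper bounds from Proposition \ref{prop:moments_points_extremaux}.

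The lower bound is analogous but extrapolates instead of interpolating. The case $\gamma=1$ is supplied by Proposition \ref{prop:moments_points_extremaux}. For $\gamma\in(1/2,1)$ I would apply three-point convexity with $a=\gamma$, $b=1$, $c=2$ and rearrange to lower bound $\log\E[S_\beta^\gamma]$ in terms of the lower bound at $b=1$ and the upper bound at $c=2$. For $\gamma\in(1,2]$ I would use the symmetric convexity with $a=3/4$, $b=1$, $c=\gamma$ (any $a\in(1/2,1)$ would work), combining the lower bound $\E[S_\beta]\gtrsim(\beta-1)^{-1}$ with the upper bound $\E[S_\beta^{3/4}]=O((\beta-1)^{-1/2})$ from Lemma~\ref{lem:small_moments}.

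The proof is entirely elementary once this framework is set up; the only (minor) obstacle is the arithmetic bookkeeping, which however is forced to succeed by the affine-exponent observation made in the opening paragraph.
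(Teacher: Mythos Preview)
Your proposal is correct and essentially identical to the paper's proof: both use the anchor estimates $\E[S_\beta]\asymp(\beta-1)^{-1}$, $\E[S_\beta^2]\leq C(\beta-1)^{-3}$ from Proposition~\ref{prop:moments_points_extremaux} and $\E[S_\beta^\gamma]\leq C(\beta-1)^{1-2\gamma}$ for $\gamma\in(1/2,1)$ from Lemma~\ref{lem:small_moments}, and then propagate to general $\gamma$ via H\"older/log-convexity, even down to the choice of the auxiliary exponent $3/4$ for the lower bound when $\gamma\in(1,2]$.
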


\medskip

\begin{proof}
	The result holds for $\gamma=1$ by Proposition \ref{prop:moments_points_extremaux}. The upper bound holds for $\gamma \in (1/2,1)$ by Lemma \ref{lem:small_moments} and for $\gamma = 2$  by Proposition \ref{prop:moments_points_extremaux}.
	For $\gamma \in (1,2)$, the upper bound then follows from Hölder's inequality:
	\[
		\Ec{S_\beta^\gamma} \leq \Ec{S_\beta}^{2-\gamma} \E[S_\beta^2]^{\gamma-1}.
	\]
	It remains to prove the lower bound for $\gamma \in (1/2,2] \setminus \{1\}$.
	If $\gamma \in (1/2,1)$, it follows from this consequence of Hölder's inequality
	\[
		\Ec{S_\beta^\gamma} 
		\geq \frac{\E[S_\beta]^{2-\gamma}}{\E[S_\beta^2]^{1-\gamma}},
	\]
	together with the lower bound on $\Ec{S_\beta}$ and the upper bound on $\E[S_\beta^2]$.
	If $\gamma \in (1,2]$, it follows from this consequence of Hölder's inequality, for some $\gamma_0 \in (1/2,1)$ ($\gamma_0=3/4$ for example),
	\[
	\Ec{S_\beta^\gamma} 
	\geq \frac{\Ec{S_\beta}^{(\gamma-\gamma_0)/(1-\gamma_0)}}
	{\E[S_\beta^{\gamma_0}]^{(\gamma-1)/(1-\gamma_0)}} \, ,
	\]
	together with the lower bound on $\Ec{S_\beta}$ and the upper bound on $\E[S_\beta^{\gamma_0}]$.
\end{proof}

\medskip

\begin{proof}[Proof of Theorem \ref{thm:THM1_BBM}]
	We prove that the decoration of the BBM satisfies the assumptions of Corollary \ref{cor:overlap}.\ref{it:critical}.
	Assumption \eqref{eq:ass_asymp} has been checked in Corollary \ref{cor:encadrement_moment_S_beta}, with $\psi(\gamma) = 2\gamma-1$ for $\gamma \in (1/2,2)$.
	In particular, we have $\psi'(1) = 2 = \psi(1) + 1$ and $\psi$ is linear on a right-neighborhood of 1.
	It remains to check that, for any $\beta' > 1$, as $\beta \downarrow 1$, $\E[S_\beta \log S_{\beta'}] = O(\E[S_\beta])$.
	To prove this, note that 
	\begin{align*}
	\E[S_\beta \log S_{\beta'}] \leq \E[S_\beta S_{\beta'}]
	&= \beta \beta' \int_{0}^{\infty} \int_{0}^{\infty} \Ec{ \cD([-x,0])\cD([-y,0])}
	\e^{-\beta x}\e^{-\beta' y} \diff x \diff y \\
	&\leq C\beta \beta'\int_{0}^{\infty} \int_{0}^{\infty}
	((x\wedge y)+1)\e^{x+y}\e^{-\beta x}\e^{-\beta' y} \diff x \diff y,
	\end{align*}
	where we used Proposition \ref{prop:crossed}. Integrating first w.r.t.\@ $y$ and then w.r.t.\@ $x$, it follows that $\E[S_\beta \log S_{\beta'}] = O(\frac{1}{\beta-1})$, which is enough since $\E[S_\beta] \sim C_\star/(\beta-1)$ by Proposition \ref{prop:moments_points_extremaux}.
\end{proof}

\bigskip

\section{Temperature susceptibility}\label{susc}

\subsection{A first formula for \texorpdfstring{$\kappa_{\mathrm{d}}$}{kappad}}
\label{subsec:existence_kappa}

\medskip

We work here in the general decorated case (see Section \ref{sec:change_of_measure}) to prove a first formula for the susceptibility, which is useful for both the REM and the BBM.
Recall that we consider 
\begin{equation} \label{eq:def_C}
\mathbf{C}_{\mathrm{d}}(\beta,\beta+h)
=\frac{\Cov\left(\log Z_{\mathrm{d}}(\beta),\log  Z_{\mathrm{d}}(\beta+h)\right)}{\sigma(\log Z_{\mathrm{d}}(\beta))\,\sigma(\log Z_{\mathrm{d}}(\beta+h))}
\end{equation}
and define the susceptibility in temperature, if it exists, as the coefficient $\kappa_{\mathrm{d}}(\beta)$ such that
\begin{equation} \label{eq:def_susc}
\mathbf{C}_{\mathrm{d}}(\beta,\beta+h) = 1-\kappa_{\mathrm{d}}(\beta)\,  h^2 + o(h^2), \qquad  h\rightarrow 0.
\end{equation}
The main technical tool to prove existence of $\kappa_{\mathrm{d}}(\beta)$ is the following lemma.

\medskip

\begin{lem}[General decorated case]
\label{lem:derC}
	Assume that $\es \left[S_\beta\right]<\infty$, for every $\beta>1$. Then, for any $\beta'>1$, the functions
	\[
	\beta \in (1,\infty) \mapsto f_1(\beta)\de\es\left[\log Z_{\mathrm{d}}(\beta') \log  Z_{\mathrm{d}}(\beta)\right],
	\]
	\[
	\beta \in (1,\infty) \mapsto f_2(\beta)\de\es\left[ \log^2  Z_{\mathrm{d}}(\beta)\right],
	\]
	\[
	\beta \in (1,\infty) \mapsto f_3(\beta)\de\es\left[\log Z_{\mathrm{d}}(\beta) \right],
	\]
	are of class $C^2$ and their first and second derivatives are given by taking the derivative under $\es$.
\end{lem}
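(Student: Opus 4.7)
The strategy is a standard dominated-convergence argument to justify interchanging $\partial_\beta$ and $\es$. Almost surely, $\beta \mapsto \log Z_{\mathrm{d}}(\beta)$ is smooth and convex on $(1,\infty)$, with
\[
\partial_\beta \log Z_{\mathrm{d}}(\beta) = M_1(\beta), \qquad \partial_\beta^2 \log Z_{\mathrm{d}}(\beta) = M_2(\beta) - M_1(\beta)^2,
\]
where $M_n(\beta) \coloneqq Z_{\mathrm{d}}(\beta)^{-1}\sum_{i,k}(\xi_i+d_{ik})^n e^{\beta(\xi_i+d_{ik})}$ is the $n$-th Gibbs moment of the atom positions. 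By the product and chain rules, the first and second derivatives of the three integrands defining $f_1, f_2, f_3$ are polynomial expressions in $\log Z_{\mathrm{d}}(\beta)$, $\log Z_{\mathrm{d}}(\beta')$, $M_1(\beta)$ and $M_2(\beta)$. It thus suffices to exhibit, on each compact $[a,b]\subset (1,\infty)$, integrable random envelopes for these quantities, uniform in $\beta\in[a,b]$. The $C^2$-regularity and the claimed derivative formulas then follow from two iterations of Lebesgue's dominated convergence theorem, with Cauchy--Schwarz handling the cross-products.

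To produce the envelopes, fix $\epsilon>0$ so small that $a-\epsilon>1$. The elementary inequality $|t|^n e^{\beta t}\leq C_{n,\epsilon}\bigl(\mathbf{1}_{t\leq 0}e^{(\beta-\epsilon)t}+\mathbf{1}_{t\geq 0}e^{(\beta+\epsilon)t}\bigr)$, summed over atoms, yields
\[
|M_n(\beta)| \;\leq\; C_\epsilon\,\frac{Z_{\mathrm{d}}(\beta-\epsilon)+Z_{\mathrm{d}}(\beta+\epsilon)}{Z_{\mathrm{d}}(\beta)} \qquad (n=1,2).
\]
Since $\xi_i+d_{ik}\leq \xi_1$, one has $Z_{\mathrm{d}}(\beta+\epsilon)\leq e^{\epsilon\xi_1}Z_{\mathrm{d}}(\beta)$, so the ``upper'' ratio is dominated by $e^{\epsilon\xi_1}$, which lies in every $L^p$ with $p\epsilon<1$, because $\xi_1$ is Gumbel and $\es[e^{\alpha\xi_1}]=\Gamma(1-\alpha)<\infty$ for $\alpha<1$. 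Similarly, the sandwich $\beta\xi_1\leq \log Z_{\mathrm{d}}(\beta)\leq \epsilon\xi_1+\log Z_{\mathrm{d}}(\beta-\epsilon)$, together with Corollary \ref{cor:change_of_measure} and the explicit stable law of $Z(\beta-\epsilon)$ given by Lemma \ref{esz}, shows that $|\log Z_{\mathrm{d}}(\beta)|$ has all polynomial moments, uniformly on $[a,b]$.

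The main obstacle is to control the ``lower'' ratio $Z_{\mathrm{d}}(\beta-\epsilon)/Z_{\mathrm{d}}(\beta)$. Since $Z(\beta)$ is stable with index $1/\beta<1$, it has no positive moment of order $\geq 1/\beta$ (Lemma \ref{esz}), so the naive bound $Z_{\mathrm{d}}(\beta-\epsilon)/Z_{\mathrm{d}}(\beta)\leq Z_{\mathrm{d}}(\beta-\epsilon)$ is useless. The key is that the largest atom $\xi_1$ dominates the denominator via $Z_{\mathrm{d}}(\beta)\geq e^{\beta\xi_1}$ while being only weakly coupled to the rest of the Poisson point process: conditioning on $\xi_1=u$ and using Palm's formula (Proposition \ref{prop:palmformula}), the remaining atoms form a PPP on $(-\infty,u)$ of finite total mass $e^{-u}$, and a H\"older argument combining the Gumbel moments $\es[e^{-p\beta\xi_1}]=\Gamma(1+p\beta)<\infty$ with the available low-order positive moments of the residual REM-type partition function yields an $L^p$ bound for some $p>2$. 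Once all these envelopes are secured, two applications of dominated convergence prove the $C^2$-regularity and the stated formulas for $f_1, f_2$ and $f_3$, continuity of the derivatives following from the same envelopes.
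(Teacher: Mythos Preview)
Your overall strategy---dominated convergence with envelopes uniform on compacts for $M_1$, $M_2$ and $\log Z_{\mathrm{d}}$---is exactly the paper's. The difference is in how the envelopes are built. The paper centres at the top atom: writing $Z_{\mathrm{d}}(\beta)=\e^{\beta\xi_1}\sum_k \e^{\beta E_k}$ with $E_k\le 0$, the Gibbs averages $\langle |E|^n\rangle_\beta$ are \emph{decreasing} in $\beta$, so $\langle |E|^n\rangle_{\beta_1}$ serves as a uniform envelope on $(\beta_1,\infty)$. Its $L^2$ bound comes from Cauchy--Schwarz \emph{inside} the Gibbs measure, $\langle |E|^n\rangle_{\beta_1}^2\le \langle E^{2n}\rangle_{\beta_1}\le \sum_k E_k^{2n}\e^{\beta_1 E_k}\le C\sum_k \e^{\beta_2 E_k}$, and the last quantity has finite expectation by a pure first-moment computation requiring only $\E[S_{\beta_2}]<\infty$.

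Your route via the ratio $Z_{\mathrm{d}}(\beta-\epsilon)/Z_{\mathrm{d}}(\beta)$ has a gap at the ``main obstacle''. First, the remark that the residual PPP on $(-\infty,u)$ has ``finite total mass $\e^{-u}$'' is wrong: the intensity $\e^{-x}\,\dx$ has infinite mass there. More importantly, the H\"older step you sketch does not close under the stated hypothesis. After bounding $Z_{\mathrm{d}}(\beta)\ge \e^{\beta\xi_1}$, the top-cluster contribution to $\e^{-\beta\xi_1}Z_{\mathrm{d}}(\beta-\epsilon)$ is $\e^{-\epsilon\xi_1}S_{\beta-\epsilon,1}$, and any $L^p$ bound with $p>1$ on this piece forces $\E[S_{\beta-\epsilon}^p]<\infty$, which is not assumed; a H\"older splitting between $\e^{-\beta\xi_1}$ and $Z_{\mathrm{d}}(\beta-\epsilon)$ fares no better since $Z_{\mathrm{d}}(\gamma)$ has no positive moment of order $\ge 1/\gamma$. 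The gap is repairable via the log-convexity you mention but do not invoke here: $\bigl(Z_{\mathrm{d}}(\beta-\epsilon)/Z_{\mathrm{d}}(\beta)\bigr)^p\le Z_{\mathrm{d}}(\beta-p\epsilon)/Z_{\mathrm{d}}(\beta)\le \e^{-\beta\xi_1}Z_{\mathrm{d}}(\beta-p\epsilon)$, whose expectation is $\E[S_{\beta-p\epsilon}]\cdot\E[\e^{-\beta\xi_1}Z(\beta-p\epsilon)]<\infty$ for $p\epsilon<\beta-1$. But this is precisely the paper's monotonicity/convexity idea; once you use it, the Palm/H\"older detour is unnecessary.
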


\medskip

\begin{proof}
	Let us write
	\[
	Z_{\mathrm{d}}(\beta) = \e^{\beta \xi_1}\sum_{k\geq 1} \e^{\beta E_k},
	\]
	where $(E_k)_{k\geq 1}$ is the decreasing reordering of $\left\{\xi_i+d_{i,j}-\xi_1\,;\,i,j\right\}$. Note that $E_1=0$ and rewrite  $f_1$ as
	\begin{equation*}
	f_1(\beta)= \es\left[\log Z_{\mathrm{d}}(\beta')\log\left(\e^{\beta \xi_1}\sum \e^{\beta E_k}\right)\right]
	=\beta\,\es\left[\xi_1 \log Z_{\mathrm{d}}(\beta')\right]+\es\left[\log Z_{\mathrm{d}}(\beta')\log\sum \e^{\beta E_k}\right].
	\end{equation*}
	Thus $f_1$ is $C^1$ with
	\begin{equation*}
	f_1'(\beta)
	=\es\left[\xi_1 \log Z_{\mathrm{d}}(\beta')\right]+\es\left[\log Z_{\mathrm{d}}(\beta')\frac{\sum E_k\,\e^{\beta E_k}}{\sum \e^{\beta E_k}}\right],
	\end{equation*}
	since, for any given $\beta_1>1$, one has, for all $\beta>\beta_1$, 
	\begin{equation*}
	\abs{\log Z_{\mathrm{d}}(\beta')\,\frac{\sum E_k\,\e^{\beta E_k}}{\sum \e^{\beta E_k}} }
	\leq \abs{\log Z_{\mathrm{d}}(\beta')} \frac{\sum \, \abs{E_k}\,\e^{\beta_1 E_k}}{\sum \e^{\beta_1 E_k}}\, ,
	\end{equation*}
	and those two last terms have finite second moments. The first one because $\log Z_{\mathrm{d}}(\beta')$ has the same law as $\log Z(\beta)$ up to translation and the second one since
	\begin{equation*}
		\left(\frac{\sum |E_k|\,\e^{\beta_1 E_k}}{\sum \e^{\beta_1 E_k}}\right) ^2 \leq \frac{\sum E_k^2\,\e^{\beta_1 E_k}}{\sum \e^{\beta_1 E_k}}\leq \sum E_k^2\,\e^{\beta_1 E_k} \leq C\sum \e^{\beta_2 E_k},
	\end{equation*}
	for some $\beta_2\in(1,\beta_1)$ and $C>0$.
	For the second derivative, one has
	\begin{equation*}
	f_1''(\beta)
	=\es\left[\log Z_{\mathrm{d}}(\beta')\left(\frac{\sum E_k^2\,\e^{\beta E_k}}{\sum \e^{\beta E_k}}- \left(\frac{\sum E_k\,\e^{\beta E_k}}{\sum \e^{\beta E_k}}\right)^2\right)\right],
	\end{equation*}
	since, for any given $\beta_1>1$, one has, for all $\beta>\beta_1$, 
	\begin{equation*}
	\abs{\log Z_{\mathrm{d}}(\beta')\left(\frac{\sum E_k^2\,\e^{\beta E_k}}{\sum \e^{\beta E_k}}- \left(\frac{\sum E_k\,\e^{\beta E_k}}{\sum \e^{\beta E_k}}\right)^2\right)}
	\leq \abs{\log Z_{\mathrm{d}}(\beta')} 
	\frac{\sum E_k^2\,\e^{\beta_1 E_k}}{\sum \e^{\beta_1 E_k}}\,,
	\end{equation*}
	which has again finite expectation with the same reasoning as above. One can then check that the first and second derivatives are the same than the ones obtained by derivating under $\es$. The computations are very similar for $f_2$ and $f_3$, the only extra argument one needs for $f_2$ is 
	\[
	\abs{\log \sum \e^{\beta E_k}}  \,\frac{\sum E_k^2\,\e^{\beta E_k}}{\sum \e^{\beta E_k}} 
	\leq \abs{\log \sum \e^{\beta_1 E_k}} \, \frac{\sum E_k^2\,\e^{\beta_1 E_k}}{\sum \e^{\beta_1 E_k}},
	\] 
	for $\beta>\beta_1>1$ using the fact $\beta\mapsto \log \sum \e^{\beta E_k}$ is decreasing and positive.
\end{proof}

\medskip

We can now deduce the main result of this section.

\medskip

\begin{cor}[General decorated case]
 \label{cor:formula_susc}
	Assume that $\es \left[S_\beta\right]<\infty$, for every $\beta>1$. Then, for any $\beta>1$, $\kappa_{\mathrm{d}}(\beta)$ is well-defined and given by
	\[
	\kappa_{\mathrm{d}}(\beta) = \frac{1}{2} \left( \frac{\Var\, \left(\frac{Z_{\mathrm{d}}'(\beta)}{Z_{\mathrm{d}}(\beta)}\right)}{\Var \,(\log Z_{\mathrm{d}}(\beta))}
	- \left(\frac{\Cov\left(\log Z_{\mathrm{d}}(\beta),\frac{Z_{\mathrm{d}}'(\beta)}{Z_{\mathrm{d}}(\beta)}\right)}{\Var\, (\log Z_{\mathrm{d}}(\beta))}\right)^2 \right).
	\]
\end{cor}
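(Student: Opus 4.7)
The plan is to obtain the formula by Taylor-expanding $\mathbf{C}_{\mathrm{d}}(\beta,\beta+h)$ to second order in $h$ at $h=0$. The key input is Lemma~\ref{lem:derC}, which guarantees the $C^2$ regularity of $\beta' \mapsto f_1(\beta')$, $\beta' \mapsto f_2(\beta')$ and $\beta' \mapsto f_3(\beta')$ and, crucially, that their derivatives can be taken inside the expectation. This will allow me to identify all the derivatives appearing in the expansion in terms of expectations involving $\log Z_{\mathrm{d}}(\beta)$ and $L'(\beta) \coloneqq Z_{\mathrm{d}}'(\beta)/Z_{\mathrm{d}}(\beta)$.

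Concretely, setting $L(\beta') \coloneqq \log Z_{\mathrm{d}}(\beta')$, introduce the numerator $N(h) \coloneqq \Cov(L(\beta), L(\beta+h))$ and denominator $D(h) \coloneqq \sigma(L(\beta))\,\sigma(L(\beta+h))$ of $\mathbf{C}_{\mathrm{d}}(\beta,\beta+h)$. By Lemma~\ref{lem:derC}, the function $V(h) \coloneqq \Var(L(\beta+h)) = f_2(\beta+h) - f_3(\beta+h)^2$ is $C^2$, as is $h \mapsto N(h) = f_1^{(\beta)}(\beta+h) - f_3(\beta) f_3(\beta+h)$ (using $f_1^{(\beta)}(\beta') \coloneqq \E[L(\beta)L(\beta')]$, which satisfies the hypotheses of Lemma~\ref{lem:derC} with the roles of $\beta$ and $\beta'$ swapped). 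Direct computation of the derivatives at $h=0$, using the differentiation-under-the-expectation provided by the lemma, yields
\[
N(0) = V(0) = \Var(L(\beta)), \quad N'(0) = \tfrac{1}{2} V'(0) = \Cov(L(\beta), L'(\beta)),
\]
\[
N''(0) = \Cov(L(\beta), L''(\beta)), \qquad V''(0) = 2\,\Var(L'(\beta)) + 2\,\Cov(L(\beta), L''(\beta)).
\]

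Writing $g(h) \coloneqq N(h)/D(h)$ with $D(h) = V(0)^{1/2} V(h)^{1/2}$, the relation $g D = N$ differentiated twice and evaluated at $0$ gives $g''(0)\,V(0) + 2 g'(0) D'(0) + D''(0) = N''(0)$. A short computation yields $D'(0) = \tfrac{1}{2} V'(0) = N'(0)$, hence $g'(0) = 0$, and
\[
D''(0) = \tfrac{1}{2} V''(0) - \tfrac{1}{4} V(0)^{-1} V'(0)^2 = \Var(L'(\beta)) + \Cov(L(\beta), L''(\beta)) - \frac{\Cov(L(\beta), L'(\beta))^2}{\Var(L(\beta))}.
\]
Substituting in $g''(0) = (N''(0) - D''(0))/V(0)$ makes the $\Cov(L(\beta),L''(\beta))$ terms cancel, leaving exactly
\[
g''(0) = -\frac{\Var(L'(\beta))}{\Var(L(\beta))} + \frac{\Cov(L(\beta), L'(\beta))^2}{\Var(L(\beta))^2}.
\]
From $\mathbf{C}_{\mathrm{d}}(\beta,\beta+h) = g(0) + g'(0) h + \tfrac{1}{2} g''(0) h^2 + o(h^2) = 1 + \tfrac{1}{2} g''(0) h^2 + o(h^2)$ and the definition \eqref{eq:def_susc} of $\kappa_{\mathrm{d}}(\beta)$, one reads off $\kappa_{\mathrm{d}}(\beta) = -\tfrac{1}{2} g''(0)$, which is precisely the claimed formula.

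No step is really an obstacle here: the only nontrivial ingredient, the $C^2$ regularity with interchange of derivative and expectation, is exactly the content of Lemma~\ref{lem:derC}. The remaining work is a careful but routine computation of the second Taylor coefficient of a ratio of smooth functions, where the main point to track is the cancellation of the second-order terms involving $L''(\beta)$ so that the final answer depends only on $L(\beta)$ and $L'(\beta)$.
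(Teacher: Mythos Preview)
Your proof is correct and follows essentially the same approach as the paper: Taylor-expand $\mathbf{C}_{\mathrm{d}}(\beta,\beta+h)$ to second order in $h$ using the $C^2$ regularity and interchange of derivative and expectation supplied by Lemma~\ref{lem:derC}. The paper carries out this expansion more tersely, substituting the second-order Taylor expansion of $\log Z_{\mathrm{d}}(\beta+h)$ directly into both numerator and denominator and reading off the $h^2$-coefficient, while you organize the same computation via the identities for $N$, $V$, $D$ and $g=N/D$; the content is the same.
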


\medskip

\begin{proof}
	We expand $\mathbf{C}_{\mathrm{d}}(\beta,\beta+h)$ as $h \to 0$, starting from Equation \eqref{eq:def_C} and using Lemma \ref{lem:derC}, writing $Z_{\mathrm{d}}$ instead of $Z_{\mathrm{d}}(\beta)$ for brevity,
	\begin{align*}
	\mathbf{C}_{\mathrm{d}}(\beta,\beta+h)
	&=\frac{\Cov\left(\log Z_{\mathrm{d}},\log Z_{\mathrm{d}}+\frac{Z_{\mathrm{d}}'}{Z_{\mathrm{d}}}\,h+\left(\frac{Z_{\mathrm{d}}''}{Z_{\mathrm{d}}}-\left(\frac{Z_{\mathrm{d}}'}{Z_{\mathrm{d}}}\right)^2\right)\,h^2\right)}{\sigma\left(\log Z_{\mathrm{d}}\right)\,\sigma\left(\log Z_{\mathrm{d}}+\frac{Z_{\mathrm{d}}'}{Z_{\mathrm{d}}}\,h+\left(\frac{Z_{\mathrm{d}}''}{Z_{\mathrm{d}}}-\left(\frac{Z_{\mathrm{d}}'}{Z_{\mathrm{d}}}\right)^2\right)\,h^2\right)}+o\left(h^2\right)\\
	&=1-\frac{1}{2}\left(\frac{\Var \frac{Z_{\mathrm{d}}'}{Z_{\mathrm{d}}} }{\Var  \log Z_{\mathrm{d}}}-\left(\frac{\Cov\left(\log Z_{\mathrm{d}},\frac{Z_{\mathrm{d}}'}{Z_{\mathrm{d}}}\right)}{\Var \log Z_{\mathrm{d}}}\right)^2\right)h^2+o\left(h^2\right) \, ,
	\end{align*}
	which yields the result.
\end{proof}

\medskip

\subsection{The REM case}

\medskip

In the case of the REM, the quantities appearing in the expression for the susceptibility given in Corollary \ref{cor:formula_susc} are explicit.
To see this, we start by establishing the following formulae. 
Note that the formula for $\es\left[\log Z(\beta) \right]$ already appears in the initial paper by Derrida \cite{Derrida1981}.

\medskip

\begin{lem}[REM case]
	For any $\beta>1$,
	\begin{align}
	\es\left[\log Z(\beta) \right]
	& = \gamma(\beta-1)+\beta\log\Gamma\left(1-\frac{1}{\beta}\right),
	\label{eq:E_log_Z} \\
	\Var (\log Z(\beta))
	& = \frac{\pi^2}{6}\left(\beta^2-1\right).
	\label{eq:Var_log_Z} 
	\end{align}
\end{lem}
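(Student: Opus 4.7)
The plan is to derive both identities from the explicit Mellin transform of $Z(\beta)$ provided by Lemma \ref{esz}. Set
\[
M(\alpha) := \Ec{Z(\beta)^{-\alpha}} = \Ec{\e^{-\alpha\log Z(\beta)}},
\]
the moment generating function of $-\log Z(\beta)$. By Lemma \ref{esz}, for every $\alpha > -1/\beta$,
\[
M(\alpha) = \frac{\Gamma(\alpha\beta+1)}{\Gamma(\alpha+1)\, \Gamma\!\left(\tfrac{\beta-1}{\beta}\right)^{\alpha\beta}},
\]
which is real-analytic on an open neighborhood of $0$. In particular $\log Z(\beta)$ has moments of all orders, and one may differentiate under the expectation sign, so that $M'(0) = -\Ec{\log Z(\beta)}$ and the cumulant generating function $K := \log M$ satisfies $K'(0) = -\Ec{\log Z(\beta)}$ and $K''(0) = \Var(\log Z(\beta))$.

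Next I would compute the first two derivatives of $K$ in closed form using the digamma function $\psi = \Gamma'/\Gamma$. From
\[
K(\alpha) = \log\Gamma(\alpha\beta+1) - \log\Gamma(\alpha+1) - \alpha\beta\,\log\Gamma\!\left(\tfrac{\beta-1}{\beta}\right),
\]
one gets $K'(\alpha) = \beta\,\psi(\alpha\beta+1) - \psi(\alpha+1) - \beta\log\Gamma((\beta-1)/\beta)$. Evaluating at $\alpha = 0$ and using the standard value $\psi(1) = -\gamma$ yields
\[
K'(0) = -\gamma(\beta-1) - \beta\log\Gamma(1-1/\beta),
\]
which, up to the sign, is precisely \eqref{eq:E_log_Z}.

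Finally, differentiating once more gives $K''(\alpha) = \beta^2\,\psi'(\alpha\beta+1) - \psi'(\alpha+1)$, hence
\[
\Var(\log Z(\beta)) = K''(0) = (\beta^2-1)\,\psi'(1) = \frac{\pi^2}{6}(\beta^2-1),
\]
where we used $\psi'(1) = \sum_{k\geq 1} k^{-2} = \pi^2/6$. This is \eqref{eq:Var_log_Z}. There is no real obstacle in this proof: once Lemma \ref{esz} is in hand, everything reduces to differentiating a known explicit formula, and the only small point worth stating carefully is the justification that one may exchange derivative and expectation at $\alpha = 0$, which follows from the fact that $M$ is finite, and in fact analytic, on the open interval $(-1/\beta,\infty)$.
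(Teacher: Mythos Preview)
Your proof is correct and follows essentially the same approach as the paper: both differentiate the explicit formula of Lemma \ref{esz} at $\alpha=0$. The only cosmetic difference is that you work with the cumulant generating function $K=\log M$, which yields the variance directly as $K''(0)$, whereas the paper differentiates $M$ itself to obtain $\es[\log^2 Z(\beta)]$ and then subtracts the square of the mean.
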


\medskip

\begin{proof}
	Let $\beta > 1$ be fixed.
	The Laplace transform of $\log Z(\beta)$ has been computed in Lemma~\ref{esz} and is finite in a neighborhood of 0.
	Hence, differentiating it at 0 yields the formula for $\es[\log Z(\beta)]$, and differentiating once more, we get
	\[
	\es\left[\log^2 Z(\beta)\right] 
	= \frac{\pi^2}{6}\left(\beta^2-1\right) + \gamma ^2 (\beta - 1)^2 + \beta \log\Gamma\left(1-\frac{1}{\beta}\right) \left(2 \gamma (\beta - 1) + \beta \log\Gamma\left(1-\frac{1}{\beta}\right)\right),
	\]
	which yields the formula for the variance.
\end{proof}

\medskip

The following formula is a bit more tricky to obtain.

\medskip

\begin{lem}[REM case]
\label{var}
	For $\beta>1$, we have
	\[
	\Var\,\left(\frac{Z'(\beta)}{Z(\beta)}\right)
	=\frac{\pi^2}{6}+\frac{\beta-1}{\beta^3}\left(
	\frac{\Gamma''\left(\frac{\beta-1}{\beta}\right)}{\Gamma\left(\frac{\beta-1}{\beta}\right)}
	-\left(\frac{\Gamma'\left(\frac{\beta-1}{\beta}\right)}{\Gamma\left(\frac{\beta-1}{\beta}\right)}\right)^2
	\right)\, .
	\]
\end{lem}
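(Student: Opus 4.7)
The plan is to exploit the identity $(\log Z)'' = Z''/Z - (Z'/Z)^2$, which together with an exchange of $\mathbb{E}$ and $\partial_\beta$ (justified by Lemma~\ref{lem:derC} applied in the REM case, i.e.\@ with $S_\beta\equiv 1$) gives
\[
\Ec{(Z'/Z)^2} = \Ec{Z''/Z} - \frac{\diff^2}{\diff\beta^2} \Ec{\log Z(\beta)},
\qquad
\Ec{Z'/Z} = \frac{\diff}{\diff\beta} \Ec{\log Z(\beta)}.
\]
So
\[
\Var(Z'/Z) = \Ec{Z''/Z} - \frac{\diff^2}{\diff\beta^2} \Ec{\log Z(\beta)} - \left(\frac{\diff}{\diff\beta} \Ec{\log Z(\beta)}\right)^{\!2},
\]
and the two derivatives of $\Ec{\log Z(\beta)} = \gamma(\beta-1)+\beta\log \Gamma_0$ (with $\Gamma_0\coloneqq\Gamma(1-1/\beta)$) are completely explicit from Equation~\eqref{eq:E_log_Z}; in particular a short computation yields $\frac{\diff}{\diff\beta} \Ec{\log Z(\beta)} = \gamma + \log\Gamma_0 + \frac{1}{\beta}\frac{\Gamma_0'}{\Gamma_0}$ and $\frac{\diff^2}{\diff\beta^2} \Ec{\log Z(\beta)} = \frac{1}{\beta^3}\bigl[\frac{\Gamma_0''}{\Gamma_0}-(\frac{\Gamma_0'}{\Gamma_0})^2\bigr]$.

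The heart of the proof is therefore the evaluation of $\Ec{Z''/Z} = \Ec{\sum_k \xi_k^2 \e^{\beta \xi_k}/Z(\beta)}$. Applying Palm's formula (Proposition~\ref{prop:palmformula}) to the PPP of intensity $\e^{-x}\dx$ and using independence,
\[
\Ec{Z''/Z} = \int_{\R} x^2 \e^{-x} \Ec{\frac{\e^{\beta x}}{\e^{\beta x}+Z(\beta)}} \dx.
\]
The inner expectation is computed via $\frac{a}{a+b} = \int_0^\infty a\e^{-t(a+b)}\dt$ and the Laplace transform of $Z(\beta)$ from Lemma~\ref{lem:Z_stable}, giving $\int_0^\infty \e^{\beta x}\e^{-t\e^{\beta x}}\e^{-\Gamma_0 t^{1/\beta}}\dt$. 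After the successive substitutions $y = \e^{\beta x}$ (to change the outer integral), $s = t^{1/\beta}$ and finally $u = y s^\beta$, the double integral factorizes and the dependence on the outer variable reduces, via standard moment identities for the Gamma distribution,
\[
\int_0^\infty u^{-1/\beta}\e^{-u}\du = \Gamma_0,\qquad
\int_0^\infty (\log u)^k u^{-1/\beta}\e^{-u}\du = \Gamma_0^{(k)},\quad k=1,2,
\]
together with $\int_0^\infty \e^{-r}\dr = 1$, $\int_0^\infty \log r\,\e^{-r}\dr = -\gamma$, $\int_0^\infty (\log r)^2 \e^{-r}\dr = \gamma^2 + \pi^2/6$, to yield after simplification
\[
\Ec{Z''/Z} = \frac{1}{\beta^2}\frac{\Gamma_0''}{\Gamma_0} + \frac{2}{\beta}\frac{\Gamma_0'}{\Gamma_0}\bigl(\gamma + \log\Gamma_0\bigr) + \bigl(\gamma + \log\Gamma_0\bigr)^2 + \frac{\pi^2}{6}.
\]

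It then remains to combine: forming $\Ec{Z''/Z} - (\Ec{Z'/Z})^2$ cancels the $(\gamma+\log\Gamma_0)^2$ and the cross term, leaving $\frac{1}{\beta^2}\bigl[\frac{\Gamma_0''}{\Gamma_0}-(\frac{\Gamma_0'}{\Gamma_0})^2\bigr] + \frac{\pi^2}{6}$, and subtracting $\frac{\diff^2}{\diff\beta^2}\Ec{\log Z(\beta)}$ produces the factor $\frac{1}{\beta^2}-\frac{1}{\beta^3} = \frac{\beta-1}{\beta^3}$ in front of the bracket, giving the claimed identity. The main obstacle is purely computational: carefully carrying out the chain of substitutions in the double integral and keeping track of the $\log s$ terms produced along the way (which are precisely what give the $\Gamma_0', \Gamma_0''$ contributions). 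No new estimates beyond Lemmas~\ref{lem:Z_stable} and~\ref{lem:derC} are needed.
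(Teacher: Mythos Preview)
Your proposal is correct and follows essentially the same approach as the paper: both compute $\E[Z''/Z]$ via the Palm formula and the Laplace transform of $Z(\beta)$ (Lemma~\ref{lem:Z_stable}), then combine with the first two derivatives of $\E[\log Z(\beta)]$ obtained from Equation~\eqref{eq:E_log_Z}. The only cosmetic difference is that the paper writes the Palm integral in the $\eta_k=\e^{-\xi_k}$ coordinates as $\int_0^\infty \log^2(x)\,\E[(1+x^\beta Z)^{-1}]\,\dx$, whereas you stay in the $\xi$ coordinates; the two are related by the substitution $x\mapsto \e^{-x}$ and lead to the same Gamma-function identities and the same final expression for $\E[Z''/Z]$.
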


\medskip

\begin{proof}
	To lighten notations, let us denote $\Gamma=\Gamma(\frac{\beta-1}{\beta})$ and similarly $\Gamma'=\Gamma'(\frac{\beta-1}{\beta})$, $\Gamma''=\Gamma''(\frac{\beta-1}{\beta})$.
	Differentiating once Equation \eqref{eq:E_log_Z} gives
	\begin{equation} \label{eq:var1}
	\es\left[\frac{Z'(\beta)}{Z(\beta)}\right] = \log\gam+\frac{1}{\beta}\frac{\gam'}{\gam}+\gamma\, ,
	\end{equation}
	and differentiating once more yields
	\begin{equation} \label{eq:var2}
	\es\left[\frac{Z''(\beta)}{Z(\beta)}-\left(\frac{Z'(\beta)}{Z(\beta)}\right)^2\right] = \frac{1}{\beta ^3}\left(\frac{\gam''}{\gam}-\left(\frac{\gam'}{\gam}\right)^2\right)\, .
	\end{equation}
	Therefore, we now aim at computing $\es[Z''(\beta)/Z(\beta)]$.	
	Applying Palm formula (see Proposition~\ref{prop:palmformula}), we get
	\begin{align*}
	\Ec{\frac{Z''(\beta)}{Z(\beta)} }
	&=\int_{0}^{\infty}\log^2(x) \, \Ec{\frac{1}{1+x^\beta Z(\beta)}} \diff x \\
	&=\int_{0}^{\infty} \diff x\,\log^2(x) \int_{0}^{\infty}  \,\e^{-t}\,\es \left[\e^{-tx^\beta Z(\beta)}\right] \diff t \\
	&=\int_{0}^{\infty} \diff t \e^{-t} \int_{0}^{\infty} \log^2(x) \e^{-\gam \, t^{1/\beta}x} \diff x\\
	&=\frac{1}{\gam}\int_{0}^{\infty} \diff t \,t^{-1/\beta}\,\e^{-t} \int_{0}^{\infty} \left(\log u-\log t^{1/\beta}\,\gam \right)^2\,\e^{-u} \diff u \\
	&=\frac{1}{\gam}\int_{0}^{\infty}\diff t \,t^{-1/\beta}\,\,\e^{-t}
	\left(\Gamma''(1)-2\,t^{1/\beta}\,\Gamma'(1)\log\gam \, +t^{1/\beta}\,\log^2\gam \right) \\
	&=\gamma^2+\frac{\pi^2}{6}+2\gamma\log\gam+\log^2\gam+\frac{2\gamma}{\beta}\frac{\gam'}{\gam}+\frac{2}{\beta}\frac{\gam'}{\gam}\log\gam +\frac{1}{\beta^2}\frac{\gam''}{\gam}\, .
	\end{align*}
	Combining this with Equation \eqref{eq:var2} gives
	\begin{align*}
	\es\left[\left(\frac{Z'(\beta)}{Z(\beta)}\right)^2\right]
	&=\gamma^2+\frac{\pi^2}{6}+2\gamma   \log\gam+\log^2\gam+\frac{2\gamma}{\beta}\,\frac{\gam'}{\gam}+\frac{2}{\beta}\frac{\gam'}{\gam}\log\gam+\frac{\beta-1}{\beta^3}\frac{\gam''}{\gam}+\frac{1}{\beta ^3}\left(\frac{\gam'}{\gam}\right)^2 \, ,
	\end{align*}
	which together with Equation \eqref{eq:var1} yields the result.
\end{proof}

\medskip

We can now conclude the section by proving Theorem \ref{thm:THM2_REM} concerning the susceptibility for the REM.

\medskip

\begin{proof}[Proof of Theorem \ref{thm:THM2_REM}]
	We start from the formula given by Corollary \ref{cor:formula_susc}.
	Differentiating Equation \eqref{eq:Var_log_Z} shows that
	\[
	\Cov\left(\log Z(\beta),\frac{Z'(\beta)}{Z(\beta)}\right) 
	= \frac{1}{2} \frac{\diff}{\diff \beta} \Var (\log Z(\beta))
	= \frac{\pi^2}{6}\beta.
	\]
	Together, with Equation \eqref{eq:Var_log_Z} and Lemma \ref{var}, this yields
	\begin{align*}
		\kappa(\beta)
		&=\frac{1}{2}\left(\frac{1}{\beta^2-1}+\frac{6}{\pi^2\beta^3(\beta+1)}\left(\frac{\Gamma''\left(\frac{\beta-1}{\beta}\right)}{\Gamma\left(\frac{\beta-1}{\beta}\right)}-\left(\frac{\Gamma'\left(\frac{\beta-1}{\beta}\right)}{\Gamma\left(\frac{\beta-1}{\beta}\right)}\right)^2\right)-\frac{\beta^2}{(\beta^2-1)^2}\right)\, ,
	\end{align*}
	and using the asymptotic expansion of $\Gamma$ at $0$ and at $1$ together with extra computations yield the desired asymptotic equivalents of $\kappa(\beta)$ as $\beta \downarrow1$ and $\beta \to \infty$.
\end{proof}

\medskip

\subsection{Comparison wih the REM in the general decorated case}

\medskip

In this section, we come back to the general decorated case.
Our aim is to prove the following proposition which compares the temperature susceptibility $\kappa_{\mathrm{d}}(\beta)$ with the one of the REM.

\medskip

\begin{prop}[General decorated case]
\label{prop:susceptibility_decorated}
	Assume that $\es \left[S_\beta\right]<\infty$, for every $\beta>1$.
	Then, for any $\beta > 1$,
	\[
	\kappa_{\mathrm{d}}(\beta) = \kappa(\beta) 
	+ \frac{3}{\pi^2\beta(\beta+1)}
	\Var_\beta \left( \frac{1}{\beta}\log S_\beta - \frac{S'_\beta}{S_\beta} \right),
	\]
	where $\Var_\beta$ denotes the variance under $\P_\beta$, which was introduced in Subsection \ref{sec:change_of_measure}, and $S_\beta' =\sum_{k\geq 0} d_k \e^{\beta d_k}$.
\end{prop}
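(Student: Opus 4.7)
The proof will combine the formula from Corollary~\ref{cor:formula_susc} with the change of measure in Lemma~\ref{lem:change_of_measure}, the latter being used to disentangle the Poisson backbone from the decorations. Setting
\[
W_{\beta,i} \de \xi_i + \tfrac{1}{\beta} \log S_{\beta,i},
\qquad
X_{\beta,i} \de \tfrac{1}{\beta}\log S_{\beta,i} - \tfrac{S'_{\beta,i}}{S_{\beta,i}},
\qquad
S'_{\beta,i} \de \sum_{k\ge 0} d_{ik}\,\e^{\beta d_{ik}},
\]
a direct computation gives $Z_{\mathrm{d}}(\beta) = \sum_i \e^{\beta W_{\beta,i}}$ and, by differentiating the defining sum of $Z_{\mathrm{d}}(\beta)$ termwise and isolating the factor $S_{\beta,i}$, $Z_{\mathrm{d}}'(\beta) = \sum_i \e^{\beta W_{\beta,i}}(W_{\beta,i} - X_{\beta,i})$. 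Since $X_{\beta,i}$ is a measurable functional of the decoration $\sum_k \delta_{d_{ik}}$, Lemma~\ref{lem:change_of_measure} yields the joint distributional identity
\[
\left(\log Z_{\mathrm{d}}(\beta),\, \frac{Z_{\mathrm{d}}'(\beta)}{Z_{\mathrm{d}}(\beta)}\right) \text{ under } \P \;\overset{(\mathrm{d})}{=}\; \left(\beta c_\beta + \log Z(\beta),\; c_\beta + \frac{Z'(\beta)}{Z(\beta)} - \sum_i \tilde w_i X_{\beta,i}\right) \text{ under } \P_\beta,
\]
where $c_\beta = \log \E[S_\beta^{1/\beta}]$, $\tilde w_i \de \e^{\beta \xi_i}/Z(\beta)$, and under $\P_\beta$ the $(X_{\beta,i})_i$ are i.i.d.\@ copies of $X_\beta = \frac{1}{\beta}\log S_\beta - S'_\beta/S_\beta$, independent of $(\xi_i)_i$.

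I would then read the three moment inputs required by Corollary~\ref{cor:formula_susc} from this identity. Because $\log Z(\beta)$ depends only on $(\xi_i)_i$, whose law is unchanged under $\P_\beta$, shift-invariance immediately gives $\Var(\log Z_{\mathrm{d}}(\beta)) = \Var(\log Z(\beta))$. For the other two second moments, I condition on $\sigma(\xi_j : j\ge 1)$ under $\P_\beta$ and use the i.i.d./independence structure together with $\sum_i \tilde w_i = 1$, getting
\[
\E_\beta\!\left[\sum_i \tilde w_i X_{\beta,i} \,\bigg|\, \sigma(\xi_j)\right] = \E_\beta[X_\beta],
\qquad
\Var_\beta\!\left(\sum_i \tilde w_i X_{\beta,i} \,\bigg|\, \sigma(\xi_j)\right) = \Var_\beta(X_\beta) \sum_i \tilde w_i^2.
\]
The first identity, together with the $\sigma(\xi_i)$-measurability of $\log Z(\beta)$, implies by the law of total covariance that $\Cov(\log Z_{\mathrm{d}}, Z_{\mathrm{d}}'/Z_{\mathrm{d}}) = \Cov(\log Z, Z'/Z)$. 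The second gives, by the law of total variance,
\[
\Var\!\left(\frac{Z_{\mathrm{d}}'(\beta)}{Z_{\mathrm{d}}(\beta)}\right)
= \Var\!\left(\frac{Z'(\beta)}{Z(\beta)}\right) + \Var_\beta(X_\beta) \cdot \E[Q(\beta,\beta)],
\]
once one recognizes $\E\bigl[\sum_i \tilde w_i^2\bigr]$ as $\E[Q(\beta,\beta)]$.

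Substituting these three identities into Corollary~\ref{cor:formula_susc}, the squared-covariance term is identical for $\kappa$ and $\kappa_{\mathrm{d}}$ and cancels in the difference, leaving
\[
\kappa_{\mathrm{d}}(\beta) - \kappa(\beta) = \frac{\E[Q(\beta,\beta)] \cdot \Var_\beta(X_\beta)}{2\,\Var(\log Z(\beta))}.
\]
To conclude I would invoke the classical REM identity $\E[Q(\beta,\beta)] = 1 - 1/\beta$, provable via Palm's formula applied to the Ruelle process combined with the Laplace transform from Lemma~\ref{lem:Z_stable}, together with $\Var(\log Z(\beta)) = \pi^2(\beta^2-1)/6$ from Equation~\eqref{eq:Var_log_Z}, which produce the announced prefactor $3/(\pi^2\beta(\beta+1))$. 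The only delicate point is that Lemma~\ref{lem:change_of_measure} must be applied at the level of the \emph{joint} law of $(\log Z_{\mathrm{d}}, Z_{\mathrm{d}}'/Z_{\mathrm{d}})$ rather than marginally; once that is established, the cancellation of the cross covariance term is an immediate consequence of the conditional product structure of the decorations under $\P_\beta$, and this is really the conceptual content of the proposition.
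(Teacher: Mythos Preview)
Your proof is correct and follows essentially the same route as the paper: both start from Corollary~\ref{cor:formula_susc}, use the change of measure from Lemma~\ref{lem:change_of_measure} to rewrite $(\log Z_{\mathrm{d}},Z_{\mathrm{d}}'/Z_{\mathrm{d}})$ under $\P_\beta$, and then apply the law of total variance conditionally on $(\xi_i)_i$ to isolate the extra term $\Var_\beta(X_\beta)\,\E[\sum_i \tilde w_i^2]$. The only cosmetic difference is that the paper obtains the identity $\Cov(\log Z_{\mathrm{d}},Z_{\mathrm{d}}'/Z_{\mathrm{d}})=\Cov(\log Z,Z'/Z)$ by differentiating $\Var(\log Z_{\mathrm{d}}(\beta))=\Var(\log Z(\beta))$ rather than via your conditional-covariance argument, and writes $\E[\sum_i \tilde w_i^2]=1-1/\beta$ directly rather than passing through $\E[Q(\beta,\beta)]$.
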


\medskip

\begin{proof}
We start from the expression given by Corollary \ref{cor:formula_susc}. 
By Corollary \ref{cor:change_of_measure}, we have 
\[
\Var \, \left(\log Z_{\mathrm{d}}(\beta) \right)
= \Var \, \left( \log \Big( \Ec{S_\beta^{1/\beta}}^{\beta} Z(\beta) \Big) \right)
= \Var \, (\log Z(\beta) )
= \frac{\pi^2}{6}(\beta^2-1)
\]
and, by differentiating with respect to $\beta$, 
\[
\Cov \left(\log Z_{\mathrm{d}}(\beta),\frac{Z_{\mathrm{d}}'(\beta)}{Z_{\mathrm{d}}(\beta)}\right) 
= \frac{1}{2} \frac{\diff}{\diff \beta} \Var \,(\log Z_{\mathrm{d}}(\beta))
= \frac{1}{2} \frac{\diff}{\diff \beta} \Var \,(\log Z(\beta))
= \Cov \left(\log Z(\beta),\frac{Z'(\beta)}{Z(\beta)}\right).
\]
Therefore, we get
\begin{equation} \label{eq:intermediate}
\kappa_{\mathrm{d}}(\beta) = \kappa(\beta) 
+ \frac{3}{\pi^2\left(\beta^2-1\right)} 
\left( \Var\,\left( \frac{Z_{\mathrm{d}}'(\beta)}{Z_{\mathrm{d}}(\beta)} \right)
- \Var\, \left( \frac{Z'(\beta)}{Z(\beta)}\right) \right).
\end{equation}
We shall now focus on $\Var\,\left(  \frac{Z_{\mathrm{d}}'(\beta)}{Z_{\mathrm{d}}(\beta)}\right)$.
Notice first that
\[
Z_{\mathrm{d}}'(\beta) = \sum_{i,k} \left(\xi_i+ d_{ik}\right) \e^{\beta(\xi_i + d_{ik})}.
\]
Therefore, it follows from Lemma \ref{lem:change_of_measure} that
\[
\frac{Z_{\mathrm{d}}'(\beta)}{Z_{\mathrm{d}}(\beta)} \; \text{ under } \P
\; \overset{({\rm d})}{=} \;
\frac{\sum_{i,k} \left(\xi_i + c_\beta - \frac{1}{\beta} \log S_{\beta,i} + d_{ik}\right) \e^{\beta(\xi_i- \frac{1}{\beta} \log S_{\beta,i} + d_{ik})}}
{\sum_i \e^{\beta \xi_i}} \; \text{ under } \P_\beta.
\]
Moreover, the quantity on the right-hand side of the last equation equals
\[
c_\beta 
+ \frac{\sum_i \xi_i \e^{\beta\xi_i}}{\sum_i \e^{\beta \xi_i}} 
+ \frac{\sum_i \e^{\beta\xi_i} (S_{\beta,i}'/S_{\beta,i} - \frac{1}{\beta} \log S_{\beta,i})}{\sum_{i} \e^{\beta \xi_i}}\, .
\]
Hence, using the law of total variance, we get
\begin{align*}
\Var \, \left( \frac{Z_{\mathrm{d}}'(\beta)}{Z_{\mathrm{d}}(\beta)} \right)
& = \Var_\beta \left( \E_\beta \left[ 
\frac{\sum_i \xi_i \e^{\beta\xi_i}}{\sum_i \e^{\beta \xi_i}} 
+ \frac{\sum_i \e^{\beta\xi_i} (S_{\beta,i}'/S_{\beta,i} - \frac{1}{\beta} \log S_{\beta,i})}{\sum_{i} \e^{\beta \xi_i}} 
\mathrel{}\middle|\mathrel{} \xi \right]
\right) \\
& \quad {}
+ \E_\beta \left[ \Var_\beta \left( 
\frac{\sum_i \xi_i \e^{\beta\xi_i}}{\sum_i \e^{\beta \xi_i}} 
+ \frac{\sum_i \e^{\beta\xi_i} (S_{\beta,i}'/S_{\beta,i} - \frac{1}{\beta} \log S_{\beta,i})}{\sum_{i} \e^{\beta \xi_i}} 
\mathrel{}\middle|\mathrel{} \xi \right)
\right].
\end{align*}
Then, using the fact that the  $(S_{\beta,i}, S_{\beta,i}')_{i \ge 0}$ are i.i.d.\@ and independent of $\xi$, we get
\begin{align*}
\Var \, \left(  \frac{Z_{\mathrm{d}}'(\beta)}{Z_{\mathrm{d}}(\beta)} \right)
& = \Var_\beta \left( 
\frac{\sum_i \xi_i \e^{\beta\xi_i}}{\sum_i \e^{\beta \xi_i}}
\right) 
+ \E_\beta \left[ 
\frac{\sum_i \e^{2\beta\xi_i}}{(\sum_i \e^{\beta \xi_i})^2} \right]
\Var_\beta \left( \frac{S_\beta'}{S_\beta} - \frac{1}{\beta} \log S_\beta \right) \\
& = \Var \, \left(  \frac{Z'(\beta)}{Z(\beta)} \right)
+ \left( 1 - \frac{1}{\beta} \right)
\Var_\beta \left( \frac{S_\beta'}{S_\beta} - \frac{1}{\beta} \log S_\beta \right).
\end{align*}
Coming back to Equation \eqref{eq:intermediate}, this concludes the proof.
\end{proof}

\bigskip

\begin{ex}\label{ex:deco2}
	We investigate here the behavior of the temperature susceptibility for the decoration processes $\cD = \delta_0 + f(X) \delta_{-X}$, where $X$ is a positive r.v. with density $p$ and $f \colon (0,\infty) \to \N^*$ a measurable function. We specify these parameters afterwards to obtain different behaviors of the susceptibility as $\beta \downarrow 1$.
	
	By Proposition \ref{prop:susceptibility_decorated}, one has
	\begin{equation} \label{eq:suscep}
	\kappa_{\mathrm{d}}(\beta) = \kappa(\beta) 
	+ \frac{3}{\pi^2\beta(\beta+1)}
	\Var_\beta \left( \frac{1}{\beta}\log S_\beta - \frac{S'_\beta}{S_\beta} \right).
	\end{equation}
	In the cases which follow, our strategy to find the behavior of $\kappa_{\mathrm{d}}(\beta) - \kappa(\beta)$ is to estimate precisely
	\begin{equation} \label{eq:formula_in_terms_of_expectations}
	\Var_\beta \left( \frac{S'_\beta}{S_\beta} \right) 
	= \frac{1}{\Ec{S_\beta^{1/\beta}}} \cdot
	\Ec{\frac{(S_\beta')^2}{S_\beta^{2-1/\beta}}} 
	- \frac{1}{\Ec{S_\beta^{1/\beta}}^2} \cdot
	\Ec{\frac{S_\beta'}{S_\beta^{1-1/\beta}}}^2,
	\end{equation}
	where we used the definition of $\P_\beta$, and then to control $\Var_\beta(\log S_\beta)$ to show that its contribution to the variance in Equation \eqref{eq:suscep} is negligible.
	Hence, we now aim at estimating the expectations appearing on the right-hand side of Equation \eqref{eq:formula_in_terms_of_expectations}. Note that $S_\beta = 1 + f(X) e^{-\beta X}$ and $S_\beta' = -X f(X) e^{-\beta X}$.
	If $\E[X^2] < \infty$, we get the following estimates
	\begin{align}
	\Ec{S_\beta^{1/\beta}} 
	& = \int_0^\infty \left(1+f(x) \e^{-\beta x}\right)^{1/\beta} p(x) \diff x
	= \int_0^\infty f(x)^{1/\beta} \e^{-x} p(x) \diff x + O(1), 
	\nonumber \\
	\Ec{\frac{S_\beta'}{S_\beta^{1-1/\beta}}}
	& = \int_0^\infty \frac{-xf(x) \e^{-\beta x}}{\left(1+f(x) \e^{-\beta x}\right)^{1-1/\beta}} p(x) \diff x
	= - \int_0^\infty x f(x)^{1/\beta} \e^{-x} p(x) \diff x + O(1), 
	\nonumber \\
	\Ec{\frac{(S_\beta')^2}{S_\beta^{2-1/\beta}}}  
	& = \int_0^\infty \frac{(xf(x) \e^{-\beta x})^2}{\left(1+f(x) \e^{-\beta x}\right)^{2-1/\beta}} p(x) \diff x
	= \int_0^\infty x^2 f(x)^{1/\beta} \e^{-x} p(x) \diff x + O(1), 
	\label{eq:various_expectations}
	\end{align}
	using respectively 
	$0 \leq (1+a)^{1/\beta} - a^{1/\beta} \leq 1$,
	$0 \leq a^{1/\beta-1} - (1+a)^{1/\beta-1} \leq (1-1/\beta) a^{-1}$ and
	$0 \leq a^{1/\beta-2} - (1+a)^{1/\beta-2} \leq (2-1/\beta) a^{-2}$
	for any $a> 0$ and $\beta > 1$.
	
\medskip
	
	\textit{First family of examples.} We specify $p(x) = (x+1)^{-4}$ and $f(x) = \lceil (x+1)^b \e^{x} \rceil$ with $b \in \R$.
	Then, noting that the ceiling function can be removed up to another $O(1)$ term, 
	one gets, for any $k\geq 0$, as $\beta \downarrow 1$,
	\begin{align*}
	\int_0^\infty x^k f(x)^{1/\beta} \e^{-x} p(x) \diff x 
	& = \int_0^\infty x^k (x+1)^{(b/\beta)-4} \e^{-(\beta-1)x/\beta} \diff x + O(1) \\
	& = \begin{cases}
	O(1), & \text{if } k+b < 3, \smallskip \\
	\log \dfrac{1}{\beta-1} + O(1), 
	& \text{if } k+b = 3, \smallskip \\
	\dfrac{\Gamma(b+k-3)}{(2(\beta-1))^{b+k-3}} (1+o(1)),
	& \text{if } k+b> 3.
	\end{cases} 
	\end{align*}
	Therefore, combining this with Equations \eqref{eq:formula_in_terms_of_expectations} and \eqref{eq:various_expectations}, we get (also note that $\E[S_\beta^{1/\beta}] \geq 1$), as $\beta \downarrow 1$,
	\begin{equation*}
	\Var_\beta \left( \frac{S'_\beta}{S_\beta} \right)  \,
	\begin{cases}
	= O(1), & \text{if } b < 1, \smallskip \\
	\asymp \log \dfrac{1}{\beta-1}, & \text{if } b = 1, \smallskip \\
	\sim \dfrac{\Gamma(b-1)}{(2(\beta-1))^{b-1}}, & \text{if } b \in (1,3), \smallskip \\
	\sim \dfrac{1}{4(\beta-1)^2 \log \frac{1}{\beta-1}}, 
	& \text{if } b = 3, \smallskip \\
	\sim \dfrac{(b-3)}{4(\beta-1)^2},
	& \text{if } b>3.
	\end{cases} 
	\end{equation*}
	We now write $\Var_\beta(\log S_\beta) \leq \E_\beta[\log^2 S_\beta]$ and hence bound, proceeding initially as in Equation \eqref{eq:various_expectations},
	\begin{align*}
	\Ec{S_\beta^{1/\beta} \log^2 S_\beta} 
	& = \int_0^\infty \log^2 \left(1+f(x) \e^{-\beta x}\right) f(x)^{1/\beta} \e^{-x} p(x) \diff x + O(1) \\
	& \leq \int_0^\infty \log^2 \left( 1+(1+x)^b \right) (x+1)^{(b/\beta)-4} \e^{-(\beta-1)x/\beta} \diff x + O(1) \\
	& = \begin{cases}
	O(1), & \text{if } b < 3, \smallskip \\
	O \left( \Ec{S_\beta^{1/\beta}} \cdot \log^2 \dfrac{1}{\beta-1} \right),
	& \text{if } b \geq 3.
	\end{cases} 
	\end{align*}
	It follows that $\Var_\beta(\log S_\beta)$ is either a $O(1)$ if $b<3$, or a $o(\Var_\beta (S_\beta'/S_\beta))$ otherwise.
	Controlling $\Cov_\beta( \log S_\beta,S_\beta'/S_\beta)$ by Cauchy-Schwarz inequality and coming back to Equation \eqref{eq:suscep}, we get
	\begin{equation*} 
	\kappa_{\mathrm{d}}(\beta) - \kappa(\beta)  \,
	\begin{cases}
	= O(1), & \text{if } b < 1, \smallskip \\
	\asymp \log \dfrac{1}{\beta-1}, & \text{if } b = 1, \smallskip \\
	\sim \dfrac{3\Gamma(b-1)}{2^b \pi^2(\beta-1)^{b-1}}, & \text{if } b \in (1,3), \smallskip \\
	\sim \dfrac{3}{8\pi^2(\beta-1)^2 \log \frac{1}{\beta-1}}, 
	& \text{if } b = 3, \smallskip \\
	\sim \dfrac{3(b-3)}{8\pi^2(\beta-1)^2},
	& \text{if } b>3.
	\end{cases} 
	\end{equation*}
	Note that, for this first family of examples, the difference $\kappa_{\mathrm{d}}(\beta) - \kappa(\beta)$ grows at most like $(\beta-1)^{-2}$, which is the order of magnitude of $\kappa(\beta)$ as $\beta \downarrow 1$. More precisely, if $b \leq 3$, then $\kappa_{\mathrm{d}}(\beta) \sim \kappa(\beta)$, and if $b>3$, then both susceptibilities grow at the same speed but with different multiplicative constants.

	In the case $b>3$, observe that the expectations in Equation \eqref{eq:various_expectations} are all dominated by the event where $X \asymp (\beta-1)^{-1}$, so, on this event, $S_\beta'/S_\beta \asymp (\beta-1)^{-1}$ which yields eventually a variance $\Var_\beta(S_\beta'/S_\beta)$ of order $(\beta-1)^{-2}$.
	This observation is the idea motivating the next family of examples: in order to construct cases where $\kappa_{\mathrm{d}}(\beta)$ grows much faster than $\kappa(\beta)$, we want the expectations in Equation \eqref{eq:various_expectations} to be dominated by the event where $X \asymp (\beta-1)^{-\alpha}$ for some $\alpha > 1$.

\medskip
	
	\textit{Second family of examples.}
	We now take $p(x) = \e^{-x^\gamma}$ and $f(x) = \lceil \e^{x+2x^\gamma} \rceil$ for some $\gamma \in (0,1)$.
	First note that, getting rid of the ceiling function and replacing $x$ by $(\beta/(2-\beta))^{1/\gamma} x$, we have
	\begin{equation} \label{eq:begin_2nd_family}
	\int_0^\infty x^k f(x)^{1/\beta} \e^{-x} p(x) \diff x 
	= \left( \frac{\beta}{2-\beta} \right)^{(k+1)/\gamma} \int_0^\infty x^k \e^{x^\gamma-\varepsilon x} \diff x + O(1),
	\end{equation}
	with $\varepsilon \coloneqq (2-\beta)^{-1/\gamma} \beta^{(1-\gamma)/\gamma} (\beta-1)$.
	One can check that this last integral is dominated, as $\varepsilon \to 0$, by the part $x \sim (\gamma/\varepsilon)^{1/(1-\gamma)}$, which means that the expectations in Equation \eqref{eq:various_expectations} are dominated by the event where $X \sim (\gamma/(\beta-1))^{1/(1-\gamma)}$. This implies that the two terms on the right-hand side of Equation \eqref{eq:formula_in_terms_of_expectations} are of order $(\beta-1)^{-2/(1-\gamma)}$, however the first order terms cancel out and $\Var_\beta(S_\beta'/S_\beta)$ is of a smaller order.
	Hence, we have to push the calculation to the second order term: we prove in Appendix \ref{sec:app_integral} that, for $k \in \{0,1,2\}$, as $\varepsilon \to 0$, 
	\begin{align}
	\int_0^\infty x^k \e^{x^\gamma-\varepsilon x} \diff x 
	& = \left( \frac{2\pi}{1-\gamma} \right)^{1/2} 
	\gamma^{\frac{1}{2(1-\gamma)}}
	\varepsilon^{-\frac{2-\gamma}{2(1-\gamma)}} 
	\exp \left( (1-\gamma) \left( \frac{\gamma}{\varepsilon} \right)^{\frac{\gamma}{1-\gamma}} \right) \nonumber \\
	& \quad {} \times 
	\left( \frac{\gamma}{\varepsilon} \right)^{\frac{k}{1-\gamma}} 
	\left( 1 + c_k \varepsilon^{\frac{\gamma}{1-\gamma}} 
	+ O \left( \varepsilon^{\frac{3\gamma}{2(1-\gamma)}} \right) \right),
	\label{eq:expansion_integral}
	\end{align}
	where the constants $c_0,c_1,c_2$ explicitly depend on $\gamma$ and satisfy $c_2+c_0-2c_1 = \frac{1}{1-\gamma} \gamma^{-\frac{1}{1-\gamma}}$.
	Coming back to Equation \eqref{eq:formula_in_terms_of_expectations}, we get 
	\begin{align*}
	\Var_\beta \left( \frac{S'_\beta}{S_\beta} \right) 
	& = \left( \frac{\beta}{2-\beta} \right)^{2/\gamma}
	\left( \frac{\gamma}{\varepsilon} \right)^{\frac{2}{1-\gamma}} 
	\left( (c_2+c_0-2c_1) \varepsilon^{\frac{\gamma}{1-\gamma}} 
	+ O \left( \varepsilon^{\frac{3\gamma}{2(1-\gamma)}} \right) \right)
	\sim \frac{\gamma^{\frac{1}{1-\gamma}}}{(1-\gamma)} 
	(\beta-1)^{-\frac{2-\gamma}{1-\gamma}},
	\end{align*}
	using the definition of $\varepsilon$.
	We now aim at bounding 
	\begin{equation} \label{eq:var_log_2nd}
	\Var_\beta(\log S_\beta) 
	= \frac{\Ec{S_\beta^{1/\beta} \log^2 S_\beta}}{\Ec{S_\beta^{1/\beta}}} 
	- \left( \frac{\Ec{S_\beta^{1/\beta} \log S_\beta}}{\Ec{S_\beta^{1/\beta}}}  \right)^2.
	\end{equation}
	For $k \in \{0,1,2\}$, proceeding in the first equality as in Equations \eqref{eq:various_expectations} and \eqref{eq:begin_2nd_family}, we get
	\begin{align}
	\Ec{S_\beta^{1/\beta} \log^k S_\beta} 
	& = \left( \frac{\beta}{2-\beta} \right)^{1/\gamma} \int_0^\infty 
	\log^k \left( 1+\e^{\frac{2\beta}{2-\beta} x^\gamma-\varepsilon \beta x} \right) \e^{x^\gamma-\varepsilon x} \diff x + O(1) \nonumber \\
	& = \left( \frac{\beta}{2-\beta} \right)^{1/\gamma} 
	\left( \frac{2\pi}{1-\gamma} \right)^{1/2} 
	\gamma^{\frac{1}{2(1-\gamma)}}
	\varepsilon^{-\frac{2-\gamma}{2(1-\gamma)}}  
	\exp \left( (1-\gamma) \left( \frac{\gamma}{\varepsilon} \right)^{\frac{\gamma}{1-\gamma}} \right) \nonumber \\
	& \quad {} \times 
	\left( \left( \frac{2\beta}{2-\beta} -\beta\gamma \right) \left( \frac{\gamma}{\varepsilon} \right)^{\frac{\gamma}{1-\gamma}} \right)^k
	\left( 1 + O \left( \varepsilon^{\frac{\gamma}{1-\gamma}} \right) \right),
	\label{eq:equiv_integral}
	\end{align}
	where the last equality is proved in Appendix \ref{sec:app_integral}.
	Plugging this in \eqref{eq:var_log_2nd}, we get $\Var_\beta(\log S_\beta) = O(\varepsilon^{-2\gamma/(1-\gamma)} \cdot \varepsilon^{\gamma/(1-\gamma)})
	= o(\Var_\beta(S'_\beta/S_\beta))$ because $\gamma < 2-\gamma$.
	Controlling $\Cov_\beta( \log S_\beta,S'_\beta/S_\beta)$ by Cauchy-Schwarz inequality and coming back to \eqref{eq:suscep}, we get
	\[
	\kappa_{\mathrm{d}}(\beta) - \kappa(\beta)
	\sim \frac{3\gamma^{\frac{1}{1-\gamma}}}{2\pi^2(1-\gamma)} 
	(\beta-1)^{-\frac{2-\gamma}{1-\gamma}}.
	\]	
	Note that the exponent $\frac{2-\gamma}{1-\gamma}$ is always larger than 2, so that $\kappa_{\mathrm{d}}(\beta)$ grows faster than $\kappa(\beta)$, and this exponent can be made arbitrarily large by taking $\gamma$ close to 1.
\end{ex}

\medskip

\subsection{The BBM case}

\medskip

In this section, our goal is to prove Theorem \ref{thm:THM2_DECORATED} which concerns the BBM.

\medskip

\begin{lem}[BBM case]
\label{lem:moment1_Sbeta''}
	For the decoration arising in the BBM and the constant $C_\star$ appearing in Equation \eqref{eq:equiv_C_x}, we have, as $\beta \downarrow 1$,
	\[
	\es \left[S_\beta'\right] \sim -\frac{C_\star}{(\beta-1)^2} 
	\qquad \textrm{and} \qquad 
	\es  \left[S_\beta''\right] \sim \frac{2C_\star}{(\beta-1)^3} \, .
	\]
\end{lem}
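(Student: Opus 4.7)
The strategy mimics the proof of Proposition \ref{prop:moments_points_extremaux}, with the extra factors of $x$ coming from differentiating $e^{-\beta x}$ in $\beta$. Writing $(-d_k)_{k \geq 0}$ for the atoms of $\cD$ seen as a point process on $[0,\infty)$ and setting $N(x) \coloneqq \cD([-x,0])$, we have $S_\beta = \int_{[0,\infty)} e^{-\beta x}\,dN(x)$. Since all summands are smooth in $\beta$ and the pointwise $\beta$-derivatives give absolutely convergent sums (as will be verified \emph{a posteriori} by the finiteness of the right-hand sides below), one obtains
\[
S_\beta' = \sum_{k \geq 0} d_k e^{\beta d_k} = -\int_0^\infty x e^{-\beta x}\,dN(x),
\qquad
S_\beta'' = \int_0^\infty x^2 e^{-\beta x}\,dN(x).
\]
Taking expectations and applying Fubini with $m(x) \coloneqq \es[N(x)]$ yields
\[
\es[S_\beta'] = -\int_0^\infty x e^{-\beta x}\,dm(x),
\qquad
\es[S_\beta''] = \int_0^\infty x^2 e^{-\beta x}\,dm(x).
\]

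I would then integrate by parts in the Stieltjes sense. The bound $m(x) \leq C e^x$ from \eqref{eq:bound_C_x} ensures $x^k e^{-\beta x} m(x) \to 0$ at $+\infty$ for $\beta > 1$ and $k \in \{1,2\}$, and the integrands $x e^{-\beta x}$, $x^2 e^{-\beta x}$ vanish at $0$ (absorbing the possible jump of $m$ at the origin); hence the boundary terms disappear and
\[
\es[S_\beta'] = \int_0^\infty (1-\beta x) e^{-\beta x} m(x)\,dx,
\qquad
\es[S_\beta''] = \int_0^\infty (\beta x^2 - 2x) e^{-\beta x} m(x)\,dx.
\]

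The dominant contribution in each formula is the term proportional to $-\beta x$, respectively $\beta x^2$. Using $m(x) \sim C_\star e^x$ from \eqref{eq:equiv_C_x} together with the same splitting argument used in Proposition \ref{prop:moments_points_extremaux}---cut the integral at some large $A$, bound $\abs{m(x) - C_\star e^x} \leq \delta e^x$ on $[A,\infty)$ for any $\delta > 0$, and observe that the contribution from $[0,A]$ is $O(1)$---one obtains, for $k \in \{0,1,2\}$,
\[
\int_0^\infty x^k e^{-\beta x} m(x)\,dx \; \underset{\beta \downarrow 1}{\sim} \; C_\star \int_0^\infty x^k e^{-(\beta-1)x}\,dx = \frac{k!\,C_\star}{(\beta-1)^{k+1}}.
\]
The subleading pieces---$\int_0^\infty e^{-\beta x} m(x)\,dx = O((\beta-1)^{-1})$ inside $\es[S_\beta']$, and $-2\int_0^\infty x e^{-\beta x} m(x)\,dx = O((\beta-1)^{-2})$ inside $\es[S_\beta'']$---are of strictly smaller order than the leading terms $-\beta C_\star/(\beta-1)^2$ and $2\beta C_\star/(\beta-1)^3$, so letting $\beta \downarrow 1$ yields both claimed equivalents.

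There is no serious obstacle in this plan; the only mildly technical steps are the Stieltjes integration by parts and the vanishing of boundary terms at $+\infty$, both of which follow directly from \eqref{eq:bound_C_x}.
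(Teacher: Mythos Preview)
Your proof is correct and follows essentially the same approach as the paper, which simply refers back to the method of Proposition~\ref{prop:moments_points_extremaux}: express the moments as integrals against the mean level-set function $m(x)=\Ec{\cD([-x,0])}$ and use the asymptotics $m(x)\sim C_\star \e^x$ from \eqref{eq:equiv_C_x}. You have merely made explicit the integration-by-parts step and the splitting argument that the paper leaves to the reader.
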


\medskip

\begin{proof}
	This follows from Equation \eqref{eq:equiv_C_x} in a similar way as the proof of Proposition \ref{prop:moments_points_extremaux}.
\end{proof}

\medskip

\begin{proof}[Proof of Theorem \ref{thm:THM2_DECORATED}]
	We first prove that $\kappa_{\mathrm{d}}(\beta) > \kappa(\beta)$,  when $\beta >1$ is fixed.
	By Proposition~\ref{prop:susceptibility_decorated}, it is sufficient to show that 
	\begin{equation} \label{eq:variance_goal}
	\Var_\beta \left( \frac{1}{\beta}\log S_\beta - \frac{S'_\beta}{S_\beta} \right)
	> 0.
	\end{equation}
	We proceed by contradiction and assume that this variance equals zero.
	Then, there exists $c \in \R$ such that, $\P_\beta$-a.s.,
	\begin{equation} \label{eq:hyp_fausse}
	\frac{1}{\beta}\log S_\beta = \frac{S_\beta'}{S_\beta} + c, 
	\end{equation}
	But $\P$ is absolutely continuous w.r.t.\@ $\P_\beta$ such that Equation \eqref{eq:hyp_fausse} holds also $\P$-a.s.
	On the one hand, using $S_\beta = \sum_{k \geq 0} \e^{\beta d_k}$, we have
	\[
		S_\beta \log S_\beta 
		> \sum_{k \geq 0} \e^{\beta d_k} \beta d_k
		= \beta S_\beta'.
	\]
	Thus, we necessarily have $c > 0$.
	On the other hand, fix some $\beta' \in (1,\beta)$.
	Then, there exists $C > 0$ such that 
	\begin{equation} \label{eq:ineg}
	\forall x \leq 0, \qquad 
	\left(1 \vee \abs{x}\right)\, \e^{\beta x} \leq C \, \e^{\beta' x}.
	\end{equation}
	Now, for some $\varepsilon >0$, consider the event
	\[
	\left\{\sum_{k \geq 1} \e^{\beta' d_k} \leq \varepsilon \right\}.
	\]
	This event has positive $\P$-probability by \cite[Proposition 3.4]{bonnefont22}.
	So, on this event intersected with the one where Equation \eqref{eq:hyp_fausse} holds (this intersection being non-empty), we have
	\[
		c = \frac{1}{\beta}\log S_\beta - \frac{S'_\beta}{S_\beta}
		\leq \frac{1}{\beta}\log \left( 1 + \sum_{k\geq 1} \e^{\beta d_k} \right) 
		+ \sum_{k\geq 1} d_k \e^{\beta d_k}
		\leq \frac{1}{\beta}\log \left( 1 + C \varepsilon \right) 
		+ C \varepsilon,
	\]
	using Equation \eqref{eq:ineg}.
	Letting $\varepsilon \to 0$ shows $c \leq 0$, which contradicts $c > 0$ and concludes our proof of Equation \eqref{eq:variance_goal}.
	
	\medskip
	
	We study now  the regime $\beta \downarrow 1$.
	Since the asymptotics of $\kappa(\beta)$ are given by Theorem \ref{thm:THM2_REM}, it remains to study the behavior of 
	\begin{equation} \label{eq:decompo_var}
	\Var_\beta \left( \frac{1}{\beta}\log S_\beta - \frac{S'_\beta}{S_\beta} \right) 
	= \frac{1}{\beta^2}\Var_\beta \, (\log S_\beta)
	- \frac{2}{\beta}\Cov_\beta\left(\log S_\beta,\frac{S'_\beta}{S_\beta} \right)
	+ \Var_\beta \left( \frac{S'_\beta}{S_\beta} \right).
	\end{equation}
	As we will see, the main term is the third one, which is of order $(\beta-1)^{-2}$, while the other terms are negligible.
	
	For the first term on the right-hand side of Equation \eqref{eq:decompo_var}, observe that
	\[
	\Var_\beta \, ( \log S_\beta  )
	\leq \E_\beta \left[ \log^2 S_\beta  \right]
	\leq \E_\beta \left[ \log^2 (\e+S_\beta) \right]
	\leq \log^2\left( \e+\E_\beta\left[ S_\beta\right] \right),
	\]
	by Jensen's inequality. 
	Then, by definition of $\E_\beta$, we get
	\[
	\E_\beta\left[S_\beta\right]
	= \frac{\es\left[S_\beta^{1+1/\beta}\right]}
	{\es\left[S_\beta^{1/\beta}\right]} 
	\leq \frac{\es\left[S_\beta^2\right]}{\es\left[S_\beta^{1/\beta}\right]} 
	= O \left( \frac{1}{(\beta-1)^2} \right),
	\]
	as $\beta \downarrow 1$ by Equation \eqref{eq:moment2_Sbeta} and Corollary \ref{cor:moment_1/beta}. 
	This proves
	\begin{equation} \label{eq:first_term}
	\Var_\beta \, (\log S_\beta)
	= O \left(  \log^2 \frac{1}{\beta-1} \right).
	\end{equation}
	
	We now consider the third term on the right-hand side of Equation \eqref{eq:decompo_var}. 
	Using the definition of $\E_\beta$ and then that $S_\beta \geq 1$, we get 
	\begin{equation} \label{eq:var_1}
	\E_\beta \left[\frac{S'_\beta}{S_\beta}\right]
	= \frac{\es\left[ S_\beta^{1/\beta-1} S'_\beta \right]}{\es\left[S_\beta^{1/\beta}\right]}
	\sim -\frac{1}{\beta-1},
	\end{equation}
	as $\beta \downarrow 1$ by Corollary \ref{cor:moment_1/beta} and the fact that $\E\left[ S_\beta^{1/\beta-1} (-S'_\beta)\right] \sim C_\star(\beta-1)^{-2}$ as a consequence of the first part of Equation \eqref{eq:ineq_X_Y} (with $X = S_\beta$ and $Y = -S_\beta'$) together with Lemma \ref{lem:moment1_Sbeta''}. 
	On the other hand, it follows from Cauchy--Schwarz inequality that
	\[
	\frac{S_\beta'^2}{S_\beta}
	= S_\beta \cdot  \left( \sum_{k\geq 0} d_k \frac{\e^{\beta d_k}}{S_\beta} \right)^2
	\leq  S_\beta \cdot \sum_{k\geq 0} d_k^2 \,\frac{\e^{\beta d_k}}{S_\beta} 
	= S_\beta''\, ,
	\]
	and therefore
	\begin{equation} \label{eq:var_2}
	\E_\beta \left[ \left( \frac{S'_\beta}{S_\beta} \right)^2 \right]
	=\frac{\es\left[ S_\beta^{1/\beta-2} S'^2_\beta  \right]}{\es\left[S_\beta^{1/\beta}\right]}
	\leq \frac{\es\left[S'^2_\beta/S_\beta\right]}{\es\left[S_\beta^{1/\beta}\right]}
	\leq \frac{\es\left[S_\beta''\right]}{\es\left[S_\beta^{1/\beta}\right]}
	\sim \frac{2}{(\beta-1)^2},
	\end{equation}
	using Lemma \ref{lem:moment1_Sbeta''} and Corollary \ref{cor:moment_1/beta}.
	For the lower bound, using the first part of Equation \eqref{eq:ineq_X_Y} with $X = S_\beta$ and $Y = (S_\beta')^2/S_\beta$, we get
	\[
	\E_\beta \left[ \left( \frac{S'_\beta}{S_\beta} \right)^2 \right]
	= \frac{\es\left[ S_\beta^{1/\beta-2} S'^2_\beta \right]}{\es\left[S_\beta^{1/\beta}\right]}
	\geq \frac{\Ec{S'^2_\beta/S_\beta}^{2-1/\beta}}{\Ec{S_\beta^{1/\beta}} \Ec{S_\beta}^{1-1/\beta}}.
	\]
	Hence, applying Lemma \ref{lem:lower_bound_susceptibility}, Proposition \ref{prop:moments_points_extremaux} and Corollary \ref{cor:moment_1/beta}, we get
	\begin{equation} \label{eq:var_3}
	\liminf_{\beta \downarrow 1} \,(\beta-1)^2 \,
	\E_\beta \,\left[ \left( \frac{S'_\beta}{S_\beta} \right)^2 \right]
	> 1.
	\end{equation}
	Combining Equations \eqref{eq:var_1}, \eqref{eq:var_2} and \eqref{eq:var_3} yields
	\begin{equation} \label{eq:var_final}
	0 <
	\liminf_{\beta \downarrow 1} 
	(\beta-1)^2 \Var_\beta \left( \frac{S'_\beta}{S_\beta} \right)
	\leq \limsup_{\beta \downarrow 1} 
	(\beta-1)^2 \Var_\beta \left( \frac{S'_\beta}{S_\beta} \right)
	\leq 1.
	\end{equation}
	
	Finally, for the second term on the right-hand side of Equation \eqref{eq:decompo_var}, by Cauchy--Schwarz inequality and then Equations \eqref{eq:first_term} and \eqref{eq:var_final}, we get
	\[
	\abs{\Cov_\beta\left(\log S_\beta,\frac{S'_\beta}{S_\beta} \right)}
	\leq \left( 
	\Var_\beta \left( \log S_\beta \right) 
	\Var_\beta \left( \frac{S'_\beta}{S_\beta} \right)
	\right)^{1/2}
	= O \left( \frac{\log \,\frac{1}{\beta-1} }{\beta-1} \right),
	\]
	which proves that this term is negligible in Equation \eqref{eq:decompo_var} and concludes the proof.
\end{proof}

\medskip

\begin{lem}[BBM case]
\label{lem:lower_bound_susceptibility}
	We have
	\[
	\liminf_{\beta \downarrow 1}\, 
	(\beta-1)^3 \,\Ec{\frac{(S'_\beta)^2}{S_\beta}}
	> C_\star.
	\]
\end{lem}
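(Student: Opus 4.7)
The natural starting point is Cauchy--Schwarz applied to the pair $(-S'_\beta, S_\beta)$:
\[
\Ec{\frac{(S'_\beta)^2}{S_\beta}} \;\geq\; \frac{\E[-S'_\beta]^2}{\E[S_\beta]}.
\]
By Proposition~\ref{prop:moments_points_extremaux} and Lemma~\ref{lem:moment1_Sbeta''}, the right-hand side is asymptotically equivalent to $C_\star/(\beta-1)^3$, so this alone only yields $\liminf_{\beta\downarrow1}(\beta-1)^3\,\E[(S'_\beta)^2/S_\beta] \geq C_\star$; the whole difficulty of the lemma lies in producing the \emph{strict} inequality in the limit.

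The plan is to sharpen Cauchy--Schwarz by splitting on an event $A$ depending on $\cD$ and on its complement. A direct algebraic manipulation gives, with $q_\beta \coloneqq \E[S_\beta \1_A]/\E[S_\beta]$,
\[
\Ec{\frac{(S'_\beta)^2}{S_\beta}} \;\geq\; \frac{\E[-S'_\beta \1_A]^2}{\E[S_\beta \1_A]} + \frac{\E[-S'_\beta \1_{A^c}]^2}{\E[S_\beta \1_{A^c}]} \;=\; \frac{\E[-S'_\beta]^2}{\E[S_\beta]} + \frac{\bigl(\E[-S'_\beta \1_A] - q_\beta \E[-S'_\beta]\bigr)^2}{\E[S_\beta]\,q_\beta(1-q_\beta)}.
\]
To produce a strict improvement of order $(\beta-1)^{-3}$ over the crude bound, it therefore suffices to find $A$ with $q_\beta$ bounded away from $0$ and $1$ and for which the conditional ratio $\E[-S'_\beta \1_A]/\E[S_\beta \1_A]$ differs from the global one $\E[-S'_\beta]/\E[S_\beta]\sim 1/(\beta-1)$ by a quantity of order $1/(\beta-1)$.

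The natural candidate for $A$ is suggested by Remark~\ref{rem:heuristic_picture_BBM}: the first moments $\E[S_\beta]$ and $\E[-S'_\beta]$ are both dominated by the rare event in which, in the finite-$t$ spine description of Section~\ref{subsec:decoration_BBM}, the spine reaches a high position at some time $s$ of order $(\beta-1)^{-2}$, and the value of $s$ is genuinely random across the window $[\eta(\beta-1)^{-2},\eta^{-1}(\beta-1)^{-2}]$. Since the spatial extent of the decoration produced at such a time is of order $\sqrt{s}$, and hence $-S'_\beta/S_\beta$ is itself of that order, cutting the admissible range of $s$ at an intermediate value produces an event $A$ whose two sides feel different leading-order values of $-S'_\beta/S_\beta$. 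The main obstacle is to make this quantitative: one must derive asymptotic equivalents, with \emph{distinct} explicit constants on the two sides of the split, for $\E[S_\beta \1_A]$ and $\E[-S'_\beta \1_A]$ at finite $t$ under $\widetilde{\P}_t$, by adapting the spine-decomposition arguments of \cite{cortineshartunglouidor2019,cortineshartunglouidor2021} and Section~\ref{subsec:decoration_BBM}, with the uniform finite-$t$ estimates of Lemma~\ref{lem:moment_C_t_r_t} and the cross-moment bound of Proposition~\ref{prop:crossed} serving as the key technical tools to justify the passage $t\to\infty$.
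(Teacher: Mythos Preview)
Your plan is the paper's: apply Cauchy--Schwarz conditionally on a spine event and its complement (your displayed identity is exactly Step~2 of the paper's proof), pass to finite $t$ via the description of $\cC$ (Step~1, justified with Lemma~\ref{lem:moment_C_t_r_t} as you indicate), and then compute first moments on each piece (Step~3). Two points are worth noting. First, the event the paper actually uses is not a cut on the time $s$ at which the spine is high, but $B_{r,t}=\{h_{t-r}(X_{t-r})-m_t+m_r\in[-B\sqrt r,-b\sqrt r]\}$ for a fixed $r=a(\beta-1)^{-2}$: a cut on the spine's \emph{position} at one prescribed time. This choice is what makes the first-moment calculations on $B_{r,t}$ reduce to explicit integrals $\kappa,\kappa'\in(0,1)$ (Lemma~\ref{lem:level_sets_on_B}), yielding the lower bound $C_\star(\kappa'^2/\kappa+(1-\kappa')^2/(1-\kappa))$. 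Second, and this is the genuine content your sketch leaves open, one must then prove that $a,b,B$ can be chosen with $\kappa\neq\kappa'$; the paper does this in Lemma~\ref{lem:kappa} by a non-trivial limiting argument ($a\to0$, then $B\to\infty$), and the heuristic ``$-S'_\beta/S_\beta$ is of order $\sqrt{s}$'' does not by itself deliver it.
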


\medskip

\begin{rem}
	A lower bound with a weak inequality could be easily obtained via Cauchy--Schwarz inequality:
	\[
	\Ec{\frac{(S'_\beta)^2}{S_\beta}}
	\geq \frac{\E[S_\beta']^2}{\Ec{S_\beta}}
	\sim \frac{C_\star}{(\beta-1)^3},
	\]
	using Lemma \ref{lem:moment1_Sbeta''} and Proposition \ref{prop:moments_points_extremaux}. Equality at the first order in this inequality would suggest that $S'_{\beta}$ and $S_\beta$ are colinear at first order (on the events that are significant for the first moment).
	Therefore, the idea of the proof below is to find an event such that the first moments of $S'_{\beta}$ and $S_\beta$ given this event have a different ratio than the one for the non-conditional first moments.
\end{rem}

\medskip

\begin{proof}
	\textit{\underline{Step 1: Working at finite $t$.}} Fix some $\beta > 1$. 
	Setting $f_\beta(x) \coloneqq \e^{\beta \sqrt{2} x}$ and $\partial_\beta f_\beta(x) \coloneqq \sqrt{2} x\e^{\beta \sqrt{2} x}$, note that
	$S_\beta = \cC(f_\beta)$ and $S_\beta' = \cC(\partial_\beta f_\beta)$.
	Our first aim in this step is to show
	\begin{equation} \label{eq:step_1}
	\Ec{\frac{(S'_\beta)^2}{S_\beta}}
	= \lim_{t \to \infty} \hEc{\frac{(\cC_{t,r_t}^*(\partial_\beta f_\beta))^2}{\cC_{t,r_t}^*(f_\beta)}}.
	\end{equation}
	For $K > 0$, let $\chi_K$ denote a continuous function such that $\1_{[-K,K]} \leq \chi_K \leq \1_{[-(K+1),K+1]}$.
	It follows from the vague convergence stated in Equation \eqref{eq:description_deco} that
	\[
	\left(\cC_{t,r_t}^*(\chi_K f_\beta),\cC_{t,r_t}^*(\chi_K \partial_\beta f_\beta)\right) 
	\text{ under } \widetilde{\P}_t
	\quad \xrightarrow[t\to\infty]{\text{(d)}} \quad 
	\left(\cC(\chi_K f_\beta),\cC(\chi_K \partial_\beta f_\beta)\right)  \,
	\text{ under } \P.
	\]
	Moreover, recalling $\cC_{t,r_t}^*$ is supported on $(-\infty,0]$ under $\widetilde{\P}_t$ we get
	\[
	\abs{ \frac{\cC_{t,r_t}^*(\chi_K \partial_\beta f_\beta)^2}
	{\cC_{t,r_t}^*(\chi_K f_\beta)} }
	\leq 2(K+1)^2 \cC_{t,r_t}^*(\chi_K f_\beta)
	\leq 2(K+1)^2 \cC_{t,r_t}^*([-(K+1),0])\,,
	\]
	which is bounded in $L^2$ by Lemma \ref{lem:moment_C_t_r_t}. Hence, $(\cC_{t,r_t}^*(\chi_K \partial_\beta f_\beta)^2 / \cC_{t,r_t}^*(\chi_K f_\beta))_t$ is uniformly integrable and we get
	\begin{equation} \label{eq:step_1_bis}
	\Ec{\frac{\cC(\chi_K \partial_\beta f_\beta)^2}{\cC(\chi_K f_\beta)}}
	= \lim_{t \to \infty} 
	\hEc{\frac{\cC_{t,r_t}^*(\chi_K \partial_\beta f_\beta)^2}
		{\cC_{t,r_t}^*(\chi_K f_\beta)}}.
	\end{equation}
	We now control the differences between the expectations in Equation \eqref{eq:step_1} and Equation \eqref{eq:step_1_bis} and show they are small when $K$ is large.
	We have
	\begin{align}
	\abs{\frac{\cC(\partial_\beta f_\beta)^2}{\cC(f_\beta)} 
		- \frac{\cC(\chi_K \partial_\beta f_\beta)^2}{\cC(\chi_K f_\beta)}}
	& \leq \frac{\abs{\cC(\partial_\beta f_\beta)^2 - \cC(\chi_K \partial_\beta f_\beta)^2}}{\cC(f_\beta)}
	+ \cC(\chi_K \partial_\beta f_\beta)^2 
	\abs{\frac{1}{\cC(f_\beta)} - \frac{1}{\cC(\chi_K f_\beta)}} \nonumber \\
	& \leq 2\, \cC(\partial_\beta f_\beta)\, \cC\left(\1_{(-\infty,-K]} \,\partial_\beta f_\beta\right)
	+ (K+1)^2 \cC\left(\1_{(-\infty,-K]} \,\partial_\beta f_\beta\right),
	\label{eq:step_1_ter}
	\end{align}
	where, for the first term, we used $\cC(f_\beta) \geq 1$ and the fact that $\partial_\beta f_\beta$ is of constant sign on $(-\infty,0]$ which is the support of $\cC$ and, for the second term, we used $\abs{\cC(\chi_K \partial_\beta f_\beta)}/\cC(f_\beta) \leq \abs{\cC(\chi_K \partial_\beta f_\beta)}/\cC(\chi_K f_\beta) \leq K+1$.
	Then, writing $\partial_\beta f_\beta (x) = \int_{-\infty}^x \sqrt{2}(\beta\sqrt{2}y+1)\e^{\beta\sqrt{2} y} \diff y$ and using Fubini's theorem, we have
	\begin{align*}
	& \Ec{ \cC(\partial_\beta f_\beta) \cC(\1_{(-\infty,-K]} \partial_\beta f_\beta) } \\
	& = \int_{-\infty}^0 \int_{-\infty}^{-K}  
	\Ec{\cC([y,0])\, \cC([z,-K])} \,2 (\beta\sqrt{2}z+1)\,\e^{\beta\sqrt{2} z} (\beta\sqrt{2}y+1)\e^{\beta\sqrt{2} y} \diff z \diff y \\
	& \leq C(\beta) (K+1)^{3/2} \e^{-(\beta-1)\sqrt{2}K},
	\end{align*}
	where we bounded the last expectation by $(\abs{y}+1)^{1/2} (\abs{z}+1)^{1/2} \e^{-\sqrt{2} (y+z)}$ using Cauchy--Schwarz inequality and Equation \eqref{eq:2nd_moment_C_x}, and where $C(\beta)$ denotes a constant depending only on $\beta$ and which can change from line to line. 
	Similarly, we have $\E[\cC(\1_{(-\infty,-K]} \partial_\beta f_\beta)] \leq C(\beta) (K+1) \e^{-(\beta-1)\sqrt{2}K}$ by using Equation \eqref{eq:bound_C_x}.
	Coming back to Equation \eqref{eq:step_1_ter}, we get 
	\[
	\abs{ \Ec{\frac{\cC(\partial_\beta f_\beta)^2}{\cC(f_\beta)}}
		- \Ec{\frac{\cC(\chi_K \partial_\beta f_\beta)^2}{\cC(\chi_K f_\beta)}}}
	\leq C(\beta) (K+1)^3 \e^{-(\beta-1)\sqrt{2}K},
	\]
	and the same holds true for $\cC_{t,r_t}^*$ instead of $\cC$, uniformly in $t \geq 1$, by replacing Equations \eqref{eq:2nd_moment_C_x} and \eqref{eq:bound_C_x} by Lemma \ref{lem:moment_C_t_r_t} in the proof.
	Therefore, letting $K \to \infty$ in Equation \eqref{eq:step_1_bis} yields Equation \eqref{eq:step_1}.
	
\medskip
	
	\textit{\underline{Step 2: Using Cauchy--Schwarz inequality conditionally on a well-chosen event.}} We fix some parameters $a > 0$ and $0 < b < B$. 
	For any $\beta >1$, letting $r = r(\beta) \coloneqq a (\beta-1)^{-2}$, and for any $t > r$, we introduce the event
	\begin{equation}
	\label{eq:B_{r,t}}
	B_{r,t} = \left\{ h_{t-r}(X_{t-r}) -m_t + m_r \in [-B\sqrt{r},-b\sqrt{r}] \right\}.
	\end{equation}
	Then, we use Cauchy--Schwarz inequality given $B_{r,t}$ or given $B_{r,t}^c$ to get
	\begin{align}
	\hEc{\frac{(\cC_{t,r_t}^*(\partial_\beta f_\beta))^2}{\cC_{t,r_t}^*(f_\beta)}}
	& = \hEcsq{\frac{(\cC_{t,r_t}^*(\partial_\beta f_\beta))^2}{\cC_{t,r_t}^*(f_\beta)}}{B_{r,t}} \hPp{B_{r,t}}
	+ \hEcsq{\frac{(\cC_{t,r_t}^*(\partial_\beta f_\beta))^2}{\cC_{t,r_t}^*(f_\beta)}}{B_{r,t}^c} \hPp{B_{r,t}^c} \nonumber \\
	& \geq \frac{\hEcsq{\cC_{t,r_t}^*(\partial_\beta f_\beta)}{B_{r,t}}^2}{\hEcsq{\cC_{t,r_t}^*(f_\beta)}{B_{r,t}}} \,\hPp{B_{r,t}}
	+ \frac{\hEcsq{\cC_{t,r_t}^*(\partial_\beta f_\beta)}{B_{r,t}^c}^2}{\hEcsq{\cC_{t,r_t}^*(f_\beta)}{B_{r,t}^c}} \,\hPp{B_{r,t}^c} \nonumber \\
	& = \frac{\hEc{\cC_{t,r_t}^*(\partial_\beta f_\beta) \1_{B_{r,t}}}^2}{\hEc{\cC_{t,r_t}^*(f_\beta) \1_{B_{r,t}}}}
	+ \frac{\hEc{\cC_{t,r_t}^*(\partial_\beta f_\beta) \1_{B_{r,t}^c}}^2}{\hEc{\cC_{t,r_t}^*(f_\beta) \1_{B_{r,t}^c}}}, \label{eq:step_2}
	\end{align}
	which yields a lower bound for the right-hand side of Equation \eqref{eq:step_1}, that we now have to estimate.

\medskip

	\textit{\underline{Step 3: Estimating the expectations.}} Let $\varepsilon > 0$. Our aim in this step consists in proving that, for $\beta > 1$ close enough to 1, there exists $t_0 > r$ such that, for any $t \geq t_0$,
	\begin{align}
	\abs{(\beta-1) \hEc{\cC_{t,r_t}^*(f_\beta) \1_{B_{r,t}}} - C_\star \kappa}
	\leq \varepsilon, \nonumber \\
	\abs{(\beta-1)^2 \hEc{\cC_{t,r_t}^*(\partial_\beta f_\beta) \1_{B_{r,t}}} + C_\star \kappa'}
	\leq \varepsilon, \label{eq:step_3}
	\end{align}
	where $\kappa, \kappa'$ are constants depending only on the parameters $a,b,B$ defined as follows
	\begin{align}
	\kappa & \coloneqq \frac{1}{2 \sqrt{\pi}} \int_0^\infty 
	\varphi_{b,B} \left( \frac{a\vee w}{\abs{a-w}} \right)
	\left( \int_0^\infty u \e^{-u-u^2/(4w)} \diff u \right)
	\frac{\diff w}{w^{3/2}}\,, \nonumber \\
	\kappa' & \coloneqq \frac{1}{2 \sqrt{\pi}} \int_0^\infty 
	\varphi_{b,B} \left( \frac{a\vee w}{\abs{a-w}} \right)
	\left( \int_0^\infty u^2 \e^{-u-u^2/(4w)} \diff u \right)
	\frac{\diff w}{w^{3/2}}\,, \label{eq:def_kappa}
	\end{align}
	with $\varphi_{b,B}$ defined by
	\begin{equation} \label{eq:def_varphi}
	\varphi_{b,B}(v) 
	\coloneqq \int_{b \sqrt{v}}^{B\sqrt{v}} \sqrt{\frac{2}{\pi}}\, y^2 \e^{-y^2/2} \diff y\,,
	\qquad v \geq 0.
	\end{equation}
	Note that $\kappa,\kappa' \in (0,1)$ and they tend to $1$ as $b \to 0$ and $B \to \infty$ (which means $\1_{B_{r,t}} \to 1$).
	To prove the first inequality in Equation \eqref{eq:step_3}, we write
	\[
		\hEc{ \cC_{t,r_t}^*(f_\beta) \1_{B_{r,t}} }
		= \int_0^\infty \hEc{\cC_{t,r_t}^*([-x,0]) \1_{B_{r,t}}} \beta \sqrt{2}\, \e^{-\beta\sqrt{2}x} \diff x.
	\]
	Then, we apply Lemma \ref{lem:level_sets_on_B} for some $\theta \in (0,1)$ and with $\varepsilon > 0$  introduced earlier to estimate the part $x \in [\theta \sqrt{r},\theta^{-1} \sqrt{r}]$ of the integral
	and use Lemma \ref{lem:moment_C_t_r_t} to bound the remaining part.
	Note also that $\varphi_{b,B} \leq 1$ and $\int_0^\infty 
	\frac{x\e^{-x^2/(2s)}}{\sqrt{2\pi}s^{3/2}} \diff s = 1$.
	Therefore, for $\beta > 1$ close enough to 1 (equivalently $r$ large enough), there exists $t_0 > r$ such that, for any $t \geq t_0$,
	\begin{align}
	& \abs{\hEc{ \cC_{t,r_t}^*(f_\beta) \1_{B_{r,t}} }
		- C_\star \int_0^\infty \left(\int_0^\infty 
		\varphi_{b,B} \left( \frac{r\vee s}{\abs{r-s}} \right)
		\frac{x\e^{-x^2/(2s)}}{\sqrt{2\pi}s^{3/2}} \diff s \right)
		\beta \sqrt{2}\, \e^{-(\beta-1)\sqrt{2}x} \diff x} 
	\nonumber \\
	& \leq \varepsilon \int_0^\infty
	\beta \sqrt{2} \e^{-(\beta-1)\sqrt{2}x} \diff x
	+ C \int_{[\theta \sqrt{r},\theta^{-1} \sqrt{r}]^c} \beta \sqrt{2}\, \e^{-(\beta-1)\sqrt{2}x} \diff x 
	\nonumber \\
	& = \frac{\beta}{\beta-1} \left(
	\varepsilon
	+ C \left( 1-\e^{-\theta \sqrt{a}} + \e^{-\theta^{-1} \sqrt{a}} \right)
	\right), \label{eq:step_3_bis}
	\end{align}
	recalling that $r = a(\beta-1)^{-2}$.
	Choosing $\theta$ small enough and considering $\beta < 2$, the right-hand side of Equation \eqref{eq:step_3_bis} is smaller that $3 \varepsilon/(\beta-1)$.
	Then, we rewrite the double integral on the left-hand side of Equation \eqref{eq:step_3_bis} by using Fubini's theorem and changing variables with $u= (\beta-1)\sqrt{2}x$ and $w=s(\beta-1)^2$, which shows that this double integral equals $\beta \kappa/(\beta-1)$.
	This proves the first inequality in Equation \eqref{eq:step_3} (with $4\varepsilon$ instead of $\varepsilon$).
	The second inequality is proved by writing 
	\[
	\hEc{ \cC_{t,r_t}^*(f_\beta) \1_{B_{r,t}} }
	= - \int_0^\infty \hEc{\cC_{t,r_t}^*([-x,0]) \1_{B_{r,t}}} \sqrt{2} (\beta \sqrt{2} x-1) \e^{-\beta\sqrt{2}x} \diff x
	\]
	and then proceeding similarly (note that $\beta \sqrt{2} x-1$ can be replaced by $\beta \sqrt{2} x$ up to a negligible error as $\beta \downarrow 1$).

\medskip
	
	\textit{\underline{Step 4: Conclusion.}}
	We first claim that for $\theta \in (0,1)$ and $\epsilon > 0$, there exists $r_0 > 0$ such that, for any $r \geq r_0$, there exists $t_0 > 0$ such that, for any $t \geq t_0$ and any $v \in [\theta \sqrt{r},\theta^{-1} \sqrt{r}]$,
	\[
	\abs{ \hEc{\cC_{t,r_t}^*([-v,0])} - C_\star \e^{\sqrt{2} v}} 
	\leq \varepsilon \, \e^{\sqrt{2} v}.
	\]
	This is a slightly stronger version of \cite[Lemma 5.2]{cortineshartunglouidor2019} where no uniformity for $v \in [\theta \sqrt{r},\theta^{-1} \sqrt{r}]$ is stated: however, the aforementioned claim follows from their proof (more precisely, it follows from \cite[Lemma 5.6]{cortineshartunglouidor2019} in the same way as Lemma \ref{lem:level_sets_on_B} below follows from Lemma \ref{lem:j}).
	Then, we deduce from this claim that, for $\beta> 1$ close enough to 1, up to a modification of $t_0$, we also have, for any $t \geq t_0$,
	\begin{align*}
	\abs{(\beta-1) \hEc{\cC_{t,r_t}^*(f_\beta)} - C_\star}
	\leq \varepsilon, \\
	\abs{(\beta-1)^2 \hEc{\cC_{t,r_t}^*(\partial_\beta f_\beta)} + C_\star}
	\leq \varepsilon,
	\end{align*}
	where these inequalities are obtained in the same way as Equations \eqref{eq:step_3} have been obtained from Lemma \ref{lem:level_sets_on_B} in Step 3.
	Combining this with Equations \eqref{eq:step_1}, \eqref{eq:step_2} and \eqref{eq:step_3}, we get 
	\[
	\Ec{\frac{(S'_\beta)^2}{S_\beta}}
	\geq \frac{1}{(\beta-1)^3}
	\left( 
	\frac{(C_\star \kappa' - \varepsilon)^2}{C_\star \kappa + \varepsilon}
	+ \frac{(C_\star (1-\kappa') - \varepsilon)^2}{C_\star (1-\kappa) + \varepsilon}
	\right).
	\]
	Letting $\varepsilon \to 0$, this proves 
	\begin{equation} \label{eq:step_4}
	\liminf_{\beta \downarrow 1} 
	(\beta-1)^3 \,\Ec{\frac{(S'_\beta)^2}{S_\beta}}
	\geq C_\star \left( 
	\frac{(\kappa')^2}{\kappa}
	+ \frac{(1-\kappa')^2}{1-\kappa}
	\right).
	\end{equation}
	By Lemma \ref{lem:kappa}, we can choose $a,b,B$ such that $\kappa \neq \kappa'$ and, together with the fact that $\kappa,\kappa' \in (0,1)$, this implies that the right-hand side of Equation \eqref{eq:step_4} is larger than $C_\star$.
\end{proof}

The proof of the following lemma is postponed to Subsection \ref{sec:first_moment_event}.

\medskip

\begin{lem}[BBM case]
 \label{lem:level_sets_on_B}
	Let $\theta \in (0,1)$, $\varepsilon,a > 0$ and $0 < b < B$. There exists $r_0 > 0$ such that, for any $r \geq r_0$, there exists $t_0 > 0$ such that, for any $t \geq t_0$ and any $v \in [\theta \sqrt{r},\theta^{-1} \sqrt{r}]$,
	\[
	\abs{ \hEc{\cC_{t,r_t}^*([-v,0]) \1_{B_{r,t}}} 
		- C_\star \e^{\sqrt{2} v}  \int_0^\infty 
		\varphi_{b,B} \left( \frac{r\vee s}{\abs{r-s}} \right)
		\frac{v\e^{-v^2/(2s)}}{\sqrt{2\pi}s^{3/2}} \diff s} 
	\leq \varepsilon \e^{\sqrt{2} v}.
	\]
\end{lem}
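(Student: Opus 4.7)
The plan is to re-run the proof of \cite[Lemma 5.2]{cortineshartunglouidor2019} with $\1_{B_{r,t}}$ inserted, checking that the only new factor is $\varphi_{b,B}(\frac{r\vee s}{|r-s|})$. The starting point is the spine decomposition under $\widetilde\P$: the spine $X$ is a standard Brownian motion branching at rate $2$, and at each branching time $t-s$ an independent BBM of length $s$ starts from $W_{t-s} \coloneqq h_{t-s}(X_{t-s})$. Because $B_{r,t}$ depends only on $W_{t-r}$, decomposing $\cC_{t,r_t}^*([-v,0])$ along these branchings and applying the many-to-one formula gives
\[
\hEc{\cC_{t,r_t}^*([-v,0])\,\1_{B_{r,t}}}
=\int_0^{r_t} 2\,\widetilde\E_t\!\left[\1_{B_{r,t}}\,K_{t,s}(W_{t-s};v)\right]\diff s,
\]
where $K_{t,s}(w;v)$ is the mean number of descendants at time $t$ lying in $[m_t-v,m_t]$ of a BBM of length $s$ started at $w$ and constrained to stay below $m_t$. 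In the regime $m_t-w,\ v=O(\sqrt s)$, Bramson's tail asymptotics combined with the convergence of the derivative martingale yield \cite[Lemma 5.6]{cortineshartunglouidor2019}
\[
K_{t,s}(w;v) = C_\star\,\e^{\sqrt 2(m_t-w)}\,\frac{v\,\e^{-v^2/(2s)}}{\sqrt{2\pi}\,s^{3/2}}\bigl(1+o(1)\bigr),
\]
which, after the spine unbiasing (a Girsanov shift that turns $\e^{\sqrt 2(m_t-W_{t-s})}$ into the deterministic factor $\e^{\sqrt 2 v}$ modulo lower-order terms), produces the prefactor $C_\star\,\e^{\sqrt 2 v}\,v\,\e^{-v^2/(2s)}/(\sqrt{2\pi}\,s^{3/2})$ of the claimed formula.

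The new input is the asymptotic of $\hPp{B_{r,t} \mid W_{t-s}}$. Under $\widetilde\P_t$, the law of the spine is, via the Lalley--Sellke/derivative martingale picture, that of a Brownian bridge from $(0,0)$ to $(t,m_t)$ further biased by the max-condition, and in the limit $t\to\infty$ the deviations from the mean trajectory become a $3$D Bessel-like process. Consequently, conditional on $W_{t-s}$ taking its typical value $\approx m_t-\sqrt 2 s$, the renormalised excess $Y \coloneqq (m_t-m_r-W_{t-r})/\sqrt r$ is asymptotically supported on $\R_+$ with density
\[
y\mapsto \frac{1}{\sigma_{r,s}^3}\sqrt{\tfrac{2}{\pi}}\,y^2\,\e^{-y^2/(2\sigma_{r,s}^2)}\qquad(y>0),
\]
where $\sigma_{r,s}^2$ is the conditional variance of $W_{t-r}$ given $(W_{t-s},W_t=m_t)$ in the Brownian bridge: $\sigma_{r,s}^2 = (s-r)/s$ if $s>r$ and $\sigma_{r,s}^2 = (r-s)/r$ if $s<r$, in both cases $\sigma_{r,s}^2 = |r-s|/(r\vee s)$. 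The Gaussian part is the bridge density, while the $y^2$ prefactor is the Bessel/ballot correction coming from the requirement that the spine remains below $m_t$ on the remaining sub-intervals of $[0,t]$. Integrating this density over $y\in[b,B]$ and applying the change of variable $y=\sigma_{r,s}\tilde y$ in the definition of $\varphi_{b,B}$ gives exactly $\varphi_{b,B}(\frac{r\vee s}{|r-s|})$. Gathering the three factors and passing to $t\to\infty$, with the uniformity in $v\in[\theta\sqrt r,\theta^{-1}\sqrt r]$ inherited from the CHL proof, yields the claimed estimate.

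\textbf{Main obstacle.} The hard step is the quantitative, uniform-in-$(s,v)$ spine asymptotic jointly at the two times $t-r,\,t-s$. What is needed is a ballot-type estimate stating that, conditional on $W_{t-s}\approx m_t-\sqrt 2 s$, the probability that a Brownian bridge from $W_{t-s}$ to $m_t$ passes near $m_t-m_r-y\sqrt r$ at time $t-r$ while the spine remains below $m_t$ on the appropriate sub-intervals of $[0,t]$ carries a multiplicative weight $\propto y/\sqrt r$ for $y\in[b,B]$; combined with the Gaussian bridge density, this produces the $y^2\,\e^{-y^2/(2\sigma_{r,s}^2)}$ law. The cases $s<r$ and $s>r$ must be handled separately because the max-barrier acts on different sub-intervals, but in both cases the required ballot estimates are refinements of those developed in \cite[Section 5]{cortineshartunglouidor2019}. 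Carrying this through uniformly in $v\in[\theta\sqrt r,\theta^{-1}\sqrt r]$ and $s\in(0,r_t)$, and verifying that the total error is at most $\varepsilon\e^{\sqrt 2 v}$ as required, is the main technical work.
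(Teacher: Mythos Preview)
Your outline is correct and is essentially the paper's approach: both reduce $\hEc{\cC_{t,r_t}^*([-v,0])\1_{B_{r,t}}}$ to an integral $\int_0^{r_t} j_{t,v,r}(s)\,\diff s$, where $j_{t,v,r}(s)$ is the CHL integrand $j_{t,v}(s)$ with the extra indicator $\1_{\{\widehat W_{t,r}\in[-B\sqrt r,-b\sqrt r]\}}$ inserted, and both identify $\varphi_{b,B}(\frac{r\vee s}{|r-s|})$ as the integral over $y\in[-B\sqrt r,-b\sqrt r]$ of a $y^2\e^{-y^2/(2\sigma_{r,s}^2 r)}$ weight. Your computation $\sigma_{r,s}^2=|r-s|/(r\vee s)$ and the resulting change of variable are correct.

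Two points of precision. First, the ``Girsanov shift'' explanation for the factor $\e^{\sqrt 2 v}$ is misleading: in the CHL framework this factor comes directly from the asymptotic $e_{s,v}(z)\sim v\e^{\sqrt 2 v-v^2/(2s)}\frac{g(z)}{\sqrt\pi}\e^{\sqrt 2 z}$ (their Lemma~4.2), and the spine, once conditioned on its endpoint, is already a Brownian bridge in the variable $\widehat W_{t,\cdot}$; no further tilt is applied. Second, the $y^2$ weight is not obtained in the paper as a conditional density of $Y$ under $\widetilde\P_t$, but by writing explicitly the two-time bridge density $p_t((r,y);(s,z))$ and multiplying by the three barrier factors $q_t$ from \cite[Lemma~3.4]{cortineshartunglouidor2019}; two of these $q_t$'s contribute a factor $(-y)$ each.

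The ``main obstacle'' you flag is exactly what the paper resolves, via an auxiliary lemma (a two-time refinement of \cite[Lemma~5.6]{cortineshartunglouidor2019}) that carries out this product computation explicitly and uniformly. Crucially, the paper first truncates to $\lvert\widehat W_{t,s}\rvert<M$ and to $s\in[\delta r,(1-\delta)r]\cup[(1+\delta)r,\delta^{-1}r]$, controls the complementary regions using the upper bounds of \cite[Lemma~5.5]{cortineshartunglouidor2019}, and only then lets $t\to\infty$, $r\to\infty$, $M\to\infty$, $\delta\to 0$. Your sketch omits these truncations; without them the asymptotics of $q_t$ and $e_{s,v}$ are not uniform, and the region $s\approx r$ (where $\sigma_{r,s}\to 0$) needs to be excised separately.
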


\medskip

\begin{lem}[BBM case]
 \label{lem:kappa}
	Recall the definition of $\kappa$ and $\kappa'$ in Equation \eqref{eq:def_kappa}. There exist $a > 0$ and $0 < b <B$ such that $\kappa \neq \kappa'$.
\end{lem}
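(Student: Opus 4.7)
The plan is to rewrite $\kappa$ and $\kappa'$ as integrals of a common weight $\Phi_{a,b,B}(w) \coloneqq \varphi_{b,B}(\tfrac{a \vee w}{|a - w|})$ against two distinct probability densities on $(0,\infty)$, and then argue by contradiction: if $\kappa = \kappa'$ for \emph{every} admissible triple $(a,b,B)$, these densities would have to coincide. Set
\[
g(w) \coloneqq \int_0^\infty u \e^{-u-u^2/(4w)}\du, \qquad h(w) \coloneqq \int_0^\infty u^2 \e^{-u-u^2/(4w)}\du,
\]
and $p(w) \coloneqq g(w)/(2\sqrt{\pi}\, w^{3/2})$, $q(w) \coloneqq h(w)/(2\sqrt{\pi}\, w^{3/2})$, so that $\kappa = \int_0^\infty \Phi_{a,b,B}(w) p(w)\diff w$ and $\kappa' = \int_0^\infty \Phi_{a,b,B}(w) q(w)\diff w$. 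An elementary substitution $s = u^2/(4w)$ yields $\int_0^\infty p = \int_0^\infty q = 1$. The asymptotics $g(w) \sim 2w$ and $h(w) \sim 2\sqrt{\pi}\, w^{3/2}$ as $w \downarrow 0$, together with $g(w) \to 1$ and $h(w) \to 2$ as $w \to \infty$, show both that $p \neq q$ and that $p - q \in L^1((0,\infty), \diff w)$.

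Assume by contradiction that $\kappa = \kappa'$ for every such triple. Fix $a > 0$; the map $v_a(w) \coloneqq \frac{a \vee w}{|a - w|}$ sends $(0,a)$ and $(a,\infty)$ bijectively onto $(1,\infty)$, with inverse branches $w_1(v) = a(v-1)/v$ and $w_2(v) = av/(v-1)$ satisfying the key identity $w_1(v) w_2(v) = a^2$. Pushing the signed measure $(p - q)(w)\diff w$ through $v_a$ yields an $L^1$ density $\rho_a$ on $(1,\infty)$ given by
\[
\rho_a(v) = (p - q)(w_1) \frac{(a - w_1)^2}{a} + (p - q)(w_2) \frac{(w_2 - a)^2}{a},
\]
and the contradiction hypothesis becomes $\int_1^\infty \varphi_{b,B}(v)\rho_a(v)\diff v = 0$ for all $0 < b < B$.

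The key step is to differentiate this identity with respect to $B$: since $\partial_B \varphi_{b,B}(v) = \sqrt{2/\pi}\, B^2 v^{3/2} \e^{-B^2 v/2}$, dominated convergence (using $\rho_a \in L^1$) yields $\int_1^\infty v^{3/2} \e^{-sv}\rho_a(v)\diff v = 0$ for all $s > 0$. Uniqueness of the Laplace transform then forces $\rho_a \equiv 0$ on $(1,\infty)$. Using $a = \sqrt{w_1 w_2}$ to factor $(a - w_1)^2 = w_1(\sqrt{w_2} - \sqrt{w_1})^2$ and $(w_2 - a)^2 = w_2(\sqrt{w_2} - \sqrt{w_1})^2$, the equation $\rho_a(v) = 0$ simplifies (after dividing by $(\sqrt{w_2} - \sqrt{w_1})^2/a > 0$) to
\[
(p - q)(w_1) w_1 + (p - q)(w_2) w_2 = 0.
\]
As $(a,v)$ ranges over $(0,\infty) \times (1,\infty)$, the pair $(w_1, w_2)$ covers every pair $0 < w_1 < w_2$, so the identity above forces $w \mapsto w(p-q)(w)$ to be constant on $(0,\infty)$; this constant must then equal its own negative and therefore vanishes. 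Thus $p = q$ almost everywhere, contradicting the previously observed distinctness.

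The main obstacle is identifying the correct push-forward: once one notices that $v_a$ is two-to-one onto $(1,\infty)$ with the two preimages satisfying $w_1 w_2 = a^2$, the remaining ingredients---Laplace-transform uniqueness, the algebraic factorization using $a = \sqrt{w_1 w_2}$, and the parameter-counting argument showing that $(w_1, w_2)$ parametrizes every pair $0 < w_1 < w_2$---are routine.
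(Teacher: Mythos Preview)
Your proof is correct and takes a genuinely different route from the paper's. Both proofs start identically: assume $\kappa=\kappa'$ for all $(a,b,B)$, observe that $\int_0^\infty p=\int_0^\infty q=1$, and differentiate in $B$. From there the arguments diverge. The paper performs a delicate asymptotic analysis as $a\to 0$: after a change of variables it shows that the resulting quantity is of order $a^{1/2}$ when $k=1$ but $o(a^{1/2})$ when $k=2$, forcing a certain explicit integral in $B$ to vanish; it then checks that this integral diverges as $B\to\infty$. Your argument instead recognizes that, for fixed $a$, varying $B$ sweeps out all Laplace frequencies, so Laplace-transform uniqueness kills the pushed-forward density $\rho_a$ outright. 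You then vary $a$ and exploit the algebraic identity $w_1w_2=a^2$ (which the paper does not use) to factor the resulting relation into $w_1(p-q)(w_1)+w_2(p-q)(w_2)=0$ for a.e.\ pair $0<w_1<w_2$, from which $p=q$ follows by an elementary Fubini argument.

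Your approach is cleaner and more conceptual: it replaces the paper's hands-on asymptotic estimates by a black-box appeal to Laplace uniqueness and a neat algebraic observation. The paper's approach, on the other hand, is more self-contained and yields slightly more explicit information (it identifies a concrete integral that would have to vanish). One minor point worth making explicit in your write-up: Laplace uniqueness is being applied to $v\mapsto v^{3/2}\rho_a(v)$, which is not in $L^1(1,\infty)$; it suffices that $e^{-sv}v^{3/2}|\rho_a(v)|$ is integrable for every $s>0$ (which follows from $\rho_a\in L^1$ and the boundedness of $v^{3/2}e^{-sv}$ on $[1,\infty)$), and the a.e.\ statements in the final paragraph should be threaded through via Fubini.
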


\medskip

\begin{proof}
	We proceed by contradiction: assume that, for any $a > 0$ and $0 < b <B$, $\kappa = \kappa'$.
	Then, differentiating the relation $\kappa = \kappa'$ w.r.t.\@ $B$, we get, for any $a,B > 0$,
	the following quantity is the same for $k=1$ and $k=2$:
	\begin{align*}
	\int_0^\infty 
	\frac{a\vee w}{\abs{a-w}} \exp \left( - \frac{B^2}{2} \frac{a\vee w}{\abs{a-w}} \right)
	\left( \int_0^\infty u^k \e^{-u-u^2/(4w)} \diff u \right)\,
	\frac{\diff w}{w^{3/2}}.
	\end{align*}
	On the other hand, a direct calculation shows that, for $k \in \{1,2\}$,
	\begin{align*}
	\int_0^\infty 
	\left( \int_0^\infty u^k \e^{-u-u^2/(4w)} \diff u \right)\,
	\frac{\diff w}{w^{3/2}} = 2 \sqrt{\pi}
	\end{align*}
	and therefore is the same for $k=1$ and $k=2$.
	Therefore, we deduce that the following quantity is the same for $k=1$ and $k=2$:
	\begin{equation} \label{eq:int_2}
	\int_0^\infty 
	\left( \exp \left( - \frac{B^2}{2} \frac{a\vee w}{\abs{a-w}} \right) - \e^{-B^2/2} \right)
	\left( \int_0^\infty u^k \e^{-u-u^2/(4w)} \diff u \right)\,
	\frac{\diff w}{w^{3/2}}.
	\end{equation}
	Our goal is now to study the behavior as $a \to 0$ of this quantity, to find a contradiction.
	We decompose the main integral in Equation \eqref{eq:int_2} into a part $w \in [0,a)$ in which we change $w$ to $x = a/(a-w)$ and $u$ to $v = u (x/(a(x-1)))^{1/2}$, and a part $w \in (a,\infty)$ in which we change $w$ to $x = w/(w-a)$ and $u$ to $v = u ((x-1)/(ax))^{1/2}$. This yields that the following quantity is the same for $k=1$ and $k=2$:
	\begin{equation} \label{eq:int_3}
	\begin{split}
	& a^{k/2} \int_1^\infty \left( x \e^{-xB^2/2} - \e^{-B^2/2} \right)
	\Biggl(
	\frac{(x-1)^{k/2-1}}{x^{k/2+1}}
	\int_0^\infty v^k \e^{-v^2/4-v(\frac{a(x-1)}{x})^{1/2}} \diff v \\
	& \hspace{5.5cm} {} + \frac{x^{k/2-1}}{(x-1)^{k/2+1}}
	\int_0^\infty v^k \e^{-v^2/4-v(\frac{ax}{x-1})^{1/2}} \diff v
	\Biggr)
	\diff x. 
	\end{split}
	\end{equation}
	For $k=1$, both integrals w.r.t.\@ $v$ in \eqref{eq:int_3} converge as $a \to 0$ towards $\int_0^\infty v \e^{-v^2/4} \diff v = 2$ by dominated convergence.
	Hence, by dominated convergence again but in the integral w.r.t.\@~$x$, \eqref{eq:int_3} equals
	\[
	2 a^{1/2} \int_1^\infty \left( x \e^{-xB^2/2} - \e^{-B^2/2} \right)
	\Biggl(
	\frac{(x-1)^{-1/2}}{x^{3/2}}
	 + \frac{x^{-1/2}}{(x-1)^{3/2}}
	\Biggr)
	\diff x
	+ o(a^{1/2}),
	\]
	as $a \to 0$.
	For $k=2$, we cannot use the same argument: the second domination cannot be justified.
	Instead, we bound the first integral w.r.t.\@ $v$ by a constant and, for the second one, for some $\ep> 0$, we write
	\[
	\int_0^\infty v^2 \e^{-v^2/4-v(\frac{ax}{x-1})^{1/2}} \diff v
	= \left(\frac{x-1}{ax}\right)^{3\ep} \int_0^\infty u^2 \e^{-u^2(\frac{x-1}{ax})^{2\ep}/4-u(\frac{ax}{x-1})^{1/2-\ep}} \diff u
	\leq C \left(\frac{x-1}{ax}\right)^{3\ep},
	\]
	by bounding the last integral by $\int_0^\infty u^2 \e^{-u^2/4} \diff u$ if $(x-1)/ax \geq 1$ and by $\int_0^\infty u^2 \e^{-u} \diff u$ otherwise.
	With $\varepsilon < 1/6$, this proves that \eqref{eq:int_3} is a $o\left(a^{1/2}\right)$ for $k=2$.
	Since \eqref{eq:int_3} is the same for $k=1$ and $k=2$, this implies
	\begin{equation} \label{eq:int_4}
	\int_1^\infty \left( x \e^{-xB^2/2} - \e^{-B^2/2} \right)
	\Biggl(
	\frac{(x-1)^{-1/2}}{x^{3/2}}
	+ \frac{x^{-1/2}}{(x-1)^{3/2}}
	\Biggr)
	\diff x = 0,
	\end{equation}
	for any $B>0$. But the left-hand side of Equation \eqref{eq:int_4} tends to infinity as $B \to \infty$, so this is a contradiction and concludes the proof. 
\end{proof}

\addcontentsline{toc}{section}{Appendix}
\appendix
\begin{appendix}\label{app}

\section{Proof of technical results on the decoration of the BBM}
\label{sec:app_BBM}

This section is dedicated to the proof of several technical results concerning the decoration of the branching Brownian motion, mostly based on ideas introduced in \cite{cortineshartunglouidor2019}. Therefore, we explain how to adapt their argument and use their notation without introducing it.

\subsection{Uniform bounds for moments of level sets}
\label{sec:unif_bound}

\begin{proof}[Proof of Lemma \ref{lem:moment_C_t_r_t}]
	The bound on the second moment is a direct consequence of Lemma \ref{lem:new_5.3}, so we focus here on the first moment. However, it could also be deduced from the proof of  \cite[Lemma 5.3]{cortineshartunglouidor2019}, in a similar way as what is done below for the first moment.
	
	To bound uniformly $\widetilde{\E}_t[\cC_{t,r_t}^*([-v,0])]$, we follow the proof of  \cite[Lemma 5.2]{cortineshartunglouidor2019}, 
	which establishes an asymptotic equivalent for the first moment of $\cC_{t,r_t}^*([-v,0])$ as $t \to \infty$ and then $v \to \infty$.
	The proof begins by writing
	\begin{equation} \label{eq:decond_1}
	\hEc{\cC_{t,r_t}^*([-v,0])} 
	= \frac{ \widetilde{\E} \left[ \cC_{t,r_t}^*([-v,0]) \1_{\{\max_{x\in L_t} h_t(x) \leq m_t\}} \mathrel{}\middle|\mathrel{} 
		h_t(X_t) = m_t \right] }
	{\widetilde{\P} \left( \,\max_{x\in L_t} h_t(x) \leq m_t \mathrel{}\middle|\mathrel{} h_t(X_t) = m_t \right)},
	\end{equation}
	where the denominator satisfies, for some constant $C_1 > 0$, as $t \to\infty$,
	\begin{equation} \label{eq:proba_denom}
	\widetilde{\P} \left( \max_{x\in L_t} h_t(x) \leq m_t \mathrel{}\middle|\mathrel{} h_t(X_t) = m_t \right)
	\sim \frac{C_1}{t},
	\end{equation}
	by  \cite[Lemmata 3.1 and 3.4]{cortineshartunglouidor2019} (the constant $C_1$ equals $2f^{(0)}(0)g(0)$ with notation of \cite[Lemma 3.4]{cortineshartunglouidor2019}).
	In particular, there exists a constant $c > 0$ such that the probability in Equation \eqref{eq:proba_denom} is at least $c/t$ for any $t \geq 1$.
	Therefore, it remains to prove that there exist $C > 0$ and $t_0 \geq 0$ such that, for any $t \geq t_0$ and $v \geq 0$,
	\begin{equation} \label{eq:moment_1}
	\widetilde{\E} \left[ \cC_{t,r_t}^*([-v,0]) \1_{\{\max_{x\in L_t} h_t(x) \leq m_t\}} \mathrel{}\middle|\mathrel{} 
	h_t(X_t) = m_t \right] 
	\leq \frac{C}{t} \e^{\sqrt{2} v}.
	\end{equation}
	Indeed, for $t \leq t_0$, bounding the indicator function by 1 and $\cC_{t,r_t}^*([-v,0]) \leq \#L_{t_0}$, the left-hand side of Equation \eqref{eq:moment_1} is at most
	\begin{equation} \label{eq:moment_1_bis}
	\widetilde{\E} \left[ \#L_{t_0} \mathrel{}\middle|\mathrel{} 
	h_t(X_t) = m_t \right] 
	= \widetilde{\E} \left[ \#L_{t_0} \right] 
	= \int_0^{t_0} \e^{t_0-s} \cdot 2 \diff s
	= C(t_0),
	\end{equation}
	where the first equality follows from the fact that displacement of the spine and branching of the BBM are independent and the second equality uses the facts that the spine branches at rate~2, giving birth to standard BBMs and that a standard BBM has in mean $\e^r$ particles at time $r$.
	
	We now focus on proving Equation \eqref{eq:moment_1}.
	Applying successively  \cite[Lemmata 5.4 and 5.5]{cortineshartunglouidor2019} (note that $j_{t,v}(s) = j_{t,v}^{\geq 0}(s)$), we get for any $t$ such that $r_t \leq t/2$ and any $v \geq 0$,
	\begin{align*}
	\widetilde{\E} \left[ \cC_{t,r_t}^*([-v,0]) \1_{\{\max_{x\in L_t} h_t(x) \leq m_t\}} \mathrel{}\middle|\mathrel{} 
	h_t(X_t) = m_t \right] 
	& = 2 \int_0^{r_t} j_{t,v}(s) \diff s \\
	& \leq \frac{C}{t} (v+1)\, \e^{\sqrt{2} v} \int_0^\infty \frac{\e^{-v^2/(16s)} + \e^{-v/2}}{(s+1)\sqrt{s}} \diff s.
	\end{align*}
	This last integral is smaller than $C((v+1)^{-1}+\e^{-v/2})$ for any $v \geq 0$ (see \cite[Equations (5.44)-(5.45)]{cortineshartunglouidor2019} for details) and so we get Equation \eqref{eq:moment_1}. 
\end{proof}

\subsection{First moment of level sets on a particular event}
\label{sec:first_moment_event}

We prove in this subsection Lemma~\ref{lem:level_sets_on_B}. This is a (non-trivial) refinement of the proof of  \cite[Lemma 5.2]{cortineshartunglouidor2019}.

\begin{proof}[Proof of Lemma \ref{lem:level_sets_on_B}]
	Recalling the definition of $\widetilde{\P}_t$ in Equation \eqref{eq:def_widetilde_P_t}, we first have
	\begin{equation} \label{eq:decond}
	\hEc{\cC_{t,r_t}^*([-v,0]) \1_{B_{r,t}}} 
	= \frac{ \widetilde{\E} \left[ \cC_{t,r_t}^*([-v,0]) \1_{B_{r,t}} \1_{\{\max_{x\in L_t} h_t(x) \leq m_t\}} \mathrel{}\middle|\mathrel{} 
		h_t(X_t) = m_t \right] }
	{\widetilde{\P} \left( \max_{x\in L_t} h_t(x) \leq m_t \mathrel{}\middle|\mathrel{} h_t(X_t) = m_t \right)}.
	\end{equation}
	Recall from Equation \eqref{eq:proba_denom} that the denominator in Equation \eqref{eq:decond} is asymptotically equivalent to $C_1/t$, so we now focus on the numerator.
	
	We introduce the event
	\[
	\cB_{r,t} \coloneqq \left\{ \widehat{W}_{t,r} \in [-B\sqrt{r},-b\sqrt{r}] \right\},
	\]
	which is analog to $B_{r,t}$ (see Equation \eqref{eq:B_{r,t}}) but for the process $(\widehat{W}_{t,s})_{s\in[0,t]}$ defined in \cite[Equation (3.1)]{cortineshartunglouidor2019} as
	\begin{equation} \label{eq:def_hatW}
	\widehat{W}_{t,s} \coloneqq W_s - \gamma_{t,s},
	\quad \text{with }
	\gamma_{t,s} \coloneqq \frac{3}{2 \sqrt{2}} \left( \log_+ s - \frac{s}{t} \log_+ t \right),
	\end{equation}
	for any $0 \leq s \leq t$, with $W$ a standard Brownian motion under $\P$.
	Defining, for $v \geq 0$ and $0 \leq s,r \leq t$,
	\[
	j_{t,v,r}(s) 
	\coloneqq \Ecsq{J_{t,v}(s) \1_{\cB_{r,t}}}{\widehat{W}_{t,0}=\widehat{W}_{t,t}=0},
	\]
	where $J_{t,v}(s)$ is introduced in \cite[Equation (5.9)]{cortineshartunglouidor2019},
	it follows from the proof of  \cite[Lemma 5.4]{cortineshartunglouidor2019}, that
	\begin{equation} \label{eq:esp_as_int}
	\widetilde{\E} \left[ \cC_{t,r_t}^*([-v,0]) \1_{B_{r,t}} \1_{\{\max_{x\in L_t} h_t(x) \leq m_t\}} \mathrel{}\middle|\mathrel{} 
	h_t(X_t) = m_t \right]
	= 2 \int_0^{r_t} j_{t,v,r}(s) \diff s.
	\end{equation}
	As in \cite[Equation (5.17)]{cortineshartunglouidor2019}, for any $M \geq 0$, we split $j_{t,v,r}(s)$ into
	\begin{align*}
	j_{t,v,r}^{<M}(s) 
	& \coloneqq \Ecsq{J_{t,v}(s) \1_{\cB_{r,t}} \1_{\{\lvert \widehat{W}_{t,s} \rvert < M\}}}{\widehat{W}_{t,0}=\widehat{W}_{t,t}=0}, \\
	j_{t,v,r}^{\geq M}(s) 
	& \coloneqq \Ecsq{J_{t,v}(s) \1_{\cB_{r,t}} \1_{\{\lvert \widehat{W}_{t,s} \rvert \geq M\}}}{\widehat{W}_{t,0}=\widehat{W}_{t,t}=0}.
	\end{align*}
	We postpone the estimate of $j_{t,v,r}^{<M}(s)$ to Lemma \ref{lem:j} (which replace Lemma 5.6 in \cite{cortineshartunglouidor2019}).
	With this lemma in hand, we can conclude the proof.
	Let $\delta \in (0,\theta \wedge \frac{1}{2})$ and $M> 0$.
	Considering $t$ large enough such that $\delta^{-1}r \leq r_t$, we decompose the right-hand side of Equation \eqref{eq:esp_as_int} as
	\begin{align*}
	2 \int_{[\delta r,\delta^{-1}r]} j_{t,v,r}^{<M}(s) \diff s
	+2 \int_{[\delta r,\delta^{-1}r]} j_{t,v,r}^{\geq M}(s) \diff s
	+2 \int_{[\delta r,\delta^{-1}r]^c} j_{t,v,r}(s) \diff s,
	\end{align*}
	where the two last terms are negligible.
	Indeed, it is proved in \cite[Equation (5.41)]{cortineshartunglouidor2019} (and the paragraph around) that
	\[
	\limsup_{\eta\to0}
	\limsup_{M\to\infty}
	\limsup_{v\to\infty}
	\limsup_{t\to\infty}
	\frac{t}{\e^{\sqrt{2}v}} \cdot \left(
	2 \int_{[\eta v^2,\eta^{-1}v^2]} j_{t,v}^{\geq M}(s) \diff s
	+2 \int_{[\eta v^2,\eta^{-1}v^2]^c} j_{t,v}(s) \diff s
	\right)
	= 0,
	\]
	and therefore
	\begin{equation} \label{eq:negligible}
	\limsup_{\delta\to0}
	\limsup_{M\to\infty}
	\limsup_{r\to\infty}
	\limsup_{t\to\infty}
	\sup_{v \in [\theta \sqrt{r},\theta^{-1} \sqrt{r}]}
	\left(
	2 \int_{[\delta r,\delta^{-1}r]} j_{t,v,r}^{\geq M}(s) \diff s
	+2 \int_{[\delta r,\delta^{-1}r]^c} j_{t,v,r}(s) \diff s
	\right)
	=0.
	\end{equation}
	Now, setting $S_\delta \coloneqq [\delta r,(1-\delta)r] \cup [(1+\delta)r,\delta^{-1}r]$, we have
	\[
	2 \int_{[\delta r,\delta^{-1}r]} j_{t,v,r}^{<M}(s) \diff s
	= 2 \int_{S_\delta} j_{t,v,r}^{<M}(s) \diff s
	+2 \int_{(1-\delta)r}^{(1+\delta)r} j_{t,v,r}^{<M}(s) \diff s.
	\]
	This last integral is also negligible for $\delta$ small enough (in the same sense as in Equation \eqref{eq:negligible}), because
	\[
	j_{t,v,r}^{<M}(s) 
	\leq j_{t,v}(s) 
	= j_{t,v}^{\geq 0}(s) 
	\leq C\, \frac{(v+1) \e^{\sqrt{2}v}}{s^{3/2}t}
	\leq C(\theta)\, \frac{\e^{\sqrt{2}v}}{rt} \, ,
	\]
	for any $M > 0$, $0 \leq r \leq t/4$, $s \in [(1-\delta)r,(1+\delta)r]$ and $v \in [\theta \sqrt{r},\theta^{-1} \sqrt{r}]$ by \cite[Lemma~5.5]{cortineshartunglouidor2019}.
	Finally, using Lemma \ref{lem:j}, we have, as $t \to\infty$, then $r \to \infty$ and then $M\to \infty$, uniformly in $v \in [\theta \sqrt{r},\theta^{-1} \sqrt{r}]$,
	\begin{align*}
	2 \int_{S_\delta} j_{t,v,r}^{<M}(s) \diff s
	\sim \frac{2 C_2}{t}\, v \e^{\sqrt{2} v} 
	\int_{S_\delta} \frac{\e^{-v^2/(2s)}}{s^{3/2}}\,
	\varphi_{b,B} \left( \frac{r\vee s}{\abs{r-s}} \right) \diff s.
	\end{align*}
	Coming back to Equation \eqref{eq:decond} and letting $\delta \to 0$, this proves that there exists $r_0 > 0$ such that, for any $r \geq r_0$, there exists $t_0 > 0$ such that, for any $t \geq t_0$ and any $v \in [\theta \sqrt{r},\theta^{-1} \sqrt{r}]$,
	\begin{equation} \label{eq:C_on_B}
	\abs{ \hEc{\cC_{t,r_t}^*([-v,0]) \1_{B_{r,t}}} 
		- \frac{2 C_2}{C_1} v\e^{\sqrt{2} v}  \int_0^\infty 
		\varphi_{b,B} \left( \frac{r\vee s}{\abs{r-s}} \right)
		\frac{\e^{-v^2/(2s)}}{s^{3/2}} \diff s} 
	\leq \varepsilon \, \e^{\sqrt{2} v}.
	\end{equation}
	Using the relation%
	\footnote{
		The authors of \cite{cortineshartunglouidor2019} do not keep precisely track of constants but this relation can be deduced from a careful reading of their paper. Alternatively letting $b \to 0$ and $B \to \infty$, we have $\1_{B_{r,t}} \to 1$ and $\varphi_{b,B} \to 1$ and Equation \eqref{eq:C_on_B} gives an alternative proof of their \cite[Lemma 5.2]{cortineshartunglouidor2019}, showing that the constant $C$ appearing there equals $2 \sqrt{2\pi}C_2/C_1$. Then, a quick look at the proof of  \cite[Proposition 1.5]{cortineshartunglouidor2019} ensures that this constant $C$ is actually $C_\star$.
	} 
	$C_\star = 2 \sqrt{2\pi}\,C_2/C_1$ gives the desired result. 
\end{proof}

\medskip

\begin{lem}[BBM case]
 \label{lem:j}
	Let $a > 0$, $0 < b < B$ and $\delta \in (0,1/2)$. 
	As $t \to\infty$, then $r \to \infty$ and then $M\to \infty$, we have 	
	\[
	j_{t,v,r}^{<M}(s)
	\sim \frac{C_2}{t s^{3/2}}\, v \e^{\sqrt{2}v - v^2/(2s)} \,\varphi_{b,B} \left( \frac{r\vee s}{\abs{r-s}} \right),
	\]
	uniformly $s \in [\delta r,(1-\delta)r] \cup [(1+\delta)r,\delta^{-1}r]$ and $v \in [\delta \sqrt{r},\delta^{-1} \sqrt{r}]$, with
	\[
	C_2 \coloneqq \frac{2\sqrt{2}}{\pi} f^{(0)}(0)g(0)
	\int_\R f(z) g(z)^2 \e^{\sqrt{2} z} \diff z \in (0,\infty),
	\]
	where $f^{(0)},f,g$ are positive functions introduced in \cite[Lemma~3.4]{cortineshartunglouidor2019}.
\end{lem}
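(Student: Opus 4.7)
The proof follows Cortines--Hartung--Louidor's derivation of their Lemma~5.6 \cite{cortineshartunglouidor2019}, keeping track of the additional indicator $\mathbf{1}_{\cB_{r,t}}$. Since $\cB_{r,t}$ depends only on $\widehat{W}_{t,r}$, while $J_{t,v}(s)\mathbf{1}_{\{|\widehat{W}_{t,s}|<M\}}$ depends on the trajectory near time $s$, and since $|r-s|\geq\delta r$ is of the same order as $r$ and $s$, the interaction between the two constraints is felt only through the macroscopic Brownian-bridge structure.

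The first step is to apply the Markov property: setting $\sigma\coloneqq\min(s,r)$ and $\tau\coloneqq\max(s,r)$, we condition on the Gaussian pair $(\widehat{W}_{t,\sigma},\widehat{W}_{t,\tau})$, so that the trajectory on $[0,\sigma]$, $[\sigma,\tau]$, $[\tau,t]$ splits into three conditionally independent Brownian bridges. The indicator $\mathbf{1}_{\{|\widehat{W}_{t,s}|<M\}}$ becomes a restriction on $y_\sigma$ or $y_\tau$ (depending on whether $s=\sigma$ or $s=\tau$) and $\mathbf{1}_{\cB_{r,t}}$ becomes the complementary restriction $y_r\in[-B\sqrt r,-b\sqrt r]$. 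For the segments carrying $J_{t,v}(s)$ and the local branching contribution, the estimates from CHL's proof of their Lemma~5.6 apply verbatim and produce the factor $\tfrac{C_2}{t\,s^{3/2}}\,v\,\e^{\sqrt{2}v-v^2/(2s)}$ with the required uniformity in $v\in[\delta\sqrt{r},\delta^{-1}\sqrt{r}]$.

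The remaining task is to compute, in the limit $t\to\infty$ then $r\to\infty$, the effective density of $\widehat{W}_{t,r}$ on the window $[-B\sqrt r,-b\sqrt r]$, given $|\widehat{W}_{t,s}|<M$, the bridge endpoints, and the max-constraint $\{\max_{x\in L_t}h_t(x)\leq m_t\}$ embedded in $J_{t,v}(s)$. A pure Brownian-bridge calculation (ignoring the max-constraint) would yield a Gaussian density of mean $O(\log r)$ and variance $\sim|r-s|$, contributing only $\int_{b\sqrt v}^{B\sqrt v}\tfrac{1}{\sqrt{2\pi}}\e^{-z^2/2}\,\diff z$ with $v=\tfrac{r\vee s}{|r-s|}$; however, the max-constraint, via successive barrier-crossing/reflection-principle estimates on the two bridge sub-segments meeting at $r$, reweights this density by a factor proportional to $\widehat{W}_{t,r}^2$ on the negative axis---an entropic-repulsion effect standard for Brownian excursion measures. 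Rescaling $x=z\sqrt{|r-s|}$ in this Gaussian-with-$x^2$-weight integral converts the window $[-B\sqrt r,-b\sqrt r]$ into $[b\sqrt v,B\sqrt v]$ and produces exactly the factor $\varphi_{b,B}(v)$.

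The main obstacle is thus the rigorous derivation of the $x^2$-reweighting: it requires a refinement of CHL's Lemmata~5.5--5.6 that not only pins down the constant $C_2$ via the total mass of the max-constraint (Equation~\eqref{eq:proba_denom}), but also captures how that mass decomposes over values of $\widehat{W}_{t,r}$ in a window of width $\sqrt r$ at the intermediate time $r\ll t$. Once this refinement is in place, combining the factors and passing to the limits $t\to\infty$, $r\to\infty$, $M\to\infty$ (in this order) gives the claimed asymptotic.
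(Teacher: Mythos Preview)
Your approach is essentially the same as the paper's: condition on the pair $(\widehat{W}_{t,r},\widehat{W}_{t,s})$ and exploit that the barrier constraint on the two sub-segments meeting at time $r$ produces a factor proportional to $y^2$, which upon integration over $y\in[-B\sqrt r,-b\sqrt r]$ yields $\varphi_{b,B}$. Your identification of the $y^2$ reweighting is the correct key insight.

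However, your two-step framing---first ``CHL's Lemma~5.6 applies verbatim'' to give the $\tfrac{C_2}{ts^{3/2}}\,v\e^{\sqrt 2 v-v^2/(2s)}$ factor, then separately compute an ``effective density'' for $\widehat W_{t,r}$---is misleading and does not reflect how the computation actually factorizes. In the paper, $j_{t,v,r}^{<M}(s)$ is written directly (say for $r<s$) as the double integral over $(y,z)\in[-B\sqrt r,-b\sqrt r]\times[-M,M]$ of
\[
p_t((r,y){;}(s,z))\; q_t((0,0){;}(r,y))\; q_t((r,y){;}(s,z))\; q_t((s,z){;}(t,0))\; e_{s,v}(z),
\]
and each factor has an explicit asymptotic from \cite[Lemmata~3.4 and~4.2]{cortineshartunglouidor2019}: the two $q_t$ factors touching $(r,y)$ each contribute a factor $(-y)$, $p_t$ contributes the Gaussian in $y$ with variance $r(s-r)/s$, and the $z$-integral (after $M\to\infty$) gives the $\int f g^2 \e^{\sqrt 2 z}\diff z$ part of $C_2$. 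The $s^{-3/2}$ does \emph{not} come from CHL's Lemma~5.6 verbatim; it only emerges after the $y$-integration collapses the $r$- and $(s-r)$-dependent prefactors against the change of variable producing $\varphi_{b,B}$. So the ``refinement of CHL's Lemmata~5.5--5.6'' you flag as the main obstacle is in fact unnecessary: everything follows from a direct application of the existing segment-wise asymptotics of \cite[Lemma~3.4]{cortineshartunglouidor2019}, with no new barrier estimate required.
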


\medskip

\begin{proof}
	We follow ideas from the proof of  \cite[Lemma 5.6]{cortineshartunglouidor2019}, but instead of only distinguishing according to the value of $\widehat{W}_{t,s}$, we also distinguish according to the value of $\widehat{W}_{t,r}$.
	For comparison, the constant $C$ appearing in the statement of  \cite[Lemma 5.6]{cortineshartunglouidor2019} equals $C_2$ introduced here.
	
	We start with the case $r \leq s$, that is $s \in [(1+\delta)r,\delta^{-1}r]$. Then, we have, with the notations from~\cite{cortineshartunglouidor2019},
	\begin{align*}
	j_{t,v,r}^{<M}(s)
	= \int_{-B\sqrt{r}}^{-b\sqrt{r}} \int_{-M}^M 
	p_t((r,y){;}(s,z))
	q_t((0,0){;}(r,y)) q_t((r,y){;}(s,z)) q_t((s,z){;}(t,0)) 
	e_{s,v}(z) \diff z \diff y.
	\end{align*}
	The function $(y,z) \mapsto p_t((r,y){;}(s,z))$ is the density of $(\widehat{W}_{t,r},\widehat{W}_{t,s})$ given $\widehat{W}_{t,0}=\widehat{W}_{t,t}=0$. It is explicitly given by (recall the definition of $\gamma_{t,s}$ in Equation \eqref{eq:def_hatW})
	\begin{align*}
	p_t((r,y){;}(s,z))
	& = \frac{1}{2 \pi} \sqrt{\frac{t}{r(s-r)(t-s)}}
	\,\exp \left(
	- \frac{(s(y+\gamma_{t,r})-r(z+\gamma_{t,s}))^2}{2rs(s-r)}
	- \frac{t(z+\gamma_{t,s})^2}{2s(t-s)} 
	\right) \\
	& \sim \frac{1}{2 \pi \sqrt{r(s-r)}} 
	\,\exp \left(	- \frac{sy^2}{2r(s-r)} \right),
	\end{align*}
	as $t \to\infty$ and then $r \to \infty$,
	uniformly in $s \in [(1+\delta)r,\delta^{-1}r]$, $y \in [-B\sqrt{r},-b\sqrt{r}]$ and $z \in [-M,M]$.
	Furthermore, it follows from \cite[Lemma 3.4]{cortineshartunglouidor2019} that
	\begin{align*}
	q_t((0,0){;}(r,y)) & = q_r((0,0){;}(r,y)) \sim \frac{2(-y) f^{(0)}(0)}{r}, \\
	q_t((r,y){;}(s,z)) & = q_s((r,y){;}(s,z)) \sim \frac{2(-y)g(z)}{s-r}, \\
	q_t((s,z){;}(t,0)) & \sim \frac{2f^{(s)}(z) g(0)}{t-s} \sim \frac{2f(z) g(0)}{t}, 
	\end{align*}
	as $t \to\infty$ and then $r \to \infty$,
	uniformly in $s$, $y$ and $z$ as before.
	Finally, it follows from \cite[Lemma 4.2]{cortineshartunglouidor2019} (see also \cite[Equation (5.37)]{cortineshartunglouidor2019}) that
	\[
	e_{s,v}(z) \sim v \e^{\sqrt{2} v - v^2/(2s)} \,\frac{g(z)}{\sqrt{\pi}} \,\e^{\sqrt{2} z},
	\]
	as $r \to \infty$, uniformly in $v \in [\delta \sqrt{r},\delta^{-1} \sqrt{r}]$ and $s,z$ as before.
	Altogether, this proves
	\begin{align*}
	j_{t,v,r}^{<M}(s)
	\sim 
	\frac{4}{\pi^{3/2}}\, 
	\frac{f^{(0)}(0)g(0)}{t r^{3/2}(s-r)^{3/2}}\,
	v \e^{\sqrt{2} v - v^2/(2s)} 
	\int_{-B\sqrt{r}}^{-b\sqrt{r}} 
	y^2 \e^{-sy^2/(2r(s-r))}
	\diff y
	\int_{-M}^M f(z) g(z)^2 \e^{\sqrt{2} z} \diff z,
	\end{align*}
	as $t \to\infty$ and then $r \to \infty$,
	uniformly in $s$ and $v$ as before.
	A change of variable in the first integral and letting $M \to \infty$ in the second integral (the fact that this integral converges to a finite limit is justified at the end of the proof of \cite[Lemma 5.6]{cortineshartunglouidor2019}) yields the result.
	
	The case $s \leq r$, that is $s \in [\delta r,(1-\delta)r]$ is similar: we write
	\begin{align*}
	j_{t,v,r}^{<M}(s)
	= \int_{-B\sqrt{r}}^{-b\sqrt{r}} \int_{-M}^M 
	p_t((s,z){;}(r,y))
	q_t((0,0){;}(s,z)) q_t((s,z){;}(r,y)) q_t((r,y){;}(t,0)) 
	e_{s,v}(z) \diff z \diff y
	\end{align*}
	and use the same asymptotics as before, the main difference being
	\begin{align*}
	p_t((s,z){;}(r,y))
	& = \frac{1}{2 \pi} \,\sqrt{\frac{t}{s(r-s)(t-r)}}\,
	\exp \left(
	- \frac{(r(z+\gamma_{t,s})-s(y+\gamma_{t,r}))^2}{2rs(r-s)}
	- \frac{t(y+\gamma_{t,r})^2}{2r(t-r)} 
	\right) \\
	& \sim \frac{1}{2 \pi \sqrt{s(r-s)}} \,
	\exp \left(	- \frac{y^2}{2(s-r)} \right).
	\end{align*}
	This yields the result in that case.
\end{proof}

\subsection{Cross-moments of level sets}
\label{sec:cross_moment}

Our aim in this section is to prove Proposition \ref{prop:crossed}. For this, we follow the proof of \cite[Proposition 1.5]{cortineshartunglouidor2019}, which bounds $\E[\cC([-v,0])^2]$. This proof is based on a series of five lemmas, that we re-state here in a new version tuned for our purpose of dealing with two level sets of different levels $v$ and $v'$.

In the following lemma, as in \cite[Lemma 4.3]{cortineshartunglouidor2019}, we work with a 2-spine BBM defined under the probability measure $\widetilde{\P}^{(2)}$ as follows. It starts with one particle at 0 at time 0 which is part of the spines 1 and 2. Particles belonging to $m$ spines branch at rate $2^m$ and move according to a standard Brownian motion. 
At a branching point, for each $k \in \{1,2\}$, if the parent was part of spine $k$, then one of both children is chosen uniformly at random to be part of spine $k$.
We denote $X_t(k)$ the particle at time~$t$ which is part of the spine $k$.
For $x,y \in L_t$, we write $d(x,y) = r$ if the most recent common ancestor of $x$ and $y$ died at time $t-r$.

\medskip

\begin{lem}[BBM case]
 \label{lem:new_4.3}
	There exists $C > 0$ such that, for any $0 \leq r \leq t$ and $v \leq v' \leq u$,
	\begin{align*}
	& \widetilde{\P}^{(2)} \left( 
	\widehat{h}_t(X_t(1)) \geq v \, , \, 
	\widehat{h}_t(X_t(2)) \geq v'\, , \, 
	\widehat{h}_t^* \leq u
	\mathrel{}\middle|\mathrel{}
	d(X_t(1),X_t(2)) = r 
	\right) \\
	& \leq \frac{C \e^{-t-r}}{1+(r\wedge(t-r))^{3/2}}
	(u_++1) \e^{\sqrt{2}u}
	(u-v+1) (u-v'+1) \e^{-\sqrt{2}(v+v')}
	\left( \e^{-(u-v)^2/(4t)} + \e^{-(u-v)/2} \right).
	\end{align*}
\end{lem}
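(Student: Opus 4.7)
The approach is to mimic the proof of Lemma 4.3 in \cite{cortineshartunglouidor2019}, adapting it to the two-spine situation. Under $\widetilde{\P}^{(2)}(\,\cdot\mid d(X_t(1),X_t(2))=r)$, the two spines follow a common Brownian trajectory on $[0,t-r]$ and then split into two independent Brownian paths on $[t-r,t]$, while at every branching point along the spines the off-spine child starts an independent standard BBM (branching rate $1$). The first step is to decompose the event of interest according to the height $z = \widehat h_{t-r}(X_{t-r}(1))$ at the split time and the two endpoints $y_1,y_2$ with $y_1\ge v$, $y_2\ge v'$. By the spine construction and the independence of the BBMs branching off, the conditional probability factorizes into three pieces: (i) the common trajectory on $[0,t-r]$ goes from $0$ to $z$ and none of the BBMs branching off from it produce a descendant exceeding $u$ at time $t$; (ii) the first spine goes from $z$ to $y_1\ge v$ on $[t-r,t]$ with no off-spine BBM exceeding $u$; and (iii) the analogous event for the second spine with endpoint $y_2\ge v'$.

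Each of the three pieces is then estimated using the ballot-type bounds and maximum-of-BBM estimates from Section~4 of \cite{cortineshartunglouidor2019}, in particular their Lemma~4.2 applied after a Girsanov tilt by $\e^{\sqrt 2 \cdot}$. Piece (i) is treated exactly as in CHL's Lemma~4.3, producing the factor $\e^{-(t-r)}(u_++1)\e^{\sqrt 2 u}$ together with a Brownian transition density on $[0,t-r]$ and a barrier-related factor involving $u-z$. Pieces (ii) and (iii) each yield a ballot factor of the form $(u-v_i+1)\e^{-\sqrt 2(v_i-z)}/r^{3/2}$ (with $v_i=v$ or $v'$) coming from a Brownian bridge on $[t-r,t]$ constrained to stay below $u$, combined with an extra $\e^{-r}$ factor coming from the \emph{doubled} branching rate along the portion of the spine where both spines coincide (which has length $t-r$) versus the segments where they have split (length $r$), accounting for the prefactor $\e^{-t-r}$ in the final bound. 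Integration over the endpoints $y_1,y_2$ produces the product $(u-v+1)(u-v'+1)$, and integration over $z$ against the Brownian transition density yields the Gaussian factor $\e^{-(u-v)^2/(4t)}+\e^{-(u-v)/2}$, where only the less restrictive exponent $u-v$ (rather than $u-v'$) survives because the bound tilts the optimization towards the spine reaching the lower level.

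The $(r\wedge(t-r))^{-3/2}$ denominator emerges from combining the three Brownian bridge densities: when $r\le t-r$ the bottleneck is the pair of densities on the split segments of length $r$, while when $r>t-r$ it is the density on the common segment of length $t-r$. The main obstacle I anticipate is obtaining this factor uniformly in the regime $r\asymp t-r$, and more generally keeping track of the Gaussian tail factor so that only $u-v$ (not $u-v'$) appears while still producing the full product $(u-v+1)(u-v'+1)$; this requires carefully coupling the barrier estimate on $[0,t-r]$ with the two independent ballot estimates on $[t-r,t]$ and applying Cauchy--Schwarz or a peeling argument on $z$ in the right order.
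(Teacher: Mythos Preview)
Your proposal is essentially correct and follows the same route as the paper: decompose according to the height $z$ of the spine at the split time $t-r$, factorize into the common-trajectory piece on $[0,t-r]$ and two independent single-spine pieces on $[t-r,t]$, and bound each factor using the estimates (4.2)--(4.3) of \cite{cortineshartunglouidor2019}. The paper carries this out exactly as in their Lemma~4.3, splitting the $z$-integral into $z\le u$ and $z>u$.

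The one point where you are vague---how to keep only $u-v$ (not $u-v'$) in the Gaussian tail while retaining the full product $(u-v+1)(u-v'+1)$---is handled in the paper not by Cauchy--Schwarz or peeling but by the elementary inequality (using $v\le v'$)
\[
\Bigl(\e^{-(v'-z)^2/(4t)}+\e^{(v'-z)/2}\Bigr)\le 1+\e^{(v-z)/2},
\]
applied to the second spine's factor in the region $z\le u$. Expanding the resulting product produces, besides the terms already present in \cite{cortineshartunglouidor2019}, one new term $\e^{(v-z)/2}$, whose contribution to the $z$-integral is bounded directly by $C\e^{\sqrt 2 u}\e^{-(u-v)/2}$ and absorbed into the final estimate. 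No further argument is needed for the $(r\wedge(t-r))^{-3/2}$ factor: it is already built into the CHL bounds (4.2)--(4.3) for the three pieces.
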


\medskip

\begin{proof}
	This is a new version of \cite[Lemma 4.3]{cortineshartunglouidor2019} and we explain how to adapt its proof. Similarly as \cite[Equation (4.15)]{cortineshartunglouidor2019}, the probability in the statement equals
	\begin{align}
	& \int_\R 
	\widetilde{\P}\left( \widehat{h}_r(X_r) \geq v-z, \widehat{h}^*_r \leq u-z \right)
	\widetilde{\P}\left( \widehat{h}_r(X_r) \geq v-z, \widehat{h}^*_r \leq u-z \right) \nonumber \\
	& \qquad {} \times 
	\widetilde{\P}\left( \widehat{h}_{t-r}(X_{t-r}) - m_{t,r} \in \diff z, \widehat{h}^*_t(\mathrm{B}(X_t)^c) \leq u \right).
	\label{eq:4.15}
	\end{align}
	Following \cite{cortineshartunglouidor2019}, we split the integral according to $z \leq u$ and $z > u$.
	
	For $z > u$, using \cite[Equation (4.2)]{cortineshartunglouidor2019} for the first one and \cite[Equation (4.3)]{cortineshartunglouidor2019} for the second one, we bound the product of the two first probabilities in Equation \eqref{eq:4.15} by
	\[
	C \e^{-2r} (u-v+1) (u-v'+1) \e^{-\sqrt{2}(v+v')}
	\e^{2\sqrt{2} z - \frac{3}{2}(z-u)}
	\left( \e^{-(v-z)^2/(4t)} + \e^{(v-z)/2} \right).	
	\]
	This is exactly the same as \cite[Equation (4.20)]{cortineshartunglouidor2019} up to the factor $(u-v'+1) \e^{-\sqrt{2}v'}$ where primes have been added.
	Therefore, this part of the integral is dealt with exactly the same way as in~\cite{cortineshartunglouidor2019}.
	
	For $z \leq u$, we use \cite[Equation (4.2)]{cortineshartunglouidor2019} for both first probabilities in Equation \eqref{eq:4.15} and note that, because $v \leq v'$,
	\begin{align*}
	\left( \e^{-(v-z)^2/(4t)} + \e^{(v-z)/2} \right)
	\left( \e^{-(v'-z)^2/(4t)} + \e^{(v'-z)/2} \right)
	& \leq \left( \e^{-(v-z)^2/(4t)} + \e^{(v-z)/2} \right)
	\left( 1 + \e^{(v-z)/2} \right) \\
	& \leq \left( \e^{-(v-z)^2/(4t)} + \e^{v-z} + 2 \e^{(v-z)/2} \right).
	\end{align*}
	This shows the product of the two first probabilities in Equation \eqref{eq:4.15} is at most
	\[
	C \e^{-2r} (u-v+1) (u-v'+1) \e^{-\sqrt{2}(v+v')}
	\e^{2\sqrt{2} z} (u-z+1)^2
	\left( \e^{-(v-z)^2/(4t)} + \e^{(v-z)/2} + \e^{(v-z)/2} \right).	
	\]
	There are two differences with \cite[Equation (4.19)]{cortineshartunglouidor2019}: the factor $(u-v'+1) \e^{-\sqrt{2}v'}$ where primes have been added (but this adds no new difficulty), and the additional term $\e^{(v-z)/2}$ in the last parentheses.
	This latter gives rise to the following new term, which should be added to the integral in \cite[Equation (4.21)]{cortineshartunglouidor2019},
	\[
	\int_{-\infty}^u \e^{\frac{v}{2} + (\sqrt{2} - \frac{1}{2})z)} (u-z+1)^3 \diff z
	\leq C \e^{\sqrt{2} u} \e^{-(u-v)/2},
	\]
	which can be included in the upper bound of the statement of the lemma after taking care of the other factors (see \cite[Equation (4.22)]{cortineshartunglouidor2019}: this is exactly how another part of the integral in \cite[Equation (4.21)]{cortineshartunglouidor2019} is bounded).
\end{proof}

\medskip

\begin{lem}[BBM case]
 \label{lem:new_4.4}
	There exists $C > 0$ such that, for any $t \geq 0$ and $v \leq v' \leq u$,
	\begin{align*}
	& \Ec{\cE_t([v,\infty)) \cE_t([v',\infty)) ; 
		\widehat{h}_t^* \leq u } \\
	& \leq C (u_++1) \e^{\sqrt{2}u}
	(u-v+1) (u-v'+1) \e^{-\sqrt{2}(v+v')}
	\left( \e^{-(u-v)^2/(4t)} + \e^{-(u-v)/2} \right).
	\end{align*}
\end{lem}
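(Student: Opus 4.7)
The plan is to split the product
\[
\cE_t([v,\infty)) \cE_t([v',\infty)) = \sum_{x\in L_t} \1_{\widehat h_t(x) \geq v'} \;+\; \sum_{x \neq y \in L_t} \1_{\widehat h_t(x)\geq v} \1_{\widehat h_t(y)\geq v'},
\]
using $v \leq v'$ so that the diagonal ($x=y$) reduces to a single level-set count at height $v'$, and then to bound each piece separately.

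For the off-diagonal sum, I would apply the many-to-two formula (analogous to \cite[Equation (4.23)]{cortineshartunglouidor2019}), which rewrites it as
\[
\int_0^t 2\,\e^{t+r}\, \widetilde{\P}^{(2)}\!\left( \widehat h_t(X_t(1))\geq v,\; \widehat h_t(X_t(2))\geq v',\; \widehat h_t^* \leq u \,\middle|\, d(X_t(1),X_t(2)) = r\right) \diff r,
\]
where the factor $\e^{t+r}$ arises from the branching structure of the 2-spine BBM. Lemma~\ref{lem:new_4.3} then bounds the 2-spine probability by
\[
\frac{C\,\e^{-t-r}}{1+(r\wedge(t-r))^{3/2}}\cdot (u_++1)\e^{\sqrt{2}u}(u-v+1)(u-v'+1)\e^{-\sqrt{2}(v+v')}\!\left(\e^{-(u-v)^2/(4t)}+\e^{-(u-v)/2}\right).
\]
The exponential factors $\e^{t+r}$ and $\e^{-t-r}$ cancel, so the off-diagonal contribution is at most $C \int_0^t (1+(r\wedge(t-r))^{3/2})^{-1}\diff r$ times the target expression. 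Since $\int_0^t (1+(r\wedge(t-r))^{3/2})^{-1}\diff r$ is bounded uniformly in $t$, this yields the desired bound.

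For the diagonal sum, which equals $\Ec{\cE_t([v',\infty)); \widehat h_t^* \leq u}$, I would invoke the single-spine estimate of \cite[Lemma 4.2]{cortineshartunglouidor2019} (giving a bound of the form $C(u_++1)(u-v'+1)\e^{\sqrt{2}u-\sqrt{2}v'}(\e^{-(u-v')^2/(4t)}+\e^{-(u-v')/2})$) and then absorb it into the stated bound: using $v\leq v'\leq u$ one has $(u-v'+1)\leq(u-v+1)$, $\e^{-\sqrt{2}v'}\leq\e^{\sqrt{2}v-\sqrt{2}(v+v')}$ only after multiplying by the (trivial) factor $(u-v+1)\e^{-\sqrt{2}v}\geq \e^{-\sqrt{2}v}$, so one needs a small auxiliary comparison such as $\e^{-\sqrt{2}v}\geq\e^{-\sqrt{2}u}$ (valid since $v\leq u$) combined with a trivial inequality on the tail factor.

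The main obstacle I anticipate is the clean bookkeeping of constants in the many-to-two formula (getting the precise cancellation of the $\e^{\pm(t+r)}$ factors) and verifying that the diagonal contribution is genuinely dominated by the right-hand side in all the regimes of $(v, v', u, t)$; the latter step, while not deep, requires a careful comparison of the one-spine and two-spine bounds since they do not obviously share the same shape in $v'$.
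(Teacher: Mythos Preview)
Your approach is correct and is exactly what the paper does: its proof consists of the single sentence that the result follows from Lemma~\ref{lem:new_4.3} in the same way that \cite[Lemma~4.4]{cortineshartunglouidor2019} follows from \cite[Lemma~4.3]{cortineshartunglouidor2019}, namely the many-to-two reduction for the off-diagonal plus a separate diagonal term.

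One small correction that resolves the concern you flag at the end: the one-spine bound from \cite[Lemma~4.2]{cortineshartunglouidor2019} does \emph{not} carry the factor $\e^{\sqrt{2}u}$---it reads $C(u_++1)(u-v'+1)\e^{-\sqrt{2}v'}(\cdots)$ (compare how it is used in the proof of Lemma~\ref{lem:small_moments_at_t}). With the correct form the absorption is immediate: the target bound has, relative to the diagonal, an extra factor $(u-v+1)\e^{\sqrt{2}u}\e^{-\sqrt{2}v}=(u-v+1)\e^{\sqrt{2}(u-v)}\geq 1$, and keeping only the $\e^{-(u-v)/2}$ piece of the target tail while bounding the diagonal tail by $2$ gives
\[
\frac{\text{Target}}{\text{Diagonal}} \;\geq\; \tfrac{1}{2}(u-v+1)\,\e^{(\sqrt{2}-\frac{1}{2})(u-v)} \;\geq\; \tfrac{1}{2},
\]
so no delicate regime analysis is needed.
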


\medskip

\begin{proof}
	This follows from Lemma \ref{lem:new_4.3} in the same way as \cite[Lemma 4.4]{cortineshartunglouidor2019} follows from \cite[Lemma 4.3]{cortineshartunglouidor2019}.
\end{proof}

\medskip

\begin{lem}[BBM case]
 \label{lem:new_5.4}
	For any $v, v' \geq 0$,
	\begin{align*}
	& \widetilde{\E} \left[
	\cC_{t,r_t}^*([-v,0]) \, \cC_{t,r_t}^*([-v',0]) \, ; \, \widehat{h}_t^* \leq 0
	\mathrel{}\middle|\mathrel{} \widehat{h}_t(X_t) = 0 \right] \\
	& = 4 \int_0^{r_t} \int_0^{r_t} j_{t,v,v'}(s,s') \diff s' \diff s
	+ 2 \int_0^{r_t} j_{t,v,v'}(s,s) \diff s,
	\end{align*}
	where $j_{t,v,v'}(s,s') \coloneqq \widehat{\E}_t[J_{t,v}(s)J_{t,v'}(s')]$.
\end{lem}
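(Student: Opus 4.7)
The plan is to expand $\cC_{t,r_t}^*([-v,0]) \cdot \cC_{t,r_t}^*([-v',0])$ as a double sum over ordered pairs $(y_1, y_2) \in L_t^2$ and to apply a two-spine decomposition relative to the distinguished spine particle $X_t$, in direct analogy with the single-spine argument behind \cite[Lemma 5.4]{cortineshartunglouidor2019}. For each particle $y_i$ off the spine, set $s_i \coloneqq d(X_t, y_i)$, so that the condition $d(X_t, y_i) < r_t$ becomes $s_i < r_t$. The natural dichotomy is whether $y_1$ and $y_2$ descend from the same sub-BBM branching off the spine (so $s_1 = s_2$, yielding the diagonal term) or from two distinct sub-BBMs branching at different times (yielding the off-diagonal term).

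Along the spine, branching events form a Poisson point process of rate $2$ on $[0, t]$, and the sub-BBMs rooted at these events are, conditionally on the spine trajectory, independent copies of a rate-$1$ BBM started at the branching position. Applying Campbell's formula conditionally on $X_t$, the off-diagonal contribution reduces to the double integral
\[
2 \cdot 2 \int_0^{r_t} \int_0^{r_t} J_{t,v}(s)\, J_{t,v'}(s') \, \diff s' \diff s,
\]
in which the factor $4$ is the squared rate of the Poisson process of spine-branchings. The diagonal contribution, where both particles lie in the single sub-BBM rooted at time $t - s$, produces
\[
2 \int_0^{r_t} J_{t,v}(s) \, J_{t,v'}(s) \, \diff s,
\]
with the single factor $2$ coming from the unique spine-branching event involved. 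Taking $\widetilde{\E}_t$ and passing to the bridge-conditioned form of the spine measure (exactly as in the proof of \cite[Lemma~5.4]{cortineshartunglouidor2019}) to match the definition $j_{t,v,v'}(s,s') = \widehat{\E}_t[J_{t,v}(s) J_{t,v'}(s')]$ will then yield the stated identity.

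The delicate point to check is that the global constraint $\widehat{h}_t^* \leq 0$ factorizes correctly across the independent sub-BBMs: each sub-BBM rooted at time $t-s$ must stay below $m_t$, which is already built into the definition of $J_{t,v}(s)$ in \cite[Equation~(5.9)]{cortineshartunglouidor2019}. Conditional independence of the sub-BBMs given the spine then guarantees that the product $J_{t,v}(s) J_{t,v'}(s')$ correctly captures the joint contribution in the off-diagonal case without over- or under-counting. A secondary bookkeeping issue will be to handle pairs with $y_1 = X_t$ or $y_2 = X_t$; these contribute lower-dimensional terms which must be shown to be absorbed, via the single-spine reduction of \cite[Lemma~5.4]{cortineshartunglouidor2019}, into the convention fixed for $J_{t,v}(s)$.
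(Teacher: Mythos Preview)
Your proposal is correct and follows the same route as the paper, which simply states that the result ``follows directly from the proof of \cite[Lemma~5.4]{cortineshartunglouidor2019}''. Your sketch makes explicit the mechanism behind that one-line deferral: decompose the product over pairs according to whether the two particles lie in the same sub-BBM off the spine or in distinct ones, then apply Mecke/Campbell for the rate-$2$ Poisson process of spine branchings, and finally pass to the bridge measure $\widehat{\P}_t$. One terminological remark: what you call a ``two-spine decomposition'' is really the \emph{single}-spine decomposition applied to a product of two counts; the genuine two-spine measure $\widetilde{\P}^{(2)}$ in this paper is used elsewhere (Lemma~\ref{lem:new_4.3}) for a different purpose. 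Your identification of the two checkpoints (factorization of the global maximum constraint via the indicator $\1_{\cA_t}$ built into $J_{t,v}(s)$, and the bookkeeping for pairs involving the spine particle itself) is exactly right and matches what one must verify when carrying over the argument from \cite{cortineshartunglouidor2019}.
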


\medskip

\begin{proof}
	This follows directly from the proof of  \cite[Lemma 5.4]{cortineshartunglouidor2019}.
\end{proof}

\medskip

\begin{lem}[BBM case]
 \label{lem:new_5.5}
	There exists $C > 0$ such that, for any $t \geq 0$, $s,s' \in [0,t/2]$ and $v \geq v' \geq 0$,
	\begin{align*}
	j_{t,v,v'}(s,s')
	& \leq \frac{C (v+1) (v'+1) \e^{\sqrt{2}(v+v')}}{t(s \wedge s' +1)\sqrt{s \wedge s'} (\abs{s'-s}+1) \sqrt{\abs{s'-s}+\1_{s=s'}}}
	\left( \e^{-v^2/(16s)} + \e^{-v/4} \right).
	\end{align*}
\end{lem}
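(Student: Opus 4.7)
I would adapt the proof of the single--level bound \cite[Lemma~5.5]{cortineshartunglouidor2019} to a cross--moment version, keeping the same decomposition but replacing one instance of a first--moment input by a second--moment input on the diagonal. Recall that, in the notation of \cite{cortineshartunglouidor2019}, $J_{t,v}(s)$ depends only on the spine $(\widehat{W}_{t,u})_{u\le t}$ and on the sub-BBM which branches off the spine at time $t-s$. Conditioning on the spine and using the branching property therefore factors $\widehat{\E}_t[J_{t,v}(s)\,J_{t,v'}(s')]$ into a Brownian-bridge integral weighted by contributions from one or two off--spine sub-BBMs, depending on whether $s=s'$ or not. The whole calculation proceeds as in their proof, but carried out at \emph{two} times instead of one.

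\textbf{Case $s\ne s'$.} Assume first $s<s'$. The two sub--BBMs branching off the spine at times $t-s$ and $t-s'$ are conditionally independent given the spine, so, with $p_t(s,z;s',z')$ denoting the joint density of $(\widehat{W}_{t,s},\widehat{W}_{t,s'})$ under the Brownian-bridge conditioning, and with $e_{s,v}(z)$ denoting the conditional first moment of $J_{t,v}(s)$ given $\widehat{W}_{t,s}=z$ (as in Lemma~\ref{lem:j} above),
\[
j_{t,v,v'}(s,s')=\int_{\R^2}p_t(s,z;s',z')\,e_{s,v}(z)\,e_{s',v'}(z')\,\diff z\,\diff z'.
\]
The bound on $e_{s,v}$ extracted in the proof of \cite[Lemma~5.5]{cortineshartunglouidor2019} yields $e_{s,v}(z)\le C(v{+}1)\e^{\sqrt{2}(v+z_+)-z_-^2/(2s)}(\e^{-v^2/(16s)}+\e^{-v/4})$ and likewise for $e_{s',v'}(z')$. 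Bounding the bridge density by $\tfrac{C\sqrt{t}}{\sqrt{s(s'-s)(t-s')}}\exp(-\tfrac{z^2}{2s}-\tfrac{(z'-z)^2}{2(s'-s)})$, using $s,s'\le t/2$, and performing the Gaussian integration in $(z,z')$ against the exponential drifts $\e^{\sqrt{2}(z+z')}$ yields, by the same quadratic-in-the-exponent control as in their one--time computation (now applied sequentially, first in $z'$ then in $z$), the prefactor $\frac{1}{t}\cdot\frac{1}{(s+1)\sqrt{s}}\cdot\frac{1}{(s'-s+1)\sqrt{s'-s}}$. The tail factor $\e^{-v^2/(16s)}+\e^{-v/4}$ is inherited from $e_{s,v}$, using $v\ge v'$ and $s\le s'$ to discard the analogous $v'$--tail.

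\textbf{Case $s=s'$.} Here $J_{t,v}(s)$ and $J_{t,v'}(s)$ are built from the \emph{same} off--spine sub-BBM, so the product is no longer a product of conditional first moments. This is precisely where I would invoke Lemma~\ref{lem:new_4.4}, applied with the role of $u$ played by $0$ (after centring by $-z$) and using $v\ge v'$ to write the tail factor only in terms of $v$: it gives
\[
\widehat{\E}_t\!\left[J_{t,v}(s)J_{t,v'}(s)\mid \widehat W_{t,s}=z\right]\le C(v{+}1)(v'{+}1)\e^{\sqrt{2}(v+v'+2z_+)-z_-^2/s}\bigl(\e^{-v^2/(16s)}+\e^{-v/4}\bigr).
\]
Integrating against the one-time bridge density $p_t(s,z)\asymp\frac{1}{\sqrt{s}}\e^{-z^2/(2s)}$ (valid for $s\le t/2$) and absorbing the $\e^{2\sqrt{2}z}$ drift into the Gaussian yields the factor $\frac{1}{t(s+1)\sqrt{s}}$, which matches the statement since $(\abs{s'-s}+1)\sqrt{\abs{s'-s}+\mathbf{1}_{s=s'}}=1$ when $s=s'$.

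\textbf{Main obstacle.} The single genuine difficulty is to verify that the two--time Gaussian integration in the $s\ne s'$ case really produces the factor $\frac{1}{(\abs{s'-s}+1)\sqrt{\abs{s'-s}}}$ on top of the expected $\frac{1}{(s\wedge s'+1)\sqrt{s\wedge s'}}$ coming from the ``first'' time. This relies on the same trick as in \cite[Lemma~5.5]{cortineshartunglouidor2019}—splitting the integration domain in $z,z'$ according to whether $\abs{z}\le\sqrt{s}$, $\abs{z'-z}\le\sqrt{s'-s}$ or not, and trading the exponential drifts against the Gaussian tails—but now it has to be done in two variables sequentially, with the bookkeeping in the exponent $\exp(-\tfrac{z^2}{2s}-\tfrac{(z'-z)^2}{2(s'-s)}+\sqrt{2}(z+z'))$ carried out carefully so that neither the $(s+1)$ nor the $(\abs{s'-s}+1)$ polynomial gain is lost. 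Beyond this bookkeeping, no new idea is required once Lemma~\ref{lem:new_4.4} is available for the diagonal.
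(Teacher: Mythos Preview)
Your overall strategy matches the paper's exactly: follow \cite[Lemma~5.5]{cortineshartunglouidor2019} for $s\neq s'$ and invoke Lemma~\ref{lem:new_4.4} on the diagonal. However, the explicit formula you write for $j_{t,v,v'}(s,s')$ is missing a crucial ingredient. You assert that $J_{t,v}(s)$ depends only on the spine and on the single sub-BBM branching at $t-s$, and accordingly write $j_{t,v,v'}(s,s')=\int p_t(s,z;s',z')\,e_{s,v}(z)\,e_{s',v'}(z')\,\diff z\,\diff z'$. But $J_{t,v}(s)$ also carries the indicator $\1_{\cA_t}$ (see the analogous definition of $K_{t,\beta}(s)$ in Section~\ref{sec:small_moments}), and once one conditions on the spine and integrates out the other sub-BBMs, this produces the barrier-probability factors $q_t((0,0){;}(s,z))$, $q_t((s,z){;}(s',z'))$, $q_t((s',z'){;}(t,0))$ in the integrand, exactly as in the integral representation used in the proof of Lemma~\ref{lem:j}. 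These $q_t$ factors are not optional: the factor $q_t((s',z'){;}(t,0))$ is what supplies the $1/t$, and $q_t((0,0){;}(s,z))$, $q_t((s,z){;}(s',z'))$ are what supply the $(s\wedge s'+1)^{-1}$ and $(\lvert s-s'\rvert+1)^{-1}$ gains. With only the bridge density and the $e$ factors you wrote, the Gaussian integration against $\e^{\sqrt{2}(z+z')}$ yields neither the $1/t$ nor those polynomial denominators. The same omission affects your diagonal computation.

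A second, smaller gap: you only treat $s<s'$. The paper's way of covering $s>s'$ is to first obtain, for $s<s'$ and \emph{without} assuming $v\ge v'$, the symmetric product bound with both tail factors $(\e^{-v^2/(16s)}+\e^{-v/4})(\e^{-(v')^2/(16s')}+\e^{-v'/4})$, then use the identity $j_{t,v,v'}(s,s')=j_{t,v',v}(s',s)$, and only at the very end bound $\e^{-(v')^2/(16s')}+\e^{-v'/4}\le 2$ under $v\ge v'$. Your proposal discards the $v'$-tail too early, which would make the symmetry step awkward.
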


\medskip

\begin{proof}	
	This is a new version of  \cite[Lemma 5.5]{cortineshartunglouidor2019}.
	
	For the case $s \neq s'$, we first assume that $s < s'$ but do not assume $v \geq v'$. Then it follows directly from the proof in~\cite{cortineshartunglouidor2019} that
	\begin{align*}
	j_{t,v,v'}(s,s')
	& \leq \frac{C (v+1) (v'+1) \e^{\sqrt{2}(v+v')}}{t(s+1) \sqrt{s} (s'-s+1) \sqrt{s'-s+\1_{s=s'}}}
	\left( \e^{-v^2/(16s)} + \e^{-v/4} \right)
	\left( \e^{-(v')^2/(16s')} + \e^{-v'/4} \right).
	\end{align*}
	Then, we use $j_{t,v,v'}(s,s') = j_{t,v',v}(s',s)$ to cover the case $s > s'$
	(this is fine because we removed the assumption $v \geq v'$).
	Finally, for $v \geq v'$, we bound $\e^{-(v')^2/(16s')} + \e^{-v'/4} \leq 2$. This yields the desired result.
	
	The case $s=s'$ is also identical to the proof of \cite[Lemma 5.5]{cortineshartunglouidor2019}, applying here Lemma \ref{lem:new_4.4} instead of \cite[Lemma 4.4]{cortineshartunglouidor2019}.
\end{proof}

\medskip

\begin{lem}[BBM case]
 \label{lem:new_5.3}
	There exists $C > 0$ such that, for any $t \geq 1$ and $v \geq v' \geq 0$,
	\begin{align*}
	\widetilde{\E} \left[
	\cC_{t,r_t}^*([-v,0]) \cC_{t,r_t}^*([-v',0])
	\mathrel{}\middle|\mathrel{} 
	\widehat{h}_t^* = \widehat{h}_t(X_t) = 0 \right] 
	& \leq C (v'+1) \e^{-\sqrt{2}(v+v')}.
	\end{align*}
\end{lem}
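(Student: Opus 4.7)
The strategy mirrors the proof of \cite[Proposition 1.5]{cortineshartunglouidor2019}, which treats the case $v = v'$, but now uses the two-level versions Lemmas \ref{lem:new_5.4} and \ref{lem:new_5.5}. First I would decondition on $\widehat{h}_t^* \leq 0$ exactly as in Equation \eqref{eq:decond_1}, writing
\[
\hEc{\cC_{t,r_t}^*([-v,0]) \cC_{t,r_t}^*([-v',0])}
= \frac{\widetilde{\E}\bigl[\cC_{t,r_t}^*([-v,0]) \cC_{t,r_t}^*([-v',0]) \1_{\{\widehat{h}_t^* \leq 0\}} \mid \widehat{h}_t(X_t) = 0\bigr]}{\widetilde{\P}\bigl(\widehat{h}_t^* \leq 0 \mid \widehat{h}_t(X_t) = 0\bigr)}.
\]
By Equation \eqref{eq:proba_denom} the denominator is bounded below by $c/t$ uniformly in $t \geq 1$, so the task reduces to bounding the numerator by $C (v'+1) \e^{\sqrt{2}(v+v')}/t$. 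For small $t$ (where $r_t$ may exceed $t/2$ and Lemma \ref{lem:new_5.5} does not apply), the trivial bound $\cC_{t,r_t}^*([-v,0]) \leq \#L_{r_t}$ together with $\widetilde{\E}[(\#L_{r_t})^2]$ being bounded for fixed $t$, as in the derivation of Equation \eqref{eq:moment_1_bis}, suffices; so we may henceforth assume $r_t \leq t/2$.

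Then I would invoke Lemma \ref{lem:new_5.4} to split the numerator as
\[
4 \int_0^{r_t} \int_0^{r_t} j_{t,v,v'}(s,s') \diff s' \diff s + 2 \int_0^{r_t} j_{t,v,v'}(s,s) \diff s,
\]
and apply the pointwise control from Lemma \ref{lem:new_5.5}: each $j_{t,v,v'}(s,s')$ is at most
\[
\frac{C(v+1)(v'+1) \e^{\sqrt{2}(v+v')}}{t(s\wedge s'+1)\sqrt{s\wedge s'}\,(|s-s'|+1)\sqrt{|s-s'|+\1_{s=s'}}} \bigl(\e^{-v^2/(16s)} + \e^{-v/4}\bigr).
\]
The diagonal integral reduces, after factoring out constants, to $\int_0^\infty (s+1)^{-1} s^{-1/2}(\e^{-v^2/(16s)} + \e^{-v/4}) \diff s$; the substitution $s = v^2 u$ in the Gaussian term bounds it by $C/(v+1)$, and the second term equals $\pi \e^{-v/4} \leq C/(v+1)$. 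Hence the diagonal part is at most $C(v'+1)\e^{\sqrt{2}(v+v')}/t$, as required.

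For the off-diagonal double integral, split into $\{s<s'\}$ and $\{s>s'\}$. In the first region, $s \wedge s' = s$ and integrating out $s'$ via $\int_s^\infty (s'-s+1)^{-1}(s'-s)^{-1/2} \diff s' = \pi$ reduces matters to exactly the diagonal integral computed above. In the second region, $s\wedge s' = s'$, and splitting the $s'$-integral at $s/2$ yields
\[
\int_0^s \frac{\diff s'}{(s'+1)\sqrt{s'}(s-s'+1)\sqrt{s-s'}} \leq \frac{C}{(s+1)^{3/2}},
\]
after which the remaining integral $\int_0^\infty (s+1)^{-3/2}(\e^{-v^2/(16s)} + \e^{-v/4}) \diff s$ is again $O(1/(v+1))$ by the same $s=v^2u$ substitution. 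Both regions therefore contribute at most $C(v'+1)\e^{\sqrt{2}(v+v')}/t$, which combined with the diagonal part and division by the denominator yields the claim.

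The only subtle point, and the main place one must be careful, is that Lemma \ref{lem:new_5.5} has already used the assumption $v \geq v'$ to reduce the product $(\e^{-v^2/(16s)}+\e^{-v/4})(\e^{-(v')^2/(16s')}+\e^{-v'/4})$ to a single factor depending only on the first coordinate $s$ and on $v$. This asymmetry is the source of the extra $1/(v+1)$ factor that converts the prefactor $(v+1)(v'+1)$ into the target $(v'+1)$, so in handling the off-diagonal integral one must integrate out the ``silent'' coordinate first in each subregion, so that the Gaussian factor $\e^{-v^2/(16s)}$ is eventually integrated against the correct coordinate $s$.
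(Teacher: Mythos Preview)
Your proposal is correct and follows essentially the same route as the paper: decondition via Equations \eqref{eq:decond_1}--\eqref{eq:moment_1_bis} (with the second-moment variant of \eqref{eq:moment_1_bis} for small $t$), then feed Lemma \ref{lem:new_5.5} into the decomposition of Lemma \ref{lem:new_5.4}, integrate out $s'$ first, and use that $\int_0^\infty (s+1)^{-1} s^{-1/2}(\e^{-v^2/(16s)}+\e^{-v/4})\diff s \leq C/(v+1)$. The paper compresses your off-diagonal computation into the single phrase ``by integrating w.r.t.\ $s'$ first'' and a reference to \cite[Lemma 5.3]{cortineshartunglouidor2019}; your explicit splitting into $\{s<s'\}$ and $\{s>s'\}$ with the $C/(s+1)^{3/2}$ bound in the second region is a slightly sharper intermediate estimate than needed, but leads to the same conclusion.
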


\medskip

\begin{proof}
	Proceeding as in Equations \eqref{eq:decond_1}, \eqref{eq:proba_denom},  \eqref{eq:moment_1} and \eqref{eq:moment_1_bis}%
	\footnote{For this step note that we end up here with $\E[(\# L_{t_0})^2]$, which is also a finite constant depending on $t_0$ using similar arguments, including the fact that the second moment of the number of particles at time $r$ in a standard BBM is finite (more precisely it equals $2\e^{2r}-\e^r$).}, 
	it is enough to prove that there exist $C > 0$ and $t_0 \geq 0$ such that, for any $t \geq t_0$ and $v \geq 0$,
	\begin{equation} \label{eq:moment_2}
	\widetilde{\E} \left[ \cC_{t,r_t}^*([-v,0]) \cC_{t,r_t}^*([-v',0]) \1_{\widehat{h}_t^* \leq 0} \mathrel{}\middle|\mathrel{} 
	 \widehat{h}_t(X_t) = 0 \right] 
	\leq \frac{C}{t} (v'+1) \e^{\sqrt{2} v}.
	\end{equation}
	We choose $t_0$ such that, for any $t \geq t_0$, $r_t \leq t/2$.
	Using Lemmas \ref{lem:new_5.4} and \ref{lem:new_5.5}, the left-hand side of Equation \eqref{eq:moment_2} is at most
	\begin{align*}
	& \frac{C}{t} (v+1) (v'+1) \e^{\sqrt{2}(v+v')} \Biggl( 
	\int_0^{r_t} \int_0^{r_t} \frac{(\e^{-v^2/(16s)} + \e^{-v/4})}{(s \wedge s' +1)\sqrt{s \wedge s'} (\abs{s'-s}+1) \sqrt{\abs{s'-s}}} \diff s' \diff s \\
	& \hspace{4.5cm} {} + 
	\int_0^{r_t} \frac{(\e^{-v^2/(16s)} + \e^{-v/4})}{(s+1)\sqrt{s}} \diff s
	\Biggr) \\
	& \leq \frac{C}{t} (v+1) (v'+1) \e^{\sqrt{2}(v+v')} \int_0^{r_t} \frac{(\e^{-v^2/(16s)} + \e^{-v/4})}{(s+1)\sqrt{s}} \diff s,
	\end{align*}
	by integrating w.r.t.\@ $s'$ first. Then, proceeding as in the proof of  \cite[Lemma 5.3]{cortineshartunglouidor2019}, this last integral is at most $C/(v+1)$ and this yields the result.
\end{proof}

\medskip

\begin{proof}[Proof of Proposition \ref{prop:crossed}]
	This follows from Lemma \ref{lem:new_5.3} in the same way as  \cite[Proposition 1.5]{cortineshartunglouidor2019} follows from \cite[Lemma 5.3]{cortineshartunglouidor2019}.
\end{proof}

\subsection{Small moments of \texorpdfstring{$S_\beta$}{Sbeta}}
\label{sec:small_moments}

\begin{lem}[BBM case]
 \label{lem:small_moments_at_t}
	For any $\gamma \in (0,1)$, there exists $C = C(\gamma) > 0$ and $t_0 \geq 0$, such that, for any $t \geq t_0$ and any $\beta \in (1,2]$,
	\[
	\widetilde{\E}_t \left[	\cC_{t,r_t}^*(f_\beta)^\gamma \right]
	\leq \begin{cases}
	C(\beta-1)^{1-2\gamma} & \text{if } \gamma \in (1/2,1), \\
	C \log \frac{1}{\beta-1} & \text{if } \gamma = 1/2, \\
	C & \text{if } \gamma \in (0,1/2),
	\end{cases} 
	\]
	where $f_\beta \colon x \in \R \mapsto \e^{\beta \sqrt{2} x}$.
\end{lem}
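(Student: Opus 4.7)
Set $X \coloneqq \cC_{t,r_t}^*(f_\beta) = \sum_k \e^{\beta\sqrt{2}d_k}$, where the $d_k\le 0$ are the atoms of $\cC_{t,r_t}^*$ under $\widetilde{\P}_t$. The plan is to combine the subadditivity of $x\mapsto x^\gamma$ on $[0,\infty)$, valid for $\gamma\in(0,1)$, with the first- and second-moment bounds on level sets from Lemma~\ref{lem:moment_C_t_r_t}. I would split $X = X_1 + X_2$ at a cutoff level $-K = -K(\beta,\gamma)$ to be tuned later, with $X_1 \coloneqq \sum_{d_k\in[-K,0]}\e^{\beta\sqrt{2}d_k}$ collecting the near atoms and $X_2\coloneqq \sum_{d_k<-K}\e^{\beta\sqrt{2}d_k}$ the far ones. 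Subadditivity then gives $X^\gamma \le X_1^\gamma + X_2^\gamma$, so the two contributions can be treated independently.

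For $X_1^\gamma$, each summand lies in $(0,1]$, hence $X_1 \le \cC_{t,r_t}^*([-K,0])$, and Jensen's inequality together with the first-moment bound from Lemma~\ref{lem:moment_C_t_r_t} yields $\widetilde{\E}_t[X_1^\gamma] \le \widetilde{\E}_t[\cC_{t,r_t}^*([-K,0])]^\gamma \le C\e^{\gamma\sqrt{2}K}$. For $X_2^\gamma$, the cleanest bound applies subadditivity once more to get $X_2^\gamma \le \sum_{d_k<-K}\e^{\gamma\beta\sqrt{2}d_k}$, and Fubini combined with Lemma~\ref{lem:moment_C_t_r_t} gives
\[
\widetilde{\E}_t[X_2^\gamma] \le \int_K^\infty \gamma\beta\sqrt{2}\,\e^{-\gamma\beta\sqrt{2}v}\,\widetilde{\E}_t[\cC_{t,r_t}^*([-v,0])]\,\diff v \le \frac{C\gamma\beta}{\gamma\beta-1}\,\e^{-(\gamma\beta-1)\sqrt{2}K},
\]
valid when $\gamma\beta > 1$. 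Balancing the two bounds by choosing $K$ of order $\tfrac{1}{(2\gamma-1)\sqrt{2}}\log\tfrac{1}{\beta-1}$ produces the target $C(\beta-1)^{1-2\gamma}$ in the regime $\gamma\in(1/2,1)$ as long as $\gamma\beta$ stays bounded away from $1$.

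The main obstacle is the near-critical regime $\gamma\beta\le 1$, which in particular covers every $\gamma\in(0,1/2]$ when $\beta$ is close to $1$, the boundary case $\gamma = 1/2$ where the logarithmic correction appears, and a window of $\gamma\in(1/2,1)$ near $\beta = 1/\gamma$. There the subadditivity bound on $X_2^\gamma$ diverges, and I would replace the second subadditivity step by the integral representation $X_2 = \int_K^\infty \beta\sqrt{2}\,\e^{-\beta\sqrt{2}v}\,\cC_{t,r_t}^*([-v,0])\,\diff v$ together with Cauchy--Schwarz and the second-moment bound $\widetilde{\E}_t[\cC_{t,r_t}^*([-v,0])^2]\le C(v+1)\e^{2\sqrt{2}v}$ of Lemma~\ref{lem:moment_C_t_r_t}. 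The linear prefactor $v+1$ is exactly what generates the $\log\tfrac{1}{\beta-1}$ at $\gamma=1/2$ and the $O(1)$ bound for $\gamma<1/2$, after tuning $K$ at the critical scale $K\asymp 1/(\beta-1)$ suggested by the rare-event picture of Remark~\ref{rem:heuristic_picture_BBM}. The delicate point, which I expect to be the hardest, is to carry out this Cauchy--Schwarz estimate cleanly and to show that the three regimes in the statement ($\gamma>1/2$, $\gamma=1/2$, $\gamma<1/2$) emerge naturally from the transition $\gamma\beta=1$---the same threshold at which the contributions of $X_1$ and $X_2$ change relative orders.
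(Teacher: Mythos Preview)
There is a genuine gap: your scheme cannot reach the target exponents using only the level-set moment bounds of Lemma~\ref{lem:moment_C_t_r_t}. Check the balancing you announce. With $\sqrt{2}K = \frac{1}{2\gamma-1}\log\frac{1}{\beta-1}$ your $X_1$ bound is $C\e^{\gamma\sqrt{2}K}=C(\beta-1)^{-\gamma/(2\gamma-1)}$, which for every $\gamma\in(1/2,1)$ is already larger than the target $(\beta-1)^{1-2\gamma}$. More fundamentally, the atom-wise subadditivity bound on $X_2$ requires $\gamma\beta>1$, which for fixed $\gamma<1$ \emph{fails} on the whole interval $\beta\in(1,1/\gamma)$---precisely where the lemma is nontrivial---and for $\gamma\le 1/2$ fails on all of $(1,2]$. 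In that regime your proposed fallback via Cauchy--Schwarz and second moments cannot rescue the argument: the best upper bound on $\E[X^\gamma]$ obtainable from $\E[X]\asymp(\beta-1)^{-1}$ and $\E[X^2]\asymp(\beta-1)^{-3}$ alone is $\E[X]^\gamma\asymp(\beta-1)^{-\gamma}$ (for any nonnegative $X$ with these two moments, one can take $X$ concentrated near $\E[X]$ with a vanishing heavy tail to make $\E[X^\gamma]$ arbitrarily close to $\E[X]^\gamma$), and $(\beta-1)^{-\gamma}$ is strictly worse than each of the three target bounds.

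What the paper does differently is to apply subadditivity not across spatial level sets but across the \emph{spine decomposition}: writing $\cC_{t,r_t}^*(f_\beta)=\int_0^{r_t}K_{t,\beta}(s)\,\cN(\diff s)$ as a Poisson sum over the subtrees branching off the spine at time $t-s$, subadditivity gives $\cC_{t,r_t}^*(f_\beta)^\gamma\le\int K_{t,\beta}(s)^\gamma\,\cN(\diff s)$. For each $s$ one then conditions on the spine position $\widehat W_{t,s}=z$ and applies Jensen only to the subtree contribution $\cE_s(f_\beta(\cdot+z))\1_{\{\widehat h_s^*\le -z\}}$, whose first moment is of order $(s+1)\wedge(\beta-1)^{-2}$. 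The crucial point is that the spine-probability factor $q_t\,q_t\,p_t\sim \frac{1}{t(s+1)\sqrt{s}}$ is \emph{not} raised to the power~$\gamma$, so that after integrating in $z$ one gets $k_{t,\beta,\gamma}(s)\le \frac{C}{t(s+1)\sqrt{s}}\bigl((s+1)\wedge(\beta-1)^{-2}\bigr)^\gamma$, and integrating this in $s$ produces exactly the trichotomy of the statement. Your spatial split $X_1+X_2$ cannot isolate this ``rare-event probability'' factor from the ``cluster size'' factor, which is why Jensen applied to it is too lossy.
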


\begin{proof}
	Proceeding as in Equations \eqref{eq:decond_1} and \eqref{eq:proba_denom}, it is enough to prove that there exist $C > 0$ such that, for any $t \geq t_0$ and $\beta \in (1,2]$,
	\begin{equation} \label{eq:goal_small_moments}
	\widetilde{\E} \left[ \cC_{t,r_t}^*(f_\beta)^\gamma \1_{\{\widehat{h}_t^* \leq 0\}} \mathrel{}\middle|\mathrel{} 
	\widehat{h}_t(X_t) = 0 \right] 
	\leq \frac{C}{t} \times
	\begin{cases}
	(\beta-1)^{1-2\gamma} & \text{if } \gamma \in (1/2,1), \\
	\log \frac{1}{\beta-1} & \text{if } \gamma = 1/2, \\
	1 & \text{if } \gamma \in (0,1/2),
	\end{cases}
	\end{equation}
	where $t_0$ is chosen such that, for any $t \geq t_0$, $r_t \leq t/2$.
	
	We first decompose $\cC_{t,r_t}^*(f_\beta)$ along the spine as it is done for level sets in \cite[Lemma 5.4]{cortineshartunglouidor2019}.
	Setting (this replaces $J_{t,v}(s)$ defined in \cite[Equation (5.9)]{cortineshartunglouidor2019})
	\[
		K_{t,\beta}(s) \coloneqq \cE_s^s(f_\beta(\,\cdot\,+\widehat{W}_{t,s})) 
		\times \1_{\{\widehat{h}_s^{s*} \leq -\widehat{W}_{t,s}\}} 
		\times \1_{\cA_t},
	\]
	and $\widehat{\P}_t \coloneqq \P( \,\cdot\, | \widehat{W}_{t,0} = \widehat{W}_{t,t} = 0)$,
	we have 
	\[
		\cC_{t,r_t}^*(f_\beta) \1_{\{\widehat{h}_t^* \leq 0\}} \text{ under } \widetilde{\P}( \,\cdot\,|\widehat{h}_t(X_t)=0)  
		\overset{(\mathrm{d})}{=} \int_0^{r_t} K_{t,\beta}(s) \cN(\diff s) \text{ under } \widehat{\P}_t,
	\]
	where $\cN$ is a Poisson point process on $\R_+$ with intensity $2\diff s$.
	Using subadditivity of $x \mapsto x^\gamma$ (note that the integral above is actually a finite sum) and then proceeding as in the proof of \cite[Lemma 5.4]{cortineshartunglouidor2019}, we get
	\begin{equation} \label{eq:link_to_k}
	\widetilde{\E} \left[ \cC_{t,r_t}^*(f_\beta)^\gamma \1_{\{\widehat{h}_t^* \leq 0\}} \mathrel{}\middle|\mathrel{} 
	\widehat{h}_t(X_t) = 0 \right] 
	\leq \widehat{\E}_t \left[ \int_0^{r_t} K_{t,\beta}(s)^\gamma \cN(\diff s) \right]
	= 2 \int_0^{r_t} k_{t,\beta,\gamma}(s) \diff s,
	\end{equation}
	with $k_{t,\beta,\gamma}(s) \coloneqq \widehat{\E}_t [K_{t,v}(s)^\gamma]$.
	
	We now aim at proving that there exists $C > 0$ such that, for any $t \geq 0$, $s \in [0,t/2]$ and $\beta \in (1,2]$, 
	\begin{equation} \label{eq:goal_k}
	k_{t,\beta,\gamma}(s) 
	\leq \frac{C}{t(s+1)\sqrt{s}} \left( (s+1) \wedge(\beta-1)^{-2} \right)^\gamma.
	\end{equation}
	For this, we follow the ideas from the proof of \cite[Lemma 5.5]{cortineshartunglouidor2019} (in the case $M=0$). 
	Conditioning on $\widehat{W}_{t,s}$, we get, similarly as \cite[Equation (5.20)]{cortineshartunglouidor2019},
	\begin{equation} \label{eq:decompo_k}
	k_{t,\beta,\gamma}(s) 
	= \int_\R q_t((0,0){;}(s,z))
	\Ec{\cE_s(f_\beta(\,\cdot\,+z))^\gamma \1_{\{\widehat{h}_s^* \leq -z\}} }
	q_t((s,z){;}(t,0)) p_t(s,z) \diff z.
	\end{equation}
	The single difference with \cite{cortineshartunglouidor2019} is inside the expectation.
	Using Jensen's inequality and then writing $f_\beta(x+z) = \int_0^\infty \beta \sqrt{2} \e^{-\beta\sqrt{2} v} \1_{\{x \geq -v-z\}}\diff v$ for $x \leq -z$, we get
	\begin{align}
	\Ec{\cE_s(f_\beta(\,\cdot\,+z))^\gamma \1_{\{\widehat{h}_s^* \leq -z\}} }^{1/\gamma}
	& \leq \Ec{\cE_s(f_\beta(\,\cdot\,+z)) \1_{\{\widehat{h}_s^* \leq -z\}} } \nonumber \\
	& = \int_0^\infty \beta \sqrt{2} \e^{-\beta\sqrt{2} v} \Ec{\cE_s([-v,0]-z) \1_{\{\widehat{h}_s^* \leq - z\}}} \diff v. \label{eq:new_exp}
	\end{align}
	Using \cite[Lemma 4.2]{cortineshartunglouidor2019}, the right-hand side of Equation \eqref{eq:new_exp} is at most
	\begin{align*}
	& C (z_-+1) \e^{\sqrt{2} z} 
	\int_0^\infty (v+1) \e^{-(\beta-1)\sqrt{2} v} 
	\left( \e^{-(v+z)^2/4s} + \e^{-(v+z)/2} \right) \diff v. 
	\end{align*}
	The part of the last integral due to the term $\e^{-(v+z)/2}$ is bounded by $C\e^{-z/2}$. For the other part, it can be bounded by $\int_0^\infty (v+1) \e^{-(\beta-1)\sqrt{2} v} \diff v \leq C(\beta-1)^{-2}$ or by $\int_\R (v+1) \e^{-(v+z)^2/4s} \diff v \leq C (\abs{z}+1) (s+1)$. 
	Therefore, we proved
	\[
	\Ec{\cE_s(f_\beta(\,\cdot\,+z))^\gamma \1_{\{\widehat{h}_s^* \leq -z\}} }
	\leq C (\abs{z}+1)^{2\gamma} \e^{\gamma \sqrt{2} z} \left( \e^{-z/2} + \left( (s+1) \wedge(\beta-1)^{-2} \right) \right)^\gamma.
	\]
	Coming back to Equation \eqref{eq:decompo_k} and bounding the other factors in the integrand as in \cite[Equations (5.23) and (5.24)]{cortineshartunglouidor2019}, we get
	\[
	k_{t,\beta,\gamma}(s) 
	\leq \frac{C}{t(s+1)\sqrt{s}} 
	\int_\R \left( z_-+\e^{-\frac{3}{2}z_+} \right) 
	(\abs{z}+1)^{2\gamma+1} \e^{\gamma \sqrt{2} z} 
	\left( \e^{-\frac{\gamma}{2}z} + \left( (s+1) \wedge(\beta-1)^{-2} \right)^\gamma \right) \diff z,
	\]
	and Equation \eqref{eq:goal_k} follows.

	Finally, we come back to Equation \eqref{eq:link_to_k} and apply Equation \eqref{eq:goal_k} (we use here $r_t \leq t/2$) to get
	\begin{align*} 
	\widetilde{\E} \left[ \cC_{t,r_t}^*(f_\beta)^\gamma \1_{\widehat{h}_t^* \leq 0} \mathrel{}\middle|\mathrel{} 
	\widehat{h}_t(X_t) = 0 \right] 
	& \leq \frac{C}{t}
	\int_0^\infty \frac{1}{(s+1)\sqrt{s}} 
	\left( (s+1) \wedge(\beta-1)^{-2} \right)^\gamma \diff s \\
	& \leq \frac{C}{t} \left( 
	\int_0^{(\beta-1)^{-2}} \frac{\diff s}{(s+1)^{1-\gamma} \sqrt{s}} 
	+ (\beta-1)^{-2\gamma} \int_{(\beta-1)^{-2}}^\infty \frac{\diff s}{(s+1)\sqrt{s}} 
	\right)
	\end{align*}
	and Equation \eqref{eq:goal_small_moments} follows.
\end{proof}

\section{Asymptotic expansion of an integral}
\label{sec:app_integral}

\begin{proof}[Proof of Equation \eqref{eq:expansion_integral}]
	Write $h_\varepsilon(x) = x^\gamma - \varepsilon x$ for any $x \geq 0$.
	Recall we aim at estimating
	\[
	I_k \coloneqq \int_0^\infty x^k \e^{h_\varepsilon(x)} \diff x,
	\]
	for $k \in \{0,1,2\}$.
	The function $h_\varepsilon$ is maximized at $x_0 \coloneqq (\gamma/\varepsilon)^{1/(1-\gamma)}$, and we can rewrite, for $x \geq -x_0$,
	\[
	h_\varepsilon(x_0+x) - h_\varepsilon(x_0)
	= -x_0^\gamma \left( 1 + \gamma \frac{x}{x_0}
	- \left( 1 + \frac{x}{x_0} \right)^\gamma \right).
	\]
	Let $\delta \in (0,1)$. 
	Setting $f \colon t \in [-1,\infty) \mapsto 1+\gamma t - (1+t)^\gamma$, there exists $c>0$ depending only on $\gamma$ such that $f(\pm\delta) \geq c\delta^2$. Together with convexity of $f$, this proves that $f(t) \geq c\delta \abs{t}$ for any $t \in[-1,\infty) \setminus (-\delta,\delta)$.
	From this inequality, choosing $\delta = x_0^{-\theta}$ for some $\theta \in (0,\gamma/2)$ so that $\delta \to 0$ and $\delta^2 x_0^\gamma \to \infty$ as $\varepsilon \to 0$, we deduce that, up to a modification of the constant $c$,
	\begin{equation} \label{eq:int_exp_1}
	\int_0^\infty x^k \e^{h_\varepsilon(x)} \diff x 
	= \e^{h_\varepsilon(x_0)}
	\left(  \int_{-\delta x_0}^{\delta x_0} (x_0+x)^k \e^{h_\varepsilon(x_0+x) - h_\varepsilon(x_0)} \diff x 
	+ O \left( \e^{-c \delta^2 x_0^\gamma} \right)\right).
	\end{equation}
	Uniformly in $x \in [-\delta x_0,\delta x_0]$, we can expand
	\begin{equation} \label{eq:exp_h}
	h_\varepsilon(x_0+x) - h_\varepsilon(x_0)
	= - \frac{x^2}{2 \sigma^2} + a_3 \frac{x^3}{x_0^{3-\gamma}}
	- a_4 \frac{x^4}{x_0^{4-\gamma}}
	+ O \left( \frac{x^5}{x_0^{5-\gamma}} \right),
	\end{equation}
	where $a_3, a_4 > 0$ are explicit in terms of $\gamma$ and
	\[
		\sigma^2 \coloneqq \frac{x_0^{2-\gamma}}{\gamma(1-\gamma)}
		= \frac{\gamma^{\frac{1}{1-\gamma}}}{1-\gamma} \varepsilon^{-\frac{2-\gamma}{1-\gamma}}.
	\]
	Now we also choose $\theta > \gamma/3$ so that $x^3/x_0^{3-\gamma} \to 0$ uniformly in $x \in [-\delta x_0,\delta x_0]$, and we can expand the $\e^{h_\varepsilon(x_0+x) - h_\varepsilon(x_0)}$ to get that \eqref{eq:int_exp_1} equals
	\[
	\e^{h_\varepsilon(x_0)}
	\left( \int_\R (x_0+x)^k \e^{-x^2/2\sigma^2}
	\left( 1 + a_3 \frac{x^3}{x_0^{3-\gamma}}
	- a_4 \frac{x^4}{x_0^{4-\gamma}}
	+ O \left( \frac{x^5}{x_0^{5-\gamma}} \right)\right) \diff x 
	+ O \left( \e^{-c \delta^2 x_0^\gamma} \right)\right),
	\]
	where we changed the domain of integration up to an error which enters the last $O$-term.
	Expanding $(x_0+x)^k$ depending on the value of $k$, changing variable with $y = x/\sigma$, computing explicitly the resulting Gaussian integrals and using that $\sigma^2/x_0^{2-\gamma} = 1/[\gamma(1-\gamma)]$, we get
	\begin{align*}
	I_0 & = \sqrt{2\pi} \sigma \e^{h_\varepsilon(x_0)}
	\left( 1 - \frac{3a_4}{\gamma(\gamma-1)} \frac{\sigma^2}{x_0^2} 
	+ O \left( \frac{\sigma^3}{x_0^3}  \right) \right), \\
	I_1 & = \sqrt{2\pi} \sigma x_0 \e^{h_\varepsilon(x_0)}
	\left( 1 + \frac{3(a_3-a_4)}{\gamma(\gamma-1)} \frac{\sigma^2}{x_0^2} 
	+ O \left( \frac{\sigma^3}{x_0^3}  \right) \right), \\
	I_2 & = \sqrt{2\pi} \sigma x_0^2 \e^{h_\varepsilon(x_0)}
	\left( 1 + \left( \frac{3(2a_3-a_4)}{\gamma(\gamma-1)} + 1 \right) \frac{\sigma^2}{x_0^2} 
	+ O \left( \frac{\sigma^3}{x_0^3}  \right) \right),
	\end{align*}
	and this proves \eqref{eq:expansion_integral}.
\end{proof}

\begin{proof}[Proof of Equation \eqref{eq:equiv_integral}]
	We follow the same argument as in the previous proof.
	Recall we are in the regime $\beta \to 1^+$ and that $\varepsilon \sim (\beta-1)$.
	Uniformly in $x \in [-\delta x_0,\delta x_0]$, we have, for some $c>0$,
	\begin{align*}
	\log \left( 1+\e^{\frac{2\beta}{2-\beta} (x_0+x)^\gamma-\varepsilon \beta (x_0+x)} \right)
	& = \left( \frac{2\beta}{2-\beta} (x_0+x)^\gamma-\varepsilon \beta (x_0+x) \right) 
	+ O \left( e^{-c x_0^\gamma} \right) \\
	& = \left( \frac{2\beta}{2-\beta} -\beta\gamma \right) x_0^\gamma 
	+ 
	\frac{\beta^2\gamma}{2-\beta} \frac{x}{x_0^{1-\gamma}} 
	+ O\left( \frac{x^2}{x_0^{2-\gamma}} \right)
	+ O \left( e^{-c x_0^\gamma} \right),
	\end{align*}
	using that $\varepsilon = \gamma x_0^{\gamma-1}$.
	Combining this with the argument of the previous proof (using in particular \eqref{eq:exp_h} up to the order $x^4$), we get, with $b = b(\gamma,\beta) \coloneqq \frac{\beta^2\gamma}{2-\beta} (\frac{2\beta}{2-\beta} -\beta\gamma)^{-1}$
	\begin{align*}
	& \int_0^\infty 
	\log^k \left( 1+\e^{\frac{2\beta}{2-\beta} x^\gamma-\varepsilon \beta x} \right) \e^{x^\gamma-\varepsilon x} \diff x \\
	& = \e^{h_\varepsilon(x_0)} 
	\left( \frac{2\beta}{2-\beta} -\beta\gamma \right)^k x_0^{k\gamma} \\
	& \quad {} \times
	\left( \int_\R 
	\left( 1 + k b \frac{x}{x_0} 
	+ O\left( \frac{x^2}{x_0^2} \right) \right)
	\e^{-x^2/2\sigma^2} 
	\left( 1+ a_3 \frac{x^3}{x_0^{3-\gamma}}
	+ O \left( \frac{x^4}{x_0^{4-\gamma}} \right) \right)
	\diff x 
	+ O \left( \e^{-c \delta^2 x_0^\gamma} \right) \right) \\
	& = \sqrt{2\pi} \sigma (2x_0^\gamma-\varepsilon x_0)^k \e^{h_\varepsilon(x_0)} \left( 1 + O \left( \frac{\sigma^2}{x_0^2} \right) \right),
	\end{align*}
	noting that the integrals involving an odd power of $x$ equal zero and using that $\sigma^2/x_0^{2-\gamma} = O(1)$.
	Using the definition of $x_0$ and $\sigma$, \eqref{eq:equiv_integral} follows.
\end{proof}

\end{appendix}

\medskip

\section*{Acknowledgements}

The authors warmly thank Bernard Derrida for very stimulating discussions. Michel Pain keenly thanks Jean-Philippe Bouchaud for presenting him the reference \cite{salesbouchaud01}. We are grateful to Xinxin Chen for suggesting us the content of Remark \ref{rem:1-stable_2}. We also thank an anonymous referee for several interesting questions and suggestions which helped us to improve the readability of the paper.

\medskip

\bibliographystyle{abbrv}

\begin{thebibliography}{10}
	
	\bibitem{abbs2013}
	E.~A{\"{\i}}d{\'e}kon, J.~Berestycki, {\'E}.~Brunet, and Z.~Shi.
	\newblock Branching {B}rownian motion seen from its tip.
	\newblock {\em Probab. Theory Related Fields}, 157(1-2):405--451, 2013.
	
	\bibitem{arguin_2016}
	L.-P. Arguin.
	\newblock {\em Extrema of Log-correlated Random Variables: Principles and
		Examples}, pages 166--204.
	\newblock Cambridge University Press, Cambridge, 2016.
	
	\bibitem{abk2013}
	L.-P. Arguin, A.~Bovier, and N.~Kistler.
	\newblock The extremal process of branching {B}rownian motion.
	\newblock {\em Probab. Theory Related Fields}, 157(3-4):535--574, 2013.
	
	\bibitem{arguinzindy2014}
	L.-P. Arguin and O.~Zindy.
	\newblock Poisson-{D}irichlet statistics for the extremes of a log-correlated
	{G}aussian field.
	\newblock {\em Ann. Appl. Probab.}, 24(4):1446--1481, 2014.
	
	\bibitem{arguinzindy2015}
	L.-P. Arguin and O.~Zindy.
	\newblock Poisson-{D}irichlet statistics for the extremes of the
	two-dimensional discrete {G}aussian free field.
	\newblock {\em Electron. J. Probab.}, 20:no. 59, 19, 2015.
	
	\bibitem{BenarousBogachevMolchanov05}
	G.~Ben~Arous, L.~V. Bogachev, and S.~A. Molchanov.
	\newblock Limit theorems for sums of random exponentials.
	\newblock {\em Probability Theory and Related Fields}, 132(4):579--612, 2005.
	
	\bibitem{biskup2020}
	M.~Biskup.
	\newblock Extrema of the two-dimensional discrete {G}aussian free field.
	\newblock In {\em Random graphs, phase transitions, and the {G}aussian free
		field}, volume 304 of {\em Springer Proc. Math. Stat.}, pages 163--407.
	Springer, Cham, 2020.
	
	\bibitem{biskuplouidor2016}
	M.~Biskup and O.~Louidor.
	\newblock Extreme local extrema of two-dimensional discrete {G}aussian free
	field.
	\newblock {\em Comm. Math. Phys.}, 345(1):271--304, 2016.
	
	\bibitem{biskuplouidor2018}
	M.~Biskup and O.~Louidor.
	\newblock Full extremal process, cluster law and freezing for the
	two-dimensional discrete {G}aussian {F}ree {F}ield.
	\newblock {\em Adv. Math.}, 330:589--687, 2018.
	
	\bibitem{Bolthausen2007}
	E.~Bolthausen.
	\newblock Random media and spin glasses: an introduction into some mathematical
	results and problems.
	\newblock In {\em Spin glasses}, volume 1900 of {\em Lecture Notes in Math.},
	pages 1--44. Springer, Berlin, 2007.
	
	\bibitem{bdg2001}
	E.~Bolthausen, J.-D. Deuschel, and G.~Giacomin.
	\newblock Entropic repulsion and the maximum of the two-dimensional harmonic
	crystal.
	\newblock {\em Ann. Probab.}, 29(4):1670--1692, 2001.
	
	\bibitem{bdz2011}
	E.~Bolthausen, J.~D. Deuschel, and O.~Zeitouni.
	\newblock Recursions and tightness for the maximum of the discrete two
	dimensional {G}aussian free field.
	\newblock {\em Electron. Commun. Probab.}, 16:114--119, 2011.
	
	\bibitem{bolthausensznitman2002}
	E.~Bolthausen and A.-S. Sznitman.
	\newblock {\em Ten lectures on random media}, volume~32 of {\em DMV Seminar}.
	\newblock Birkh\"auser Verlag, Basel, 2002.
	
	\bibitem{bonnefont22}
	B.~Bonnefont.
	\newblock The overlap distribution at two temperatures for the branching
	{B}rownian motion.
	\newblock {\em Electron. J. Probab.}, 27:Paper No. 116, 21, 2022.
	
	\bibitem{bovierhartung2014}
	A.~Bovier and L.~Hartung.
	\newblock The extremal process of two-speed branching {B}rownian motion.
	\newblock {\em Electron. J. Probab.}, 19:no. 18, 28, 2014.
	
	\bibitem{bovierkurkova2004-1}
	A.~Bovier and I.~Kurkova.
	\newblock Derrida's generalised random energy models. {I}. {M}odels with
	finitely many hierarchies.
	\newblock {\em Ann. Inst. H. Poincar\'e Probab. Statist.}, 40(4):439--480,
	2004.
	
	\bibitem{bovierkurkova2004-2}
	A.~Bovier and I.~Kurkova.
	\newblock Derrida's generalized random energy models. {II}. {M}odels with
	continuous hierarchies.
	\newblock {\em Ann. Inst. H. Poincar\'e Probab. Statist.}, 40(4):481--495,
	2004.
	
	\bibitem{BovierKurkovaLowe02}
	A.~Bovier, I.~Kurkova, and M.~L{\"o}we.
	\newblock {Fluctuations of the free energy in the REM and the $p$-spin SK
		models}.
	\newblock {\em The Annals of Probability}, 30(2):605 -- 651, 2002.
	
	\bibitem{bramson83}
	M.~Bramson.
	\newblock Convergence of solutions of the {K}olmogorov equation to travelling
	waves.
	\newblock {\em Mem. Amer. Math. Soc.}, 44(285):iv+190, 1983.
	
	\bibitem{bdz2016-2}
	M.~Bramson, J.~Ding, and O.~Zeitouni.
	\newblock Convergence in law of the maximum of the two-dimensional discrete
	{G}aussian free field.
	\newblock {\em Comm. Pure Appl. Math.}, 69(1):62--123, 2016.
	
	\bibitem{bramsonzeitouni2012}
	M.~Bramson and O.~Zeitouni.
	\newblock Tightness of the recentered maximum of the two-dimensional discrete
	{G}aussian free field.
	\newblock {\em Comm. Pure Appl. Math.}, 65(1):1--20, 2012.
	
	\bibitem{bramson78}
	M.~D. Bramson.
	\newblock Maximal displacement of branching {B}rownian motion.
	\newblock {\em Comm. Pure Appl. Math.}, 31(5):531--581, 1978.
	
	\bibitem{carpentier-ledoussal2001}
	D.~Carpentier and P.~Le~Doussal.
	\newblock Glass transition of a particle in a random potential, front selection
	in nonlinear renormalization group, and entropic phenomena in {L}iouville and
	sinh-{G}ordon models.
	\newblock {\em Phys. Rev. E}, 63:026110, Jan 2001.
	
	\bibitem{chauvinrouault88}
	B.~Chauvin and A.~Rouault.
	\newblock K{PP} equation and supercritical branching {B}rownian motion in the
	subcritical speed area. {A}pplication to spatial trees.
	\newblock {\em Probab. Theory Related Fields}, 80(2):299--314, 1988.
	
	\bibitem{chauvinrouault90}
	B.~Chauvin and A.~Rouault.
	\newblock Supercritical branching {B}rownian motion and {K}-{P}-{P} equation in
	the critical speed-area.
	\newblock {\em Math. Nachr.}, 149:41--59, 1990.
	
	\bibitem{cortineshartunglouidor2019}
	A.~Cortines, L.~Hartung, and O.~Louidor.
	\newblock {The structure of extreme level sets in branching Brownian motion}.
	\newblock {\em Ann. Probab.}, 47(4):2257 -- 2302, 2019.
	
	\bibitem{cortineshartunglouidor2021}
	A.~Cortines, L.~Hartung, and O.~Louidor.
	\newblock More on the structure of extreme level sets in branching {B}rownian
	motion.
	\newblock {\em Electron. Commun. Probab.}, 26:Paper No. 2, 14, 2021.
	
	\bibitem{Daviaud2006}
	O.~Daviaud.
	\newblock Extremes of the discrete two-dimensional {G}aussian free field.
	\newblock {\em Ann. Probab.}, 34(3):962--986, 2006.
	
	\bibitem{Derrida1981}
	B.~Derrida.
	\newblock Random-energy model: an exactly solvable model of disordered systems.
	\newblock {\em Phys. Rev. B (3)}, 24(5):2613--2626, 1981.
	
	\bibitem{Derrida1985}
	B.~Derrida.
	\newblock A generalization of the random energy model which includes
	correlations between energies.
	\newblock {\em J. Physique Lett.}, 46(9):401--407, 1985.
	
	\bibitem{DerridaMottishaw_2021}
	B.~Derrida and P.~Mottishaw.
	\newblock One step replica symmetry breaking and overlaps between two
	temperatures.
	\newblock {\em Journal of Physics A: Mathematical and Theoretical},
	54(4):045002, jan 2021.
	
	\bibitem{derridaspohn88}
	B.~Derrida and H.~Spohn.
	\newblock Polymers on disordered trees, spin glasses, and traveling waves.
	\newblock {\em J. Statist. Phys.}, 51(5-6):817--840, 1988.
	\newblock New directions in statistical mechanics (Santa Barbara, CA, 1987).
	
	\bibitem{FisherHuse1991}
	D.~S. Fisher and D.~A. Huse.
	\newblock Directed paths in a random potential.
	\newblock {\em Phys. Rev.}, B 43:10728, 1991.
	
	\bibitem{FyodorovBouchaud2008a}
	Y.~V. Fyodorov and J.-P. Bouchaud.
	\newblock Freezing and extreme-value statistics in a random energy model with
	logarithmically correlated potential.
	\newblock {\em J. Phys. A}, 41(37):372001, 12, 2008.
	
	\bibitem{FyodorovBouchaud2008b}
	Y.~V. Fyodorov and J.-P. Bouchaud.
	\newblock Statistical mechanics of a single particle in a multiscale random
	potential: {P}arisi landscapes in finite-dimensional {E}uclidean spaces.
	\newblock {\em J. Phys. A}, 41(32):324009, 25, 2008.
	
	\bibitem{FyodorovLeDoussalRosso2009}
	Y.~V. Fyodorov, P.~Le~Doussal, and A.~Rosso.
	\newblock Statistical mechanics of logarithmic {REM}: duality, freezing and
	extreme value statistics of {$1/f$} noises generated by {G}aussian free
	fields.
	\newblock {\em J. Stat. Mech. Theory Exp.}, (10):P10005, 32, 2009.
	
	\bibitem{HarLouWu2024}
	L.~Hartung, O.~Louidor, and T.~Wu.
	\newblock On the growth of the extremal and cluster level sets in branching
	{B}rownian motion.
	\newblock 2024.
	\newblock arXiv:2405.17634.
	
	\bibitem{Kistler2015}
	N.~Kistler.
	\newblock Derrida's random energy models. {F}rom spin glasses to the extremes
	of correlated random fields.
	\newblock In {\em Correlated random systems: five different methods}, volume
	2143 of {\em Lecture Notes in Math.}, pages 71--120. Springer, Cham, 2015.
	
	\bibitem{kurkova2003}
	I.~Kurkova.
	\newblock Temperature dependence of the {G}ibbs state in the random energy
	model.
	\newblock {\em J. Statist. Phys.}, 111(1-2):35--56, 2003.
	
	\bibitem{lalleysellke87}
	S.~P. Lalley and T.~Sellke.
	\newblock A conditional limit theorem for the frontier of a branching
	{B}rownian motion.
	\newblock {\em Ann. Probab.}, 15(3):1052--1061, 1987.
	
	\bibitem{lastpenrose2017}
	G.~Last and M.~Penrose.
	\newblock {\em Lectures on the Poisson Process}.
	\newblock Institute of Mathematical Statistics Textbooks. Cambridge University
	Press, Cambridge, 2017.
	
	\bibitem{LeMuellerMunier2022}
	A.~D. Le, A.~H. Mueller, and S.~Munier.
	\newblock Probabilistic picture for particle number densities in stretched tips
	of the branching {B}rownian motion.
	\newblock {\em Europhysics Letters}, 140(5):51003, 2022.
	
	\bibitem{MaiPai2019}
	P.~Maillard and M.~Pain.
	\newblock 1-stable fluctuations in branching {B}rownian motion at critical
	temperature {I}: {T}he derivative martingale.
	\newblock {\em Ann. Probab.}, 47(5):2953--3002, 2019.
	
	\bibitem{MaiPai2021}
	P.~Maillard and M.~Pain.
	\newblock 1-stable fluctuations in branching {B}rownian motion at critical
	temperature {II}: general functionals.
	\newblock 2021.
	\newblock arXiv:2103.10412.
	
	\bibitem{mckean75}
	H.~P. McKean.
	\newblock Application of {B}rownian motion to the equation of
	{K}olmogorov-{P}etrovskii-{P}iskunov.
	\newblock {\em Comm. Pure Appl. Math.}, 28(3):323--331, 1975.
	
	\bibitem{MuellerMunier2020}
	A.~H. Mueller and S.~Munier.
	\newblock Particle-number distribution in large fluctuations at the tip of
	branching random walks.
	\newblock {\em Phys. Rev. E}, 102(2):Paper No. 022104, 15, 2020.
	
	\bibitem{mytnik2021fisherkpp}
	L.~Mytnik, J.-M. Roquejoffre, and L.~Ryzhik.
	\newblock Fisher-{KPP} equation with small data and the extremal process of
	branching {B}rownian motion.
	\newblock {\em Adv. Math.}, 396:Paper No. 108106, 58, 2022.
	
	\bibitem{PainZindy21}
	M.~Pain and O.~Zindy.
	\newblock {Two-temperatures overlap distribution for the 2D discrete Gaussian
		free field}.
	\newblock {\em Ann. Inst. H. Poincar\'e Probab. Statist.}, 57(2):685 -- 699,
	2021.
	
	\bibitem{panchenkotalagrand2007-1}
	D.~Panchenko and M.~Talagrand.
	\newblock On one property of {D}errida-{R}uelle cascades.
	\newblock {\em C. R. Math. Acad. Sci. Paris}, 345(11):653--656, 2007.
	
	\bibitem{rhodesvargas14}
	R.~Rhodes and V.~Vargas.
	\newblock Gaussian multiplicative chaos and applications: a review.
	\newblock {\em Probab. Surv.}, 11:315--392, 2014.
	
	\bibitem{ruelle87}
	D.~Ruelle.
	\newblock {A mathematical reformulation of Derrida's REM and GREM}.
	\newblock {\em Communications in Mathematical Physics}, 108(2):225 -- 239,
	1987.
	
	\bibitem{salesbouchaud01}
	M.~Sales and J.-P. Bouchaud.
	\newblock Rejuvenation in the random energy model.
	\newblock {\em Europhys. Lett.}, 56(2):181--186, 2001.
	
	\bibitem{SubagZeitouni2015}
	E.~Subag and O.~Zeitouni.
	\newblock Freezing and decorated {P}oisson point processes.
	\newblock {\em Comm. Math. Phys.}, 337(1):55--92, 2015.
	
\end{thebibliography}

\end{document}